\newtheorem{theorem}{Theorem}
\numberwithin{theorem}{section}
\numberwithin{equation}{section}
\newtheorem{proposition}[theorem]{Proposition}
\newtheorem{lemma}[theorem]{Lemma}
\newtheorem{corollary}[theorem]{Corollary}
\theoremstyle{definition}
\newtheorem{rmk}[theorem]{Remark}
\newenvironment{remark}[1][]{\begin{rmk}[#1] \pushQED{\qed}}{\popQED \end{rmk}}
\newtheorem{eg}[theorem]{Example}
\newenvironment{example}[1][]{\begin{eg}[#1] \pushQED{\qed}}{\popQED \end{eg}}
\newtheorem{defn}[theorem]{Definition}
\newcommand{\rB}{\mathrm{B}}
\newcommand{\bC}{\mathbf{C}}
\newcommand{\rC}{\mathrm{C}}
\newcommand{\rD}{\mathrm{D}}
\newcommand{\rE}{\mathrm{E}}
\newcommand{\cH}{\mathcal{H}}
\newcommand{\rH}{\mathrm{H}}
\newcommand{\rK}{\mathrm{K}}
\newcommand{\cL}{\mathcal{L}}
\newcommand{\rL}{\mathrm{L}}
\newcommand{\cM}{\mathcal{M}}
\newcommand{\bO}{\mathbf{O}}
\newcommand{\cO}{\mathcal{O}}
\newcommand{\bP}{\mathbf{P}}
\newcommand{\cQ}{\mathcal{Q}}
\newcommand{\cR}{\mathcal{R}}
\newcommand{\rR}{\mathrm{R}}
\newcommand{\bS}{\mathbf{S}}
\newcommand{\fS}{\mathfrak{S}}
\newcommand{\cU}{\mathcal{U}}
\newcommand{\cV}{\mathcal{V}}
\newcommand{\cW}{\mathcal{W}}
\newcommand{\bZ}{\mathbf{Z}}
\newcommand{\fg}{\mathfrak{g}}
\renewcommand{\phi}{\varphi}
\renewcommand{\emptyset}{\varnothing}
\newcommand{\eps}{\varepsilon}
\newcommand{\ol}[1]{\overline{#1}}
\newcommand{\arxiv}[1]{\href{http://arxiv.org/abs/#1}{{\tt arXiv:#1}}}
\def\Ddots{\mathinner{\mkern1mu\raise\p@
\vbox{\kern7\p@\hbox{.}}\mkern2mu
\raise4\p@\hbox{.}\mkern2mu\raise7\p@\hbox{.}\mkern1mu}}
\DeclareMathOperator{\rank}{rank}
\DeclareMathOperator{\Sym}{Sym}
\DeclareMathOperator{\Aut}{Aut}
\DeclareMathOperator{\Tor}{Tor}
\DeclareMathOperator{\Spec}{Spec}
\DeclareMathOperator{\sgn}{sgn}
\newcommand{\GL}{\mathbf{GL}}
\newcommand{\Sp}{\mathbf{Sp}}
\newcommand{\SO}{\mathbf{SO}}
\newcommand{\IGr}{\mathbf{IGr}}
\title{Homology of Littlewood complexes}
\author{Steven V Sam}
\address{Department of Mathematics, University of California, Berkeley, CA}
\email{svs@math.berkeley.edu}
\author{Andrew Snowden}
\address{Department of Mathematics, MIT, Cambridge, MA} \email{asnowden@math.mit.edu}
\author{Jerzy Weyman}
\address{Department of Mathematics, Northeastern University, Boston, MA}
\email{j.weyman@neu.edu}
\date{February 8, 2013}
\subjclass[2010]{05E10, 
13D02, 
15A72, 
20G05.
}
\thanks{S.~Sam was supported by an NDSEG fellowship and a Miller research fellowship.  A.~Snowden was partially supported by NSF fellowship DMS-0902661.  J.~Weyman was partially supported by NSF grant DMS-0901185.}
\newcommand{\lw}{{\textstyle \bigwedge}}
\DeclareMathOperator{\Rep}{Rep}
\DeclareMathOperator{\Hom}{Hom}
\newcommand{\Step}[1]{\removelastskip\vskip.5\baselineskip\noindent{\it Step #1{}.}}
\begin{document}

\maketitle

\begin{abstract}
Let $V$ be a symplectic vector space of dimension $2n$.  Given a partition $\lambda$ with at most $n$ parts, there is an associated irreducible representation $\bS_{[\lambda]}(V)$ of $\Sp(V)$.  This representation admits a resolution by a natural complex $L^{\lambda}_{\bullet}$, which we call the {\bf Littlewood complex}, whose terms are restrictions of representations of $\GL(V)$.  When $\lambda$ has more than $n$ parts, the representation $\bS_{[\lambda]}(V)$ is not defined, but the Littlewood complex $L^{\lambda}_{\bullet}$ still makes sense.  The purpose of this paper is to compute its homology.  We find that either $L^{\lambda}_{\bullet}$ is acyclic or it has a unique non-zero homology group, which forms an irreducible representation of $\Sp(V)$.  The non-zero homology group, if it exists, can be computed by a rule reminiscent of that occurring in the Borel--Weil--Bott theorem. This result can be interpreted as the computation of the ``derived specialization'' of irreducible representations of $\Sp(\infty)$, and as such categorifies earlier results of Koike--Terada on universal character rings.  We prove analogous results for orthogonal and general linear groups. Along the way, we will see two topics from commutative algebra: the minimal free resolutions of determinantal ideals and Koszul homology.
\end{abstract}

\tableofcontents

\section{Introduction}

\subsection{Statement of main theorem}

Let $V$ be a symplectic vector space over the complex numbers of dimension $2n$.  Associated to a partition $\lambda$ with at most $n$ parts there is an irreducible representation $\bS_{[\lambda]}(V)$ of $\Sp(V)$, and all irreducible representations of $\Sp(V)$ are uniquely of this form.  The space $\bS_{[\lambda]}(V)$ can be defined as the quotient of the usual Schur functor $\bS_{\lambda}(V)$ by the sum of the images of all of the ``obvious'' $\Sp(V)$-linear maps $\bS_{\mu}(V) \to \bS_{\lambda}(V)$, where $\mu$ can be obtained by removing from $\lambda$ a vertical strip of size two.  In other words, we have a presentation
\begin{displaymath}
\bigoplus_{\lambda/\mu=(1,1)} \bS_{\mu}(V) \to \bS_{\lambda}(V) \to \bS_{[\lambda]}(V) \to 0.
\end{displaymath}
This presentation admits a natural continuation to a resolution $L^{\lambda}_{\bullet}=L^{\lambda}_{\bullet}(V)$, which we call the {\bf Littlewood complex}.  It can be characterized as the minimal resolution of $\bS_{[\lambda]}(V)$ by representations which extend to $\GL(V)$.

When the number of parts of $\lambda$ exceeds $n$, it still makes sense to speak of the complex $L^{\lambda}_{\bullet}$, even though there is no longer an associated irreducible $\bS_{[\lambda]}(V)$ (see \S \ref{ss:introex} for a simple example).  However, $L^{\lambda}_{\bullet}$ is typically no longer exact in higher degrees.  A very natural problem is to compute its homology, and this is exactly what the main theorem of this paper accomplishes:

\begin{theorem}
\label{mainthm}
The homology of $L^{\lambda}_{\bullet}$ is either identically zero or else there exists a unique $i$ for which $\rH_i(L^{\lambda}_{\bullet})$ is non-zero, and it is then an irreducible representation of $\Sp(V)$.
\end{theorem}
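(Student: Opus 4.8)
The plan is to turn the computation of $\rH_\bullet(L^\lambda_\bullet)$ into a Koszul-homology computation attached to a classical determinantal (Pfaffian) variety, and then to read the answer off the known, Bott-theoretic structure of the minimal free resolution of that variety, which makes the ``one nonzero term, and it is irreducible'' dichotomy essentially automatic. The first observation is that when $\ell(\lambda)\le n$ the complex $L^\lambda_\bullet$ is, by construction, the minimal resolution of $\bS_{[\lambda]}(V)$, hence acyclic in positive degrees; more generally it is acyclic over $\Sp(\infty)$. So the theorem is really a statement about how this acyclicity degenerates when one specializes $\dim V$ to $2n$, and the real task is to find a description of $L^\lambda_\bullet(V)$ that can be analyzed uniformly in $n$.

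Such a description comes from classical invariant theory. Let $F=\mathbb{C}^m$ with $m$ large. The polynomial ring $\Sym(F\otimes V^*)=\mathbb{C}[\Hom(F,V)]$ carries commuting actions of $\GL(F)$ and $\Sp(V)$, is a module over $\Sym(\bigwedge^2 F)$ via the invariant form, has $\Sp(V)$-invariants equal to the coordinate ring $B/J_n$ of the variety of alternating forms on $F$ of rank $\le 2n$ (a Pfaffian variety), and decomposes as $\bigoplus_\lambda \bS_{[\lambda]}(V)\otimes M_\lambda$ with the multiplicity spaces $M_\lambda$ explicit $\GL(F)$-representations. Tracking the ``obvious'' maps built from $\omega$ on both sides, one identifies $L^\lambda_\bullet(V)$ --- after extracting a suitable $\GL(F)$-isotypic strand and twisting by the relevant Schur functor --- with a Koszul homology, i.e. with $\Tor^{\,\Sym(\bigwedge^2 F)}_\bullet$ of $B/J_n$ against an explicit auxiliary module supported on the Pfaffian variety; equivalently one presents the same object as the derived pushforward of a Koszul complex from the Kempf collapsing of the Pfaffian variety, a homogeneous bundle over an isotropic Grassmannian.

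To finish, feed in the explicit minimal free resolution of $B/J_n$. This resolution is known (the geometric technique of Lascoux--Weyman, J\'ozefiak--Pragacz): its terms are direct sums of Schur functors $\bS_\beta(F)$, and it is produced by applying Bott's theorem to the tautological complex on the desingularizing Grassmannian, so that every graded piece of the spectral sequence computing it --- and likewise of the spectral sequence computing our $\Tor$ --- is the cohomology of a single homogeneous bundle, concentrated in one degree. Carrying out the resulting computation, with a Cauchy/Littlewood-identity cancellation on the $\GL(F)$-characters, one finds that all contributions but at most one cancel or vanish: either the relevant weight is singular, in which case $L^\lambda_\bullet$ is acyclic, or it is regular, in which case $\rH_\bullet(L^\lambda_\bullet)$ is concentrated in a single homological degree $i$ and equals the cohomology of one homogeneous line bundle on the symplectic flag variety, that is, a single irreducible $\bS_{[\lambda^+]}(V)$, with $\lambda^+$ and $i$ obtained from $\lambda$ by the Bott dotted-action straightening --- which is precisely the Koike--Terada modification rule, as promised in the introduction.

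The main obstacle is the identification in the second paragraph: matching not only the \emph{terms} of $L^\lambda_\bullet(V)$ with those of the Koszul/geometric complex (handled by the Littlewood restriction rule and the Cauchy formula) but also the \emph{differentials}, together with all twists, homological shifts and multiplicity spaces. I would handle the differentials by rigidity: for $\dim V$ large both complexes \emph{are} the minimal resolution of $\bS_{[\lambda]}(V)$, hence isomorphic, which pins down the maps; one then transports this isomorphism through a universal, $\GL$-equivariant (functorial-in-$V$) formulation of both constructions and specializes the dimension. A secondary point is to prove that the spectral sequence degenerates and that no cancellation occurs between adjacent homological degrees; this can be confirmed by an independent computation of the Euler characteristic of $\rH_\bullet(L^\lambda_\bullet)$, which matches the Koike--Terada virtual character and thereby forces the single surviving term.
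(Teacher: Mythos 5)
Your overall strategy is the paper's: decompose the Koszul complex of $U=\lw^2 E$ inside $B=\Sym(E\otimes V)$ under $\GL(E)$ so that $\rH_i(L^\lambda_\bullet)$ becomes the $\bS_\lambda(E)$-multiplicity space of $\Tor^A_i(B,\bC)$, and compute the latter via the geometric technique and Bott's theorem on the Grassmannian desingularizing the Pfaffian locus. But the two steps you defer are exactly where the proof lives, and both of your proposed fixes fail. First, the identification of the multiplicity module $B_\lambda=\Hom_{\Sp(V)}(\bS_{[\lambda]}(V),B)$ with the geometrically defined module $M_\lambda=\rH^0(X,\Sym(\eta)\otimes\bS_\lambda(\cQ))$ cannot be obtained by ``rigidity for $\dim V$ large, then specialize the dimension'': the geometric module does not involve $V$ at all (only $n$ enters, through the rank of $\cQ$), and specialization in $V$ is precisely the non-exact operation the theorem is about, so an isomorphism in the stable range transports nothing to small $V$. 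The paper instead proves $M_\lambda\cong B_\lambda$ for each fixed $n$ by combining (i) a character identity in the representation ring of $\GL(E)$, obtained by comparing the Euler characteristic of the resolution of $M_\lambda$ with the Koike--Terada specialization formula for $\bS_\theta(V)|_{\Sp(V)}$, and (ii) a presentation argument: $\Tor^A_0(M_\lambda,\bC)$ and $\Tor^A_1(M_\lambda,\bC)$ are single Schur functors, the first syzygy $\bS_{(\lambda,1^{2n+2-2\ell(\lambda)})}(E)$ occurs with multiplicity one in $\bS_\lambda(E)\otimes A$ and hence in neither $M_\lambda$ nor $B_\lambda$, so the surjection $\bS_\lambda(E)\otimes A\to B_\lambda$ supplied by the Littlewood restriction rule \eqref{s:cring} factors through $M_\lambda$; finiteness of multiplicities then upgrades this surjection to an isomorphism. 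Some such argument is indispensable and is absent from your proposal.

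Second, your claim that an Euler-characteristic comparison with the Koike--Terada virtual character ``forces the single surviving term'' is false: Euler characteristics are invariant under adding a common summand to two adjacent homology groups, so they can never rule out cancellation. In the actual proof no such argument is needed: Proposition~\ref{s:stepb} shows that each $\bS_\alpha(E)$ occurs in $\Tor^A_i(M_\lambda,\bC)$ for exactly one pair $(\lambda,i)=(\tau_{2n}(\alpha),i_{2n}(\alpha))$, because each weight fed into Bott's theorem is either singular or regular with a well-defined length; concentration in a single degree and irreducibility then follow from $M_\lambda\cong B_\lambda$ and the semisimplicity of $\Rep(\Sp(V))$ --- not, as you assert, from the cohomology of a line bundle on a symplectic flag variety, which is never computed. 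Relatedly, you omit the nontrivial combinatorial step (Proposition~\ref{s:stepa}) matching the output of Bott's algorithm on the weights $(\lambda\mid_n\mu)$, $\mu\in Q_{-1}$, with the border-strip/Weyl-group modification rule; this is what makes the indexing of the $\Tor$ groups come out as stated.
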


In fact, there is a procedure, called the {\bf modification rule} (see \S \ref{s:modrule}), which allows one to compute exactly which homology group is non-zero and which irreducible representation it is.  This rule can be phrased in terms of a certain Weyl group action, and, in this way, the theorem is reminiscent of the classical Borel--Weil--Bott theorem.   There is also a more combinatorial description of the rule in terms of border strips.  See Theorem~\ref{s:mainthm} for a precise statement.

We prove analogous theorems for the orthogonal and general linear groups, but for clarity of exposition we concentrate on the symplectic case in the introduction.  An analogous result for the symmetric group can be found in \cite[Proposition 7.4.3]{tca}; this will be elaborated upon in \cite{algrep}.

\subsection{An example}
\label{ss:introex}

Let us now give the simplest example of the theorem, namely $\lambda=(1,1)$.  If $n \ge 2$ then the irreducible representation $\bS_{[1,1]}(V)$ is the quotient of $\bS_{(1,1)}(V)=\lw^2{V}$ by the line spanned by the symplectic form (where we identify $V$ with $V^*$ via the form).  The complex $L_\bullet^{\lambda}$ is thus
\begin{displaymath}
\cdots \to 0 \to \bC \to \lw^2{V},
\end{displaymath}
the differential being multiplication by the form.  This complex clearly makes since even if $n<2$.  When $n=0$, the differential is surjective, and $\rH_1=\bC$, the trivial representation of $\Sp(V)$.  When $n=1$, the differential is an isomorphism and all homology vanishes.  And when $n \ge 2$, the differential is injective and $\rH_0=\bS_{[1,1]}(V)$.  More involved examples can be found in \S \ref{ss:examples}.

\subsection{Representation theory of $\Sp(\infty)$}

The proper context for Theorem~\ref{mainthm} lies in the representation theory of $\Sp(\infty)$.  We now explain the connection, noting, however, that the somewhat exotic objects discussed here are not used in our proof of Theorem~\ref{mainthm} and do not occur in the remainder of the paper.  Let $\Rep(\Sp(\infty))$ denote the category of ``algebraic'' representations\footnote{Technically, we should use the ``pro'' version of the category, which is opposite to the more usual ``ind'' version of the category.  See \cite{algrep} for details.} of $\Sp(\infty)$.  This category was first identified in \cite{categorytg}, where it is denoted $\mathbb{T}_{\fg}$.  It is also studied from a slightly different point of view in \cite{algrep}. As shown in \cite{algrep}, there is a specialization functor
\begin{displaymath}
\Gamma_V \colon \Rep(\Sp(\infty)) \to \Rep(\Sp(V)).
\end{displaymath}
This functor is right exact, but not exact --- the category $\Rep(\Sp(\infty))$ is not semi-simple.  The Littlewood complex $L^{\lambda}_\bullet(\bC^{\infty})$ makes sense, and defines a complex in $\Rep(\Sp(\infty))$.  It is exact in positive degrees and its $\rH_0$ is a simple object $\bS_{[\lambda]}(\bC^{\infty})$; all simple objects are uniquely of this form.  The Schur functor $\bS_{\lambda}(\bC^{\infty})$, as an object of $\Rep(\Sp(\infty))$, has two important properties:  it is projective and it specializes under $\Gamma_V$ to $\bS_{\lambda}(V)$.  We thus see that $L^{\lambda}_\bullet(\bC^{\infty})$ is a projective resolution of $\bS_{[\lambda]}(\bC^{\infty})$ and specializes under $\Gamma_V$ to $L^{\lambda}_\bullet(V)$.  We therefore have the following observation, which explains the significance of the Littlewood complex from this point of view:

\begin{proposition}
We have $L_\bullet^{\lambda}(V)=\rL\Gamma_V(\bS_{[\lambda]}(\bC^{\infty}))$, i.e., $L_\bullet^{\lambda}(V)$ computes the derived specialization of the simple object $\bS_{[\lambda]}(\bC^{\infty})$ to $V$.
\end{proposition}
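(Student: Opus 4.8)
The plan is essentially formal once the structural facts about $\Rep(\Sp(\infty))$ recalled above are granted. By definition, $\rL\Gamma_V(X)$ is computed by choosing any projective resolution $P_\bullet \to X$ in $\Rep(\Sp(\infty))$ and forming the complex $\Gamma_V(P_\bullet)$. So it suffices to check two things: that $L^\lambda_\bullet(\bC^{\infty})$ is a projective resolution of $\bS_{[\lambda]}(\bC^{\infty})$, and that applying $\Gamma_V$ to it yields $L^\lambda_\bullet(V)$.

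For the first point, recall that by the construction of the Littlewood complex each term of $L^\lambda_\bullet$ is a finite direct sum of Schur functors $\bS_\mu$ — indeed these are exactly the representations appearing, since $L^\lambda_\bullet$ is built from representations that extend to $\GL(V)$. Hence each term of $L^\lambda_\bullet(\bC^{\infty})$ is a finite direct sum of the objects $\bS_\mu(\bC^{\infty})$, which are projective in $\Rep(\Sp(\infty))$ by the cited result of \cite{algrep}; so each term is projective. Moreover $L^\lambda_\bullet(\bC^{\infty})$ is exact in positive homological degrees with $\rH_0 = \bS_{[\lambda]}(\bC^{\infty})$, as noted above, so it is a genuine projective resolution. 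For the second point, applying the additive functor $\Gamma_V$ to $L^\lambda_\bullet(\bC^{\infty})$ produces a complex whose terms are $\Gamma_V(\bS_\mu(\bC^{\infty})) = \bS_\mu(V)$ and whose differentials are the specializations of the differentials defining the Littlewood complex; since the latter are natural in the underlying vector space, comparing with the definition of $L^\lambda_\bullet(V)$ identifies the two complexes on the nose. Combining the two points, $\rL\Gamma_V(\bS_{[\lambda]}(\bC^{\infty})) = \Gamma_V(L^\lambda_\bullet(\bC^{\infty})) = L^\lambda_\bullet(V)$.

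There is no substantive obstacle here: the argument is bookkeeping on top of the two imported facts (projectivity of $\bS_\mu(\bC^{\infty})$ and the identity $\Gamma_V(\bS_\mu(\bC^{\infty})) = \bS_\mu(V)$, both from \cite{algrep}) together with acyclicity of the Littlewood complex over $\bC^{\infty}$. The one conceptual point worth emphasizing is that $\Gamma_V$ is only right exact, so this identification genuinely records derived information — which is precisely why $L^\lambda_\bullet(V)$ can have nonzero homology in positive degrees when $\lambda$ has more than $n$ parts, as computed by Theorem~\ref{mainthm}.
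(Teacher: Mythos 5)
Your argument is exactly the paper's: the authors present this proposition as an observation following immediately from the facts that the terms of $L^\lambda_\bullet(\bC^{\infty})$ are (sums of) the projective objects $\bS_\mu(\bC^{\infty})$, that the complex is exact in positive degrees with $\rH_0=\bS_{[\lambda]}(\bC^{\infty})$, and that $\Gamma_V$ carries $\bS_\mu(\bC^{\infty})$ to $\bS_\mu(V)$, hence carries the whole complex to $L^\lambda_\bullet(V)$. Your write-up just makes this bookkeeping explicit, so it is correct and essentially identical in approach.
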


We can thus rephrase Theorem~\ref{mainthm} as follows:

\begin{theorem}
\label{mainthm2}
Let $M$ be an irreducible algebraic representation of $\Sp(\infty)$.  Then $\rL\Gamma_V(M)$ is either acyclic or else there is a unique $i$ for which $\rL^i\Gamma_V(M)$ is non-zero, and it is then an irreducible representation of $\Sp(V)$.
\end{theorem}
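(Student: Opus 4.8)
The plan is to obtain Theorem~\ref{mainthm2} as a direct rephrasing of Theorem~\ref{mainthm}, using the dictionary between $\Rep(\Sp(\infty))$ and the Littlewood complex recorded in the preceding discussion. The only inputs beyond Theorem~\ref{mainthm} itself are the classification of the simple objects of $\Rep(\Sp(\infty))$ and the homological properties of the specialization functor $\Gamma_V$, both of which are established in \cite{algrep} and recalled above; in particular I will take for granted the Proposition stated just before this theorem.

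First I would invoke the classification of simple objects: every irreducible algebraic representation $M$ of $\Sp(\infty)$ is isomorphic to $\bS_{[\lambda]}(\bC^{\infty})$ for a unique partition $\lambda$, so it suffices to prove the statement for $M = \bS_{[\lambda]}(\bC^{\infty})$. Next, the Proposition identifies $\rL\Gamma_V(\bS_{[\lambda]}(\bC^{\infty}))$ with the Littlewood complex $L^{\lambda}_{\bullet}(V)$, and hence $\rL^i\Gamma_V(M) \cong \rH_i(L^{\lambda}_{\bullet}(V))$ for all $i$. Finally I would apply Theorem~\ref{mainthm} to $L^{\lambda}_{\bullet}(V)$: either all homology vanishes, so $\rL\Gamma_V(M)$ is acyclic, or there is a unique $i$ with $\rH_i(L^{\lambda}_{\bullet}(V)) \ne 0$, and that group is an irreducible representation of $\Sp(V)$. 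This is precisely the assertion of Theorem~\ref{mainthm2}.

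Since the mathematical content sits entirely inside Theorem~\ref{mainthm}, the only step requiring genuine care is the justification of the Proposition, i.e.\ the claim that $L^{\lambda}_{\bullet}(\bC^{\infty})$ is an honest projective resolution of $\bS_{[\lambda]}(\bC^{\infty})$ in $\Rep(\Sp(\infty))$ whose termwise specialization under $\Gamma_V$ recovers $L^{\lambda}_{\bullet}(V)$. This rests on two facts from \cite{algrep}: that each $\bS_{\lambda}(\bC^{\infty})$ is a projective object of $\Rep(\Sp(\infty))$ and specializes to $\bS_{\lambda}(V)$, and that the complex $L^{\lambda}_{\bullet}(\bC^{\infty})$ is exact in positive degrees. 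Granting these, $\rL\Gamma_V$ is computed by applying $\Gamma_V$ to the resolution term by term, which visibly yields $L^{\lambda}_{\bullet}(V)$. I would regard this bookkeeping with the functor $\Gamma_V$ as the main (and only) obstacle; the remainder of the deduction is formal.
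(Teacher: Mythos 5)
Your proposal matches the paper's own treatment: Theorem~\ref{mainthm2} is presented there precisely as a rephrasing of Theorem~\ref{mainthm} via the classification of simple objects of $\Rep(\Sp(\infty))$ and the Proposition identifying $\rL\Gamma_V(\bS_{[\lambda]}(\bC^{\infty}))$ with $L^{\lambda}_{\bullet}(V)$, with the nontrivial inputs (projectivity of $\bS_{\lambda}(\bC^{\infty})$, its specialization, and exactness of $L^{\lambda}_{\bullet}(\bC^{\infty})$ in positive degrees) deferred to \cite{algrep} exactly as you do. Your identification of the Proposition as the only step requiring care is also consistent with the paper's presentation.
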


The symplectic Schur functors $\bS_{[\lambda]}$ exhibit stability for large dimensional vector spaces (as explained in \cite{koiketerada}, but see also \cite{ew} and \cite{htw}), but not in general, in contrast to the usual Schur functors.  A general strategy for dealing with problems involving symplectic Schur functors is to pass to the stable range (e.g., work with $\bC^{\infty}$), take advantage of the simpler behavior there, and then apply the specialization functor to return to the unstable range.  For this strategy to be viable, one must understand the behavior of the specialization functor.  This was one source of motivation for this project, and is accomplished by Theorem~\ref{mainthm2}.

\subsection{Relation to results of Koike--Terada}

Theorem~\ref{mainthm2} categorifies results of \cite{koiketerada}, as we now explain.  In \cite{koiketerada}, a so-called universal character ring $\Lambda$ is defined, and a ring homomorphism $\pi$ (``specialization'') from $\Lambda$ to the representation ring of $\Sp(V)$ is given.  A basis $s_{[\lambda]}$ of $\Lambda$ is given and it is shown that the image under $\pi$ of $s_{[\lambda]}$ is either 0 or (plus or minus) the character of an irreducible representation of $\Sp(V)$.  In fact, $\Lambda$ is the Grothendieck group of $\Rep(\Sp(\infty))$, $\pi$ is the map induced by the specialization functor $\Gamma_V$ and $s_{[\lambda]}$ is the class of the simple object $\bS_{[\lambda]}(\bC^{\infty})$ in the Grothendieck group.  Thus the $\rK$-theoretic shadow of Theorem~\ref{mainthm2} is precisely the result of \cite{koiketerada} on specialization. However, we note that our proof depends on \cite{koiketerada}.

\subsection{Koszul homology and classical invariant theory} \label{ss:intro:koszul}

Theorem~\ref{mainthm} can be reinterpreted as the calculation of the homology groups of the Koszul complex on the generators of an ideal which arises in classical invariant theory. This will be explained in \S\ref{ss:ccx} (see also \S\ref{ss:bdcx} and \S\ref{ss:glcx} for the orthogonal and general linear groups). For now, we remark that Koszul homology seems to be remarkably difficult to calculate, even for well-behaved classes of ideals, such as determinantal ideals. Very few cases have been worked out explicitly; we point to \cite{ah} for the case of codimension 2 perfect ideals, and \cite{koszulhomology} for the case of codimension 3 Gorenstein ideals. Both of these classes of ideals are determinantal. They are singled out because their Koszul homology modules are Cohen--Macaulay (this property fails for all other determinantal ideals).

\subsection{Resolutions of determinantal ideals}

In \S\ref{ss:commalg}, we will see how the interpretation of Theorem~\ref{mainthm} in terms of Koszul homology in \S\ref{ss:intro:koszul} can also be interpreted in terms of the minimal free resolutions of certain modules $M_{\lambda}$ supported on the determinantal varieties defined by the Pfaffians of a generic skew-symmetric matrix.  The coordinate ring of the determinantal variety is the module $M_\emptyset$ and so the computation of its resolution becomes a special case of Theorem~\ref{mainthm}, and therefore realizes this classical resolution as the first piece of a much larger structure.  The orthogonal group and general linear group correspond to determinantal varieties in generic symmetric matrices and generic matrices, respectively. We refer the reader to \cite[\S 6]{weyman} for the calculation of the minimal free resolutions of the  coordinate rings of determinantal varieties.

\subsection{Overview of proof} \label{ss:proofoverview}

There are three main steps to the proof:
\begin{enumerate}[(a)]
\item We first establish a combinatorial result, relating Bott's algorithm for calculating cohomology of irreducible homogeneous bundles to the modification rule appearing in the main theorem.
\item We then introduce a certain module $M_{\lambda}$ over the polynomial ring $A=\Sym(\lw^2{E})$ (where $E$ is an auxiliary vector space), and compute its minimal free resolution.  The main tools are step (a), the Borel--Weil--Bott theorem and the geometric method of the third author.
\item Lastly, we identify $M_{\lambda}$ with the $\bS_{[\lambda]}(V)$-isotypic piece of the ring $B=\Sym(V \otimes E)$.  The results of step (b) and the specialization homomorphism on K-theory (see \cite{koike}, \cite{koiketerada}, \cite{wenzl}) are used to get enough control on $M_{\lambda}$ to do this.  Once the identification is made, the results of step (b) give the minimal free resolution of $B$ as an $A$-module.
\end{enumerate}
The theorem then follows, as the Littlewood complex can be identified with a piece of the minimal free resolution of $B$ over $A$.

\subsection{Notation and conventions}

We always work over the complex numbers. It is possible to work over any field of characteristic 0, but there does not seem to be any advantage to doing so. We write $\ell(\lambda)$ for the number of parts of a partition $\lambda$.  The rank of a partition $\lambda$, denoted $\rank(\lambda)$, is the number of boxes on the main diagonal.  We write $\lambda^{\dag}$ for the transpose of the partition $\lambda$.  We will occasionally use Frobenius coordinates to describe partitions, which we now recall.  Let $r=\rank(\lambda)$.  For $1 \le i \le r$, let $a_i$ (resp.\ $b_i$) denote the number of boxes to the right (resp.\ below) the $i$th diagonal box, including the box itself.  Then the Frobenius coordinates of $\lambda$ are $(a_1, \ldots, a_r \vert b_1, \ldots, b_r)$.  We denote by $c^{\lambda}_{\mu,\nu}$ the Littlewood--Richardson coefficients, i.e., the coefficient of the Schur function $s_\lambda$ in the product $s_\mu s_\nu$. For the relevant background on partitions, Schur functions, and Schur functors, we refer to \cite[Chapter 1]{macdonald} and \cite[Chapters 1, 2]{weyman}.

\section{Preliminaries}

\subsection{The geometric technique}

Let $X$ be a smooth projective variety.  Let
\begin{displaymath}
0 \to \xi \to \eps \to \eta \to 0
\end{displaymath}
be an exact sequence of vector bundles on $X$, with $\eps$ trivial, and let $\cV$ be another vector bundle on $X$.  Put
\begin{displaymath}
A=\rH^0(X, \Sym(\eps)), \qquad M=\rH^0(X, \Sym(\eta) \otimes \cV).
\end{displaymath}
Then $A$ is a ring --- in fact, it is the symmetric algebra on $\rH^0(X, \eps)$ --- and $M$ is an $A$-module.  The following proposition encapsulates what we need of the geometric technique of the third author.  For a proof, and a stronger result, see \cite[\S 5.1]{weyman}.

\begin{proposition}
\label{geo}
Assume $\rH^j(X, \lw^{i+j}(\xi) \otimes \cV)=0$ for $i<0$ and all $j$.  Then we have a natural isomorphism
\begin{displaymath}
\Tor^A_i(M, \bC) = \bigoplus_{j \ge 0} \rH^j(X, \lw^{i+j}(\xi) \otimes \cV).
\end{displaymath}
\end{proposition}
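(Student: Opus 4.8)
The plan is to realize both $A$ and $M$ as the sheaf cohomology of a Koszul-type resolution on $X$ and then push forward along the projection from a vector bundle total space. Write $V(\xi)$ for the total space of the bundle $\xi$ over $X$ (or rather its affine bundle $\Spec_X \Sym(\xi^*)$; I will keep the notational weight light), and let $q \colon V(\xi) \to X$ be the projection and $p \colon V(\xi) \to \bA(\rH^0(X,\eps))$ the map induced by the surjection of trivial bundles $\eps^* \to \xi^*$ dual to $\xi \injects \eps$. Concretely, $\Spec A = \bA(\rH^0(X,\eps)^*)$, and $p$ is a closed immersion onto the affine subvariety cut out by the ideal sheaf generated by $\rH^0(X,\eta^*) \subseteq \rH^0(X,\eps^*) = A_1$. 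The first step is to check that $p_*(q^* \cV) $, as a sheaf on $\Spec A$, has global sections $M = \rH^0(X, \Sym(\eta)\otimes\cV)$ — this is just the projection formula together with $q_* \cO_{V(\xi)} = \Sym(\eta)$, using that $q$ is affine so there are no higher direct images. So $M$ is the module of global sections of a coherent sheaf on $\Spec A$ supported on the image of $p$.

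The second step is to resolve this sheaf. The Koszul complex on the tautological section gives an exact sequence of sheaves on $X \times \Spec A$ (pulled back appropriately) whose terms are $\lw^i(\xi) \otimes_{\cO_X} (\cV \boxtimes \cO_{\Spec A})$, up to the grading shift by $A_1$; taking $\rH^0$ over $X$ of each term — equivalently, applying $\rR\Gamma(X, -)$ to the whole complex — produces a complex of graded free $A$-modules, namely $\bigoplus_j \rH^j(X, \lw^{\bullet}(\xi)\otimes\cV) \otimes_{\bC} A$ assembled via a hypercohomology spectral sequence, whose homology computes $M$ (again using the affineness of $q$ to see that the total complex has homology only $M$ in degree $0$). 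Under the stated vanishing hypothesis $\rH^j(X, \lw^{i+j}(\xi)\otimes\cV) = 0$ for $i < 0$, this spectral sequence degenerates in the relevant way: the complex of free modules it builds has its terms indexed so that the $i$-th term is $\bigoplus_j \rH^j(X,\lw^{i+j}(\xi)\otimes\cV)\otimes A$, and the hypothesis forces all the would-be differentials between these free modules to vanish (they are $\cO_X$-linear after taking cohomology and land in negatively-indexed pieces). Hence the complex of free modules is itself the minimal free resolution — or at least \emph{a} free resolution — of $M$, and $\Tor^A_i(M,\bC)$ is computed by tensoring down with $\bC = A/A_+$, which kills $A_+$ and leaves exactly $\bigoplus_{j\ge 0}\rH^j(X,\lw^{i+j}(\xi)\otimes\cV)$. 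Naturality is clear from the construction, since every step is functorial in $\cV$ and in the exact sequence $0\to\xi\to\eps\to\eta\to 0$.

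The main obstacle is the bookkeeping in the hypercohomology spectral sequence: one needs that the Koszul complex on $X$ is exact (this uses that $\eps$ is trivial, so the tautological section is a regular section of $\eta$ generically, but really one wants exactness as a complex of sheaves, which is a standard Koszul fact once one knows $\xi \injects \eps$ is a subbundle), and one needs the vanishing hypothesis to be exactly what is required to collapse $E_1 \Rightarrow$ the free resolution without extension problems or surviving differentials. In the interest of not reproving a published result, I would simply cite \cite[\S 5.1]{weyman} for the precise form of the spectral sequence and its degeneration, and restrict the write-up to: (i) identifying $A$ and $M$ as above, (ii) invoking the geometric technique in the form of that reference, and (iii) observing that the hypothesis $\rH^j(X,\lw^{i+j}(\xi)\otimes\cV)=0$ for $i<0$ is precisely the condition under which the resulting complex $\rF_\bullet$ is a resolution concentrated in non-negative homological degrees, so that $\Tor^A_i(M,\bC) = \rH_i(\rF_\bullet \otimes_A \bC) = \bigoplus_{j\ge 0}\rH^j(X,\lw^{i+j}(\xi)\otimes\cV)$.
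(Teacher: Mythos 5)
The paper gives no proof of this proposition at all: it is stated as an encapsulation of the geometric technique and the reader is referred to \cite[\S 5.1]{weyman}, which is exactly where your write-up ends up as well, so you are taking the same route. Two slips in your sketch are worth flagging so you do not internalize them: the relevant total space is that of $\eta^*$ (the annihilator of $\xi$ inside the trivial bundle $\eps^*$ over $X$), not of $\xi$ --- it is for $Z=\mathrm{Tot}(\eta^*)$ that one has $q_*\cO_Z=\Sym(\eta)$ and that the ideal of $Z$ in $X\times\Spec A$ is generated by $\xi\subset A_1\otimes\cO_X$; and the map $p\colon Z\to\Spec A$ is proper but almost never a closed immersion (in the paper's applications it is a resolution of singularities of a determinantal variety, collapsing fibers). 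Also, the differentials of the resulting free complex $F_\bullet$ do not vanish --- rather, the internal grading forces every component of $F_i\to F_{i-1}$ to have positive degree, so the resolution is minimal and the differentials die only after applying $-\otimes_A\bC$; the hypothesis $\rH^j(X,\lw^{i+j}(\xi)\otimes\cV)=0$ for $i<0$ is used instead to ensure $F_i=0$ for $i<0$, so that $F_\bullet$ is a resolution of its $\rH_0=M$.
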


\subsection{The Borel--Weil--Bott theorem}
\label{ss:bott}

Let $\cU$ be the set of all integer sequences $(a_1, a_2, \ldots)$ which are eventually 0.  We identify partitions with non-increasing sequences in $\cU$ (such sequences are necessarily non-negative).  For $i \ge 1$, let $s_i$ be the transposition which switches $a_i$ and $a_{i+1}$, and let $\fS$ be the group of automorphisms of $\cU$ generated by the $s_i$.  The group $\fS$ is a Coxeter group (in fact, the infinite symmetric group), and admits a length function $\ell \colon \fS \to \bZ_{\ge 0}$. By definition, the length of $w \in \fS$ is the minimum number $\ell(w)$ so that there exists an expression 
\begin{align} \label{eqn:word}
w = s_{i_1} \cdots s_{i_\ell(w)}.
\end{align} Alternatively, $\ell(w)$ is the number of inversions of $w$, interpreted as a permutation.

We define a second action of $\fS$ on $\cU$, denoted $\bullet$, as follows.  For $w \in \fS$ and $\lambda \in \cU$ we put $w \bullet \lambda=w(\lambda+\rho)-\rho$, where $\rho=(-1, -2, \ldots)$.  In terms of the generators, this action is:
\begin{displaymath}
s_i \bullet (\ldots, a_i, a_{i+1}, \ldots) = (\ldots, a_{i+1}-1, a_i+1, \ldots).
\end{displaymath}
Let $\lambda$ be an element of $\cU$.  Precisely one of the following two possibilities occurs:
\begin{itemize}
\item There exists a unique element $w$ of $\fS$ such that $w \bullet \lambda$ is a partition.  In this case, we call $\lambda$ {\bf regular}.
\item There exists an element $w \ne 1$ of $\fS$ such that $w \bullet \lambda=\lambda$.   In this case, we call $\lambda$ {\bf singular}.
\end{itemize}
{\bf Bott's algorithm} \cite[\S 4.1]{weyman} is a procedure for determining if $\lambda$ is regular.  It goes as follows.  Find an index $i$ such that $\lambda_{i+1}>\lambda_i$.  If no such index exists, then $\lambda$ is a partition and is regular.  If $\lambda_{i+1}-\lambda_i=1$ then $\lambda$ is singular.  Otherwise apply $s_i$ to $\lambda$ and repeat.  Keeping track of the $s_i$ produces a minimal word for the element $w$ in the definition \eqref{eqn:word}. In particular, it is important to note that we have a choice of which index $i$ to pick in the first step. Different choices lead to different minimal words, but the resulting partition and permutation are independent of these choices.

Let $E$ be a vector space and let $X$ be the Grassmannian of rank $n$ quotients of $E$.  (We assume $\dim{E} \ge n$, obviously.)  We have a tautological sequence on $X$
\begin{align} \label{eqn:taut}
0 \to \cR \to E \otimes \cO_X \to \cQ \to 0,
\end{align}
where $\cQ$ has rank $n$.  For a partition $\lambda$ with at most $n$ parts and a partition $\mu$, let $(\lambda \mid_n \mu)$ be the element of $\cU$ given by $(\lambda_1, \ldots, \lambda_n, \mu_1, \mu_2, \ldots)$.  The Borel--Weil--Bott theorem \cite[\S 4.1]{weyman} is then:

\begin{theorem}[Borel--Weil--Bott]
Let $\lambda$ be a partition with at most $n$ parts, let $\mu$ be any partition and let $\cV$ be the vector bundle $\bS_{\lambda}(\cQ) \otimes \bS_{\mu}(\cR)$ on $X$.
\begin{itemize}
\item Suppose $(\lambda \mid_n \mu)$ is regular, and write $w \bullet (\lambda \mid_n \mu)=\alpha$ for a partition $\alpha$.  Then
\begin{displaymath}
\rH^i(X, \cV) = \begin{cases}
\bS_{\alpha}(E) & \textrm{if $i=\ell(w)$} \\
0 & \textrm{otherwise.}
\end{cases}
\end{displaymath}
\item Suppose $(\lambda \mid_n \mu)$ is singular.  Then $\rH^i(X, \cV)=0$ for all $i$.
\end{itemize}
\end{theorem}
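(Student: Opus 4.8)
The plan is to reduce the statement to the case where the Grassmannian $X$ is replaced by the complete flag variety of $E$ --- i.e., to the classical Borel--Weil--Bott theorem --- and then prove the latter by the standard $\bP^1$-bundle induction. Let $\mathrm{Fl}(E)$ denote the variety of complete flags in $E$. Giving a complete flag of $E$ is the same as giving a rank $m=\dim E-n$ subspace $\cR\subseteq E$ together with a complete flag of $\cR$ and a complete flag of $\cQ=E/\cR$; hence $\mathrm{Fl}(E)$ is the fibre product over $X$ of the relative complete-flag bundles of $\cR$ and of $\cQ$, and in particular there is a natural projection $\pi\colon\mathrm{Fl}(E)\to X$. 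On $\mathrm{Fl}(E)$ there is a line bundle $\cL$ whose $\GL(E)$-weight is $(\lambda\mid_n\mu)$. Since $\lambda$ and $\mu$ are partitions, hence dominant, the relative Borel--Weil theorem applied to the factors $\GL(\cQ)$ and $\GL(\cR)$ gives $\pi_*\cL\cong\cV$ and $\rR^{>0}\pi_*\cL=0$, so by the Leray spectral sequence $\rH^j(X,\cV)\cong\rH^j(\mathrm{Fl}(E),\cL)$ for all $j$. It therefore suffices to compute the cohomology of the (generally non-dominant) line bundle $\cL$ on $\mathrm{Fl}(E)$.

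For this I would use the standard $\bP^1$-bundle induction. For each $i\ge1$, forgetting the $i$-th step of the flag gives a $\bP^1$-bundle $\pi_i\colon\mathrm{Fl}(E)\to\mathrm{Fl}_i(E)$, and a line bundle $\cL_\nu$ of weight $\nu$ restricts on each fibre to $\cO(\nu_i-\nu_{i+1})$. Computing $\rR\pi_{i*}$ fibrewise from the cohomology of $\cO(d)$ on $\bP^1$ and feeding the result into the Leray spectral sequence yields: if $\nu_{i+1}=\nu_i+1$ then $\rH^\bullet(\mathrm{Fl}(E),\cL_\nu)=0$; and if $\nu_{i+1}\ge\nu_i+2$ then $\rH^j(\mathrm{Fl}(E),\cL_\nu)\cong\rH^{j-1}(\mathrm{Fl}(E),\cL_{s_i\bullet\nu})$ for all $j$. (In the latter case $\pi_{i*}\cL_\nu=0$ since the fibre degree is at most $-2$, a relative Serre duality computation on the fibres identifies $\rR^1\pi_{i*}\cL_\nu$ with $\pi_{i*}\cL_{s_i\bullet\nu}$ --- whose own higher direct image vanishes --- and the shift by one in cohomological degree is the $\rH^1$ of a negative line bundle on $\bP^1$.) Running Bott's algorithm on $\nu=(\lambda\mid_n\mu)$: if $\nu$ is singular the algorithm reaches an index with $\nu_{i+1}=\nu_i+1$ and all cohomology vanishes; if $\nu$ is regular, after $\ell(w)$ reflections $\nu$ becomes the partition $\alpha=w\bullet(\lambda\mid_n\mu)$ and $\rH^j(\mathrm{Fl}(E),\cL)\cong\rH^{j-\ell(w)}(\mathrm{Fl}(E),\cL_\alpha)$, the independence of the output from the choices made in the algorithm being exactly the fact recalled after its definition.

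What remains is the base case, Borel--Weil: for a partition $\alpha$ one needs $\rH^0(\mathrm{Fl}(E),\cL_\alpha)\cong\bS_\alpha(E)$ and $\rH^{>0}(\mathrm{Fl}(E),\cL_\alpha)=0$. The identification of $\rH^0$ is the theorem of the highest weight (equivalently the Cauchy formula) and is routine. The genuinely non-formal input --- and the step I expect to be the main obstacle --- is the higher vanishing for all dominant $\alpha$ (Kempf vanishing): over $\bC$ it follows from Kodaira vanishing when $\alpha$ is regular dominant, so that $\cL_\alpha$ is ample, and for a general partition $\alpha$ one realizes $\cL_\alpha$ as the pullback of an ample bundle from the partial flag variety on which $\alpha$ is regular and checks that the fibres of that projection --- products of smaller flag varieties --- have vanishing higher cohomology with trivial coefficients, which is an instance of the same statement for smaller $\dim E$, closing the loop by induction on $\dim E$. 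Everything else is formal bookkeeping on $\bP^1$-bundles, so essentially all the work is concentrated in this base case.
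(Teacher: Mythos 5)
The paper does not prove this statement: it is quoted as a known classical result, with a citation to \cite[\S 4.1]{weyman}, and that reference establishes it by essentially the argument you give (reduction to the full flag variety, Demazure's $\bP^1$-bundle induction tracking Bott's algorithm, with Borel--Weil and Kempf vanishing as the base case). Your proposal is correct and is the standard proof of the cited theorem, so there is nothing to compare beyond noting that the paper treats it as a black box.
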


\begin{remark}
If $\ell(\mu)>\rank(\cR)$ then $\cV=0$.  Similarly, if $\ell(\alpha)>\dim(E)$ then $\bS_{\alpha}(E)=0$.  These problems disappear if $\dim(E)$ is sufficiently large compared to $\lambda$ and $\mu$; in fact, the situation becomes completely uniform when $\dim(E)=\infty$.
\end{remark}

\subsection{Resolution of the second Veronese ring}

In our treatment of odd orthogonal groups, we need to know the resolution for the second Veronese ring in a relative setting.  We state the relevant result here, so as not to interrupt the discussion later.

Let $X$ be a variety and let $\cR$ be a vector bundle on $X$.  Let $\pi \colon \bP(\cR) \to X$ be the associated projective space bundle of one dimensional quotients of $\cR$.  Let $\pi^*(\cR) \to \cL$ be the universal rank one quotient.  Define $\xi$ to be the kernel of the map $\Sym^2(\pi^* \cR) \to \Sym^2(\cL)$.  The result we need is the following: 

\begin{proposition}
\label{prop:veronese}
Let $a$ be $0$ or $1$.  We have
\begin{displaymath}
\bigoplus_{j \in \bZ} \rR^j\pi_*(\lw^{i+j}(\xi) \otimes \cL^a)=\bigoplus_{\mu} \bS_{\mu}(\cR),
\end{displaymath}
where the sum is over those partitions $\mu$ such that $\mu=\mu^{\dag}$, $\rank(\mu)=a \pmod{2}$ and $i=\tfrac{1}{2}(\vert \mu \vert-\rank(\mu))$.
\end{proposition}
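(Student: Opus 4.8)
The plan is to reduce the statement to a known computation of the minimal free resolution of the second Veronese subring of a polynomial ring, done fiberwise over $X$. Recall that for a vector space $R$, the second Veronese ring $\Sym^{(2)}(R) = \bigoplus_{k} \Sym^{2k}(R)$ is a module over $A = \Sym(\Sym^2 R)$, and its minimal free resolution has terms indexed by partitions $\mu$ with $\mu = \mu^{\dag}$; more precisely (see \cite[\S 6]{weyman} for the determinantal/Veronese resolutions), one has $\Tor^A_i(\Sym^{(2)}(R), \bC) = \bigoplus_{\mu} \bS_\mu(R)$ where $\mu$ ranges over self-conjugate partitions of even rank with $i = \tfrac12(|\mu| - \rank(\mu))$, and similarly $\Tor^A_i$ of the module $\bigoplus_k \Sym^{2k+1}(R)$ picks out the self-conjugate partitions of odd rank. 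The two values $a = 0, 1$ in the Proposition correspond exactly to these two cases via the twist by $\cL^a$.

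First I would apply the geometric technique (Proposition~\ref{geo}) in the relative setting over $X$: on $\bP(\cR)$ we have the tautological sequence $0 \to \xi \to \Sym^2(\pi^*\cR) \to \Sym^2(\cL) \to 0$, with $\Sym^2(\pi^*\cR)$ pulled back from $X$ (hence ``relatively trivial''), and we take $\cV = \cL^a$. Pushing forward along $\pi$, the role of $A$ is played by $\Sym(\Sym^2 \cR)$ and the role of $M$ by $\bigoplus_k \pi_*(\Sym^{2k}(\cL) \otimes \cL^a) = \bigoplus_k \Sym^{2k+a}(\cR)$ — that is, the even or odd Veronese module depending on $a$. So the left-hand side $\bigoplus_j \rR^j\pi_*(\lw^{i+j}(\xi) \otimes \cL^a)$ is, by the relative version of Proposition~\ref{geo}, precisely $\Tor^{\Sym(\Sym^2\cR)}_i(\bigoplus_k \Sym^{2k+a}(\cR), \cO_X)$.

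Next I would identify this relative Tor with the fiberwise one. Since everything is built functorially from the bundle $\cR$ — the polynomial ring $\Sym(\Sym^2\cR)$, the module, and the bar/Koszul resolution computing Tor are all obtained by applying a polynomial functor to $\cR$ — the relative $\Tor_i$ is the associated bundle $\bS$-functor construction applied to the vector-space-level $\Tor_i$. Thus it suffices to quote the absolute statement: $\Tor^{\Sym(\Sym^2 R)}_i(\bigoplus_k \Sym^{2k+a}(R), \bC) = \bigoplus_\mu \bS_\mu(R)$, sum over $\mu = \mu^{\dag}$ with $\rank(\mu) \equiv a \pmod 2$ and $i = \tfrac12(|\mu| - \rank(\mu))$. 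The vanishing hypothesis in Proposition~\ref{geo} ($\rH^j$ of the relevant bundles vanishes in negative cohomological degree) is automatic here because $\pi$ is a projective bundle and $\lw^\bullet(\xi) \otimes \cL^a$ has no higher direct images in the wrong range — I would check this by noting $\xi$ is a subbundle of a pullback and $\cL^a$ is relatively globally generated for $a \ge 0$, so the relevant $\rR^j\pi_*$ for the ``$i < 0$'' terms vanish.

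The main obstacle is the identification in the previous paragraph: namely, verifying carefully that the relative Tor over the sheaf of algebras $\Sym(\Sym^2\cR)$ agrees with the bundle obtained by applying the Schur-functor construction to the absolute graded Betti numbers of the Veronese ring — equivalently, that forming the minimal free resolution commutes with the associated-bundle construction in this $\GL$-equivariant situation. This is a standard functoriality argument (the resolution of the Veronese ring is $\GL(R)$-equivariant, so it globalizes), but it requires pinning down that minimality is preserved and that no cancellation occurs fiber to fiber; once that is in hand, the combinatorial bookkeeping matching $\{a = 0, 1\}$ to $\{\rank(\mu)$ even, odd$\}$ and the degree relation $i = \tfrac12(|\mu| - \rank(\mu))$ is routine from the known Betti table of the second Veronese embedding.
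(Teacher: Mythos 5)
Your overall strategy matches the paper's: reduce the relative statement to the computation of the minimal free resolution of the second Veronese ring $\bigoplus_k \Sym^{2k}(R)$ and its odd module $\bigoplus_k \Sym^{2k+1}(R)$ over $\Sym(\Sym^2 R)$, and then globalize by functoriality in $\cR$. The globalization step, which occupies most of your write-up, is indeed the routine part (everything in sight is a polynomial functor applied to $\cR$, and the relative geometric technique/Bott computation is fiberwise the absolute one), and the paper dispenses with it in one sentence.

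The genuine gap is in the absolute input for $a=1$. You write that ``similarly $\Tor^A_i$ of the module $\bigoplus_k \Sym^{2k+1}(R)$ picks out the self-conjugate partitions of odd rank,'' citing the same source as for $a=0$. But only the $a=0$ case (the coordinate ring of the second Veronese) is available off the shelf \cite[Theorem 6.3.1(c)]{weyman}; the resolution of the odd Veronese module is precisely what the paper has to \emph{prove}, and it is not a formal ``similarly.'' The paper's argument: by functoriality one may take $\dim U = N$ odd and large; realizing both modules as pushforwards of $\cL'^{\otimes a}$ from the total space of $\cO(-2)$ on $\bP(U)$ and invoking \cite[Theorem 5.1.4, Proposition 1.2.5]{weyman}, one finds that $M(U)_1$ is the Ext dual of $M(U)_0$. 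On Betti numbers this duality complements each indexing partition $\mu$ inside an $N \times N$ square (and reverses the homological order); complementation preserves self-conjugacy and sends $\rank(\mu)$ to $N - \rank(\mu)$, which flips the parity exactly because $N$ is odd. This parity flip is the content behind the $a \equiv \rank(\mu) \pmod 2$ condition, and your proposal contains no argument for it. To repair the proof you would need either this duality argument or an independent computation (e.g., a direct Bott/Cauchy calculation) of the Betti numbers of the odd Veronese module.
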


\begin{proof}
This is a relative version of the calculation of the minimal free resolution (over $\Sym(U)$) of the second Veronese ring $M(U)_0 = \bigoplus_{d \ge 0} \Sym^{2d}(U)$ ($a=0$) and its odd Veronese module $M(U)_1 = \bigoplus_{d \ge 0} \Sym^{2d+1}(U)$ ($a=1$), where $U$ is some vector space.

The case $a=0$ is contained in \cite[Theorem 6.3.1(c)]{weyman}. Now we calculate the case $a=1$. Note that it is functorial in $\cR$, so due to the stability properties of Schur functors, if we calculate the resolution for $\rank U = N$, the same result also holds for $\rank U < N$. So it is enough to handle the case that $\rank U$ is odd, but arbitrarily large. 

Consider the total space of $\cO(-2)$ on $\bP(U)$ with structure map $\pi' \colon \cO(-2) \to \bP(U)$, and define $\cL' = {\pi'}^*\cO_{\bP(U)}(1)$. Also consider the map $p \colon \cO(-2) \to \Spec(\Sym(U))$. Then $M(U)_a = p_*({\cL'}^{\otimes a})$. Using this setup and \cite[Theorem 5.1.4]{weyman}, we see that the Ext dual \cite[Proposition 1.2.5]{weyman} of $M(U)_0$ is $M(U)_1$ when $\rank U$ is odd. All of the partitions $\mu$ in the free resolution of $M(U)_0$ fit in a square of size $\rank U$, and on the level of the partitions that index the Schur functors appearing in the free resolution, this duality amounts to taking complements within this square, and then reversing the direction of the arrows, hence the result follows.
\end{proof}

\begin{remark}
Let $\eps=\Sym^2(\pi^* \cR)$ and $\eta=\Sym^2(\cL)$, so that we have an exact sequence
\begin{displaymath}
0 \to \xi \to \eps \to \eta \to 0.
\end{displaymath}
The ring $\pi_*(\Sym(\eta))$ is identified with $\Sym(\Sym^2(\cR))$, i.e., the projective coordinate ring of the second Veronese of $\bP(\cR)$.  By a relative version of the geometric method, its minimal locally free resolution is computed by $\rR\pi_*(\lw^{\bullet}(\xi))$, i.e., the sheaves appearing in the proposition.
\end{remark}

\subsection{A criterion for degeneration of certain spectral sequences}
\label{ss:degen}

Let $\pi \colon X' \to X$ be a map of proper varieties and let $\cV$ be a coherent sheaf on $X'$.  We say that $(\pi, \cV)$ is {\bf degenerate} if the Leray spectral sequence
\begin{displaymath}
\rE^{i,j}_2 = \rH^i(X, \rR^j\pi_*(\cV)) \Rightarrow \rH^{i+j}(X, \cV)
\end{displaymath}
degenerates at the second page.  The following is a simple criterion for degeneracy that applies in our one case of interest:

\begin{lemma}
\label{lem:degen}
Suppose that a group $G$ acts on $X$ and $X'$ and that $\pi$ and $\cV$ are $G$-equivariant.  Suppose furthermore that the $G$-module $\bigoplus_{i,j} \rH^i(X, \rR^j\pi_*(\cV))$ is semi-simple and multiplicity-free.  Then $(\pi, \cV)$ is degenerate.
\end{lemma}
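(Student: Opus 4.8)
The plan is to exploit the fact that, because $\pi$ and $\cV$ are $G$-equivariant, the entire Leray spectral sequence is a spectral sequence of $G$-modules, and then to use the multiplicity-freeness hypothesis to kill all higher differentials on purely representation-theoretic grounds.

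First I would record the equivariance. Since $\pi$ and $\cV$ carry compatible $G$-actions, the sheaves $\rR^j\pi_*(\cV)$ are $G$-equivariant, the cohomology groups $\rH^i(X,\rR^j\pi_*(\cV))$ are $G$-modules, and by functoriality of the Leray spectral sequence every page $\rE_r$ is a bigraded $G$-module and every differential $d_r \colon \rE_r^{i,j} \to \rE_r^{i+r,j-r+1}$ is a $G$-module homomorphism.

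Next I would observe that, for every $r \ge 2$, the $G$-module $\rE_r^{i,j}$ is a subquotient of $\rE_2^{i,j} = \rH^i(X,\rR^j\pi_*(\cV))$. Indeed $\rE_{r+1}^{i,j}$ is the homology of the complex $\rE_r^{i-r,j+r-1} \to \rE_r^{i,j} \to \rE_r^{i+r,j-r+1}$ at its middle term, hence a $G$-subquotient of $\rE_r^{i,j}$, and one inducts on $r$. Now by hypothesis $\bigoplus_{i,j}\rE_2^{i,j}$ is semisimple and multiplicity-free, so for $(i,j) \ne (i',j')$ the $G$-modules $\rE_2^{i,j}$ and $\rE_2^{i',j'}$ have no irreducible constituent in common; consequently neither do their subquotients $\rE_r^{i,j}$ and $\rE_r^{i',j'}$, for any $r$.

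Finally, fix $r \ge 2$ and any $(i,j)$, and consider $d_r \colon \rE_r^{i,j} \to \rE_r^{i+r,j-r+1}$. As $r \ge 2$ we have $(i,j) \ne (i+r,j-r+1)$, so by the previous step the source and target of $d_r$ share no irreducible constituent. But the image of $d_r$ is at once a quotient of $\rE_r^{i,j}$ and a submodule of $\rE_r^{i+r,j-r+1}$, so each of its composition factors appears in both; since there are none, the image vanishes, i.e.\ $d_r = 0$. As this holds for all $r \ge 2$, the spectral sequence degenerates at the second page, which is the assertion. The only point needing any care is the bookkeeping in the middle step — that passing from $\rE_r$ to $\rE_{r+1}$ can only shrink the set of constituents in each bidegree — and this is routine; I do not expect a genuine obstacle here. (One could note in passing that semisimplicity is used only through the conclusion that distinct bidegrees have disjoint sets of composition factors.)
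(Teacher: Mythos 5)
Your argument is correct and is exactly the paper's approach, just spelled out in full: the paper's proof is the one-line observation that the differentials are $G$-equivariant and hence forced to vanish, and your careful bookkeeping (subquotients at each page, disjointness of constituents across distinct bidegrees) is the routine verification behind that sentence. Nothing further is needed.
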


\begin{proof}
The differentials of the spectral sequence are $G$-equivariant, and thus forced to vanish.
\end{proof}

\subsection{A lemma from commutative algebra} \label{ss:commalg}

We now give a very simple lemma that allows us to interpret Koszul homology groups as $\Tor$'s.  This is useful since we are ultimately interested in certain Koszul homology groups, but the geometric technique computes $\Tor$'s.

Let $B$ be a graded $\bC$-algebra and let $U$ be a homogeneous subspace of $B$.  We can then form the Koszul complex $\rK_{\bullet}=B \otimes \lw^{\bullet}{U}$.  If $f_1, \ldots, f_n$ is a basis for $U$ then $\rK_{\bullet}$ is the familiar Koszul complex on the $f_i$.  Let $A=\Sym(U)$, so that there is a natural homomorphism $A \to B$.  We then have the following result:

\begin{lemma}
\label{lem:koszul}
There is a natural identification $\Tor^A_i(B, \bC)=\rH_i(\rK_{\bullet})$.
\end{lemma}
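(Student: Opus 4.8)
The plan is to use the standard fact that $\Tor^A_\bullet(B,\bC)$ can be computed by tensoring $B$ with \emph{any} free resolution of $\bC$ over $A$, and then to observe that the Koszul complex on a basis of $U$ is such a resolution. Concretely, since $A=\Sym(U)$ is a polynomial ring on the basis $f_1,\dots,f_n$ of $U$, the Koszul complex $A\otimes\lw^\bullet U$ (with differential given by multiplication by the $f_i$) is the minimal graded free resolution of $\bC=A/A_+$ over $A$; this is classical. Applying $-\otimes_A B$ to this resolution yields the complex $B\otimes_A(A\otimes\lw^\bullet U)=B\otimes\lw^\bullet U=\rK_\bullet$, with the differential again being multiplication by the $f_i$ (now viewed inside $B$). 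Hence $\rH_i(\rK_\bullet)=\rH_i(B\otimes_A(\text{free resn of }\bC))=\Tor^A_i(B,\bC)$.

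The steps, in order: first, recall (or cite \cite{weyman}) that the Koszul complex on the variables is a free resolution of the residue field of a polynomial ring; second, identify $A=\Sym(U)$ as that polynomial ring, with the distinguished basis $f_1,\dots,f_n$ of $U$ playing the role of the variables; third, check that base change $-\otimes_A B$ carries the Koszul resolution of $\bC$ over $A$ to the complex $\rK_\bullet=B\otimes\lw^\bullet U$ of the lemma, matching differentials; fourth, conclude by the definition of $\Tor$ as the homology of a resolution tensored with the second argument. One should also note that every identification here is natural, i.e.\ independent of the choice of basis of $U$ — this is automatic because both $\Tor^A_i(B,\bC)$ and $\rH_i(\rK_\bullet)=\rH_i(B\otimes\lw^\bullet U)$ are defined functorially in $U$ (the Koszul complex $B\otimes\lw^\bullet U$ with its multiplication differential makes sense without choosing coordinates), and the isomorphism was constructed from the canonical Koszul resolution.

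There is no serious obstacle: this is a routine homological-algebra lemma, and the only thing to be mildly careful about is bookkeeping — making sure the differential on $B\otimes_A(A\otimes\lw^\bullet U)$ really is the expected multiplication-by-$f_i$ map on $\rK_\bullet$ after the canonical identification $B\otimes_A A\cong B$, and that the grading conventions are consistent. If one wants to avoid even invoking the Koszul resolution of the residue field as a black box, one can instead argue that $\rK_\bullet\to\bC$ (augmentation) is a resolution when $B=A$, deduce the general case, or simply note both sides are the derived tensor product $B\otimes^{\bL}_A\bC$ computed two ways. Either route gives the natural identification $\Tor^A_i(B,\bC)=\rH_i(\rK_\bullet)$ claimed.
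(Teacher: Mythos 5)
Your argument is exactly the paper's: resolve $\bC$ over $A=\Sym(U)$ by the Koszul resolution $A\otimes\lw^{\bullet}U$, tensor over $A$ with $B$ to obtain $\rK_{\bullet}$, and conclude by the definition of $\Tor$. The proposal is correct and just spells out the same two-line argument in more detail.
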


\begin{proof}
We can resolve $\bC$ as an $A$-module using the Koszul resolution $A \otimes \lw^{\bullet}{U}$.  Tensoring over $A$ with $B$ gives $\rK_{\bullet}$, and is also how one computes $\Tor^A_{\bullet}(B, \bC)$.
\end{proof}

\section{Symplectic groups}

\subsection{Representations of $\Sp(V)$}

Let $(V, \omega)$ be a symplectic space of dimension $2n$ (here $\omega \in \bigwedge^2 V^*$ is the symplectic form, and gives an isomorphism $V \cong V^*$).  As stated in the introduction, the irreducible representations of $\Sp(V)$ are indexed by partitions $\lambda$ with $\ell(\lambda) \le n$ (see \cite[\S 17.3]{fultonharris}).  We call such partitions {\bf admissible}.  For an admissible partition $\lambda$, we write $\bS_{[\lambda]}(V)$ for the corresponding irreducible representation of $\Sp(V)$.

\subsection{The Littlewood complex}
\label{ss:ccx}

Let $E$ be a vector space.  Put $U=\lw^2{E}$, $A=\Sym(U)$ and $B=\Sym(E \otimes V)$.  Consider the inclusion $U \subset B$ given by
\begin{displaymath}
\lw^2{E} \subset \lw^2{E} \otimes \lw^2{V} \subset \Sym^2(E \otimes V),
\end{displaymath}
where the first inclusion is multiplication by $\omega$.  This inclusion defines an algebra homomorphism $A \to B$.  Put $C=B \otimes_A \bC$; this is the quotient of $B$ by the ideal generated by $U$.  We have maps
\begin{displaymath}
\Spec(C) \to \Spec(B) \to \Spec(A).
\end{displaymath}
We have a natural identification of $\Spec(B)$ with the space $\Hom(E, V)$ of linear maps $\varphi \colon E \to V$ and of $\Spec(A)$ with the space $\lw^2(E)^*$ of anti-symmetric forms on $E$.  The map $\Spec(B) \to \Spec(A)$ takes a linear map $\varphi$ to the form $\varphi^*(\omega)$.  The space $\Spec(C)$, which we call the {\bf Littlewood variety}, is the scheme-theoretic fiber of this map above 0, i.e., it consists of those maps $\varphi$ for which $\varphi^*(\omega)=0$. In other words, $\Spec(C)$ consists of maps $\phi \colon E \to V$ such that the image of $\phi$ is an isotropic subspace of $V$.

Let $\rK_{\bullet}(E)=B \otimes \lw^{\bullet}{U}$ be the Koszul complex of the Littlewood variety.  We can decompose this complex under the action of $\GL(E)$:
\begin{displaymath}
\rK_{\bullet}(E)=\bigoplus_{\ell(\lambda) \le \dim{E}} \bS_{\lambda}(E) \otimes L^{\lambda}_{\bullet}.
\end{displaymath}
The complex $L^{\lambda}_{\bullet}$ is the {\bf Littlewood complex}, and is independent of $E$ (so long as $\dim{E} \ge \ell(\lambda)$). By \cite[Theorem 3.8.6.2]{howe}, its zeroth homology is 
\begin{equation}
\label{s:Hzero}
\rH_0(L^{\lambda}_{\bullet})=\begin{cases}
\bS_{[\lambda]}(V) & \textrm{if $\lambda$ is admissible} \\
0 & \textrm{otherwise.}
\end{cases}
\end{equation}
By Lemma~\ref{lem:koszul}, we have $\rH_i(\rK_{\bullet})=\Tor^A_i(B, \bC)$, and so we have a decomposition
\begin{equation}
\label{s:tor}
\Tor^A_i(B, \bC)=\bigoplus_{\ell(\lambda) \le \dim{E}} \bS_{\lambda}(E) \otimes \rH_i(L^{\lambda}_{\bullet}).
\end{equation}
Applied to $i=0$, we obtain
\begin{equation}
\label{s:cring}
C=\bigoplus_{\textrm{admissible $\lambda$}} \bS_{\lambda}(E) \otimes \bS_{[\lambda]}(V).
\end{equation}

\begin{remark}
It is possible to compute the terms of $L^{\lambda}_{\bullet}$ explicitly.  Let $Q_{-1}$ be the set of partitions $\lambda$ whose Frobenius coordinates $(a_1, \ldots, a_r \vert b_1, \ldots, b_r)$ satisfy $a_i=b_i-1$ for each $i$ (see \S \ref{ss:sthm} for further discussion of this set).  Then
\begin{displaymath}
L^{\lambda}_i=\bigoplus_{\substack{\mu \in Q_{-1},\\ \vert \mu \vert=2i}} \bS_{\lambda/\mu}(V).
\end{displaymath}
If $\lambda$ is admissible then the higher homology of $L^{\lambda}_{\bullet}$ vanishes (see Proposition~\ref{s:spcase} below), and so, taking Euler characteristics, we get an equality in the representation ring of $\Sp(V)$:
\begin{displaymath}
[\bS_{[\lambda]}(V)]=\sum_{\mu \in Q_{-1}} (-1)^{\vert \mu \vert/2} [\bS_{\lambda/\mu}(V)].
\end{displaymath}
The significance of this identity is that it expresses the class of the irreducible $\bS_{[\lambda]}(V)$ in terms of representations which are restricted from $\GL(V)$.  It is due to Littlewood \cite[p.295]{littlewood} (see also \cite[Prop.~1.5.3(2)]{koiketerada}), and is why we name the complexes $L^{\lambda}_{\bullet}$ after him.  
\end{remark}

\subsection{A special case of the main theorem} \label{ss:C:special}

Our main theorem computes the homology of the complex $L^{\lambda}_{\bullet}$.  We now formulate and prove the theorem in a particularly simple case.  We mention this here only because it is worthwhile to know; the argument is not needed to prove the main theorem.

\begin{proposition}
\label{s:spcase}
Suppose $\lambda$ is admissible.  Then
\begin{displaymath}
\rH_i(L^{\lambda}_{\bullet}) = \begin{cases}
\bS_{[\lambda]}(V) & \textrm{if $i=0$} \\
0 & \textrm{otherwise.}
\end{cases}
\end{displaymath}
\end{proposition}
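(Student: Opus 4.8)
The plan is to derive the proposition from the exactness in positive degrees of the full Koszul complex $\rK_\bullet(E)$ for a well-chosen $E$. Since $L^\lambda_\bullet$ is independent of $E$ as long as $\dim E\ge\ell(\lambda)$, and since $\lambda$ admissible means $\ell(\lambda)\le n$, I would take $\dim E=n$. Then $L^\lambda_\bullet$ is a summand of $\rK_\bullet(E)=\bigoplus_{\ell(\mu)\le n}\bS_\mu(E)\otimes L^\mu_\bullet$, so $\rH_i(\rK_\bullet(E))=\bigoplus_{\ell(\mu)\le n}\bS_\mu(E)\otimes\rH_i(L^\mu_\bullet)$; hence $\rH_i(\rK_\bullet(E))=0$ for $i>0$ forces $\rH_i(L^\lambda_\bullet)=0$ for $i>0$. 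Combined with $\rH_0(L^\lambda_\bullet)=\bS_{[\lambda]}(V)$ from \eqref{s:Hzero}, this gives the proposition, so it suffices to prove $\rH_i(\rK_\bullet(E))=0$ for $i>0$.

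Now $\rK_\bullet(E)=B\otimes\lw^\bullet U$ is the Koszul complex on the $\binom n2$ elements of $B=\Sym(E\otimes V)$ spanning $U=\lw^2 E$, whose common zero locus is the Littlewood variety $\Spec C$. Since $B$ is a polynomial ring, hence Cohen--Macaulay, the Koszul complex on $c$ elements generating an ideal $I$ is acyclic in positive degrees as soon as $\codim_B(I)\ge c$; so it suffices to show $\codim_B(UB)\ge\binom n2$, equivalently $\dim\Spec C\le\dim\Hom(E,V)-\binom n2=2n^2-\binom n2$. The whole proposition thus comes down to this single dimension estimate for the Littlewood variety.

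For the estimate I would realize $\Spec C$ as the image of a variety of the expected dimension. Let $\mathbf{LG}(V)$ be the Lagrangian Grassmannian of $V$ — the variety of $n$-dimensional isotropic subspaces — which is smooth and projective of dimension $\binom{n+1}2$, and let $\cR\subset V\otimes\cO$ be its tautological rank-$n$ subbundle. Let $Z$ be the total space over $\mathbf{LG}(V)$ of the rank-$n^2$ bundle $\Hom(E,\cR)$, so that a point of $Z$ is a pair $(W,\varphi)$ with $W$ Lagrangian and $\im\varphi\subseteq W$; then $\dim Z=\binom{n+1}2+n^2=2n^2-\binom n2$. The projection $(W,\varphi)\mapsto\varphi$ from $Z$ to $\Hom(E,V)$ has image $\Spec C$: any such $\varphi$ has isotropic image, and conversely if $\im\varphi$ is isotropic then $\dim\im\varphi\le n$ — here $\dim E=n$ is used — so $\im\varphi$ lies in some Lagrangian. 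Hence $\dim\Spec C\le\dim Z=2n^2-\binom n2$, exactly the bound needed.

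The argument is short; its only substantive input is the dimension count just given, which is the classical statement that the variety of maps $E\to V$ with isotropic image is, when $\dim E\le n$, a complete intersection. The point to watch is that admissibility is genuinely used twice — to make $L^\lambda_\bullet$ a summand of $\rK_\bullet(E)$ for $\dim E=n$, and to force isotropic images of maps from $E$ into Lagrangians — and that both break when $\ell(\lambda)>n$: there the fiber of $\varphi\mapsto\varphi^*\omega$ over $0$ is too large, $UB$ drops below codimension $\binom n2$, and $\rK_\bullet(E)$ — hence some Littlewood complex — acquires higher homology, which is exactly the phenomenon the main theorem quantifies.
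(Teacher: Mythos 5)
Your proof is correct and follows essentially the same route as the paper: take $\dim E=n$, reduce to acyclicity of $\rK_\bullet(E)$ in positive degrees, and establish this by showing $U$ has the right codimension via the parametrization of the Littlewood variety by the total space of $\cH om(E,\cR)$ over the isotropic (here Lagrangian) Grassmannian — this is exactly Lemma~\ref{lem:C:CI} and its proof. The only cosmetic differences are that you phrase the regular-sequence criterion as Koszul depth-sensitivity and content yourself with the dimension inequality where the paper notes the map is birational.
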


\begin{proof}
Choose $E$ to be of dimension $n$.  By Lemma~\ref{lem:C:CI} below, $\rK_{\bullet}(E)$ has no higher homology.  It follows that $L^{\lambda}_{\bullet}$ does not either.  The computation of $\rH_0(L^{\lambda}_{\bullet})$ is given in \eqref{s:Hzero}.
\end{proof}

\begin{lemma} \label{lem:C:CI}
Suppose $\dim{E} \le n$.  Then $U \subset B$ is spanned by a regular sequence.
\end{lemma}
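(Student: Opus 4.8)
We must show that when $\dim E \le n$, the subspace $U = \lw^2 E \subset B = \Sym(E \otimes V)$ is spanned by a regular sequence on $B$. Since $B$ is a polynomial ring (hence Cohen--Macaulay), and a regular sequence of homogeneous elements in a Cohen--Macaulay graded ring is detected by codimension, the plan is to reduce the statement to a dimension count: a sequence of $\dim U = \binom{\dim E}{2}$ homogeneous elements of $B$ is regular if and only if the quotient $C = B/(U) = B \otimes_A \bC$ has Krull dimension equal to $\dim B - \dim U = (\dim E)(2n) - \binom{\dim E}{2}$.

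**Computing $\dim \Spec(C)$.** As explained in \S\ref{ss:ccx}, $\Spec(C)$ is the Littlewood variety: the set of linear maps $\varphi \colon E \to V$ whose image is an isotropic subspace of $V$. When $\dim E \le n$, a generic such $\varphi$ is injective and its image is a generic isotropic subspace of dimension $d := \dim E$ (every $d$-dimensional isotropic subspace is attained since $d \le n$). So I would stratify $\Spec(C)$ by the rank $k$ of $\varphi$, for $0 \le k \le d$: the locus of rank exactly $k$ fibers over the variety of $k$-dimensional isotropic subspaces $W \subset V$ (the isotropic Grassmannian $\IGr(k, V)$, of dimension $k(2n-k) - \binom{k}{2}$) with fiber the surjections $E \twoheadrightarrow W$, an open subset of $\Hom(E, W)$ of dimension $dk$. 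The top-dimensional stratum is $k = d$, contributing $d(2n-d) - \binom{d}{2} + d^2 = 2nd - \binom{d}{2}$, and one checks the lower strata have strictly smaller dimension. Hence $\dim C = 2nd - \binom{d}{2} = \dim B - \dim U$, which is exactly the codimension bound needed.

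**Concluding regularity.** Since any $m$ homogeneous elements cut down dimension by at most $m$, and here the $\binom{d}{2}$ elements spanning $U$ cut it down by exactly $\binom{d}{2}$, they form a system of parameters on the relevant localization; combined with the Cohen--Macaulayness of the polynomial ring $B$, this forces them to be a regular sequence (equivalently, $\grade(U, B) = \binom{d}{2} = \dim U$, and grade equals the length of a maximal regular sequence in the ideal). Alternatively, one can invoke that $B$ is Cohen--Macaulay and that $C$, being a fiber of the flat-in-codimension structure, is a complete intersection precisely when the codimension is right; but the system-of-parameters argument is cleanest.

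**Main obstacle.** The genuine content is the dimension count for the Littlewood variety — specifically, verifying that the rank-$d$ stratum dominates and that the scheme-theoretic fiber (not just its reduction) has no embedded or excess components inflating the dimension. The hypothesis $\dim E \le n$ is exactly what guarantees a $d$-dimensional isotropic subspace exists and that the generic fiber has the expected dimension; without it the image is forced to be degenerate and the count breaks. One should also be a little careful that "spanned by a regular sequence" is basis-independent here — it is, because for homogeneous elements in a graded ring the regular sequence property depends only on the ideal they generate together with their count, via the grade characterization.
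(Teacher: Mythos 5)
Your proof is correct and follows essentially the same route as the paper: reduce to the dimension count $\dim\Spec(C)=\dim\Spec(B)-\dim U$ via Cohen--Macaulayness of the polynomial ring $B$, and compute $\dim\Spec(C)=2nd-\binom{d}{2}$ by parametrizing maps with isotropic image over the isotropic Grassmannian $\IGr(d,V)$ (the paper uses a single birational map from the total space of $\cH om(E,\cR)$ where you stratify by rank, which amounts to the same count). Your worry about embedded or non-reduced components is moot, since Krull dimension is insensitive to them.
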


\begin{proof}
It suffices to show that $\dim{\Spec(C)}=\dim{\Spec(B)}-\dim{U}$.  Put $d=\dim{E}$.  Observe that the locus in $\Spec(C)$ where $\varphi$ is injective is open. Let $\IGr(d,V)$ be the variety of $d$-dimensional isotropic subspaces of $V$, which comes with a rank $d$ tautological bundle $\cR \subset V \otimes \cO_{\IGr(d,V)}$. There is a natural birational map from the total space of $\cH om(E,\cR)$ to $\Spec(C)$, and thus $\Spec(C)$ has dimension $2nd-\tfrac{1}{2} d(d-1)$.  As $\dim{\Spec(B)}=2nd$ and $\dim{U}=\tfrac{1}{2} d(d-1)$, the result follows.
\end{proof}

\subsection{The modification rule}
\label{s:modrule}

We now associate to a partition $\lambda$ two quantities, $i_{2n}(\lambda)$ and $\tau_{2n}(\lambda)$, which will be used to describe the homology of $L_\bullet^{\lambda}$.  (Recall that $2n=\dim{V}$.)  We give two equivalent definitions of these quantities, one via a Weyl group action and one via border strips.

We begin with the Weyl group definition, following \cite[\S 1.5]{wenzl}.  Recall that in \S \ref{ss:bott} we defined automorphisms $s_i$ of the set $\cU$ of integer sequences, for $i \ge 1$.  We now define an additional automorphism:  $s_0$ negates $a_1$.  We let $W$ be the group generated by the $s_i$, for $i \ge 0$.  Then $W$ is a Coxeter group of type $\rB\rC_{\infty}$, and, as such, is equipped with a length function $\ell \colon W \to \bZ_{\ge 0}$, which is defined just as in \eqref{eqn:word}.  Let $\rho=( -(n+1), -(n+2), \ldots)$.  Define a new action of $W$ on $\cU$ by $w \bullet \lambda=w(\lambda+\rho)-\rho$.  On $\fS$ this action agrees with the one defined in \S \ref{ss:bott}, despite the difference in $\rho$.  The action of $s_0$ is given by
\begin{displaymath}
s_0 \bullet (a_1, a_2, \ldots)=(2n+2-a_1, a_2, \ldots).
\end{displaymath}
Given a partition $\lambda \in \cU$, exactly one of the following two possibilities hold:
\begin{itemize}
\item There exists a unique element $w \in W$ such that $w \bullet \lambda^{\dag}=\mu^{\dag}$ is a partition and $\mu$ is admissible.  We then put $i_{2n}(\lambda)=\ell(w)$ and $\tau_{2n}(\lambda)=\mu$.
\item There exists a non-identity element $w \in W$ such that $w \bullet \lambda^{\dag}=\lambda^{\dag}$.  We then put $i_{2n}(\lambda)=\infty$ and leave $\tau_{2n}(\lambda)$ undefined.
\end{itemize}
Note that if $\lambda$ is an admissible partition then we are in the first case with $w=1$, and so $i_{2n}(\lambda)=0$ and $\tau_{2n}(\lambda)=\lambda$.

~

We now give the border strip definition, following \cite[\S 5]{sundaram} (which is based on \cite{king}).  If $\ell(\lambda) \le n$ we put $i_{2n}(\lambda)=0$ and $\tau_{2n}(\lambda)=\lambda$.  Suppose $\ell(\lambda)>n$. Recall that a {\bf border strip} is a connected skew Young diagram containing no $2 \times 2$ square.  Let $R_{\lambda}$ be the connected border strip of length $2(\ell(\lambda)-n-1)$ which starts at the first box in the final row of $\lambda$, if it exists.  If $R_{\lambda}$ exists, is non-empty and $\lambda \setminus R_{\lambda}$ is a partition, then we put $i_{2n}(\lambda)=c(R_{\lambda})+i_{2n}(\lambda \setminus R_{\lambda})$ and $\tau_{2n}(\lambda)=\tau_{2n}(\lambda \setminus R_{\lambda})$, where $c(R_{\lambda})$ denotes the number of columns that $R_{\lambda}$ occupies; otherwise we put $i_{2n}(\lambda)=\infty$ and leave $\tau_{2n}(\lambda)$ undefined.

\begin{remark} \label{rmk:striphook}
There is an alternative way to think about removing $R_\lambda$ in terms of hooks. Given a box $b$ in the Young diagram of $\lambda$, recall that the book of $b$ is the set of boxes which are either directly below $b$ or directly to the right of $b$ (including $b$ itself). The border strips $R$ of $\lambda$ that begin at the last box in the first column, and have the property that $\lambda \setminus R$ is a Young diagram, are naturally in bijection with the boxes in the first column: just take the box $b_R$ in the same row where $R$ ends. The important point is that the size of this border strip is the same as size of the hook of $b_R$, and removing $R$ is the same as removing the hook of $b_R$ and shifting all boxes below this hook one box in the northwest direction. This is illustrated in the following diagram:
\begin{displaymath}
\ydiagram[*(white)]{6,4,2,1}*[*(gray)]{6,5,5,3,2,1,1} \qquad\qquad
\ydiagram[*(white)]{6}*[*(gray)]{6,5}*[*(white)]{6,5,1+4,1+2,1+1}*[*(gray)]{6,5,5,3,2,1,1}
\end{displaymath}
The shaded boxes indicate the border strip (left diagram) and hook (right diagram).
\end{remark}

The agreement of the above two definitions may be known to some experts, but we are unaware of a reference, so we provide a proof.

\begin{proposition}
The above two definitions agree.
\end{proposition}

\begin{proof}
Suppose that we are removing a border strip $R_\lambda$ of size $2(\ell(\lambda) - n - 1)$ from $\lambda$ which begins at the first box in the final row of $\lambda$. Let $c = c(R_\lambda)$ be the number of columns of $R_\lambda$. The sequence $(s_{c - 1} s_{c - 2} \cdots s_1 s_0) \bullet \lambda^\dagger$ is
\[
(\lambda^\dagger_2 - 1, \lambda^\dagger_3 - 1, \dots, \lambda^\dagger_c - 1, 2n+2 - \lambda^\dagger_1 + c - 1, \lambda^\dagger_{c+1}, \lambda^\dagger_{c+2}, \dots),
\]
and these are the same as the column lengths of $\lambda \setminus R_\lambda$.

Conversely, if we use the Weyl group modification rule with $w \in W$, then the expression \eqref{eqn:word} for $w$ must begin with $s_0$: if we apply any $s_i$ with $i>0$, then we increase the number of inversions of the sequence, so if we write $ws_i = v$, then $\ell(v) = \ell(w) + 1$ \cite[\S 5.4, Theorem]{humphreys}, so the resulting expression for $w$ will not be minimal. If we choose $i$ maximal so that $w = w' s_{i-1} \cdots s_1 s_0$ with $\ell(w) = \ell(w') + i$, then we have replaced the first column of $\lambda$ with $2n+2-\lambda^\dagger_1$ and then moved it over to the right as much as possible (adding 1 to it each time we pass a column and subtracting 1 from the column we just passed) so that the resulting shape is again a Young diagram. This is the same as removing a border strip of length $2(\ell(\lambda) - n - 1)$ with $i$ columns.
\end{proof}

Finally, there is a third modification rule, defined in \cite[\S 2.4]{koiketerada}. We will not need to know the statement of the rule, but we will cite some results from \cite{koiketerada}, so we need to know that their rule is equivalent to the previous two. The equivalence of the rule from \cite[\S 2.4]{koiketerada} with the border strip rule comes from the fact that both rules were derived from the same determinantal formulas (see \cite[Theorem 1.3.3]{koiketerada} and \cite[Footnote 18]{king}).

\subsection{The main theorem}
\label{ss:sthm}

Our main theorem is the following:

\begin{theorem}
\label{s:mainthm}
For a partition $\lambda$ and an integer $i$ we have
\begin{displaymath}
\rH_i(L^{\lambda}_{\bullet})=\begin{cases}
\bS_{[\tau_{2n}(\lambda)]}(V) & \textrm{if $i=i_{2n}(\lambda)$} \\
0 & \textrm{otherwise.}
\end{cases}
\end{displaymath}
In particular, if $i_{2n}(\lambda)=\infty$ then $L^{\lambda}_{\bullet}$ is exact.
\end{theorem}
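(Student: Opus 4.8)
The plan is to realize the Littlewood complex $L^\lambda_\bullet$ as a graded piece of the minimal free resolution of $B = \Sym(E \otimes V)$ over $A = \Sym(\lw^2 E)$, and then to compute that resolution geometrically. First I would fix $E$ of dimension $d \ge \ell(\lambda)$ and use the identification $\Spec(B) = \Hom(E,V)$, $\Spec(A) = \lw^2(E)^*$, with the map sending $\varphi$ to $\varphi^*(\omega)$. The total space $\Spec(C)$ — the Littlewood variety — is the scheme-theoretic fiber over $0$, consisting of $\varphi$ with isotropic image. I would resolve this using the geometric technique (Proposition~\ref{geo}): one constructs a desingularization of $\Spec(C)$ by the total space of $\cH om(E, \cR)$ over an isotropic Grassmannian $\IGr(k, V)$, pulls back the relevant bundle sequence, and applies the Borel--Weil--Bott theorem fiberwise to compute $\rR\pi_*$ of the exterior powers of the subbundle $\xi$. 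This expresses $\Tor^A_i(B,\bC)$ — hence, by Lemma~\ref{lem:koszul} and the $\GL(E)$-decomposition \eqref{s:tor}, each $\rH_i(L^\lambda_\bullet)$ — as a sum of cohomology groups on the Grassmannian bundle.

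The second ingredient is step~(a) of the overview: a purely combinatorial lemma identifying the output of Bott's algorithm (applied to the weights that arise in the BWB computation above) with the modification rule of \S\ref{s:modrule}. Concretely, the partitions indexing terms of $L^\lambda_i$ are the $\bS_{\lambda/\mu}(V)$ for $\mu \in Q_{-1}$ with $|\mu| = 2i$; applying Bott's algorithm to $(\lambda/\mu \mid_n \cdots)$ either produces a singular weight (contributing zero) or, after $\ell(w)$ steps, a partition $\alpha$ with $\alpha$ admissible, and the claim is that the surviving contribution occurs exactly at homological degree $i = i_{2n}(\lambda)$ with $\alpha = \tau_{2n}(\lambda)$. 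Here the $s_0$ reflection of the type $\rB\rC_\infty$ Weyl group corresponds to the "wrap-around" move in Bott's algorithm caused by the symplectic form, and the length count matches the number of columns $c(R_\lambda)$ in a removed border strip. The earlier Proposition comparing the Weyl-group and border-strip definitions already does the bulk of this bookkeeping; what remains is to align it with the BWB exponents.

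The final step is to bound multiplicities so that the spectral sequence degenerates. After computing $\bigoplus_{i,j}\rH^i(X, \rR^j\pi_*(\lw^{i+j}\xi \otimes \cV))$, one invokes Lemma~\ref{lem:degen}: the $\Sp(V)$-module obtained is semisimple, and one must check it is multiplicity-free. This is where the results of Koike--Terada (and the specialization homomorphism on $K$-theory, as in the overview step~(c)) enter: the $K$-theoretic identity shows that the Euler characteristic $\sum (-1)^i [\rH_i(L^\lambda_\bullet)]$ equals $\pm[\bS_{[\tau_{2n}(\lambda)]}(V)]$ or $0$, so at most one $\rH_i$ is nonzero and it is (at most) the single irreducible $\bS_{[\tau_{2n}(\lambda)]}(V)$; combined with multiplicity-freeness this pins the answer down exactly and forces the spectral sequence to collapse. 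The last assertion, that $i_{2n}(\lambda) = \infty$ (singular case) implies exactness, then follows immediately since every term in the BWB computation vanishes.

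I expect the main obstacle to be step~(a): carefully matching the iterated Bott's-algorithm output — including the choice-independence of the algorithm, the precise role of $\rho = (-(n+1), -(n+2), \dots)$ versus the Grassmannian's $\rho$, and the appearance of $s_0$ from the isotropy condition — with the border-strip removal $R_\lambda$ of length $2(\ell(\lambda)-n-1)$. The geometric technique and the degeneration argument are comparatively routine once the combinatorics is in place, but getting the indexing exactly right (so that $i = i_{2n}(\lambda)$ and not off by the rank correction) is the delicate part.
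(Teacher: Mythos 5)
Your overall frame---realize $\rH_\bullet(L^\lambda_\bullet)$ as the $\GL(E)$-isotypic pieces of $\Tor^A_\bullet(B,\bC)$ and compute the latter geometrically---matches the paper, and you correctly identify step~(a) as the combinatorial heart. But your geometric step is aimed at the wrong object. Desingularizing the Littlewood variety $\Spec(C)$ by the total space of $\cHom(E,\cR)$ over an isotropic Grassmannian $\IGr(k,V)$ and applying Proposition~\ref{geo} there would compute $\Tor^B_\bullet(C',\bC)$ for the pushforward module $C'$, i.e.\ the minimal free resolution of (a normalization of) $C$ over the polynomial ring $B$. That is not the Koszul homology $\Tor^A_\bullet(B,\bC)=\rH_\bullet(B\otimes\lw^\bullet U)$; the two agree only when $U$ is spanned by a regular sequence, which is exactly the already-known case $\dim E\le n$ of Proposition~\ref{s:spcase}. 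The paper's actual geometry lives entirely on the $E$-side: $X$ is the ordinary Grassmannian of rank-$n$ quotients of the auxiliary space $E$, $\xi=\lw^2\cR$, and the geometric technique produces modules $M_\lambda$ over $A$ supported on Pfaffian determinantal loci in $\lw^2(E)^*$, with only type~A Borel--Weil--Bott needed. These $M_\lambda$ have, a priori, nothing to do with $V$.

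The second gap is that the identification of the geometric answer with $B_\lambda=\Hom_{\Sp(V)}(\bS_{[\lambda]}(V),B)$ is the real content of step~(c), and your substitute for it is logically invalid: the Koike--Terada specialization is a statement in $K$-theory, so it only controls the Euler characteristic $\sum_i(-1)^i[\rH_i(L^\lambda_\bullet)]$, and an Euler characteristic equal to $\pm$ an irreducible class does not rule out several nonzero $\rH_i$ with cancelling contributions---eliminating such cancellation is precisely what the theorem adds to Koike--Terada (this is the sense in which it ``categorifies'' their result). The paper instead uses $K$-theory only to show that $M_\lambda$ and $B_\lambda$ have the same $\GL(E)$-character with finite multiplicities (Lemma~\ref{lem:B=M}), and then pins down $B_\lambda$ by a presentation argument: $\Tor^A_0$ and $\Tor^A_1$ of $M_\lambda$ give a two-term presentation whose relation module $\bS_{(\lambda,1^{2n+2-2\ell(\lambda)})}(E)$ occurs with multiplicity one in $\bS_\lambda(E)\otimes A$, hence occurs in neither $M_\lambda$ nor $B_\lambda$, forcing the surjection $\bS_\lambda(E)\otimes A\to B_\lambda$ to factor through $M_\lambda$ and then to be an isomorphism by the character count. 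Finally, the Leray degeneration lemma you invoke plays no role in the symplectic case (there is no fibration $\pi$; it is needed only for odd orthogonal groups), and even there the multiplicity-freeness is as a $\GL(E)$-module on the Grassmannian of $E$, not as an $\Sp(V)$-module.
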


\begin{remark}
Consider the coordinate ring $R$ of rank $\le 2n$ skew-symmetric matrices; identifying $E$ with its dual, this is the quotient of $A$ by the ideal generated by $2(n+1) \times 2(n+1)$ Pfaffians.  A description of the resolution of $R$ over $A$ can be found in \cite[\S 6.4]{weyman} and \cite[\S 3]{jpw}.  On the other hand, $R$ is the $\Sp(V)$-invariant part of $B$, and so the above theorem, combined with \eqref{s:tor}, shows that $\bS_\lambda(E)$ appears in its resolution if and only if $\tau_{2n}(\lambda) = \emptyset$. Thus the modification rule gives an alternative description of the resolution of $R$. It is a pleasant combinatorial exercise to show directly that these two descriptions agree. While the description in terms of the modification rule is more complicated, it has the advantage that it readily generalizes to our situation.
\end{remark}

The proof of the theorem will take the remainder of this section.  We follow the three-step plan outlined in \S\ref{ss:proofoverview}.  Throughout, the space $V$ is fixed and $n=\tfrac{1}{2} \dim(V)$.

\Step{a}
Let $Q_{-1}$ be the set of partitions $\lambda$ whose Frobenius coordinates $(a_1, \ldots, a_r \vert b_1, \ldots, b_r)$ satisfy $a_i=b_i-1$ for all $i$.  This set admits an inductive definition that will be useful for us and which we now describe.  The empty partition belongs to $Q_{-1}$.  A non-empty partition $\mu$ belongs to $Q_{-1}$ if and only if the number of rows in $\mu$ is one more than the number of columns, i.e., $\ell(\mu)=\mu_1+1$, and the partition obtained by deleting the first row and column of $\mu$, i.e., $(\mu_2-1, \ldots, \mu_{\ell(\mu)}-1)$, belongs to $Q_{-1}$.  The significance of this set is the plethysm
\begin{displaymath}
\lw^{\bullet}(\lw^2(E))=\bigoplus_{\mu \in Q_{-1}} \bS_{\mu}(E)
\end{displaymath}
(see \cite[I.A.7, Ex.~4]{macdonald}).

Let $\lambda$ be a partition with $\ell(\lambda) \le n$.  We write $(\lambda \vert \mu)$ in place of $(\lambda \mid_n \mu)$ in this section.  Define
\begin{displaymath}
\begin{split}
S_1(\lambda) &= \{ \textrm{$\mu \in Q_{-1}$ such that $(\lambda \vert \mu)$ is regular} \} \\
S_2(\lambda) &= \{ \textrm{partitions $\alpha$ such that $\tau_{2n}(\alpha)=\lambda$} \}.
\end{split}
\end{displaymath}

\begin{lemma}
\label{lem:s1}
Let $\mu$ be a non-zero partition in $S_1(\lambda)$ and let $\nu$ be the partition obtained by removing the first row and column of $\mu$.  Then $\nu$ also belongs to $S_1(\lambda)$.  Furthermore, let $w$ (resp.\ $w'$) be the unique element of $W$ such that $\alpha=w \bullet (\lambda \vert \mu)$ (resp.\ $\beta=w' \bullet (\lambda \vert \nu)$) is a partition.  Then the border strip $R_{\alpha}$ is defined (see \S\ref{s:modrule}) and we have the following identities:
\begin{displaymath}
\vert R_{\alpha} \vert=2 \mu_1, \qquad
\alpha \setminus R_{\alpha}=\beta, \qquad
c(R_{\alpha})=\mu_1+\ell(w')-\ell(w).
\end{displaymath}
\end{lemma}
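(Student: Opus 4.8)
The plan is to analyze how Bott's algorithm acts on the sequence $(\lambda \mid \mu)$ when we strip off the first row and column of $\mu$, and to match this up column-by-column with the border-strip removal in the modification rule. First I would record the key structural fact about $Q_{-1}$: if $\mu \in Q_{-1}$ is nonzero, then $\ell(\mu) = \mu_1 + 1$, and $\nu$ (obtained by deleting the first row and column) again lies in $Q_{-1}$. In particular, in the sequence $(\lambda \mid \mu) = (\lambda_1, \dots, \lambda_n, \mu_1, \mu_2, \dots)$, the entry $\mu_1$ sits at position $n+1$ and is repeated (as a value at least once, and the tail $\mu_2 - 1, \mu_3 - 1, \dots$ after deletion of row/column is what $\nu$ records). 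The heart of the argument is a direct computation: starting from position $n+1$ downward, apply $s_n, s_{n-1}, \dots$ to $(\lambda \mid \mu)$ in Bott's algorithm, pushing the entry originally at position $n+1$ leftward past the $\lambda_i$'s as far as it will go before either stabilizing to a partition prefix or hitting a singularity. I would show this initial segment of Bott's word, applied to $(\lambda \mid \mu)$, has the same effect as applying the corresponding segment to $(\lambda \mid \nu)$ and then separately accounting for the one long row that $\mu_1$ contributes.

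Concretely, I would set $\beta = w' \bullet (\lambda \mid \nu)$ and argue that $(\lambda \mid \mu)$ is regular if and only if $(\lambda \mid \nu)$ is (this gives $\nu \in S_1(\lambda)$ as soon as $\mu \in S_1(\lambda)$), by observing that the sorting process for $(\lambda \mid \mu)$ factors: the subsequence $(\mu_2, \mu_3, \dots)$ interacts with $\lambda$ exactly as $(\nu_1 + 1, \nu_2 + 1, \dots)$ would — here one uses $\mu_{j+1} = \nu_j + 1$ — while the single entry $\mu_1$ gets sorted into place to produce one extra row of $\alpha$ of a controlled length. Tracking the extra row: since $\ell(\mu) = \mu_1 + 1$, the entry $\mu_1$ must travel past exactly the right number of positions, and a bookkeeping of how many $s_i$'s with $i \ge n$ versus the shift in $\rho$ gives $|R_\alpha| = 2\mu_1$. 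That the removed piece is a border strip (no $2\times 2$ square) and that $\alpha \setminus R_\alpha = \beta$ should both fall out of the explicit description of where the sorted entry lands relative to the rows of $\beta$: the new row of $\alpha$ overlaps each row of $\beta$ in at most one box in each relevant column, which is exactly the border-strip condition, and deleting it returns $\beta$.

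For the length identity $c(R_\alpha) = \mu_1 + \ell(w') - \ell(w)$, I would count inversions. The word for $w$ factors as (the word for $w'$, suitably shifted) composed with the subword $\sigma$ that sorts the $\mu_1$-entry into position; then $\ell(w) = \ell(w') + \ell(\sigma)$ if the concatenation is reduced, and one checks reducedness by noting the $\mu_1$-entry only ever moves left past strictly larger entries (no cancellation is possible). So $\ell(\sigma)$ is the number of positions the $\mu_1$-entry moves, equivalently the number of rows of $\beta$ it must hop over plus a correction. Meanwhile $c(R_\alpha)$, the number of columns occupied by the border strip, equals (length of the new long row) minus (number of rows of $\beta$ it properly overlaps), and the length of that row is $\mu_1$ plus the number of leftward hops; assembling these gives $c(R_\alpha) = \mu_1 + \ell(\sigma) = \mu_1 + \ell(w) - \ell(w')$ — wait, I would need to be careful about the sign, and in fact the correct reading is that $\sigma$ is appended on the \emph{other} side, so that $\ell(w') = \ell(w) + \ell(\sigma')$ with $\sigma'$ the undoing move, yielding $c(R_\alpha) = \mu_1 + \ell(w') - \ell(w)$ as stated. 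The main obstacle I anticipate is precisely this sign/index bookkeeping: getting the shift by $\rho = (-(n+1), -(n+2), \dots)$ versus $\rho = (-1, -2, \dots)$ correct, and being careful that "regular for $(\lambda \mid \mu)$" genuinely decouples into the two independent sorting problems rather than producing a spurious singularity when $\mu_1$ collides with a $\lambda_i$. I would handle this by writing out one generic case in full Frobenius/column-length coordinates (as the authors do in the modification-rule equivalence proof) and checking the edge cases — $\mu_1$ equal to some $\lambda_i - (n+1-i)$-type coincidence — separately, showing such a collision forces singularity of $(\lambda\mid\nu)$ too, so $\nu \in S_1(\lambda)$ is not violated.
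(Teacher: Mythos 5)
Your overall strategy is the paper's: run Bott's algorithm on $(\lambda\mid\mu)$, observe that it factors into (a) moving the entry $\mu_1$ leftward some number $r$ of places past the $\lambda_i$'s and (b) a run that coincides with Bott's algorithm on $(\lambda\mid\nu)$ shifted one slot with $1$ added to every entry; deduce $\nu\in S_1(\lambda)$ and $\ell(w)=\ell(w')+r$, write down $\alpha$ explicitly in terms of $\beta$, and finish with the hook description of border strips (Remark~\ref{rmk:striphook}). However, the one step you yourself flag as the main obstacle --- the sign bookkeeping for $c(R_\alpha)$ --- is carried out incorrectly, and the patch you offer is false. Under the $\bullet$-action, $s_i\bullet(\ldots,a_i,a_{i+1},\ldots)=(\ldots,a_{i+1}-1,a_i+1,\ldots)$, so the migrating entry \emph{decreases} by $1$ at each hop: after $r$ steps it has value $\mu_1-r$, the new row of $\alpha$ has length $\alpha_{n-r+1}=\mu_1-r$, and $c(R_\alpha)=\mu_1-r$. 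You instead assert the row has length ``$\mu_1$ plus the number of leftward hops,'' arrive at $c(R_\alpha)=\mu_1+\ell(\sigma)$, and then repair the sign by claiming $\ell(w')=\ell(w)+\ell(\sigma')$. That last identity cannot hold: the reduced word for $w$ is obtained by prepending the $r$ sorting reflections to (a shift of) the word for $w'$, so $\ell(w)=\ell(w')+r\ge\ell(w')$. The correct chain is $c(R_\alpha)=\mu_1-r$ and $r=\ell(w)-\ell(w')$, which gives the stated $c(R_\alpha)=\mu_1+\ell(w')-\ell(w)$. You land on the right formula only by fitting the answer, not by a valid derivation.

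Two smaller points. First, the equivalence ``$(\lambda\mid\mu)$ regular iff $(\lambda\mid\nu)$ regular'' is false in one direction: $(\lambda\mid\nu)+\rho$ is obtained from $(\lambda\mid\mu)+\rho$ by deleting the single entry $\mu_1-n-1$, so singularity passes from $\nu$ to $\mu$ but not conversely (take $n=1$, $\lambda=\emptyset$, $\mu=(1,1)$: then $(\lambda\mid\mu)+\rho=(-1,-1,-2,\ldots)$ is singular while $\nu=\emptyset$ is regular). Only the true direction is needed for the lemma, so this is an overstatement rather than a fatal error, but it should not be asserted. Second, your verification that $R_\alpha$ is a border strip conflates $R_\alpha$ with ``the new row of $\alpha$''; $R_\alpha$ is by definition the rim hook of length $2(\ell(\alpha)-n-1)$ starting at the first box of the last row, and the clean check (as in the paper) is that $\ell(\alpha)=n+\mu_1+1$ and the box in position $(n-r+1,1)$ has hook length $(\mu_1-r-1)+(\mu_1+r)+1=2\mu_1$, so by Remark~\ref{rmk:striphook} the strip exists, occupies $\mu_1-r$ columns, and its removal yields exactly $\beta$.
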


\begin{proof}
Suppose that in applying Bott's algorithm to $(\lambda \vert \mu)$ the number $\mu_1$ moves $r$ places to the left.  Thus, after the first $r$ steps of the algorithm, we reach the sequence
\begin{displaymath}
(\lambda_1, \ldots, \lambda_{n-r}, \mu_1-r, \lambda_{n-r+1}+1, \ldots, \lambda_n+1, \mu_2,
\ldots, \mu_{\ell(\mu)}).
\end{displaymath}
Notice that the subsequence starting at $\lambda_{n-r+1}+1$ is the same as the subsequence of $(\lambda \vert \nu)$ starting at $\lambda_{n-r+1}$, except 1 has been added to each entry of the former.  It follows that Bott's algorithm runs in exactly the same manner on each.  In particular, if $(\lambda \vert \nu)$ were not regular then $(\lambda \vert \mu)$ would not be either; this shows that $\nu$ belongs to $S_1(\lambda)$.  Suppose that Bott's algorithm on $(\lambda \vert \nu)$ terminates after $N=\ell(w')$ steps.  By the above discussion, Bott's algorithm on $(\lambda \vert \mu)$ terminates after $N+r=\ell(w)$ steps, and we have the following formula for $\alpha$:
\begin{displaymath}
\alpha_i=\begin{cases}
\lambda_i & 1 \le i \le n-r \\
\mu_1-r & i=n-r+1 \\
\beta_{i-1}+1 & n-r+2 \le i \le n+\mu_1+1
\end{cases}
\end{displaymath}
Since $\ell(\alpha)=n+\mu_1+1$, the border strip $R_{\alpha}$ has $2 \mu_1$ boxes.  Using Remark~\ref{rmk:striphook}, we see that $R_{\alpha}$ exists since the box in the $(n-r+1)$th row and the first column has a hook of size $2(\ell(\alpha) - n - 1)$. Furthermore, $\alpha \setminus R_{\alpha}=\beta$ and $c(R_{\alpha})=\mu_1-r$. Since $r=\ell(w)-\ell(w')$, the result follows.
\end{proof}

\begin{lemma}
\label{lem:s2}
Let $\nu$ belong to $S_1(\lambda)$ and suppose that $w' \in W$ is such that $w' \bullet (\lambda \vert \nu)=\beta$ is a partition.  Let $\alpha$ be a partition such that $R_{\alpha}$ is defined and $\alpha \setminus R_{\alpha}=\beta$.  Then there exists a partition $\mu \in S_1(\lambda)$ and an element $w \in W$ such that $w \bullet (\lambda \vert \mu)=\alpha$, and the partition obtained from $\mu$ by removing the first row and column is $\nu$.
\end{lemma}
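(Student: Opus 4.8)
The plan is to run the construction in the proof of Lemma~\ref{lem:s1} backwards, producing the pair $(\mu,w)$ from the data $(\alpha,\beta,\nu,w')$. First I would recover the two parameters that Lemma~\ref{lem:s1} attached to $(\mu,w)$. Since $R_\alpha$ is a border strip of length $2(\ell(\alpha)-n-1)$, set $\mu_1:=\ell(\alpha)-n-1$; this is forced. Using Remark~\ref{rmk:striphook}, locate the box of the first column of $\alpha$ whose hook is $R_\alpha$, say it lies in row $n-r+1$; this defines $r\ge 0$, and because that hook has length $2\mu_1$ one reads off $\alpha_{n-r+1}=\mu_1-r$, $c(R_\alpha)=\mu_1-r$, and that $\alpha$ has exactly the shape displayed in the proof of Lemma~\ref{lem:s1}: $\alpha_j=\lambda_j$ for $j\le n-r$, $\alpha_{n-r+1}=\mu_1-r$, and $\alpha_j=\beta_{j-1}+1$ for $n-r+2\le j\le n+\mu_1+1$. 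Here $\mu_1$ and $r$ play the roles of ``$\mu_1$'' and ``the number of places $\mu_1$ moves to the left'' in that proof.

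Next I would define the partition $\mu:=(\mu_1,\nu_1+1,\nu_2+1,\dots)$, padded with as many trailing $1$'s as needed so that $\ell(\mu)=\mu_1+1$; the inductive description of $Q_{-1}$ then shows $\mu\in Q_{-1}$ and that deleting the first row and column of $\mu$ gives back $\nu$, while the padding is legitimate because $\ell(\nu)\le\mu_1$ (an inequality that is forced by the displayed shape of $\alpha$, and without which no such $\alpha$ could exist). For $w$ I would run Bott's algorithm on $(\lambda\vert\mu)$: its first $r$ steps slide $\mu_1$ leftwards into the $(n-r+1)$st position (each step being legal, by the shape of $\alpha$), after which the remaining entries coincide --- up to a uniform shift of the values and of the indices by $1$ --- with the corresponding tail of $(\lambda\vert\nu)$. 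From this point on the algorithm therefore proceeds in lockstep with the one computing $w'\bullet(\lambda\vert\nu)=\beta$. In particular it terminates, so $(\lambda\vert\mu)$ is regular, i.e.\ $\mu\in S_1(\lambda)$; the element $w\in\fS\subseteq W$ it produces has $\ell(w)=r+\ell(w')$; and, by construction, $w\bullet(\lambda\vert\mu)=\alpha$. (As a sanity check one recovers $c(R_\alpha)=\mu_1-r=\mu_1+\ell(w')-\ell(w)$, matching Lemma~\ref{lem:s1}.)

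The main obstacle is precisely this ``lockstep'' claim: one must verify that after the first $r$ slides, Bott's algorithm on $(\lambda\vert\mu)$ genuinely behaves as it does on $(\lambda\vert\nu)$, so that regularity and the final output transfer across. This is the same bookkeeping as in the proof of Lemma~\ref{lem:s1}, read in the opposite direction, and the only genuinely new points are the two consistency statements about $\alpha$ used above --- that the prescribed first-column box of hook length $2\mu_1$ exists and lies in a row indexed at most $n+1$ (so that $r\ge 0$), and that $\ell(\nu)\le\mu_1$. Both follow from the hypothesis that $R_\alpha$ is defined with $\alpha\setminus R_\alpha=\beta$ and $\nu\in S_1(\lambda)$; once they are granted the rest is routine.
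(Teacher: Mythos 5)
Your proposal is correct and is exactly the paper's argument: the paper's entire proof of Lemma~\ref{lem:s2} is the one-line instruction ``Reverse the steps of Lemma~\ref{lem:s1},'' and you have carried out that reversal explicitly --- reading $\mu_1$ and $r$ off from $\ell(\alpha)$ and the hook box of Remark~\ref{rmk:striphook}, reconstructing $\mu\in Q_{-1}$ from $\nu$ by adjoining the appropriate first row and column, and rerunning the lockstep Bott's-algorithm bookkeeping from Lemma~\ref{lem:s1} in the opposite direction. The consistency checks you isolate ($r\ge 0$ and $\ell(\nu)\le\mu_1$) are indeed the only points needing verification beyond that bookkeeping, and both follow as you say from the known shape of $\beta=w'\bullet(\lambda\vert\nu)$ together with the hypothesis on $R_\alpha$.
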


\begin{proof}
Reverse the steps of Lemma~\ref{lem:s1}.
\end{proof}

\begin{proposition}
\label{s:stepa}
There is a unique bijection $S_1(\lambda) \to S_2(\lambda)$ under which $\mu$ maps to $\alpha$ if there exists $w \in \fS$ such that $w \bullet (\lambda \vert \mu)=\alpha$; in this case, $\ell(w)+i_{2n}(\alpha)=\tfrac{1}{2} \vert \mu \vert$.
\end{proposition}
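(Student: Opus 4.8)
The plan is to construct the bijection by induction on $\vert\mu\vert$ (equivalently, on the number of times one can peel off a first row and column from a partition in $Q_{-1}$), using Lemmas~\ref{lem:s1} and~\ref{lem:s2} as the inductive engine. The base case is $\mu=\emptyset$: then $(\lambda\vert\mu)=\lambda$ is already a partition, so it is regular with $w=1$ and $\alpha=\lambda$; since $\ell(\lambda)\le n$ we have $\tau_{2n}(\lambda)=\lambda$, so $\lambda\in S_2(\lambda)$, and conversely the only element of $S_2(\lambda)$ with at most $n$ rows is $\lambda$ itself, while by Lemma~\ref{lem:s1} any nonzero $\mu\in S_1(\lambda)$ produces an $\alpha$ with $\ell(\alpha)=n+\mu_1+1>n$. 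Thus the ``nonzero'' and ``at least $n+1$ rows'' cases match up, and the base-level identity $\ell(1)+i_{2n}(\lambda)=0=\tfrac{1}{2}\vert\emptyset\vert$ is immediate.

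For the inductive step, given a nonzero $\mu\in S_1(\lambda)$ let $\nu$ be $\mu$ with its first row and column removed; Lemma~\ref{lem:s1} says $\nu\in S_1(\lambda)$, so by induction $\nu$ corresponds to a unique $\beta\in S_2(\lambda)$, with $w'\bullet(\lambda\vert\nu)=\beta$ and $\ell(w')+i_{2n}(\beta)=\tfrac{1}{2}\vert\nu\vert$. Lemma~\ref{lem:s1} further gives that $\alpha:=w\bullet(\lambda\vert\mu)$ has $R_\alpha$ defined with $\alpha\setminus R_\alpha=\beta$; hence, by the border strip form of the modification rule, $\tau_{2n}(\alpha)=\tau_{2n}(\beta)=\lambda$, so $\alpha\in S_2(\lambda)$. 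This defines the forward map $\mu\mapsto\alpha$, and it is the unique map satisfying the stated compatibility since, $(\lambda\vert\mu)$ being regular, $w\in\fS$ and hence $\alpha$ are determined by $\mu$. To see it is a bijection I would run the construction backwards: given a nonzero $\alpha\in S_2(\lambda)$ (so $\ell(\alpha)>n$), the modification rule provides $R_\alpha$ with $\beta:=\alpha\setminus R_\alpha\in S_2(\lambda)$ and $\vert\beta\vert<\vert\alpha\vert$; by induction $\beta$ comes from a unique $\nu\in S_1(\lambda)$, and Lemma~\ref{lem:s2} produces $\mu\in S_1(\lambda)$ with forward image $\alpha$ whose peeled partition is $\nu$, giving surjectivity. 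Injectivity follows because the peeled partition of any preimage $\mu$ must correspond to $\alpha\setminus R_\alpha$ (determined by $\alpha$), hence is unique by induction, while $\mu_1$ is forced by $\vert R_\alpha\vert=2\mu_1$; since $\mu=(\mu_1,\nu_1+1,\nu_2+1,\dots)$, this pins down $\mu$.

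The numerical identity then propagates automatically: combining $i_{2n}(\alpha)=c(R_\alpha)+i_{2n}(\beta)$ (border strip rule), $c(R_\alpha)=\mu_1+\ell(w')-\ell(w)$ (Lemma~\ref{lem:s1}), the inductive hypothesis $\ell(w')+i_{2n}(\beta)=\tfrac{1}{2}\vert\nu\vert$, and $\vert\mu\vert=\vert\nu\vert+2\mu_1$, one gets $\ell(w)+i_{2n}(\alpha)=\mu_1+\tfrac{1}{2}\vert\nu\vert=\tfrac{1}{2}\vert\mu\vert$.

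I expect the main obstacle to be bookkeeping rather than anything conceptual: making sure the ``nonzero $\mu$'' and ``nonzero $\alpha$'' cases line up exactly so the induction neither skips nor double-counts elements, that the $w$ coming from Bott's algorithm indeed lies in $\fS$ and is consistent with the $w'$ of the smaller instance via the ``shift everything by $1$'' observation in the proof of Lemma~\ref{lem:s1}, and that uniqueness is verified at each stage (the forward map well-defined, the preimage unique). Once Lemmas~\ref{lem:s1} and~\ref{lem:s2} are in hand each of these is routine, so the genuine content of the proposition has already been isolated into those two lemmas.
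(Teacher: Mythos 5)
Your proposal is correct and follows essentially the same route as the paper: induction on $\vert\mu\vert$ using Lemma~\ref{lem:s1} for the forward map and the numerical identity, and Lemma~\ref{lem:s2} for surjectivity. The only point of divergence is injectivity, where the paper uses a short direct argument (if $\mu$ and $\mu'$ both map to $\alpha$, then $(\lambda\vert\mu)+\rho$ and $(\lambda\vert\mu')+\rho$ coincide as multisets, and since the entries to the right of the bar are strictly decreasing this forces $\mu=\mu'$), whereas you deduce it inductively from the uniqueness of the preimage of $\alpha\setminus R_\alpha$ together with $\vert R_\alpha\vert=2\mu_1$ --- both arguments are valid.
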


\begin{proof}
Let $\mu$ be an element of $S_1(\lambda)$ and let $w \in W$ be such that $w \bullet (\lambda \vert \mu)=\alpha$ is a partition.  We show by induction on $\vert \mu \vert$ that $\alpha$ belongs to $S_2(\lambda)$ and that $\ell(w)+i_{2n}(\alpha)=\tfrac{1}{2} \vert \mu \vert$.  For $\vert \mu \vert=0$ this is clear:  $w=1$ and $\alpha=\lambda$.  Suppose now that $\mu$ is non-empty.  In what follows, we tacitly employ Lemma~\ref{lem:s1}.  Let $\nu$ be the partition obtained by removing the first row and column of $\mu$.  Then $\nu$ belongs to $S_1(\lambda)$, and so we can choose $w' \in W$ such that $w' \bullet (\lambda \vert \nu)=\beta$ is a partition.  By induction we have $\tau_{2n}(\beta)=\lambda$ and $\ell(w')+i_{2n}(\beta)=\tfrac{1}{2} \vert \nu \vert$.  Since $\alpha \setminus R_{\alpha}=\beta$, we have $\tau_{2n}(\alpha)=\tau_{2n}(\beta)=\lambda$.  Furthermore, $i_{2n}(\alpha)=c(R_{\alpha})+i_{2n}(\beta)$, and so
\begin{displaymath}
i_{2n}(\alpha)=\mu_1+\ell(w')-\ell(w)+i_{2n}(\beta)=\tfrac{1}{2} \vert \mu \vert - \ell(w).
\end{displaymath}
This completes the induction.

We have thus shown that $\mu \mapsto \alpha$ defines a map of sets $S_1(\lambda) \to S_2(\lambda)$.  We now show that this map is injective. Suppose $\mu$ and $\mu'$ are two elements of $S_1(\lambda)$ that both map to $\alpha$. Then the sequences $(\lambda|\mu) + \rho$ and $(\lambda|\mu') + \rho$ are identical as multisets of numbers. In particular, we can rearrange the sequence of numbers to the right of the bar of $(\lambda | \mu) + \rho$ to get the sequence of numbers to the right of the bar of $(\lambda | \mu') + \rho$. But both of these sequences (to the right of the bar) are strictly decreasing, so we see that $\mu = \mu'$.

Finally, we show that $\mu \mapsto \alpha$ is surjective.  The partition $\alpha=\lambda$ has the empty partition as its preimage.  Suppose now that $\alpha \ne \lambda$ belongs to $S_2(\lambda)$, and let $\beta=\alpha \setminus R_{\alpha}$.  By induction on size, we can find $\nu \in S_1(\lambda)$ mapping to $\beta$.  Applying Lemma~\ref{lem:s2}, we find a partition $\mu \in S_1(\lambda)$ mapping to $\alpha$.  This completes the proof.
\end{proof}

\Step{b}
Let $E$ be a vector space of dimension at least $n$.  Let $X$ be the Grassmannian of rank $n$ quotients of $E$.  Let $\cR$ and $\cQ$ be the tautological bundles on $X$ as in \eqref{eqn:taut}.  Put $\eps=\lw^2(E) \otimes \cO_X$, $\xi=\lw^2{\cR}$ and define $\eta$ by the exact sequence
\begin{displaymath}
0 \to \xi \to \eps \to \eta \to 0.
\end{displaymath}
Finally, for a partition $\lambda$ with $\ell(\lambda) \le n$, put $\cM_{\lambda}=\Sym(\eta) \otimes \bS_{\lambda}(\cQ)$ and $M_{\lambda}=\rH^0(X, \cM_{\lambda})$.  Note that $A=\rH^0(X, \Sym(\eps))$, and so $M_{\lambda}$ is an $A$-module.

\begin{lemma}
Let $\lambda$ be a partition with $\ell(\lambda) \le n$ and $\mu \in S_1(\lambda)$ correspond to $\alpha \in S_2(\lambda)$.  Then
\begin{displaymath}
\rH^i(X, \bS_{\lambda}(\cQ) \otimes \bS_{\mu}(\cR))=\begin{cases}
\bS_{\alpha}(E) & \textrm{if $i=\tfrac{1}{2} \vert \mu \vert-i_{2n}(\alpha)$} \\
0 & \textrm{otherwise.}
\end{cases}
\end{displaymath}
\end{lemma}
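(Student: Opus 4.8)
The plan is to read this lemma off directly from the Borel--Weil--Bott theorem of \S\ref{ss:bott}, using Proposition~\ref{s:stepa} to translate Bott's output into the modification-rule data $i_{2n}$ and $\tau_{2n}$.

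First, observe that the bundle $\cV=\bS_{\lambda}(\cQ)\otimes\bS_{\mu}(\cR)$ is precisely of the shape to which Borel--Weil--Bott applies, the relevant sequence being $(\lambda\mid_n\mu)=(\lambda\vert\mu)$. By the definition of $S_1(\lambda)$, the sequence $(\lambda\vert\mu)$ is regular, so there is a unique $w\in\fS$ with $w\bullet(\lambda\vert\mu)$ a partition; Borel--Weil--Bott then gives $\rH^i(X,\cV)=\bS_{w\bullet(\lambda\vert\mu)}(E)$ concentrated in cohomological degree $i=\ell(w)$, and $0$ otherwise. It remains only to identify the partition $w\bullet(\lambda\vert\mu)$ and the integer $\ell(w)$.

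This is where Step (a) does the work. By the very definition of the bijection $S_1(\lambda)\to S_2(\lambda)$ in Proposition~\ref{s:stepa}, the element $\mu$ is sent to $\alpha$ exactly because $w\bullet(\lambda\vert\mu)=\alpha$; hence $w\bullet(\lambda\vert\mu)=\alpha$. The same proposition asserts $\ell(w)+i_{2n}(\alpha)=\tfrac{1}{2}\vert\mu\vert$, that is, $\ell(w)=\tfrac{1}{2}\vert\mu\vert-i_{2n}(\alpha)$. Plugging these two identities into the Borel--Weil--Bott output gives exactly the asserted formula.

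Two minor points round this out. First, one should check the relevant cohomology is not accidentally forced to vanish by the size of $E$: if $\mu=\emptyset$ then $\alpha=\lambda$ and $\ell(\lambda)\le n\le\dim E$, while if $\mu\ne\emptyset$ then $\ell(\mu)=\mu_1+1$ (since $\mu\in Q_{-1}$) and $\ell(\alpha)=n+\mu_1+1$ (from Lemma~\ref{lem:s1}), so $\bS_{\mu}(\cR)=0$ iff $\ell(\mu)>\rank\cR=\dim E-n$ iff $\bS_{\alpha}(E)=0$; in that regime both sides of the equality are $0$, and otherwise no truncation occurs and the theorem applies cleanly. Second, and this is the only thing worth emphasizing: there is no genuine obstacle at this stage, since all of the combinatorics — matching regularity of $(\lambda\vert\mu)$ with the recursive structure of the modification rule, identifying the output partition as $\alpha$, and computing $\ell(w)$ as the complement of $i_{2n}(\alpha)$ inside $\tfrac{1}{2}\vert\mu\vert$ — was already carried out in Lemmas~\ref{lem:s1} and~\ref{lem:s2} and Proposition~\ref{s:stepa}. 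The present lemma is just the act of feeding that dictionary into Borel--Weil--Bott.
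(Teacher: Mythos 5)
Your proof is correct and is essentially the paper's own argument: the paper's proof is exactly ``apply Borel--Weil--Bott and feed in Proposition~\ref{s:stepa}.'' The extra check that $\bS_{\mu}(\cR)$ and $\bS_{\alpha}(E)$ vanish simultaneously when $\dim E$ is small is a nice touch not made explicit in the paper, but it does not change the route.
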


\begin{proof}
This follows immediately from Proposition~\ref{s:stepa} and the Borel--Weil--Bott theorem.
\end{proof}

\begin{lemma}
Let $\lambda$ be a partition with $\ell(\lambda) \le n$ and let $i$ be an integer.  We have
\begin{displaymath}
\bigoplus_{j \in \bZ} \rH^j(X, \lw^{i+j}(\xi) \otimes \bS_{\lambda}(\cQ)) = \bigoplus_{\alpha} \bS_{\alpha}(E),
\end{displaymath}
where the sum is over partitions $\alpha$ with $\tau_{2n}(\alpha)=\lambda$ and $i_{2n}(\alpha)=i$.  In particular, when $i<0$ the left side above vanishes.
\end{lemma}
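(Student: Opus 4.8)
The plan is to decompose $\lw^{\bullet}(\xi) = \lw^{\bullet}(\lw^2 \cR)$ using the plethysm $\lw^{\bullet}(\lw^2 \cR) = \bigoplus_{\mu \in Q_{-1}} \bS_{\mu}(\cR)$ recalled in Step (a), keeping track of the homological degree. Since $\lw^{k}(\lw^2 E)$ is spanned by those $\bS_{\mu}(E)$ with $\mu \in Q_{-1}$ and $|\mu| = 2k$, tensoring with $\bS_{\lambda}(\cQ)$ and taking $\rH^j$ gives
\begin{displaymath}
\rH^j(X, \lw^{i+j}(\xi) \otimes \bS_{\lambda}(\cQ)) = \bigoplus_{\substack{\mu \in Q_{-1} \\ |\mu| = 2(i+j)}} \rH^j(X, \bS_{\lambda}(\cQ) \otimes \bS_{\mu}(\cR)).
\end{displaymath}
First I would apply the preceding lemma (the Borel--Weil--Bott computation): the summand indexed by $\mu$ contributes $\bS_{\alpha}(E)$, where $\alpha \in S_2(\lambda)$ is the partition corresponding to $\mu$ under the bijection of Proposition~\ref{s:stepa}, and this contribution sits in cohomological degree $j = \tfrac{1}{2}|\mu| - i_{2n}(\alpha)$; if $\mu \notin S_1(\lambda)$ then $(\lambda \vert \mu)$ is singular and the contribution vanishes, so only $\mu \in S_1(\lambda)$ matter.

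Next I would sum over all $j$ and reorganize the sum so that it is indexed by $\alpha$ rather than by $(\mu, j)$. For a fixed $\mu \in S_1(\lambda)$ with corresponding $\alpha$, Proposition~\ref{s:stepa} gives $\ell(w) + i_{2n}(\alpha) = \tfrac{1}{2}|\mu|$ where $w$ is the Bott element, and the cohomological degree in which $\bS_{\alpha}(E)$ appears is $j = \ell(w)$; the constraint $|\mu| = 2(i+j)$ then becomes $i + j = \tfrac{1}{2}|\mu| = \ell(w) + i_{2n}(\alpha) = j + i_{2n}(\alpha)$, i.e.\ $i = i_{2n}(\alpha)$. So for each $\alpha \in S_2(\lambda)$ with $i_{2n}(\alpha) = i$, its unique preimage $\mu$ under the bijection contributes exactly one copy of $\bS_{\alpha}(E)$ to $\bigoplus_j \rH^j(X, \lw^{i+j}(\xi) \otimes \bS_{\lambda}(\cQ))$, and no other $\alpha$ contributes. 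This gives the claimed identity. Finally, since $i_{2n}(\alpha) \ge 0$ always (it is either $0$, or a sum of column counts, or $\infty$), there are no partitions $\alpha$ with $i_{2n}(\alpha) = i < 0$, so the left side vanishes for $i < 0$.

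The main bookkeeping obstacle is making sure the index substitution $(\mu, j) \leftrightarrow \alpha$ is genuinely a bijection onto the right-hand sum and that no cancellation or overcounting occurs: this is precisely what Proposition~\ref{s:stepa} supplies, so once that is in hand the argument is essentially a rewriting of indices. One should also note that a priori the bijection in Proposition~\ref{s:stepa} is phrased using $w \in \fS$ (the type $\rA$ group arising in Bott's algorithm), whereas $i_{2n}$ is defined via the larger group $W$ of type $\rB\rC_{\infty}$; but this is harmless here because the relation $\ell(w) + i_{2n}(\alpha) = \tfrac12|\mu|$ in Proposition~\ref{s:stepa} already bridges the two, so no further reconciliation is needed. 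The vanishing for $i < 0$ is then the only additional input beyond the previous lemma, and it is immediate from the nonnegativity of $i_{2n}$.
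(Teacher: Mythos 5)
Your proposal is correct and follows the same route as the paper: decompose $\lw^{i+j}(\xi)$ via the plethysm $\lw^{\bullet}(\lw^2\cR)=\bigoplus_{\mu\in Q_{-1}}\bS_{\mu}(\cR)$, apply the preceding Borel--Weil--Bott lemma to each summand, and reindex via the bijection of Proposition~\ref{s:stepa}. The only (harmless) difference is that you make explicit the vanishing for singular $(\lambda\vert\mu)$ and the nonnegativity of $i_{2n}$, which the paper leaves implicit.
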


\begin{proof}
We have
\begin{displaymath}
\lw^{i+j}(\xi)=\lw^{i+j}(\lw^2(\cR))=\bigoplus_{\substack{\mu \in Q_{-1},\\ \vert \mu \vert=2(i+j)}} \bS_{\mu}(\cR),
\end{displaymath}
and so
\begin{displaymath}
\bigoplus_{j \in \bZ} \rH^j(X, \lw^{i+j}(\xi) \otimes \bS_{\lambda}(\cQ))
= \bigoplus_{\mu \in Q_{-1}} \rH^{\vert \mu \vert/2-i}(X, \bS_{\mu}(\cR) \otimes \bS_{\lambda}(\cQ)).
\end{displaymath}
The result now follows from the previous lemma.
\end{proof}

\begin{proposition}
\label{s:stepb}
We have
\begin{displaymath}
\Tor^A_i(M_{\lambda}, \bC)=\bigoplus_{\alpha} \bS_{\alpha}(E),
\end{displaymath}
where the sum is over partitions $\alpha$ with $\tau_{2n}(\alpha)=\lambda$ and $i_{2n}(\alpha)=i$.
\end{proposition}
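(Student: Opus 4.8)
The plan is to feed the data assembled in Step~(b) directly into the geometric technique of Proposition~\ref{geo}. Take $X$ to be the Grassmannian of rank $n$ quotients of $E$, which is smooth and projective; take the trivial bundle $\eps=\lw^2(E)\otimes\cO_X$, its subbundle $\xi=\lw^2\cR$ with quotient $\eta$, and the auxiliary bundle $\cV=\bS_{\lambda}(\cQ)$. With these choices $A=\rH^0(X,\Sym(\eps))$ and $\rH^0(X,\Sym(\eta)\otimes\cV)=M_{\lambda}$ hold by definition, so the setup of Proposition~\ref{geo} is matched verbatim, and it remains only to check its hypothesis.

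First I would observe that the hypothesis of Proposition~\ref{geo} --- namely $\rH^j(X,\lw^{i+j}(\xi)\otimes\bS_{\lambda}(\cQ))=0$ for $i<0$ and all $j$ --- is precisely the final sentence of the lemma immediately preceding this proposition. Granting this, Proposition~\ref{geo} yields a natural isomorphism
\begin{displaymath}
\Tor^A_i(M_{\lambda},\bC)=\bigoplus_{j\ge 0}\rH^j(X,\lw^{i+j}(\xi)\otimes\bS_{\lambda}(\cQ)).
\end{displaymath}
Since sheaf cohomology on the variety $X$ vanishes in negative degrees, the sum over $j\ge 0$ coincides with the sum over all $j\in\bZ$. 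That latter sum was computed in the preceding lemma to be $\bigoplus_{\alpha}\bS_{\alpha}(E)$, the sum running over partitions $\alpha$ with $\tau_{2n}(\alpha)=\lambda$ and $i_{2n}(\alpha)=i$, which is exactly the asserted formula.

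In short, the real work --- the bijection of Proposition~\ref{s:stepa}, the Borel--Weil--Bott computation, and the plethysm bookkeeping $\lw^{\bullet}(\lw^2(E))=\bigoplus_{\mu\in Q_{-1}}\bS_{\mu}(E)$ --- has all been carried out in the two preceding lemmas; the only points to verify here are that the chosen data genuinely satisfy the hypotheses of Proposition~\ref{geo} and that passing from $j\ge 0$ to $j\in\bZ$ is harmless. I therefore expect no real obstacle at this step: the proposition is a formal consequence of what has already been established.
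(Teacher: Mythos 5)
Your proposal is correct and follows exactly the paper's argument, which likewise deduces the proposition immediately from the preceding lemma together with Proposition~\ref{geo}. The extra points you verify --- that the hypothesis of Proposition~\ref{geo} is the vanishing statement at the end of the preceding lemma, and that the sums over $j \ge 0$ and $j \in \bZ$ agree --- are exactly the (routine) checks left implicit in the paper.
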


\begin{proof}
This follows immediately from the previous lemma and Proposition~\ref{geo}.
\end{proof}

\Step{c}
For a partition $\lambda$ with at most $n$ parts put $B_{\lambda}=\Hom_{\Sp(V)}(\bS_{[\lambda]}(V), B)$.  Note that $B_{\lambda}$ is an $A$-module and has a compatible action of $\GL(E)$.  Our goal is to show that $B_{\lambda}$ is isomorphic to $M_{\lambda}$.

\begin{lemma} \label{lem:B=M}
The spaces $B_{\lambda}$ and $M_{\lambda}$ are isomorphic as representations of $\GL(E)$ and have finite multiplicities.
\end{lemma}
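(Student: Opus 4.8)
The plan is to compute the characters of $B_\lambda$ and of $M_\lambda$ as representations of $\GL(E)$ and show they coincide; since both are algebraic (hence semisimple) representations of $\GL(E)$, equality of characters forces $B_\lambda\cong M_\lambda$, and finiteness of multiplicities will fall out along the way. For the latter: for $M_\lambda=\rH^0(X,\Sym(\eta)\otimes\bS_\lambda(\cQ))$, the sheaf $\Sym^d(\eta)\otimes\bS_\lambda(\cQ)$ is built from $E$ ``in degree $2d+\lvert\lambda\rvert$'' as a $\GL(E)$-equivariant sheaf (since $\eta$ is a quotient of $\lw^2 E\otimes\cO_X$ and $\cQ$ of $E\otimes\cO_X$), so its global sections involve only those $\bS_\nu(E)$ with $\lvert\nu\rvert=2d+\lvert\lambda\rvert$; thus a fixed $\bS_\nu(E)$ occurs in $M_\lambda$ only inside the single finite-dimensional graded piece $\rH^0(X,\Sym^{(\lvert\nu\rvert-\lvert\lambda\rvert)/2}(\eta)\otimes\bS_\lambda(\cQ))$, hence with finite multiplicity. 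For $B_\lambda$, the Cauchy identity $B=\Sym(E\otimes V)=\bigoplus_\nu\bS_\nu(E)\otimes\bS_\nu(V)$ shows $\bS_\nu(E)$ occurs in $B$ only in $\Sym^{\lvert\nu\rvert}(E\otimes V)$, so with finite multiplicity in $B_\lambda$.

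Next I would compute $[M_\lambda]$. The module $M_\lambda$ is a finitely generated graded module over the polynomial ring $A=\Sym(\lw^2 E)$ (finite generation because $M_\lambda=\rH^0(X,\cM_\lambda)$ with $X$ projective and $A$ Noetherian), so it has a finite $\GL(E)$-equivariant graded free resolution, and therefore in the (suitably completed) representation ring of $\GL(E)$ one has
\[
[M_\lambda]=[A]\cdot\sum_i(-1)^i[\Tor^A_i(M_\lambda,\bC)].
\]
By Proposition~\ref{s:stepb}, $\sum_i(-1)^i[\Tor^A_i(M_\lambda,\bC)]=\sum_{\alpha}(-1)^{i_{2n}(\alpha)}[\bS_\alpha(E)]$, the sum over partitions $\alpha$ with $\tau_{2n}(\alpha)=\lambda$. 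On the other hand $[A]=\sum_{\delta}[\bS_\delta(E)]$, the sum over partitions $\delta$ with $\delta^{\dag}$ even (the plethysm for $\Sym(\lw^2 E)$; equivalently, $[A]$ is the inverse of $\sum_{\mu\in Q_{-1}}(-1)^{\lvert\mu\rvert/2}[\bS_\mu(E)]$ since $\lw^{\bullet}(\lw^2 E)$ resolves $\bC$ over $A$). Multiplying and expanding in the Schur basis with the Littlewood--Richardson rule gives
\[
[M_\lambda]=\sum_\nu\Bigl(\sum(-1)^{i_{2n}(\alpha)}c^\nu_{\alpha,\delta}\Bigr)[\bS_\nu(E)],
\]
where the inner sum runs over pairs $(\alpha,\delta)$ with $\tau_{2n}(\alpha)=\lambda$ and $\delta^{\dag}$ even.

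For $[B_\lambda]$, the Cauchy decomposition gives $[B_\lambda]=\sum_\nu m_\nu[\bS_\nu(E)]$, where $m_\nu$ is the multiplicity of $\bS_{[\lambda]}(V)$ in the restriction of $\bS_\nu(V)$ to $\Sp(V)$. Thus $[B_\lambda]=[M_\lambda]$ is equivalent to the family of identities
\[
m_\nu=\sum(-1)^{i_{2n}(\alpha)}c^\nu_{\alpha,\delta}\qquad\text{for all }\nu,
\]
the sum again over $(\alpha,\delta)$ with $\tau_{2n}(\alpha)=\lambda$ and $\delta^{\dag}$ even, and this is exactly what I would extract from \cite{koiketerada}: in the universal character ring one has the universal Littlewood restriction $s_\nu=\sum_\mu\bigl(\sum_{\delta^{\dag}\ \mathrm{even}}c^\nu_{\mu,\delta}\bigr)s_{[\mu]}$, and the specialization homomorphism sends $s_{[\mu]}$ to $(-1)^{i_{2n}(\mu)}[\bS_{[\tau_{2n}(\mu)]}(V)]$ if $i_{2n}(\mu)<\infty$ and to $0$ otherwise (using the modification rule of \cite[\S2.4]{koiketerada} together with its identification with $i_{2n},\tau_{2n}$ in \S\ref{s:modrule}). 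Applying the specialization to the first relation and reading off the coefficient of $[\bS_{[\lambda]}(V)]$ --- which is unambiguous since distinct admissible partitions label non-isomorphic irreducibles of $\Sp(V)$ --- produces precisely the displayed formula for $m_\nu$. Hence $[B_\lambda]=[M_\lambda]$, and therefore $B_\lambda\cong M_\lambda$ as $\GL(E)$-representations.

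The step I expect to be the main obstacle is the last one: confirming that the identity we need is exactly the one supplied by \cite{koiketerada}, with every convention correctly matched --- the ``$\delta^{\dag}$ even'' condition appearing on both sides, the sign $(-1)^{i_{2n}}$, and the precise form of the modification rule --- rather than something off by a transpose or a sign. A secondary technical point is that the Euler-characteristic identity for $[M_\lambda]$ must be read in a completed Grothendieck ring; this is justified by the finiteness of the $\GL(E)$-equivariant free resolution of the finitely generated $A$-module $M_\lambda$ over the regular ring $A$, with the relevant infinite sums converging in each fixed polynomial degree.
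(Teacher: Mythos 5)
Your proposal is correct and follows essentially the same route as the paper: both compute $[M_\lambda]$ via the Euler characteristic $[A]\sum_i(-1)^i[\Tor^A_i(M_\lambda,\bC)]$ using Proposition~\ref{s:stepb} and the plethysm for $\Sym(\lw^2 E)$, and both identify $[B_\lambda]$ via the Cauchy decomposition together with the specialized Littlewood restriction rule of Koike--Terada (the paper cites \cite[Thm.~2.3.1(1)]{koiketerada} and the equivalence of modification rules from \S\ref{s:modrule}, exactly the input you anticipate). The only cosmetic difference is that the paper compares the $\bS_\theta(E)$-multiplicity spaces of $B$ and of $M=\bigoplus_\lambda M_\lambda\otimes\bS_{[\lambda]}(V)$ as $\Sp(V)$-representations, whereas you compare the $\bS_\nu(E)$-coefficients of $B_\lambda$ and $M_\lambda$ for fixed $\lambda$; these are equivalent bookkeeping choices.
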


\begin{proof}
Let $M=\bigoplus_{\ell(\lambda) \le n} M_{\lambda} \otimes \bS_{[\lambda]}(V)$.  It is enough to show that $M$ and $B$ are isomorphic as representations of $\GL(E) \times \Sp(V)$ and have finite multiplicities.  In fact, it is enough to show that the $\bS_{\theta}(E)$ multiplicity spaces of $M$ and $B$ are isomorphic as representations of $\Sp(V)$ and have finite multiplicities.  This is what we do.

The $\bS_{\theta}(E)$ multiplicity space of $B$ is $\bS_{\theta}(V)$.  The decomposition of this in the representation ring of $\Sp(V)$ can be computed by applying the specialization homomorphism to \cite[Thm.~2.3.1(1)]{koiketerada}.  The result is
\begin{displaymath}
\sum_{\mu, \nu} (-1)^{i_{2n}(\nu)} c^{\theta}_{(2\mu)^{\dag},\nu} [\bS_{[\tau_{2n}(\nu)]}(V)].
\end{displaymath}
Note that for a fixed $\theta$ there are only finitely many values for $\mu$ and $\nu$ which make the Littlewood--Richardson coefficient non-zero, which establishes finiteness of the multiplicities.  Now, we have an equality
\begin{displaymath}
[M_{\lambda}]=[A] \sum_{i \ge 0} (-1)^i [\Tor^A_i(M_{\lambda}, \bC)]
\end{displaymath}
in the representation ring of $\GL(E)$.  Applying Proposition~\ref{s:stepb}, we find
\begin{displaymath}
\sum_{i \ge 0} (-1)^i [\Tor^A_i(M_{\lambda}, \bC)] = \sum_{\tau_{2n}(\nu)=\lambda} (-1)^{i_{2n}(\nu)} [\bS_{\nu}(E)].
\end{displaymath}
As $[A]=\sum_{\mu} [\bS_{(2\mu)^{\dag}}(E)]$ (see \cite[I.A.7, Ex.~2]{macdonald}), we obtain
\begin{displaymath}
[M_{\lambda}]=\sum_{\substack{\nu, \mu, \theta\\ \tau_{2n}(\nu)=\lambda}}(-1)^{i_{2n}(\nu)} c^{\theta}_{(2\mu)^{\dag},\nu} [\bS_{\theta}(E)].
\end{displaymath}
We therefore find that the $\bS_{\theta}(E)$-component of $M$ is given by
\begin{displaymath}
\sum_{\substack{\lambda,\mu\\ \tau_{2n}(\nu)=\lambda}} (-1)^{i_{2n}(\nu)} c^{\theta}_{(2\mu)^{\dag},\nu} [\bS_{[\lambda]}(V)].
\end{displaymath}
The result now follows.
\end{proof}

\begin{proposition}
\label{s:stepc}
We have an isomorphism $M_{\lambda} \to B_{\lambda}$ which is $A$-linear and $\GL(E)$-equivariant.
\end{proposition}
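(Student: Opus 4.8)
The plan is to produce a single canonical $A$-linear, $\GL(E)$-equivariant map between $M_\lambda$ and $B_\lambda$ and then promote it to an isomorphism by the character identity of Lemma~\ref{lem:B=M}: a $\GL(E)$-equivariant map between two $\GL(E)$-modules with the same \emph{finite} multiplicities is an isomorphism as soon as it is injective (equivalently surjective), since it is then injective on each finite-dimensional isotypic component. I may enlarge $E$, so fix $\dim E\ge 2n$. The map will arise by presenting both sides as quotients of $P:=A\otimes\bS_\lambda(E)$, say $M_\lambda=P/N$ and $B_\lambda=P/N'$, and showing $N=N'$; by Lemma~\ref{lem:B=M} one has $[N]=[N']$ as $\GL(E)$-characters with finite multiplicities, so it is enough to prove one of the inclusions $N\subseteq N'$ or $N'\subseteq N$.

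First I would establish the two presentations. By the geometric technique $M_\lambda$ is a finitely generated graded $A$-module, and by Proposition~\ref{s:stepb} $\Tor^A_0(M_\lambda,\bC)=\bigoplus\bS_\alpha(E)$ over $\alpha$ with $\tau_{2n}(\alpha)=\lambda$ and $i_{2n}(\alpha)=0$; since $i_{2n}(\alpha)=0$ forces $\ell(\alpha)\le n$ and $\tau_{2n}(\alpha)=\alpha$, and $\ell(\lambda)\le n$, this is just $\bS_\lambda(E)$, so $M_\lambda\otimes_A\bC=\bS_\lambda(E)$. On the other side, $B_\lambda=\Hom_{\Sp(V)}(\bS_{[\lambda]}(V),B)$ is finitely generated over $B^{\Sp(V)}$ (invariants of a finitely generated module under a reductive group), hence over $A$; and because $\Hom_{\Sp(V)}(\bS_{[\lambda]}(V),-)$ is exact and $A_+\subset B$ is $\Sp(V)$-trivial, $B_\lambda\otimes_A\bC=\Hom_{\Sp(V)}(\bS_{[\lambda]}(V),C)=\bS_\lambda(E)$ by \eqref{s:cring}. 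By graded Nakayama both are generated by one copy of $\bS_\lambda(E)$ in degree $|\lambda|$, giving the surjections $P\to M_\lambda$ and $P\to B_\lambda$ (the latter being the $A$-linear extension of the unique-up-to-scalar embedding $\bS_\lambda(E)\hookrightarrow B_\lambda$ singled out by the Cauchy summand $\bS_\lambda(E)\otimes\bS_\lambda(V)\subset B$ and the multiplicity-one occurrence of $\bS_{[\lambda]}(V)$ in $\bS_\lambda(V)$).

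To see $N=N'$ I would localize at the generic point of $\Spec R$, where $R=B^{\Sp(V)}$. By the symplectic first and second fundamental theorems $R=A/I$ with $I$ the ideal of $(2n{+}2)\times(2n{+}2)$ Pfaffians, and $M_\emptyset=\rH^0(X,\Sym\eta)$ is cyclic with $\Tor^A_1(M_\emptyset,\bC)=\lw^{2n+2}E$ (the only $\alpha$ with $\tau_{2n}(\alpha)=\emptyset$, $i_{2n}(\alpha)=1$ is $(1^{2n+2})$, occurring once in $\Sym^{n+1}(\lw^2E)$ as the span of the Pfaffians), hence $M_\emptyset\cong A/I=R$; so both $M_\lambda$ and $B_\lambda$ are $R$-modules, and both are torsion-free over the normal domain $R$ ($B_\lambda\subset B$, a domain; and $M_\lambda$ is the pushforward of a locally free sheaf along the dominant map from the irreducible total space $\operatorname{Tot}_X(\eta^*)$ to $\Spec R$). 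Since $\dim E\ge 2n$, $\Spec R$ has a dense $\GL(E)$-orbit $\mathcal O$, the forms $\psi$ of rank exactly $2n$, with stabilizer $H=\operatorname{Stab}_{\GL(E)}(\psi_0)$ acting through the symplectic group of the nondegenerate part $W_0$ of $\psi_0$. Over $\psi_0$, the fiber of the collapsing is a Lagrangian Grassmannian of $W_0$, and base change plus Borel--Weil give $(M_\lambda)_{\psi_0}=\rH^0(\IGr(n,W_0),\bS_\lambda(\cQ))=\bS_{[\lambda]}(W_0)$; the fiber of $\Spec B\to\Spec R$ over $\psi_0$ is an $\Sp(V)$-torsor (two maps $E\to V$ with equal rank-$2n$ pullback of $\omega$ differ by a unique element of $\Sp(V)$), so Peter--Weyl and self-duality of symplectic representations give $(B_\lambda)_{\psi_0}\cong\bS_{[\lambda]}(W_0)$ as $H$-modules. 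Finally $\bS_{[\lambda]}(W_0)$ occurs with multiplicity one in $\bS_\lambda(E)|_H$ (decompose $\bS_\lambda(E)=\bS_\lambda(W_0\oplus\ker\psi_0)$ and apply the Littlewood restriction rule), so the two surjections $P\to M_\lambda$, $P\to B_\lambda$ have the same kernel at $\psi_0$; thus $N|_{\mathcal O}=N'|_{\mathcal O}$, and torsion-freeness over $R$ forces $N=N'$.

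The main obstacle is this last paragraph: verifying cohomology base change so that $(M_\lambda)_{\psi_0}$ really is the cohomology of $\bS_\lambda(\cQ)$ on the Lagrangian Grassmannian fiber, and the multiplicity-one computation that pins down the common kernel. Everything else --- the two cyclicity statements, the identification $M_\emptyset\cong R$, and the final ``injective-or-surjective $\Rightarrow$ isomorphism'' step --- is formal given Lemma~\ref{lem:B=M} and Proposition~\ref{s:stepb}. (Alternatively one could build the map directly from the sheaf surjection $\Sym(\lw^2 E)\otimes\cO_X\to\Sym\eta$ together with the inclusion $A\to B$ and check that it kills $N$, but the relation check again reduces to the same generic computation.) Summing the resulting canonical isomorphisms over $\lambda$ gives $B\cong\bigoplus_\lambda M_\lambda\otimes\bS_{[\lambda]}(V)$ as $A$-modules, which is what feeds, via \eqref{s:tor} and Proposition~\ref{s:stepb}, into the proof of Theorem~\ref{s:mainthm}.
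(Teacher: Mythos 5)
Your overall frame is the same as the paper's: realize both $M_\lambda$ and $B_\lambda$ as cyclic quotients of $P=A\otimes\bS_\lambda(E)$ (using Proposition~\ref{s:stepb} for $i=0$ on one side and \eqref{s:cring} on the other), and use the character identity of Lemma~\ref{lem:B=M} to upgrade a single kernel inclusion to equality. Where you diverge is the mechanism for that inclusion, and here the paper has a much cheaper trick that you missed: Proposition~\ref{s:stepb} with $i=1$ says the relations of $M_\lambda$ are generated by the single irreducible $\bS_{(\lambda,1^{2n+2-2\ell(\lambda)})}(E)$, which by Pieri occurs with multiplicity \emph{one} in $P$; the character identity then forces it not to occur in $B_\lambda$ at all, so the surjection $P\to B_\lambda$ automatically kills the relations of $M_\lambda$ and factors through it. No geometry, no relation check. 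Your route instead passes to the open orbit of rank-$2n$ forms in the Pfaffian variety $\Spec R$, identifies the fiber of $M_\lambda$ via the Lagrangian Grassmannian collapsing and Borel--Weil, identifies the fiber of $B_\lambda$ via the $\Sp(V)$-torsor structure and Peter--Weyl, pins down the common kernel by the multiplicity-one of $\bS_{[\lambda]}(W_0)$ in $\bS_\lambda(E)|_H$, and spreads out using torsion-freeness over $R$. I believe this works: the torsion-freeness claims are sound, the multiplicity-one claim is correct (even the naive bound $\Hom_H\subseteq\Hom_{\Sp(W_0)\times\GL(K)}$ suffices, since only the $\nu=\emptyset$ summand of $\bS_\lambda(W_0\oplus K)$ can map to a $\GL(K)$-trivial target, and stable Littlewood restriction gives multiplicity one there), and the base-change and local-freeness issues you flag are handled by generic flatness plus homogeneity of the orbit. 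But each of these is a lemma you would have to write out, whereas the paper's argument is two sentences given Proposition~\ref{s:stepb}. What your approach buys in exchange is genuine extra content: a description of $M_\lambda$ and $B_\lambda$ as generically locally free sheaves on the Pfaffian variety with fiber $\bS_{[\lambda]}(W_0)$, and a concrete identification of the isomorphism rather than an abstract factorization; none of that is needed for Theorem~\ref{s:mainthm}, but it is a correct and more geometric picture.
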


\begin{proof}
According to Proposition~\ref{s:stepb}, we have 
\begin{align*}
\Tor^A_0(M_{\lambda}, \bC) &=\bS_{\lambda}(E),\\
\Tor^A_1(M_\lambda, \bC) &=\bS_{(\lambda,1^{2n+2-2\ell(\lambda)})}(E),
\end{align*}
since the only $\nu$ for which $\tau_{2n}(\nu) = \lambda$ and $i_{2n}(\nu) \le 1$ must agree with $\lambda$ everywhere except possibly the first column. We therefore have a presentation
\begin{displaymath}
\bS_{(\lambda,1^{2n+2-2\ell(\lambda)})}(E) \otimes A \to \bS_{\lambda}(E) \otimes A \to M_{\lambda} \to 0.
\end{displaymath}
Note that $\bS_{(\lambda, 1^{2n+2-2\ell(\lambda)})}(E)$ occurs with multiplicity one in $\bS_{\lambda}(E) \otimes A$, and thus does not occur in $M_{\lambda}$; it therefore does not occur in $B_{\lambda}$ either, since $M_{\lambda}$ and $B_{\lambda}$ are isomorphic as representations of $\GL(E)$ by Lemma~\ref{lem:B=M}.

Now, $\Tor^A_0(B, \bC)$ is the coordinate ring of the Littlewood variety, and its $\bS_{[\lambda]}(V)$ multiplicity space is $\bS_{\lambda}(E)$ by \eqref{s:cring}.  We therefore have a surjection $f \colon \bS_{\lambda}(E) \otimes A \to B_{\lambda}$.  Since $\bS_{(\lambda,1^{2n+2-2\ell(\lambda)})}(E)$ does not occur in $B_{\lambda}$, the copy of $\bS_{(\lambda,1^{2n+2-2\ell(\lambda)})}(E)$ in $\bS_{\lambda}(E) \otimes A$ lies in the kernel of $f$, and therefore $f$ induces a surjection $M_{\lambda} \to B_{\lambda}$.  Finally, since the two are isomorphic as $\GL(E)$ representations and have finite multiplicity spaces, this surjection is an isomorphism.
\end{proof}

Combining this proposition with Proposition~\ref{s:stepb}, we obtain the following corollary.

\begin{corollary}
We have
\begin{displaymath}
\Tor^A_i(B, \bC)=\bigoplus_{i_{2n}(\lambda)=i} \bS_{\lambda}(E) \otimes \bS_{[\tau_{2n}(\lambda)]}(V).
\end{displaymath}
\end{corollary}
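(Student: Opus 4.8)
The plan is to decompose $B$ under $\Sp(V)$ into isotypic pieces, recognize this as a decomposition of $A$-modules, and then feed each piece through Propositions~\ref{s:stepc} and~\ref{s:stepb}.

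First I would note that $\Sp(V)$ is reductive and $B=\Sym(E \otimes V)$ is a polynomial $\GL(E) \times \Sp(V)$-representation, so $B$ is a semisimple $\Sp(V)$-module and hence
\[
B = \bigoplus_{\ell(\lambda) \le n} B_\lambda \otimes \bS_{[\lambda]}(V), \qquad B_\lambda = \Hom_{\Sp(V)}(\bS_{[\lambda]}(V), B).
\]
The crucial observation is that the structure map $A \to B$ has image inside $B^{\Sp(V)}$, so the $A$-action commutes with $\Sp(V)$ and the displayed decomposition is one of $A$-modules (with $A$ and each $B_\lambda$ carrying the trivial $\Sp(V)$-action, and $\Sp(V)$ acting only on the factor $\bS_{[\lambda]}(V)$).

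Next, $\Tor^A_i(-,\bC)$ commutes with arbitrary direct sums and with $-\otimes_{\bC} W$ for any complex vector space $W$, so
\[
\Tor^A_i(B, \bC) = \bigoplus_{\ell(\lambda) \le n} \Tor^A_i(B_\lambda, \bC) \otimes \bS_{[\lambda]}(V).
\]
By Proposition~\ref{s:stepc} there is an $A$-linear, $\GL(E)$-equivariant isomorphism $M_\lambda \cong B_\lambda$, hence $\Tor^A_i(B_\lambda, \bC) \cong \Tor^A_i(M_\lambda, \bC)$, and by Proposition~\ref{s:stepb} this equals $\bigoplus_\alpha \bS_\alpha(E)$ with $\alpha$ ranging over partitions satisfying $\tau_{2n}(\alpha)=\lambda$ and $i_{2n}(\alpha)=i$. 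Substituting and summing over $\lambda$ amounts to reindexing the resulting double sum by the single partition $\alpha$: every partition $\alpha$ with $i_{2n}(\alpha)=i$ occurs for exactly one admissible $\lambda$, namely $\lambda=\tau_{2n}(\alpha)$. Renaming $\alpha$ back to $\lambda$ yields
\[
\Tor^A_i(B, \bC) = \bigoplus_{i_{2n}(\lambda) = i} \bS_\lambda(E) \otimes \bS_{[\tau_{2n}(\lambda)]}(V),
\]
as asserted.

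I do not expect any real obstacle here: all the substance is contained in Propositions~\ref{s:stepc} and~\ref{s:stepb}, and the only point deserving a line of justification is that the $\Sp(V)$-isotypic decomposition of $B$ respects the $A$-module structure, which is immediate since $A$ maps into $B^{\Sp(V)}$. As a bonus, comparing the $\bS_\lambda(E)$-multiplicity spaces on both sides of this corollary with the decomposition \eqref{s:tor} recovers Theorem~\ref{s:mainthm} at once.
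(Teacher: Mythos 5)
Your proof is correct and is exactly the paper's argument: the paper's proof is the single sentence ``Combining this proposition with Proposition~\ref{s:stepb}, we obtain the following corollary,'' and your write-up just fills in the implicit steps (the $\Sp(V)$-isotypic decomposition of $B$ as $A$-modules, which the paper sets up at the start of Step c, followed by the reindexing over $\alpha$). Nothing further is needed.
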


Combining this with \eqref{s:tor} yields the main theorem.  (We can choose $E$ to be arbitrarily large.)

\begin{remark}
The arguments of step~c made no use of the construction of the module $M_{\lambda}$, simply that it satisfied Proposition~\ref{s:stepb}.  More precisely, say that a $\GL(E)$-equivariant $A$-module $M$ is of ``type $\lambda$'' (for an admissible partition $\lambda$) if
\begin{displaymath}
\Tor^A_i(M, \bC)=\bigoplus_{\alpha} \bS_{\alpha}(E),
\end{displaymath}
where the sum is over all partitions $\alpha$ with $\tau_{2n}(\alpha)=\lambda$ and $i_{2n}(\alpha)=i$.  Then the arguments of step~c establish the following statement:  if a type $\lambda$ module exists then it is isomorphic to $B_{\lambda}$, and thus $B_{\lambda}$ has type $\lambda$.  (Actually the argument is a bit weaker, since it works with all $\lambda$ at once.)  Step~b can be thought of as simply providing a construction of a module of type $\lambda$.\end{remark}

\subsection{Examples}
\label{ss:examples}

We now give a few examples to illustrate the theorem.  

\begin{example}
Suppose $\lambda=(1^i)$.  Then $L^{\lambda}_\bullet$ is the complex $\lw^{i-2}{V} \to \lw^i{V}$, where the differential is the multiplication by the symplectic form on $V^*$ treated as an element of $\lw^2V$. 
\begin{itemize}
\item If $i\le n$ then the differential is injective, and $\rH_0(L^{\lambda}_\bullet)=\bS_{[1^i]}(V)$ is an irreducible representation of $V$. 
\item If $i=n+1$ then the differential is an isomorphism, and all homology of $L^{\lambda}_\bullet$ vanishes. 
\item If $n+2\le i\le 2n+2$ then the differential is surjective and $\rH_1(L^{\lambda}_\bullet)=\bS_{[1^{2n-i+2}]}(V)$. 
\item If $i>2n+2$ then the complex $L^{\lambda}_\bullet$ is identically 0. \qedhere
\end{itemize}
\end{example}

\begin{example}
Suppose $\lambda=(2,1,1)$.  Then $L^{\lambda}_\bullet$ is the complex $\bC \to \bS_{(2,1,1)/(1,1)}(V) \to \bS_{(2,1,1)}(V)$, where the differential is the multiplication by the symplectic form on $V^*$ treated as an element of $\lw^2V$.  
\begin{itemize}
\item If $n\ge 3$ then the differential is injective, and $\rH_0(L^{\lambda}_\bullet)=\bS_{[2,1,1]}(V)$ is an irreducible representation of $V$.  
\item If $n=2$ then the complex is exact, and all homology of $L^{\lambda}_\bullet$ vanishes.  
\item If $n=1$ then $\rH_1(L^{\lambda}_\bullet)=\bS_{[2]}(V)$. 
\item Finally when $n=0$ then $\rH_2(L^{\lambda}_\bullet)=\bC$. \qedhere
\end{itemize}
\end{example}

The reader will check easily that in both instances the description of the homology agrees with the rule given by the Weyl group action.

\begin{example}
\label{ex:s3}
Suppose $\lambda=(6,5,4,4,3,3,2)$ and $n=2$ (so $\dim(V)=4$).  The modification rule, using border strips, proceeds as follows:
\begin{displaymath}
\ydiagram[*(white)]{6,5,3,2,2,1}*[*(gray)]{6,5,4,4,3,3,2} \qquad
\ydiagram[*(white)]{6,5,1,1}*[*(gray)]{6,5,3,2,2,1} \qquad
\ydiagram[*(white)]{6,5}*[*(gray)]{6,5,1,1} \qquad
\ydiagram{6,5}
\end{displaymath}
We start on the left with $\lambda_0=\lambda$.  As $\ell(\lambda_0)=7$, we are supposed to remove the border strip $R_0$ of size $2(\ell(\lambda_0)-n-1)=8$; this border strip is shaded.  The result is the second displayed partition, $\lambda_1=(6,5,3,2,2,1)$.  As $\ell(\lambda_1)=6$, the border strip $R_1$ we remove from it has length 6.  The result of removing this strip is the third partition $\lambda_2=(6,5,1,1)$.  As $\ell(\lambda_2)=4$, the border strip $R_2$ has length 2.  The result of removing it is the final partition $\lambda_3=(6,5)$.  This satisfies $\ell(\lambda_3) \le n$, so the algorithm stops.  We thus see that $\tau_4(\lambda)=(6,5)$ and
\begin{displaymath}
i_4(\lambda)=c(R_0)+c(R_1)+c(R_2)=4+3+1=8.
\end{displaymath}
It follows that $\rH_i(L_\bullet^{\lambda})=0$ for $i \ne 8$ and $\rH_8(L_\bullet^{\lambda})=\bS_{[6,5]}(\bC^4)$.

Now we illustrate the modification rule using the Weyl group action. We write $\alpha \xrightarrow{s_i} \beta$ if $\beta = s_i(\alpha)$. The idea for getting the Weyl group element is to apply $s_0$ if the first column length is too long, then sort the result, and repeat as necessary. We start with $\lambda^\dagger + \rho = (7,7,6,4,2,1) + (-3,-4,-5,\dots)$:
\begin{align*}
(4,3,1,-2,-5,-7) &\xrightarrow{s_0} (-4,3,1,-2,-5,-7)
\xrightarrow{s_1} (3,-4,1,-2,-5,-7)\\
&\xrightarrow{s_2} (3,1,-4,-2,-5,-7)
\xrightarrow{s_3} (3,1,-2,-4,-5,-7)\\
&\xrightarrow{s_0} (-3,1,-2,-4,-5,-7)
\xrightarrow{s_1} (1,-3,-2,-4,-5,-7)\\
&\xrightarrow{s_2} (1,-2,-3,-4,-5,-7)
\xrightarrow{s_0} (-1,-2,-3,-4,-5,-7).
\end{align*}
Subtracting $\rho$ from the result, we get $(6,5)^\dagger$.
\end{example}

\section{Orthogonal groups}

\subsection{Representations of $\bO(V)$}

Let $(V, \omega)$ be an orthogonal space of dimension $m$ (here $\omega \in \Sym^2 V^*$ is the orthogonal form, and gives an isomorphism $V \cong V^*$). We write $m = 2n$ if it is even, or $m = 2n+1$ if it is odd.  We now recall the representation theory of $\bO(V)$; see \cite[\S 19.5]{fultonharris} for details.  The irreducible representations of $\bO(V)$ are indexed by partitions $\lambda$ such that the first two columns have at most $m$ boxes in total, i.e., $\lambda^{\dag}_1+\lambda^{\dag}_2 \le m$.  We call such partitions {\bf admissible}.  For an admissible partition $\lambda$, we write $\bS_{[\lambda]}(V)$ for the corresponding irreducible representation of $\bO(V)$.

Given an admissible partition $\lambda$, we let $\lambda^{\sigma}$ be the partition obtained by changing the number of boxes in the first column of $\lambda$ to $m$ minus its present value; that is, $(\lambda^{\sigma})^{\dag}_1=m-\lambda^{\dag}_1$.  We call $\lambda^{\sigma}$ the {\bf conjugate} of $\lambda$.  Conjugation defines an involution on the set of admissible partitions.  On irreducible representations, conjugating the partition corresponds to twisting by the sign character: $\bS_{[\lambda^{\sigma}]}(V)=\bS_{[\lambda]}(V) \otimes \sgn$.  It follows that $\bS_{[\lambda]}(V)$ and $\bS_{[\lambda^{\sigma}]}(V)$ are isomorphic when restricted to $\SO(V)$.  In fact, these restrictions remain irreducible, unless $\lambda=\lambda^{\sigma}$ (which is equivalent to $\ell(\lambda)=n$ and $m=2n$), in which case $\bS_{[\lambda]}(V)$ decomposes as a sum of two non-isomorphic irreducible representations.

For an admissible partition $\lambda$, exactly one element of the set $\{\lambda, \lambda^{\sigma}\}$ has at most $n$ boxes in its first column.  We denote this element by $\ol{\lambda}$.  Thus $\ol{\lambda}=\lambda$ if $\lambda_1^{\dag} \le n$, and $\ol{\lambda}=\lambda^{\sigma}$ otherwise.

\subsection{The Littlewood complex} \label{ss:bdcx}

Let $E$ be a vector space.  Put $U=\Sym^2(E)$, $A=\Sym(U)$ and $B=\Sym(E \otimes V)$.  Consider the inclusion $U \subset B$ given by
\begin{displaymath}
\Sym^2(E) \subset \Sym^2(E) \otimes \Sym^2(V) \subset \Sym^2(E \otimes V),
\end{displaymath}
where the first inclusion is multiplication with $\omega$.  This inclusion defines an algebra homomorphism $A \to B$.  Put $C=B \otimes_A \bC$; this is the quotient of $B$ by the ideal generated by $U$.  We have maps
\begin{displaymath}
\Spec(C) \to \Spec(B) \to \Spec(A).
\end{displaymath}
We have a natural identification of $\Spec(B)$ with the space $\Hom(E, V)$ of linear map $\varphi \colon E \to V$ and of $\Spec(A)$ with the space $\Sym^2(E)^*$ of symmetric forms on $E$.  The map $\Spec(B) \to \Spec(A)$ takes a linear map $\varphi$ to the form $\varphi^*(\omega)$.  The space $\Spec(C)$, which we call that {\bf Littlewood variety}, is the scheme-theoretic fiber of this map above 0, i.e., is consists of those maps $\varphi$ such that $\varphi^*(\omega)=0$. In other words, $\Spec(C)$ consists of maps $\phi \colon E \to V$ such that the image of $\phi$ is an isotropic subspace of $V$.

Let $\rK_{\bullet}=B \otimes \lw^{\bullet}{U}$ be the Koszul complex of the Littlewood variety.  We can decompose this complex under the action of $\GL(E)$:
\begin{displaymath}
\rK_{\bullet}(E)=\bigoplus_{\ell(\lambda) \le \dim{E}} \bS_{\lambda}(E) \otimes L^{\lambda}_{\bullet}.
\end{displaymath}
The complex $L^{\lambda}_{\bullet}$ is the {\bf Littlewood complex}, and is independent of $E$ (so long as $\dim{E} \ge \ell(\lambda)$). By \cite[Proposition 3.6.3]{howe}, its zeroth homology is
\begin{equation}
\label{o:Hzero}
\rH_0(L^{\lambda}_{\bullet})=\begin{cases}
\bS_{[\lambda]}(V) & \textrm{if $\lambda$ is admissible} \\
0 & \textrm{otherwise.}
\end{cases}
\end{equation}
By Lemma~\ref{lem:koszul}, we have $\rH_i(\rK_{\bullet})=\Tor^A_i(B, \bC)$, and so we have a decomposition
\begin{equation}
\label{o:tor}
\Tor^A_i(B, \bC)=\bigoplus_{\ell(\lambda) \le \dim{E}} \bS_{\lambda}(E) \otimes \rH_i(L^{\lambda}_{\bullet}).
\end{equation}
Applied to $i=0$, we obtain
\begin{equation}
\label{o:cring}
C=\bigoplus_{\textrm{admissible $\lambda$}} \bS_{\lambda}(E) \otimes \bS_{[\lambda]}(V).
\end{equation}

\subsection{A special case of the main theorem}

Our main theorem computes the homology of the complex $L^{\lambda}_{\bullet}$.  We now formulate and prove the theorem in a particularly simple case.  We mention this here only because it is worthwhile to know; the argument is not needed to prove the main theorem.

\begin{proposition}
Suppose $\lambda$ is admissible.  Then
\begin{displaymath}
\rH_i(L^{\lambda}_{\bullet}) = \begin{cases}
\bS_{[\lambda]}(V) & \textrm{if $i=0$} \\
0 & \textrm{otherwise.}
\end{cases}
\end{displaymath}
\end{proposition}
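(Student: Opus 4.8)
The plan is to follow the proof of Proposition~\ref{s:spcase} essentially verbatim. First I would establish the orthogonal counterpart of Lemma~\ref{lem:C:CI}: if $\dim E \le n$ then $U = \Sym^2 E \subset B$ is spanned by a regular sequence. As in the symplectic case this reduces to a dimension count for the Littlewood variety $\Spec(C) \subset \Spec(B) = \Hom(E,V)$, the locus of maps $\varphi$ whose image is isotropic. Writing $d = \dim E$, so that $\dim\Spec(B) = dm$ and $\dim U = \binom{d+1}{2}$, the open subset of $\Spec(C)$ where $\varphi$ is injective is nonempty (for $d \le n$ there exist isotropic $d$-planes) and is birational to the total space of $\cHom(E,\cR)$, where $\cR \subset V \otimes \cO_{\OGr(d,V)}$ is the tautological subbundle on the orthogonal Grassmannian of isotropic $d$-planes; this total space has dimension $\dim\OGr(d,V) + d^2 = \big(d(m-d) - \binom{d+1}{2}\big) + d^2 = dm - \binom{d+1}{2}$. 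A check on the lower-rank strata (rank-$k$ maps with isotropic image form a locally closed piece of dimension $k(m-k) - \binom{k+1}{2} + dk$, which is $< dm - \binom{d+1}{2}$ once $k < d \le n$) shows this is the dimension of all of $\Spec(C)$; since $B$ is Cohen--Macaulay, $U$ is then a regular sequence.

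Granting this, take $\dim E = n$: then $\rK_{\bullet}(E)$ is acyclic in positive degrees, hence so is every $L^{\lambda}_{\bullet}$ with $\ell(\lambda) \le n$, and \eqref{o:Hzero} identifies $\rH_0$. This disposes of every admissible $\lambda$ with $\ell(\lambda) \le n$. The genuinely new feature of the orthogonal case --- and the main obstacle --- is that admissibility ($\lambda^{\dag}_1 + \lambda^{\dag}_2 \le m$) allows $\ell(\lambda) = \lambda^{\dag}_1$ to exceed $n$, and for such $\lambda$ the complex $L^{\lambda}_{\bullet}$ only exists for $\dim E \ge \ell(\lambda) > n$, where the Littlewood variety fails to have the expected dimension and $U$ is no longer a regular sequence. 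My plan for these $\lambda$ is to pass to the conjugate $\lambda^{\sigma}$, whose first column has $m - \lambda^{\dag}_1 \le n$ boxes, so that $L^{\lambda^{\sigma}}_{\bullet}$ is --- by the case already treated --- a resolution of $\bS_{[\lambda^{\sigma}]}(V) = \bS_{[\lambda]}(V) \otimes \sgn$. The delicate point is that $L^{\lambda}_{\bullet}$ and $L^{\lambda^{\sigma}}_{\bullet}$ are \emph{not} isomorphic as complexes (their terms generally have different dimensions), so the reduction must be carried out on homology, not on complexes.

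To do that I would compute $\rH_i(L^{\lambda}_{\bullet})$ directly: by \eqref{o:tor} it is the $\bS_{\lambda}(E)$-isotypic component of $\Tor^A_i(B,\bC)$, which one evaluates by running the geometric technique (Proposition~\ref{geo}, and Proposition~\ref{prop:veronese} in the odd case) on the isotropic Grassmannian $\OGr(n,V)$. Borel--Weil--Bott then expresses each $\Tor^A_i(B,\bC)$ as a sum of $\bS_{\alpha}(E)$'s, and the modification-rule bookkeeping --- the orthogonal analogue of Proposition~\ref{s:stepa} --- shows that for $i > 0$ only \emph{non}-admissible $\alpha$ occur; in particular $\rH_i(L^{\lambda}_{\bullet}) = 0$ for $i > 0$ whenever $\lambda$ is admissible, completing the proof. (Alternatively, and most cheaply, the case $\ell(\lambda) > n$ is subsumed in the orthogonal analogue of Theorem~\ref{s:mainthm}, which places the homology of $L^{\lambda}_{\bullet}$ in degree zero for every admissible $\lambda$, and whose proof does not use the present proposition.)
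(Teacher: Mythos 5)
Your argument for admissible $\lambda$ with $\ell(\lambda)\le n$ is exactly the paper's: prove the orthogonal analogue of Lemma~\ref{lem:C:CI} (namely Lemma~\ref{lem:BD:CI}), take $\dim E=n$ so that $\rK_\bullet(E)$ is the Koszul complex on a regular sequence, and read off $\rH_0$ from \eqref{o:Hzero}. Your dimension count for the Littlewood variety is correct ($\dim\OGr(d,V)+d^2=dm-\binom{d+1}{2}$), and your verification that the lower-rank strata are smaller is a detail the paper elides (it is needed to know the injective locus is dense, so it is a worthwhile addition; one should also note, as the paper does, that for $m=2n$, $d=n$ the variety has two components, though this does not affect the count).

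Where you genuinely diverge is in flagging the admissible $\lambda$ with $n<\ell(\lambda)\le m$. You are right that the paper's two-line proof, read literally, says nothing about these: with $\dim E=n$ one has $\bS_\lambda(E)=0$, so $L^\lambda_\bullet$ does not occur in $\rK_\bullet(E)$, and for $\dim E>n$ the regular-sequence lemma fails. Of your three proposed fixes, the conjugation idea is the one I would not rely on: $\sigma$ is an involution on representations of $\bO(V)$, not on $\GL(E)$-isotypic components of $\Tor^A_\bullet(B,\bC)$, and I do not see how to transport acyclicity from $L^{\lambda^\sigma}_\bullet$ to $L^\lambda_\bullet$ without essentially redoing the Borel--Weil--Bott computation. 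Your second and third options collapse into the same thing --- the case is subsumed in Theorem~\ref{o:mainthm}, whose proof (steps a--c for the orthogonal group) is indeed independent of the present proposition, so the appeal is not circular; in the even case one also checks directly from the modification rule that an admissible $\lambda$ with $\ell(\lambda)>n$ has $i_m(\lambda)=0$ and $\tau_m(\lambda)=\lambda$ (remove the single-column border strip of length $2\ell(\lambda)-m$, giving $c(R_\lambda)-1=0$, and rule (D3) undoes the conjugation). So your proof is correct; for $\ell(\lambda)\le n$ it is the paper's proof, and for $\ell(\lambda)>n$ it correctly patches a case the paper's own proof of this ``simple special case'' silently omits, at the cost of invoking the main theorem.
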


\begin{proof}
Choose $E$ to be of dimension $n$.  By Lemma~\ref{lem:BD:CI} below, $\rK_{\bullet}(E)$ has no higher homology.  It follows that $L^{\lambda}_{\bullet}$ does not either.  The computation of $\rH_0(L^{\lambda}_{\bullet})$ is given in \eqref{o:Hzero}.
\end{proof}

\begin{lemma} \label{lem:BD:CI}
Suppose $\dim{E} \le n$.  Then $U \subset B$ is spanned by a regular sequence.
\end{lemma}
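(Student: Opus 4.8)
The plan is to follow the proof of Lemma~\ref{lem:C:CI} in the symplectic case, with the isotropic Grassmannian of the orthogonal space replacing the one for the symplectic space. Since $B=\Sym(E\otimes V)$ is a polynomial ring, it is Cohen--Macaulay, and $U=\Sym^2(E)$ is a space of homogeneous elements of $B$ of dimension $\binom{d+1}{2}$, where $d=\dim E$. Hence a basis of $U$ is a regular sequence if and only if the ideal $UB$ has height $\binom{d+1}{2}$, equivalently $\dim\Spec(C)=\dim\Spec(B)-\dim U=dm-\binom{d+1}{2}$ (writing $m=\dim V$). By Krull's height theorem the inequality $\dim\Spec(C)\ge dm-\binom{d+1}{2}$ always holds, so it suffices to establish the reverse inequality $\dim\Spec(C)\le dm-\binom{d+1}{2}$.

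For this I would use the orthogonal isotropic Grassmannian $\OGr(d,V)$ of $d$-dimensional isotropic subspaces of $V$, together with its rank $d$ tautological bundle $\cR\subset V\otimes\cO_{\OGr(d,V)}$. Because $d=\dim E\le n$ and $n$ is the Witt index of $V$ (whether $m=2n$ or $m=2n+1$), the variety $\OGr(d,V)$ is non-empty of dimension $d(m-d)-\binom{d+1}{2}$; for $d=n$ and $m=2n$ it has two irreducible components of this common dimension, which does not affect the argument. As in the symplectic case, there is a natural map from the total space $Z$ of $\cHom(E,\cR)$ to $\Spec(C)$, sending a pair $(R,\psi\colon E\to R)$ to the composite $E\xrightarrow{\psi}R\hookrightarrow V$, whose image is isotropic. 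This map is surjective: any $\varphi\in\Hom(E,V)$ with $\varphi^*(\omega)=0$ has isotropic image of dimension at most $d\le n$, and since this does not exceed the Witt index, the image of $\varphi$ is contained in some $d$-dimensional isotropic subspace, through which $\varphi$ factors. (Over the locus where $\varphi$ is injective the map is an isomorphism, so it is birational onto the closure of that locus, just as before.) Therefore
\[
\dim\Spec(C)\le\dim Z=\dim\OGr(d,V)+d^2=\left(d(m-d)-\binom{d+1}{2}\right)+d^2=dm-\binom{d+1}{2},
\]
which is exactly the bound needed.

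I do not expect a real obstacle here beyond two standard inputs: the dimension formula $\dim\OGr(d,V)=d(m-d)-\binom{d+1}{2}$, which is valid precisely because $d$ does not exceed the Witt index, and the elementary fact that an isotropic subspace of dimension at most the Witt index is contained in a $d$-dimensional isotropic subspace. The hypothesis $\dim E\le n$ is used exactly to stay within the Witt index; once $\dim E$ exceeds $n$ the isotropic Grassmannian degenerates, the dimension count fails, and $U$ is no longer spanned by a regular sequence, in agreement with the fact that $L^{\lambda}_{\bullet}$ acquires higher homology in that range.
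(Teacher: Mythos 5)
Your proof is correct and follows essentially the same route as the paper, which simply reduces to the dimension count $\dim\Spec(C)=\dim\Spec(B)-\dim U$ via the total space of $\cHom(E,\cR)$ over the isotropic Grassmannian, exactly as in the symplectic case. Your extra care with the surjectivity of the map and with the two irreducible components of $\Spec(C)$ when $m=2n$ and $d=n$ (which the paper explicitly flags as the only difference, not affecting the argument) is sound.
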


\begin{proof}
The proof is the same as Lemma~\ref{lem:C:CI}. The only difference worth pointing out (but which does not affect the proof) is that when $\dim V = 2n$ and $\dim E = n$, the Grassmannian of isotropic $n$-dimensional subspaces of $V$ has two connected components, and the variety cut out by $U$ has two irreducible components.
\end{proof}

\subsection{The modification rule}
\label{o:modrule}

As in the symplectic case, we now associate to a partition $\lambda$ two quantities $i_m(\lambda)$ and $\tau_m(\lambda)$.  We again give two equivalent definitions.

We begin with the Weyl group definition, following \cite[\S 1.4]{wenzl}.  Let $s_0$ be the automorphism of the set $\cU$ which negates and swaps the first and second entries, and let $W$ be the group generated by the $s_i$ with $i \ge 0$. This is a Coxeter group of type $\rD_\infty$. Let $\ell \colon W \to \bZ_{\ge 0}$ be the length function, which is defined just as in \eqref{eqn:word}.  Note that this group $W$, as a subgroup of $\Aut(\cU)$, is equal to the one from \S\ref{s:modrule}, but that the length function is different since we are using a different set of simple reflections.  Let $\rho=(-m/2, -m/2-1, \ldots)$.  Define a new action of $W$ on $\cU$ by $w \bullet \lambda=w(\lambda+\rho)-\rho$.  On $\fS$ this agrees with the one defined in \S\ref{ss:bott}.  The action of $s_0$ is given by
\begin{displaymath}
s_0 \bullet (a_1, a_2, a_3, \ldots) = (m+1-a_2, m+1-a_1, a_3, \ldots).
\end{displaymath}
The definitions of $i_m(\lambda)$ and $\tau_m(\lambda)$ are now exactly as in the first half of \S \ref{s:modrule}.

~

We now give the border strip definition. This is motivated by \cite[\S 5]{sundaram} (which is based on \cite{king}), but \cite{sundaram} only focuses on the special orthogonal group, so we have to modify the definition to get the correct answer for the full orthogonal group. This is the same as the one given in \S \ref{s:modrule}, except for three differences:
\begin{enumerate}[(D1)]
\item the border strip $R_{\lambda}$ has length $2\ell(\lambda)-m$,
\item in the definition of $i_m(\lambda)$, we use $c(R_{\lambda})-1$ instead of $c(R_{\lambda})$, and
\item if the total number of border strips removed is odd, then replace the end result $\mu$ with $\mu^\sigma$.
\end{enumerate}
One can stop applying the modification rule either when $\lambda$ becomes admissible or when $\ell(\lambda) \le n$; the resulting values of $\tau$ and $i$ are the same.  For instance, if $\lambda$ is admissible but $\ell(\lambda)>n$ then one can stop immediately with $i=0$ and $\tau=\lambda$.  Instead, one could remove a border strip.  This border strip occupies only the first column and when removed yields $\lambda^{\sigma}$.  Thus $i=0$ and by (D3), since we removed an odd number of strips, $\tau=(\lambda^{\sigma})^{\sigma}=\lambda$.

\begin{proposition}
The above two definitions agree.
\end{proposition}

\begin{proof}
Suppose that we are removing a border strip $R_1$ of size $2\ell(\lambda) - m$ from $\lambda$ which begins at the first box in the final row of $\lambda$. Let $c_1 = c(R_1)$ be the number of columns of $R_1$. The first two column lengths of $\lambda \setminus R_1$ are $(\lambda^\dagger_2 - 1, \lambda^\dagger_3 - 1)$. We have two cases depending on which of the two quantities $\lambda^\dagger_2 + \lambda^\dagger_3 - 2$ and $m$ is bigger.

First suppose that $\lambda^\dagger_2 + \lambda^\dagger_3 - 2 > m$. Then we remove another border strip $R_2$ of size $2(\lambda^\dagger_2 - 1) - m$ from $\lambda \setminus R_1$ which begins at the first box in the final row. Let $c_2 = c(R_2)$ be the number of columns of $R_2$. The sequence 
\[
(s_{c_2-1} s_{c_2-2} \cdots s_2 s_1 s_{c_1-1} s_{c_1-2} \cdots s_3 s_2 s_0) \bullet \lambda^\dagger
\]
gives the column lengths of $(\lambda \setminus R_1) \setminus R_2$.

Now suppose that $\lambda^\dagger_2 + \lambda^\dagger_3 - 2 \le m$. Then we have only removed 1 border strip, which is an odd number, so we have to replace $\lambda \setminus R_1$ with $(\lambda \setminus R_1)^\sigma$ according to (D3) above. In this case, the sequence
\[
(s_{c_1 - 1} s_{c_1 - 2} \cdots s_3 s_2 s_0) \bullet \lambda^\dagger
\]
gives the column lengths of $(\lambda \setminus R_1)^\sigma$.

Conversely, if we use the Weyl group modification rule with $w \in W$, then the expression \eqref{eqn:word} for $w$ must begin with $s_0$: if we apply any $s_i$ with $i>0$, then we increase the number of inversions of the sequence, so if we write $ws_i = v$, then $\ell(v) = \ell(w) + 1$ \cite[\S 5.4, Theorem]{humphreys}, so the resulting expression for $w$ will not be minimal. If we choose $i$ maximal so that $w = w' s_{i-1} \cdots s_3 s_2 s_0$ with $\ell(w) = \ell(w') + i-1$, then we have replaced the first two columns of $\lambda$ with $(m+1-\lambda_2^\dagger, m+1-\lambda_1^\dagger)$ and then moved the column of length $m+1-\lambda_1^\dagger$ over to the right as much as possible (adding 1 to it each time we pass a column and subtracting 1 from the column we just passed) so that the resulting shape (minus the first column) is again a Young diagram. 

Now there are two possibilities: if the whole shape is a Young diagram, then it is the result of first removing a border strip of length $2\ell(\lambda) - m$ with $i$ columns, and then replacing the resulting $\mu$ with $\mu^c$. Otherwise, the first column length of the resulting shape is less than the second column length. If we choose $j$ maximal so that $w' = w'' s_{j-1} \cdots s_2 s_1$ with $\ell(w') = \ell(w'') + j -1$, then we have moved the first column over to the right as much as possible (adding 1 to it each time we pass a column and subtracting 1 from the column we just passed) so that the resulting shape is again a Young diagram. In this case, then we have removed two border strips of size $2\ell(\lambda) - m$ and $2(\lambda^\dagger-1)-m$ with $i$ and $j$ columns, respectively.
\end{proof}

Finally, there is a third modification rule, defined in \cite[\S 2.4]{koiketerada}. As in the symplectic case, we will not need to know the statement of the rule, but we will cite some results from \cite{koiketerada}. The equivalence of the rule from \cite[\S 2.4]{koiketerada} with the border strip rule comes from the fact that both rules were derived from the same determinantal formulas (see \cite[Theorem 1.3.2]{koiketerada} and \cite[Footnote 17]{king}). We remark that both rules are only stated for the special orthogonal group, but this will be enough for our purposes. 

As a matter of notation, we write $\ol{\tau}_m(\lambda)$ in place of $\ol{\tau_m(\lambda)}$.

\subsection{The main theorem}

Our main theorem is exactly the same as in the symplectic case:

\begin{theorem}
\label{o:mainthm}
For a partition $\lambda$ and an integer $i$ we have
\begin{displaymath}
\rH_i(L^{\lambda}_{\bullet})=\begin{cases}
\bS_{[\tau_m(\lambda)]}(V) & \textrm{if $i=i_m(\lambda)$} \\
0 & \textrm{otherwise.}
\end{cases}
\end{displaymath}
In particular, if $i_m(\lambda)=\infty$ then $L^{\lambda}_{\bullet}$ is exact.
\end{theorem}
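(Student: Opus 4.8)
The plan is to run the three-step argument of \S\ref{ss:sthm} almost verbatim, making the changes forced by replacing $\lw^2 E$ with $\Sym^2 E$ and by the dichotomy $m = 2n$ versus $m = 2n+1$. By \eqref{o:tor} and Lemma~\ref{lem:koszul}, $\rH_i(L^\lambda_\bullet)$ is the $\bS_\lambda(E)$-isotypic component of $\Tor^A_i(B, \bC) = \rH_i(\rK_\bullet)$, so it is enough to compute this $\Tor$ as a $\GL(E) \times \bO(V)$-representation (taking $E$ arbitrarily large) and then compare with \eqref{o:tor}. \emph{Step (a).} First I would prove the orthogonal analogue of Proposition~\ref{s:stepa}. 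The relevant plethysm is now $\lw^\bullet(\Sym^2 E) = \bigoplus_\mu \bS_\mu(E)$, the sum over partitions $\mu$ whose Frobenius coordinates satisfy $a_i = b_i + 1$ (with $\bS_\mu$ in homological degree $\tfrac12|\mu|$), together with, in the odd case, the self-conjugate partitions supplied by Proposition~\ref{prop:veronese}. Running the induction of Lemmas~\ref{lem:s1}--\ref{lem:s2}, I would show that for $\lambda$ with at most $n$ parts the $\mu$ with $(\lambda \mid_n \mu)$ regular biject, compatibly with lengths and degrees, with the partitions $\alpha$ satisfying $\ol{\tau}_m(\alpha) = \lambda$: peeling the first row and column off $\mu$ corresponds to peeling off one border strip $R_\alpha$, now of the length prescribed by (D1). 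The type-$\rD$ reflection $s_0$ negates and swaps the first two entries, which is exactly what produces the correction $c(R_\alpha) - 1$ of (D2), and the parity of the number of strips removed is tracked by the sign twist (D3) deciding between $\tau$ and $\tau^\sigma$.

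\emph{Step (b).} Next I would construct a $\GL(E)$-equivariant $A$-module $M_\lambda$ with $\Tor^A_i(M_\lambda, \bC) = \bigoplus_\alpha \bS_\alpha(E)$, summed over $\alpha$ with $\ol{\tau}_m(\alpha) = \lambda$ and $i_m(\alpha) = i$. For $m = 2n$ this copies the symplectic construction: take $X$ the Grassmannian of rank $n$ quotients of $E$, set $\xi = \Sym^2(\cR)$, let $\eta$ be the cokernel of $\xi \hookrightarrow \Sym^2(E) \otimes \cO_X$, and put $M_\lambda = \rH^0(X, \Sym(\eta) \otimes \bS_\lambda(\cQ))$; then $\lw^{i+j}(\xi)$ decomposes into Schur functors of $\cR$ indexed as in Step (a), Borel--Weil--Bott computes each cohomology group, the vanishing hypothesis of Proposition~\ref{geo} holds by Step (a), and the claimed $\Tor$ follows. (The only new point is that $\OGr(n, 2n)$ has two connected components, cf.\ Lemma~\ref{lem:BD:CI}.) For $m = 2n+1$ the isotropic Grassmannian carries an extra rank-one bundle $\cR^\perp/\cR$, so $X$ must be enlarged to a projective bundle over it, and one splices the geometric technique with the relative second Veronese resolution of Proposition~\ref{prop:veronese} for both twists $a \in \{0, 1\}$; the resulting Leray spectral sequence degenerates by Lemma~\ref{lem:degen}, since Step (a) makes the total cohomology multiplicity-free.

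\emph{Step (c) and conclusion.} Finally I would identify $M_\lambda$ with $B_\lambda := \Hom_{\bO(V)}(\bS_{[\lambda]}(V), B)$ (for $\lambda$ with at most $n$ parts, so $\lambda = \ol{\lambda}$), exactly as in Proposition~\ref{s:stepc}. One first matches classes in the representation ring of $\GL(E)$: the $\bS_\theta(E)$-multiplicity space of $B$ is $\bS_\theta(V)$, whose $\bO(V)$-decomposition is computed by the Koike--Terada branching rule \cite{koiketerada} (its orthogonal determinantal formula unwinding, via the equivalence of modification rules recorded in \S\ref{o:modrule}, to the same combinatorics), and comparing with the Euler characteristic from Step (b) gives $[M_\lambda] = [B_\lambda]$ with finite multiplicities. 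The minimal first syzygy of $\bS_\lambda(E) \otimes A \twoheadrightarrow M_\lambda$ is a single $\bS_\nu(E)$, with $\nu$ obtained from $\lambda$ by suitably modifying its first column; it occurs with multiplicity one in $\bS_\lambda(E) \otimes A$ and hence not in $B_\lambda$, so the evident surjection $\bS_\lambda(E) \otimes A \twoheadrightarrow B_\lambda$ factors through $M_\lambda$, and the resulting surjection $M_\lambda \twoheadrightarrow B_\lambda$ is an isomorphism by finiteness. Feeding this back through \eqref{o:tor} reads off $\rH_i(L^\lambda_\bullet)$: it is $\bS_{[\tau_m(\lambda)]}(V)$ concentrated in degree $i_m(\lambda)$ --- with (D3) dictating which of the two sign-related $\bO(V)$-irreducibles appears --- and it vanishes identically when $i_m(\lambda) = \infty$.

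\emph{Main obstacle.} I expect the odd case $m = 2n+1$ to be where the work concentrates: getting the desingularization over the projective bundle right, correctly bookkeeping the line-bundle twists and folding in the relative Veronese resolution, and verifying the spectral-sequence degeneration --- all while simultaneously carrying along the full-orthogonal (as opposed to special-orthogonal) subtleties encoded by the involution $\lambda \leftrightarrow \lambda^\sigma$ and the parity rule (D3), which are absent from the symplectic argument.
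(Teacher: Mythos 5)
Your outline follows the paper's proof essentially verbatim: the same three steps, the same plethysms ($Q_1$ for $m=2n$, the self-conjugate partitions $Q_0$ via the relative Veronese resolution for $m=2n+1$), the same geometric constructions (ordinary Grassmannian in the even case, the projective bundle $\bP(\cR)$ with the twist $\cL^{\otimes a}$ and the degeneration lemma in the odd case), and the same multiplicity-one presentation trick in step (c). Two points need repair, one of them a genuine gap.

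First, the bookkeeping in step (c) for $m=2n$: the module your geometric construction produces has $\Tor_0^A = \bS_{\lambda}(E) \oplus \bS_{\lambda^{\sigma}}(E)$ when $\ell(\lambda) < n$ (both $\lambda$ and $\lambda^{\sigma}$ satisfy $\ol{\tau}_m(\alpha) = \lambda$, $i_m(\alpha) = 0$), so it is not minimally generated by a single $\bS_{\lambda}(E)$ and must be compared with $B_{\lambda} \oplus B_{\lambda^{\sigma}}$; the two-generator, two-relation presentation still carries a multiplicity-one argument, but it is not literally Proposition~\ref{s:stepc}. Second, and more seriously: every comparison in your step (c) --- the Koike--Terada branching of $\bS_{\theta}(V)$ and the isotypic decomposition of $B$ --- is only $\SO(V)$-equivariant, because $\bS_{[\lambda]}(V)$ and $\bS_{[\lambda^{\sigma}]}(V)$ are isomorphic upon restriction to $\SO(V)$. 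Consequently the argument as you describe it only determines $\rH_{i_m(\lambda)}(L^{\lambda}_{\bullet})$ as an $\SO(V)$-module, i.e., only up to the sign twist $\mu \leftrightarrow \mu^{\sigma}$; the rule (D3) is part of the \emph{definition} of $\tau_m$ and cannot by itself tell you which of the two $\bO(V)$-irreducibles actually occurs in the Koszul homology. The paper closes this gap by an external input, \cite[Theorem 1.9]{wenzl}, which pins down the $\bO(V)$-structure; without that (or an equivalent argument tracking the action of an orientation-reversing element), your proof establishes the theorem only after restriction to $\SO(V)$.
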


\begin{remark}
Consider the coordinate ring $R$ of rank $\le m$ symmetric matrices; identifying $E$ with its dual, this is the quotient of $A$ by the ideal generated by $(m+1) \times (m+1)$ minors.  The resolution of $R$ over $A$ is known, see \cite[\S 6.3]{weyman} or \cite[\S 3]{jpw}.  On the other hand, $R$ is the $\bO(V)$-invariant part of $B$, and so the above theorem, combined with \eqref{o:tor}, shows that $\bS_{\lambda}(E)$ appears in its resolution if and only if $\tau_m(\lambda)=\emptyset$. Thus the modification rule gives an alternative description of the resolution of $R$. It is a pleasant combinatorial exercise to show directly that these two descriptions agree. As in the symplectic case, the description in terms of the modification rule is more complicated, but has the advantage of generalizing to our situation.
\end{remark}

We separate the proof of the theorem into two cases, according to whether $m$ is even or odd.  In each case, we follow the three-step plan from \S\ref{ss:proofoverview}.

\subsection{The even case}

Throughout this section, $m=2n$ is the dimension of the space $V$.

\Step{a}
Let $Q_1$ be the set of partitions $\lambda$ whose Frobenius coordinates $(a_1, \ldots, a_r \vert b_1, \ldots, b_r)$ satisfy $a_i=b_i+1$ for each $i$.  This set admits an inductive definition, as follows.  The empty partition belongs to $Q_1$.  A non-empty partition $\mu$ belongs to $Q_1$ if and only if the number of columns in $\mu$ is one more than the number of rows, i.e., $\ell(\mu)=\mu_1-1$, and the partition obtained by deleting the first row and column of $\mu$, i.e., $(\mu_2-1, \ldots, \mu_{\ell(\mu)}-1)$, belongs to $Q_1$.  The significance of this set is the plethysm
\begin{displaymath}
\lw^{\bullet}(\Sym^2(E))=\bigoplus_{\mu \in Q_1} \bS_{\mu}(E)
\end{displaymath}
(see \cite[I.A.7, Ex.~5]{macdonald}).

Let $\lambda$ be a partition with $\ell(\lambda) \le n$.  We write $(\lambda \vert \mu)$ in place of $(\lambda \mid_n \mu)$.  Define
\begin{displaymath}
\begin{split}
\ol{S}_1(\lambda) &= \{ \textrm{$\mu \in Q_1$ such that $(\lambda \vert \mu)$ is regular} \} \\
\ol{S}_2(\lambda) &= \{ \textrm{partitions $\alpha$ such that $\ol{\tau}_m(\alpha)=\lambda$} \}.
\end{split}
\end{displaymath}

\begin{lemma} \label{lem:Dbott}
Let $\mu$ be a non-zero partition in $\ol{S}_1(\lambda)$ and let $\nu$ be the partition obtained by removing the first row and column of $\mu$.  Then $\nu$ also belongs to $\ol{S}_1(\lambda)$.  Furthermore, let $w$ (resp.\ $w'$) be the element of $W$ such that $\alpha=w \bullet (\lambda \vert \mu)$ (resp.\ $\beta=w' \bullet (\lambda \vert \nu)$) is a partition.  Then $R_{\alpha}$ is defined and we have the following identities:
\begin{displaymath}
\vert R_{\alpha} \vert=2\mu_1-2, \qquad
\alpha \setminus R_{\alpha}=\beta, \qquad
c(R_{\alpha})=\mu_1+\ell(w')-\ell(w).
\end{displaymath}
\end{lemma}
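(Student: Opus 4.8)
The plan is to follow the proof of Lemma~\ref{lem:s1} essentially word for word, running Bott's algorithm on $(\lambda \vert \mu)$ and keeping track of what happens to the entry $\mu_1$, which sits in position $n+1$ since $\ell(\lambda) \le n$. Only three things change in the even orthogonal setting: $\mu$ now lies in $Q_1$, so $\ell(\mu) = \mu_1 - 1$ instead of $\mu_1 + 1$; the border strip in the modification rule of \S\ref{o:modrule} has length $2\ell(\lambda) - m$ instead of $2(\ell(\lambda)-n-1)$; and the relevant length function is that of the type $\rD_\infty$ group $W$. None of these affects the combinatorial heart of the argument.

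First I would run Bott's algorithm on $(\lambda \vert \mu)$, at each stage preferring to move $\mu_1$ leftward, say for $r$ steps in all. After those $r$ steps one reaches
\[
(\lambda_1, \ldots, \lambda_{n-r}, \mu_1 - r, \lambda_{n-r+1}+1, \ldots, \lambda_n+1, \mu_2, \ldots, \mu_{\ell(\mu)}),
\]
and, exactly as in Lemma~\ref{lem:s1} (using $\nu_j = \mu_{j+1}-1$ and $\ell(\nu)=\ell(\mu)-1$), the part of this sequence from position $n-r+2$ onward is the part of $(\lambda \vert \nu)$ from position $n-r+1$ onward with $1$ added to every entry. Hence Bott's algorithm proceeds identically on the two from this point: if $(\lambda \vert \nu)$ were singular the same singular step would be hit for $(\lambda \vert \mu)$, so $\nu \in \ol{S}_1(\lambda)$; and the algorithm takes exactly $r$ more steps on $(\lambda \vert \mu)$ than on $(\lambda \vert \nu)$, giving $r = \ell(w) - \ell(w')$. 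Reading off the outputs yields $\alpha_i = \lambda_i$ for $i \le n-r$, $\alpha_{n-r+1} = \mu_1 - r$, and $\alpha_i = \beta_{i-1}+1$ for $n-r+2 \le i \le \ell(\alpha)$, where $\ell(\alpha) = n + \ell(\mu) = n + \mu_1 - 1$; since $\mu \ne \emptyset$ forces $\mu_1 \ge 2$, we have $\ell(\alpha) > n$, so the modification rule genuinely applies to $\alpha$.

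Next I would extract the three identities from the description in \S\ref{o:modrule}. The border strip $R_\alpha$ has length $2\ell(\alpha) - m = 2(n+\mu_1-1) - 2n = 2\mu_1 - 2$, which is the first identity. Using Remark~\ref{rmk:striphook} I would identify $R_\alpha$ with the hook of the box in row $n-r+1$, column $1$ of $\alpha$: that hook has size $\alpha_{n-r+1} + \ell(\alpha) - (n-r+1) = (\mu_1-r) + (n+\mu_1-1) - (n-r+1) = 2\mu_1 - 2$, so $R_\alpha$ is indeed defined; deleting that hook and sliding the rows below it to the northwest turns $\alpha$ into $\beta$ by the explicit formula for $\alpha$ in terms of $\beta$, so $\alpha \setminus R_\alpha = \beta$; and the number of columns met by this border strip is the arm-plus-one of its corner box, i.e.\ $c(R_\alpha) = \alpha_{n-r+1} = \mu_1 - r = \mu_1 + \ell(w') - \ell(w)$.

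The one point that needs a moment's care — and the closest thing to an obstacle — is that $\ell(w)$ and $\ell(w')$ are lengths in the type $\rD_\infty$ group $W$, whereas $r$ is a count of Bott steps, an algorithm that uses only the reflections $s_i$ with $i \ge 1$. Thus $w$ and $w'$ lie in the standard parabolic subgroup $\fS = \langle s_1, s_2, \ldots\rangle$ of $W$, and the length of an element of a standard parabolic agrees with its length in the ambient Coxeter group, so the number of Bott steps really is $\ell(w)$; similarly the $\bullet$-action used to define regularity (with $\rho=(-1,-2,\ldots)$) agrees on $\fS$ with the $W$-action of \S\ref{o:modrule} (with $\rho=(-m/2,-m/2-1,\ldots)$), the two $\rho$'s differing by a constant sequence that $\fS$ fixes. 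Beyond that, the entire content is the index arithmetic above, whose only genuinely new feature is that replacing $\ell(\mu)=\mu_1+1$ by $\ell(\mu)=\mu_1-1$ precisely absorbs the change of strip length from $2(\ell(\lambda)-n-1)$ to $2\ell(\lambda)-m$, so that the hook-size computation closes up as before.
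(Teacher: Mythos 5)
Your proof is correct and follows essentially the same route as the paper's: run Bott's algorithm on $(\lambda\vert\mu)$ tracking the $r$ leftward moves of $\mu_1$, compare the tail with $(\lambda\vert\nu)$, read off $\alpha$ in terms of $\beta$, and identify $R_\alpha$ via the hook of the box in row $n-r+1$ of the first column as in Remark~\ref{rmk:striphook}. The extra remarks you add (that $\mu\in Q_1$ nonzero forces $\mu_1\ge 2$, and that $\ell$ restricted to the parabolic $\fS\subset W$ agrees with the Bott step count) are correct and only make explicit what the paper leaves implicit.
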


\begin{proof}
Suppose that in applying Bott's algorithm to $(\lambda \vert \mu)$ the number $\mu_1$ moves $r$ places to the left.  Thus, after the first $r$ steps of the algorithm, we reach the sequence
\begin{displaymath}
(\lambda_1, \ldots, \lambda_{n-r}, \mu_1-r, \lambda_{n-r+1}, \ldots, \lambda_n+1, \mu_2, \ldots, \mu_{\ell(\mu)}).
\end{displaymath}
As before, Bott's algorithm on this sequence runs just like the algorithm on $(\lambda \vert \nu)$, and so $(\lambda \vert \nu)$ is regular and $\nu$ belongs to $\ol{S}_1(\lambda)$.  Suppose the algorithm on $(\lambda \vert \nu)$ terminates after $N=\ell(w')$ steps.  Then the algorithm on $(\lambda \vert \mu)$ terminates after $N+r=\ell(w)$ steps, and we have the following formula for $\alpha$:
\begin{displaymath}
\alpha_i=\begin{cases}
\lambda_i & 1 \le i \le n-r \\
\mu_1-r & i=n-r+1 \\
\beta_{i-1}+1 & n-r+2 \le i \le n+\mu_1-1
\end{cases}
\end{displaymath}
Since $\ell(\alpha)=n+\mu_1-1$, the border strip $R_{\alpha}$ has $2\mu_1-2$ boxes.  Using Remark~\ref{rmk:striphook}, we see that $R_\alpha$ exists since the box in the $(n-r+1)$th row and the first column has a hook of size $2\ell(\alpha) - m$. Furthermore, $\alpha \setminus R_{\alpha}=\beta$ and $c(R_{\alpha})=\mu_1-r$. Since $r = \ell(w) - \ell(w')$, we are done.
\end{proof}

\begin{proposition} \label{prop:o1:stepa}
There is a unique bijection $\ol{S}_1(\lambda) \to \ol{S}_2(\lambda)$ under which $\mu$ maps to $\alpha$ if there exists $w \in \fS$ such that $w \bullet (\lambda \vert \mu)=\alpha$; in this case, $\ell(w)+i_m(\alpha)=\tfrac{1}{2} \vert \mu \vert$ and 
\[
\tau_m(\alpha) = \begin{cases} \lambda & \text{if $\rank(\mu)$ is even}\\ 
\lambda^\sigma & \text{if $\rank(\mu)$ is odd.}\end{cases}
\]
\end{proposition}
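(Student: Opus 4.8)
The plan is to mimic the proof of Proposition~\ref{s:stepa}, with Lemma~\ref{lem:Dbott} playing the role of Lemma~\ref{lem:s1}, while also tracking the two new features of the even orthogonal modification rule: the shift (D2) replacing $c(R_\lambda)$ by $c(R_\lambda)-1$, and the $\sigma$-twist (D3) governed by the parity of the number of strips removed. It is cleanest to prove the assertion first with $\ol{\tau}_m$ in place of $\tau_m$. For this I would run the border strip rule stopping as soon as $\ell(\cdot)\le n$ (so that no $\sigma$-twist is ever invoked), call the resulting partition $\mu_0(\alpha)$, and record the number $k(\alpha)$ of strips removed; since $\mu_0(\alpha)$ has at most $n$ boxes in its first column, one has $\ol{\tau}_m(\alpha)=\mu_0(\alpha)$ and $\tau_m(\alpha)=\mu_0(\alpha)$ or $\mu_0(\alpha)^\sigma$ according as $k(\alpha)$ is even or odd (this uses the asserted equivalence of the ``stop when admissible'' and ``stop when $\ell\le n$'' conventions).

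Next I would induct on $\vert\mu\vert$, exactly as in Proposition~\ref{s:stepa}. The base case $\mu=\emptyset$ gives $w=1$, $\alpha=\lambda$, and since $\ell(\lambda)\le n$ we get $\ol{\tau}_m(\lambda)=\lambda$, $i_m(\lambda)=0$, $\ell(w)+i_m(\alpha)=0=\tfrac12\vert\mu\vert$, while $\rank(\emptyset)=0$ is even and $\tau_m(\lambda)=\lambda$. For the inductive step, let $\nu$ be $\mu$ with its first row and column removed and apply Lemma~\ref{lem:Dbott}: $\nu\in\ol{S}_1(\lambda)$, and for $w'$ with $w'\bullet(\lambda\vert\nu)=\beta$ a partition, $R_\alpha$ is defined with $\vert R_\alpha\vert=2\mu_1-2$, $\alpha\setminus R_\alpha=\beta$, and $c(R_\alpha)=\mu_1+\ell(w')-\ell(w)$. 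Since $\ell(\alpha)=n+\mu_1-1$, the strip $R_\alpha$ is precisely the length $2\ell(\alpha)-m$ strip that the orthogonal rule removes first, so applying the rule to $\alpha$ reduces to applying it to $\beta$; hence $\ol{\tau}_m(\alpha)=\ol{\tau}_m(\beta)=\lambda$ and $k(\alpha)=k(\beta)+1$. Using (D2), $i_m(\alpha)=(c(R_\alpha)-1)+i_m(\beta)$, and substituting $\ell(w')+i_m(\beta)=\tfrac12\vert\nu\vert$ together with $c(R_\alpha)=\mu_1+\ell(w')-\ell(w)$ and $\tfrac12\vert\mu\vert=\tfrac12\vert\nu\vert+\mu_1-1$ (valid since $\mu\in Q_1$ has $\ell(\mu)=\mu_1-1$) gives $\ell(w)+i_m(\alpha)=\tfrac12\vert\mu\vert$. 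Finally $\rank(\mu)=\rank(\nu)+1$, and $k(\alpha)=k(\beta)+1$ flips the $\sigma$-twist once, so combining with the inductive description of $\tau_m(\beta)$ yields $\tau_m(\alpha)=\lambda$ if $\rank(\mu)$ is even and $\lambda^\sigma$ otherwise.

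The bijectivity of $\mu\mapsto\alpha$ is established just as in Proposition~\ref{s:stepa}. For injectivity, if $\mu,\mu'\in\ol{S}_1(\lambda)$ both map to $\alpha$, then $(\lambda\vert\mu)+\rho$ and $(\lambda\vert\mu')+\rho$ are rearrangements of the common multiset $\alpha+\rho$; their first $n$ entries agree (being $\lambda+(\rho_1,\dots,\rho_n)$), so the multisets of the remaining entries agree, and as those entries are strictly decreasing in both cases we get $\mu=\mu'$. For surjectivity, $\alpha=\lambda$ is hit by $\emptyset$, and if $\alpha\ne\lambda$ lies in $\ol{S}_2(\lambda)$ then $\ell(\alpha)>n$ (else the rule stops immediately at $\alpha\ne\lambda$), so $R_\alpha$ is defined, $\beta:=\alpha\setminus R_\alpha$ satisfies $\ol{\tau}_m(\beta)=\lambda$ and $\vert\beta\vert<\vert\alpha\vert$, by induction some $\nu\in\ol{S}_1(\lambda)$ maps to $\beta$, and reversing the steps of Lemma~\ref{lem:Dbott} produces $\mu\in\ol{S}_1(\lambda)$ mapping to $\alpha$ whose first-row-and-column deletion is $\nu$.

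I expect the main obstacle to be the bookkeeping in the inductive step: one must check that the $-1$ from (D2) and the shift $\tfrac12\vert\mu\vert=\tfrac12\vert\nu\vert+\mu_1-1$ coming from $Q_1$ (rather than $Q_{-1}$) exactly cancel so that the identity $\ell(w)+i_m(\alpha)=\tfrac12\vert\mu\vert$ is formally unchanged from the symplectic case, and that the single additional strip removed in passing from $\beta$ to $\alpha$ toggles the $\sigma$-twist in step with the parity of $\rank(\mu)=\rank(\nu)+1$. One should also confirm the compatibility of the two stopping conventions for the border strip rule, which is used implicitly in identifying $\ol{\tau}_m$ with $\mu_0$.
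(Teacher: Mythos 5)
Your proposal is correct and follows essentially the same route as the paper: the paper's proof simply says the argument is ``exactly like that of Proposition~\ref{s:stepa}'' (induction on $\vert\mu\vert$ via Lemma~\ref{lem:Dbott}, with the same injectivity/surjectivity arguments), and then observes that the number of border strips removed from $\alpha$ equals $\rank(\mu)$, so that (D3) gives the stated formula for $\tau_m(\alpha)$. You have merely written out the bookkeeping (the cancellation of the $-1$ from (D2) against $\tfrac12\vert\mu\vert=\tfrac12\vert\nu\vert+\mu_1-1$, and the parity toggle) that the paper leaves implicit, and your arithmetic checks out.
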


\begin{proof}
Except for the computation of $\tau_m(\alpha)$, the proof is exactly like that of Proposition~\ref{s:stepa}. In the proof of Lemma~\ref{lem:Dbott}, we see that the number of border strips removed from $\alpha$ is $\rank(\mu)$, so the determination of $\tau_m(\alpha)$ follows from (D3) in \S\ref{o:modrule}.
\end{proof}

\Step{b}
Let $E$ be a vector space of dimension at least $n$.  Let $X$ be the Grassmannian of rank $n$ quotients of $E$.  Let $\cR$ and $\cQ$ be the tautological bundles on $X$ as in \eqref{eqn:taut}.  Put $\eps=\Sym^2(E) \otimes \cO_X$, $\xi=\Sym^2(\cR)$ and define $\eta$ by the exact sequence
\begin{displaymath}
0 \to \xi \to \eps \to \eta \to 0.
\end{displaymath}
Finally, for a partition $\lambda$ with $\ell(\lambda) \le n$, put $\ol{\cM}_{\lambda}=\Sym(\eta) \otimes \bS_{\lambda}(\cQ)$ and $\ol{M}_{\lambda} = \rH^0(X, \ol{\cM}_{\lambda})$.  Note that $A=\rH^0(X, \Sym(\eps))$, and so $\ol{M}_{\lambda}$ is an $A$-module.

\begin{proposition}
\label{o:stepb}
We have
\begin{displaymath}
\Tor^A_i(\ol{M}_{\lambda}, \bC)=\bigoplus_{\alpha} \bS_{\alpha}(E),
\end{displaymath}
where the sum is over partitions $\alpha$ with $\ol{\tau}_m(\alpha)=\lambda$ and $i_m(\alpha)=i$.
\end{proposition}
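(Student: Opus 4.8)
The plan is to run exactly the same argument as in Step~b of the symplectic case (\S\ref{ss:sthm}), with the combinatorial input coming from Proposition~\ref{prop:o1:stepa} in place of Proposition~\ref{s:stepa}, and the plethysm for $\lw^{\bullet}(\Sym^2(E))$ in place of the one for $\lw^{\bullet}(\lw^2(E))$. So I would proceed in three lemmas mirroring the symplectic development, and then invoke the geometric technique.

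First I would record the pointwise Borel--Weil--Bott computation. For a partition $\lambda$ with $\ell(\lambda)\le n$ and $\mu\in\ol{S}_1(\lambda)$ corresponding to $\alpha\in\ol{S}_2(\lambda)$ under the bijection of Proposition~\ref{prop:o1:stepa}, one has
\begin{displaymath}
\rH^i(X, \bS_{\lambda}(\cQ)\otimes\bS_{\mu}(\cR))=\begin{cases}\bS_{\alpha}(E) & \textrm{if $i=\tfrac12\vert\mu\vert - i_m(\alpha)$}\\ 0 & \textrm{otherwise,}\end{cases}
\end{displaymath}
which is immediate from the Borel--Weil--Bott theorem together with the identity $\ell(w)+i_m(\alpha)=\tfrac12\vert\mu\vert$ and the regularity/singularity dichotomy built into the definition of $\ol{S}_1(\lambda)$: if $(\lambda\mid_n\mu)$ is singular the bundle has no cohomology, and it contributes to $\Tor$ (equivalently, to the Koszul homology) exactly through the regular $\mu$. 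Next, using the plethysm $\lw^{i+j}(\xi)=\lw^{i+j}(\Sym^2(\cR))=\bigoplus_{\mu\in Q_1,\ \vert\mu\vert=2(i+j)}\bS_{\mu}(\cR)$, I would reindex to get
\begin{displaymath}
\bigoplus_{j\in\bZ}\rH^j(X,\lw^{i+j}(\xi)\otimes\bS_{\lambda}(\cQ))=\bigoplus_{\mu\in Q_1}\rH^{\vert\mu\vert/2-i}(X,\bS_{\mu}(\cR)\otimes\bS_{\lambda}(\cQ))=\bigoplus_{\alpha}\bS_{\alpha}(E),
\end{displaymath}
the last sum being over partitions $\alpha$ with $\ol{\tau}_m(\alpha)=\lambda$ and $i_m(\alpha)=i$; in particular this vanishes for $i<0$, which is precisely the hypothesis needed to apply Proposition~\ref{geo}. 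Feeding this into Proposition~\ref{geo} with $\cV=\bS_{\lambda}(\cQ)$ yields $\Tor^A_i(\ol{M}_{\lambda},\bC)=\bigoplus_{\alpha}\bS_{\alpha}(E)$ summed over the same set, which is the assertion.

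The only place requiring genuine care — and thus the main obstacle — is checking that Proposition~\ref{prop:o1:stepa} does in fact give a clean bijection between the $\mu\in Q_1$ that are regular for $(\lambda\mid_n\mu)$ and all partitions $\alpha$ with $\ol{\tau}_m(\alpha)=\lambda$, matched with the correct homological degree $i_m(\alpha)$; but that is exactly the content of Step~a (Lemma~\ref{lem:Dbott} and Proposition~\ref{prop:o1:stepa}), which has already been established, so here it can be cited verbatim. The bookkeeping subtlety peculiar to the orthogonal case — that $\ol{\tau}_m$ rather than $\tau_m$ appears, because removing border strips may produce $\mu$ or $\mu^\sigma$ depending on the parity of $\rank(\mu)$ (condition (D3)) — is already absorbed into the definition of $\ol{S}_2(\lambda)$ and into Proposition~\ref{prop:o1:stepa}, so no new argument is needed. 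Everything else is formal: the decomposition $\lw^{\bullet}(\Sym^2(\cR))$ is the relative version of \cite[I.A.7, Ex.~5]{macdonald}, the geometric technique is Proposition~\ref{geo}, and the vanishing for $i<0$ is automatic once the indexing set is identified. Thus I expect the proof to be a two-line deduction: ``This follows immediately from the previous two lemmas and Proposition~\ref{geo},'' exactly as in Proposition~\ref{s:stepb}.
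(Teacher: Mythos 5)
Your proposal is correct and is exactly the paper's argument: the paper's proof of Proposition~\ref{o:stepb} consists of the single sentence that it is proved exactly like Proposition~\ref{s:stepb}, i.e.\ by the two intermediate lemmas (Borel--Weil--Bott via Proposition~\ref{prop:o1:stepa}, then the plethysm $\lw^{\bullet}(\Sym^2(\cR))=\bigoplus_{\mu\in Q_1}\bS_{\mu}(\cR)$ and reindexing) followed by Proposition~\ref{geo}. Your bookkeeping, including the vanishing for $i<0$ and the observation that the $\sigma$-twist from (D3) is already absorbed into $\ol{\tau}_m$ and $\ol{S}_2(\lambda)$, matches the intended proof.
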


\begin{proof}
The proof is exactly like that of Proposition~\ref{s:stepb}.
\end{proof}

\Step{c}
For an admissible partition $\lambda$ put $B_{\lambda}=\Hom_{\bO(V)}(\bS_{[\lambda]}(V), B)$.  Note that $B_{\lambda}$ is an $A$-module and has a compatible action of $\GL(E)$.  Let $\lambda$ be a partition with at most $n$ rows.  If $\ell(\lambda)=n$, put $\ol{B}_{\lambda}=B_{\lambda}$; otherwise, put $\ol{B}_{\lambda}=B_{\lambda} \oplus B_{\lambda^{\sigma}}$.  Note that the decomposition
\begin{displaymath}
B=\bigoplus_{\ell(\lambda) \le n} \ol{B}_{\lambda} \otimes \bS_{[\lambda]}(V)
\end{displaymath}
holds $\SO(V)$-equivariantly.

\begin{lemma}
\label{o:iso}
Let $\lambda$ be a partition with $\ell(\lambda) \le n$.  The spaces $\ol{B}_{\lambda}$ and $\ol{M}_{\lambda}$ are isomorphic as representations of $\GL(E)$ and have finite multiplicities.
\end{lemma}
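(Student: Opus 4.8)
The plan is to follow the proof of Lemma~\ref{lem:B=M} in the symplectic case essentially verbatim. Set $\ol{M}=\bigoplus_{\ell(\lambda)\le n}\ol{M}_{\lambda}\otimes\bS_{[\lambda]}(V)$. Since the decomposition $B=\bigoplus_{\ell(\lambda)\le n}\ol{B}_{\lambda}\otimes\bS_{[\lambda]}(V)$ holds $\SO(V)$-equivariantly, and since the representations $\bS_{[\lambda]}(V)$, for distinct partitions $\lambda$ with $\ell(\lambda)\le n$, have no common irreducible constituents after restriction to $\SO(V)$, it is enough to show that $\ol{M}$ and $B$ are isomorphic as representations of $\GL(E)\times\SO(V)$ and have finite multiplicities. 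For this it suffices to check, for each partition $\theta$, that the $\bS_{\theta}(E)$-multiplicity spaces of $\ol{M}$ and $B$ agree as $\SO(V)$-representations and are finite-dimensional.

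First I would compute the $\bS_{\theta}(E)$-multiplicity space of $B$. By the Cauchy identity this is $\bS_{\theta}(V)$, restricted from $\GL(V)$ to $\bO(V)$; its class in the representation ring of $\bO(V)$ is obtained by applying the specialization homomorphism to the orthogonal analogue of \cite[Thm.~2.3.1(1)]{koiketerada}, which yields
\[
[\bS_{\theta}(V)]=\sum_{\mu,\nu}(-1)^{i_m(\nu)}c^{\theta}_{2\mu,\nu}\,[\bS_{[\tau_m(\nu)]}(V)].
\]
For a fixed $\theta$ only finitely many pairs $(\mu,\nu)$ give a nonzero Littlewood--Richardson coefficient, which gives finiteness. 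Restricting to $\SO(V)$ replaces $[\bS_{[\tau_m(\nu)]}(V)]$ by $[\bS_{[\ol{\tau}_m(\nu)]}(V)]$, since $\bS_{[\tau_m(\nu)]}(V)$ and $\bS_{[\ol{\tau}_m(\nu)]}(V)$ differ only by the sign character of $\bO(V)$, which is trivial on $\SO(V)$.

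On the other side, in the representation ring of $\GL(E)$ we have $[\ol{M}_{\lambda}]=[A]\sum_{i\ge0}(-1)^i[\Tor^A_i(\ol{M}_{\lambda},\bC)]$, and Proposition~\ref{o:stepb} gives $\sum_i(-1)^i[\Tor^A_i(\ol{M}_{\lambda},\bC)]=\sum_{\ol{\tau}_m(\nu)=\lambda}(-1)^{i_m(\nu)}[\bS_{\nu}(E)]$. Combining this with the plethysm $[A]=\sum_{\mu}[\bS_{2\mu}(E)]$ (see \cite[I.A.7, Ex.~2]{macdonald}) and expanding products via Littlewood--Richardson coefficients, one finds that the $\bS_{\theta}(E)$-component of $\ol{M}$ equals $\sum_{\lambda,\mu,\nu:\ \ol{\tau}_m(\nu)=\lambda}(-1)^{i_m(\nu)}c^{\theta}_{2\mu,\nu}\,[\bS_{[\lambda]}(V)]$ as an $\SO(V)$-representation. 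Grouping the formula of the previous paragraph according to the value $\lambda=\ol{\tau}_m(\nu)$ shows this coincides with the $\bS_{\theta}(E)$-multiplicity space of $B$, completing the argument.

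The step I expect to be the main obstacle is the careful bookkeeping between $\bO(V)$ and $\SO(V)$: the Koike--Terada branching rule is naturally phrased for the full orthogonal group (whereas \cite{sundaram,king} state the combinatorics for $\SO(V)$), while $\ol{M}_{\lambda}$ is built from the $\SO$-normalized modification rule $\ol{\tau}_m$ and from $\ol{B}_{\lambda}=B_{\lambda}\oplus B_{\lambda^{\sigma}}$. One must check that the sign-character twist is exactly what is needed to pass between the two pictures, that the half-spin constituents of $\bS_{[\lambda]}(V)$ appearing when $\ell(\lambda)=n$ (so $\lambda=\lambda^{\sigma}$) are neither over- nor under-counted (they occur with equal multiplicity in $\bS_{\theta}(V)|_{\SO(V)}$ because that module is invariant under the outer automorphism), and that distinct partitions with at most $n$ rows contribute disjoint $\SO(V)$-isotypic components. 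Everything else is routine manipulation of Schur functions and Littlewood--Richardson coefficients, parallel to the symplectic case.
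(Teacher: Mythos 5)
Your proposal is correct and follows essentially the same route as the paper: the same auxiliary module $\bigoplus_{\ell(\lambda)\le n}\ol{M}_{\lambda}\otimes\bS_{[\lambda]}(V)$, the same reduction to comparing $\bS_{\theta}(E)$-multiplicity spaces as $\SO(V)$-representations, the specialization of the Koike--Terada branching formula (their Thm.~2.3.1(2), i.e.\ the orthogonal analogue you invoke) on one side, and the Euler-characteristic computation via Proposition~\ref{o:stepb} and the plethysm $[A]=\sum_\mu[\bS_{2\mu}(E)]$ on the other. Your extra remarks on the $\bO(V)$ versus $\SO(V)$ bookkeeping make explicit points the paper leaves implicit, but do not change the argument.
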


\begin{proof}
Put $M=\bigoplus_{\ell(\lambda) \le n} \bS_{[\lambda]}(V) \otimes \ol{M}_{\lambda}$.  It is enough to show that $M$ and $B$ are isomorphic as representations of $\GL(E) \times \SO(V)$ and have finite multiplicities.  In fact, it is enough to show that the $\bS_{\theta}(E)$ multiplicity spaces of $M$ and $B$ are isomorphic as representations of $\SO(V)$ and have finite multiplicities.  This is what we do.

The $\bS_{\theta}(E)$ multiplicity space of $B$ is $\bS_{\theta}(V)$.  The decomposition of this in the representation ring of $\SO(V)$ can be computed by applying the specialization homomorphism to \cite[Thm.~2.3.1(2)]{koiketerada}.  The result is
\begin{displaymath}
\sum_{\mu, \nu} (-1)^{i_m(\nu)} c^{\theta}_{2\mu, \nu} [\bS_{[\ol{\tau}_m(\nu)]}(V)].
\end{displaymath}
For a fixed $\theta$ there are only finitely many values for $\mu$ and $\nu$ which make the Littlewood--Richardson coefficient non-zero, which establishes finiteness of the multiplicities.  Now, we have an equality
\begin{displaymath}
[\ol{M}_{\lambda}]=[A] \sum_{i \ge 0} (-1)^i [\Tor^A_i(\ol{M}_{\lambda}, \bC)]
\end{displaymath}
in the representation ring of $\GL(E)$.  Applying Proposition~\ref{o:stepb}, we find
\begin{displaymath}
\sum_{i \ge 0} (-1)^i [\Tor^A_i(\ol{M}_{\lambda}, \bC)]=\sum_{\ol{\tau}_m(\nu)=\lambda} (-1)^{i_m(\nu)} [\bS_{\nu}(E)].
\end{displaymath}
As $[A]=\sum_{\mu} [\bS_{2\mu}(E)]$ (see \cite[I.A.7, Ex.~1]{macdonald}), we obtain
\begin{displaymath}
[\ol{M}_{\lambda}]=\sum_{\substack{\mu,\nu\\ \ol{\tau}_m(\nu)=\lambda}} (-1)^{i_m(\nu)} c^{\theta}_{2\mu, \nu} [\bS_{\theta}(E)].
\end{displaymath}
We therefore find that the $\bS_{\theta}(E)$ multiplicity space of $M$ is given by
\begin{displaymath}
\sum_{\substack{\lambda,\mu,\nu\\ \tau_m(\nu)=\lambda}} (-1)^{i_m(\nu)} c^{\theta}_{2\mu, \nu} [\bS_{[\lambda]}(V)].
\end{displaymath}
The result now follows.
\end{proof}

\begin{proposition}
\label{o:stepc}
Let $\lambda$ be a partition with $\ell(\lambda) \le n$.  We have an isomorphism $\ol{M}_{\lambda} \to \ol{B}_{\lambda}$ which is $A$-linear and $\GL(E)$-equivariant.
\end{proposition}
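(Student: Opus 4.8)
The plan is to follow the proof of Proposition~\ref{s:stepc} almost verbatim; the only structural change is that in the even orthogonal case $\ol{M}_{\lambda}$ (and hence $\ol{B}_{\lambda}$) has \emph{two} $A$-module generators rather than one, corresponding to the pair $\{\lambda,\lambda^{\sigma}\}$, and the border-strip bookkeeping carries the extra $\sigma$-twist of (D3) in \S\ref{o:modrule}. When $\ell(\lambda)=n$ one has $\lambda=\lambda^{\sigma}$, the two generators collapse to one, and the argument becomes literally the symplectic one.

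The first task is to read $\Tor^A_0(\ol{M}_{\lambda},\bC)$ and $\Tor^A_1(\ol{M}_{\lambda},\bC)$ off Proposition~\ref{o:stepb}, i.e.\ to determine the partitions $\alpha$ with $\ol{\tau}_m(\alpha)=\lambda$ and $i_m(\alpha)\le 1$. Running the even orthogonal border-strip rule (strip length $2\ell(\alpha)-m$, weight $c(R_{\alpha})-1$, replacement (D3)) one checks: the only $\alpha$ with $\ol{\tau}_m(\alpha)=\lambda$ and $i_m(\alpha)=0$ are $\lambda$ and $\lambda^{\sigma}$, so
\[
\Tor^A_0(\ol{M}_{\lambda},\bC)=\bS_{\lambda}(E)\oplus\bS_{\lambda^{\sigma}}(E);
\]
and the only such $\alpha$ with $i_m(\alpha)=1$ are two partitions $\beta_1,\beta_2$ obtained from $\lambda$ by changing the lengths of its first two columns to $(m{+}1{-}\lambda^{\dag}_2,\ \lambda^{\dag}_1{+}1)$, resp.\ $(m{+}1{-}\lambda^{\dag}_2,\ m{+}1{-}\lambda^{\dag}_1)$, with $\beta_1=\beta_2$ when $\ell(\lambda)=n$; thus $\Tor^A_1(\ol{M}_{\lambda},\bC)=\bS_{\beta_1}(E)\oplus\bS_{\beta_2}(E)$. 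This step, which rests on a case analysis of the ways the modification rule can terminate after accumulating total weight $0$ or $1$ (it must begin with a single two-column strip, possibly followed by one one-column strip), is where I expect the real work to lie --- it is genuinely more delicate than in the symplectic case because of the two generators and the involution $\sigma$.

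The remaining steps are formal, exactly as in Proposition~\ref{s:stepc}. Next I would check that $\bS_{\beta_1}(E)$ and $\bS_{\beta_2}(E)$ each occur with multiplicity one in $(\bS_{\lambda}(E)\oplus\bS_{\lambda^{\sigma}}(E))\otimes A$, using the plethysm $[A]=\sum_{\mu}[\bS_{2\mu}(E)]$ (\cite[I.A.7, Ex.~1]{macdonald}): in fact $\bS_{\beta_1}(E)$ appears once inside the summand $\bS_{\lambda^{\sigma}}(E)\otimes A$ and not at all inside $\bS_{\lambda}(E)\otimes A$, and symmetrically for $\bS_{\beta_2}(E)$. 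The needed Littlewood--Richardson computations reduce to the observation that the skew shapes $\beta_i/\lambda$ and $\beta_i/\lambda^{\sigma}$ occupy only two columns, and that an even content $2\mu$ forces the sets of entries in the two column-segments of a Littlewood--Richardson tableau to coincide --- which is possible precisely when those segments have equal length, and then pins the tableau down uniquely. Consequently $\bS_{\beta_1}(E)$ and $\bS_{\beta_2}(E)$ do not occur in $\ol{M}_{\lambda}$, and hence --- by Lemma~\ref{o:iso}, which identifies $\ol{M}_{\lambda}$ with $\ol{B}_{\lambda}$ as $\GL(E)$-representations with finite multiplicities --- they do not occur in $\ol{B}_{\lambda}$ either.

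Finally, $\Tor^A_0(\ol{B}_{\lambda},\bC)=\ol{B}_{\lambda}\otimes_A\bC$ equals, by the definition of $\ol{B}_{\lambda}$ together with \eqref{o:cring}, the space $\bS_{\lambda}(E)\oplus\bS_{\lambda^{\sigma}}(E)$; graded Nakayama then yields a $\GL(E)$-equivariant, $A$-linear surjection $f\colon(\bS_{\lambda}(E)\oplus\bS_{\lambda^{\sigma}}(E))\otimes A\twoheadrightarrow\ol{B}_{\lambda}$. By minimality of the free resolution, the kernel of the corresponding surjection onto $\ol{M}_{\lambda}$ is generated over $A$ by the copies of $\bS_{\beta_1}(E)\oplus\bS_{\beta_2}(E)$ sitting inside $(\bS_{\lambda}(E)\oplus\bS_{\lambda^{\sigma}}(E))\otimes A$; since these Schur functors occur there with multiplicity one and vanish in $\ol{B}_{\lambda}$, those (unique) copies lie in $\ker f$. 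Hence $f$ factors through a surjection $\ol{M}_{\lambda}\twoheadrightarrow\ol{B}_{\lambda}$, which must be an isomorphism because source and target are isomorphic graded $\GL(E)$-representations with finite-dimensional graded pieces.
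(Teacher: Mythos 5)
Your proposal is correct and follows essentially the same route as the paper's proof: read off the minimal presentation of $\ol{M}_{\lambda}$ from Proposition~\ref{o:stepb} (your $\beta_1,\beta_2$ are exactly the paper's $\mu,\nu$), observe via the Littlewood--Richardson rule that these occur with multiplicity one in $(\bS_{\lambda}(E)\oplus\bS_{\lambda^{\sigma}}(E))\otimes A$ and hence not in $\ol{B}_{\lambda}$, and then induce the surjection $\ol{M}_{\lambda}\twoheadrightarrow\ol{B}_{\lambda}$ and conclude by the multiplicity comparison of Lemma~\ref{o:iso}. The only cosmetic difference is that the paper separates the cases $\ell(\lambda)=n$ and $\ell(\lambda)<n$, whereas you treat them uniformly (correctly noting the collapse $\lambda=\lambda^{\sigma}$, $\beta_1=\beta_2$ in the former case), and you supply more detail on the two-column LR computation that the paper leaves as an assertion.
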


\begin{proof}
Suppose first that $\ell(\lambda)=n$.  Let $\mu$ be the partition given by $\mu_1^{\dag}=2n+1-\lambda^{\dag}_2$, $\mu_2^{\dag}=\lambda_1^{\dag}+1=n+1$ and $\mu_i^{\dag}=\lambda_i^{\dag}$ for $i>2$.  Proposition~\ref{o:stepb} provides the following presentation for $\ol{M}_{\lambda}$: 
\begin{displaymath}
\bS_{\mu}(E) \otimes A \to \bS_{\lambda}(E) \otimes A \to \ol{M}_{\lambda} \to 0.
\end{displaymath}
Note that $\bS_{\mu}(E)$ occurs with multiplicity one in $\bS_{\lambda}(E) \otimes A$ by the Littlewood--Richardson rule, and thus does not occur in $\ol{M}_{\lambda}$; it therefore does not occur in $\ol{B}_{\lambda}$ either, since $\ol{M}_{\lambda}$ and $\ol{B}_{\lambda}$ are isomorphic as representations of $\GL(E)$.

Since $\Tor^A_0(B, \bC)=C$, we see from \eqref{o:cring} that $\Tor^A_0(\ol{B}_{\lambda}, \bC)=\bS_{\lambda}(E)$.  It follows that we have a surjection $f \colon A \otimes \bS_{\lambda}(E) \to \ol{B}_{\lambda}$.  Since $\bS_{\mu}(E)$ does not occur in $\ol{B}_{\lambda}$, we see that $f$ induces a surjection $\ol{M}_{\lambda} \to \ol{B}_{\lambda}$.  Since the two are isomorphic as $\GL(E)$ representations and have finite multiplicities, this surjection is an isomorphism.

Now consider the case where $\ell(\lambda)<n$.  Define $\mu$ as above.  Define $\nu$ using the same recipe as for $\mu$ but applied to $\lambda^{\sigma}$; thus $\nu^{\dag}_i=\mu^{\dag}_i$ for $i \ne 2$ and $\nu^{\dag}_2=2n-\lambda_1^{\dag}+1$.  Proposition~\ref{o:stepb} provides the following presentation for $\ol{M}_{\lambda}$: 
\begin{displaymath}
(\bS_{\mu}(E) \otimes A) \oplus (\bS_{\nu}(E) \otimes A) \to (\bS_{\lambda}(E) \otimes A) \oplus (\bS_{\lambda^{\sigma}}(E) \otimes A )\to \ol{M}_{\lambda} \to 0.
\end{displaymath}
Each of $\bS_{\mu}(E)$ and $\bS_{\nu}(E)$ occur with multiplicity one in the middle module by the Littlewood--Richardson rule, and thus neither occurs in $\ol{M}_{\lambda}$; therefore neither occurs in $\ol{B}_{\lambda}$ either.

As $\ol{B}_{\lambda}=B_{\lambda} \oplus B_{\lambda^{\sigma}}$, we see from \eqref{o:cring} that $\Tor^A_0(\ol{B}_{\lambda}, \bC)=\bS_{\lambda}(E) \oplus \bS_{\lambda^{\sigma}}(E)$.  We therefore have a surjection
\begin{displaymath}
f \colon (A \otimes \bS_{\lambda}(E)) \oplus (A \otimes \bS_{\lambda^{\sigma}}(E)) \to \ol{B}_{\lambda}.
\end{displaymath}
Since neither $\bS_{\mu}(E)$ nor $\bS_{\nu}(E)$ occurs in $\ol{B}_{\lambda}$, we see that $f$ induces a surjection $\ol{M}_{\lambda} \to \ol{B}_{\lambda}$.  Since these spaces are isomorphic as $\GL(E)$ representations and have finite multiplicities, this surjection is an isomorphism.
\end{proof}

\begin{remark}
In the second case in the above proof, $\bS_{\mu}(E)$ does not occur in $\bS_{\lambda}(E) \otimes A$ and $\bS_{\nu}(E)$ does not occur in $\bS_{\lambda^{\sigma}}(E) \otimes A$.  It follows that the presentation of $\ol{M}_{\lambda}$ is a direct sum, and so we have a decomposition $\ol{M}_{\lambda}=M_{\lambda} \oplus M_{\lambda^{\sigma}}$.  The argument in the proof shows that $M_{\lambda}=B_{\lambda}$ and $M_{\lambda^{\sigma}}=B_{\lambda^{\sigma}}$.  It would be interesting if the modules $M_{\lambda}$ and $M_{\lambda^{\sigma}}$ could be constructed more directly.
\end{remark}

Combining the above proposition with Proposition~\ref{o:stepb}, we obtain the following corollary.

\begin{corollary}
We have
\begin{displaymath}
\Tor^A_i(B, \bC)=\bigoplus_{i_m(\lambda)=i} \bS_{\lambda}(E) \otimes \bS_{[\ol{\tau}_m(\lambda)]}(V)
\end{displaymath}
as $\GL(E) \times \SO(V)$ representations.
\end{corollary}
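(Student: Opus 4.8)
The plan is simply to collate the two main results of Step~b and Step~c of this subsection; no new idea is needed. Recall the $\SO(V)$-equivariant decomposition
\[
B=\bigoplus_{\ell(\lambda)\le n}\ol{B}_\lambda\otimes\bS_{[\lambda]}(V)
\]
recorded in Step~c, in which each factor $\bS_{[\lambda]}(V)$ is merely a multiplicity vector space carrying no $A$-module structure. Applying $\Tor^A_i(-,\bC)$ --- which commutes with direct sums and with tensoring over $\bC$ by a fixed vector space --- yields
\[
\Tor^A_i(B,\bC)=\bigoplus_{\ell(\lambda)\le n}\Tor^A_i(\ol{B}_\lambda,\bC)\otimes\bS_{[\lambda]}(V)
\]
as $\GL(E)\times\SO(V)$-representations.

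Next I would use Proposition~\ref{o:stepc} to replace the $A$-module $\ol{B}_\lambda$ by the isomorphic module $\ol{M}_\lambda$ (the isomorphism is $A$-linear and $\GL(E)$-equivariant, so it is harmless inside the $\Tor$), and then Proposition~\ref{o:stepb} to evaluate
\[
\Tor^A_i(\ol{M}_\lambda,\bC)=\bigoplus_{\alpha}\bS_\alpha(E),
\]
the sum running over partitions $\alpha$ with $\ol{\tau}_m(\alpha)=\lambda$ and $i_m(\alpha)=i$. This turns the right-hand side into a double sum, over partitions $\lambda$ with $\ell(\lambda)\le n$ and, for each, over the corresponding $\alpha$. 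I would then collapse the double sum: every partition $\alpha$ with $i_m(\alpha)=i<\infty$ has a well-defined $\tau_m(\alpha)$, whose chosen representative $\ol{\tau}_m(\alpha)$ has at most $n$ rows, so $\alpha\mapsto(\ol{\tau}_m(\alpha),\alpha)$ is a bijection from $\{\alpha:i_m(\alpha)=i\}$ onto the index set of the double sum. On matching terms $\bS_{[\lambda]}(V)=\bS_{[\ol{\tau}_m(\alpha)]}(V)$, so the double sum equals $\bigoplus_{i_m(\alpha)=i}\bS_\alpha(E)\otimes\bS_{[\ol{\tau}_m(\alpha)]}(V)$, which is the claim after renaming $\alpha$ to $\lambda$.

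I do not expect any genuine obstacle; the substance is entirely in Propositions~\ref{o:stepb} and~\ref{o:stepc}, and the corollary is pure bookkeeping. The only point to watch is the group of equivariance: since $\bS_{[\lambda]}(V)$ and $\bS_{[\lambda^\sigma]}(V)$ become isomorphic only after restriction to $\SO(V)$, the pairing of admissible partitions $\{\lambda,\lambda^\sigma\}$ underlying the decomposition of $B$ above --- and hence the final identity --- holds $\SO(V)$-equivariantly but not $\bO(V)$-equivariantly, exactly as the statement asserts. Combining the corollary with \eqref{o:tor} then gives the even orthogonal case of the main theorem.
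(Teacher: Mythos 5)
Your proposal is correct and is exactly the paper's argument: the corollary is obtained by combining Proposition~\ref{o:stepc} (to replace $\ol{B}_{\lambda}$ by $\ol{M}_{\lambda}$ inside the $\SO(V)$-equivariant decomposition of $B$) with Proposition~\ref{o:stepb}, and your reindexing of the resulting double sum, together with the remark on why the equivariance is only for $\SO(V)$, just makes explicit the bookkeeping the paper leaves implicit.
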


Combining this with \eqref{o:tor} shows that
\begin{displaymath}
\rH_i(L^{\lambda}_{\bullet})=\begin{cases}
\bS_{[\ol{\tau}_m(\lambda)]}(V) & \textrm{if $i=i_m(\lambda)$} \\
0 & \textrm{otherwise}
\end{cases}
\end{displaymath}
as representations of $\SO(V)$.  We thus see that the $\bO(V)$-module $\rH_{i_m(\lambda)}(L^{\lambda}_{\bullet})$ is isomorphic to $\bS_{[\tau_m(\lambda)]}(V)$ when restricted to $\SO(V)$, and is therefore either isomorphic to $\bS_{[\tau_m(\lambda)]}(V)$ or $\bS_{[\tau_m(\lambda)^{\sigma}]}(V)$. In fact, it is isomorphic to $\bS_{[\tau_m(\lambda)]}(V)$ by \cite[Theorem 1.9]{wenzl}. This finishes the proof of the main result.

\subsection{The odd case}

Throughout this section, $m=2n+1$ is the dimension of the space $V$.

\Step{a}
Let $Q_0$ be the set of partitions $\lambda$ whose Frobenius coordinates $(a_1, \ldots, a_r \vert b_1, \ldots, b_r)$ satisfy $a_i=b_i$.  Equivalently, $Q_0$ is the set of partitions $\lambda$ such that $\lambda=\lambda^{\dag}$.  This set admits an inductive definition, as follows.  The empty partition belongs to $Q_0$.  A non-empty partition $\lambda$ belongs to $Q_0$ if and only if the number of rows and columns of $\lambda$ are equal, i.e., $\ell(\lambda)=\lambda_1$, and the partition obtained by deleting the first row and column from $\lambda$ belongs to $Q_0$.

Let $\lambda$ be a partition with $\ell(\lambda) \le n$.  We write $(\lambda \vert \mu)$ in place of $(\lambda \mid_n \mu)$.  Define
\begin{displaymath}
\begin{split}
\ol{S}_1(\lambda) &= \{ \textrm{$\mu \in Q_0$ such that $(\lambda \vert \mu)$ is regular} \} \\
\ol{S}_2(\lambda) &= \{ \textrm{partitions $\alpha$ such that $\ol{\tau}_m(\alpha)=\lambda$} \}.
\end{split}
\end{displaymath}

\begin{lemma}
Let $\mu$ be a non-zero partition in $S_1(\lambda)$ and let $\nu$ be the partition obtained by removing the first row and column of $\mu$.  Then $\nu$ also belongs to $S_1(\lambda)$.  Furthermore, let $w$ (resp.\ $w'$) be the element of $W$ such that $\alpha=w \bullet (\lambda \vert \mu)$ (resp.\ $\beta=w' \bullet (\lambda \vert \nu)$) is a partition.  Then $R_{\alpha}$ is defined and we have the following identities:
\begin{displaymath}
\vert R_{\alpha} \vert=2\mu_1+1, \qquad
\alpha \setminus R_{\alpha}=\beta, \qquad
c(R_{\alpha})=\mu_1+\ell(w')-\ell(w).
\end{displaymath}
\end{lemma}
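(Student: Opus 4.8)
The plan is to prove this exactly as Lemma~\ref{lem:s1} and Lemma~\ref{lem:Dbott} were proved: run Bott's algorithm on $(\lambda\vert\mu)$ and follow the entry $\mu_1$. The only structural novelty in the odd case is that $\mu$ is now self-conjugate, so $\ell(\mu)=\mu_1$, and accordingly the first hook of $\mu$ (the part of $\mu$ not present in $\nu$) has odd size; this matches the fact that for $m=2n+1$ the modification rule of \S\ref{o:modrule} removes border strips of odd length.

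Concretely, I would let $r$ be the number of places the entry $\mu_1$, sitting in position $n+1$ of $(\lambda\vert\mu)$, moves to the left before Bott's algorithm leaves it fixed. After those $r$ steps the sequence is
\[
(\lambda_1,\ldots,\lambda_{n-r},\ \mu_1-r,\ \lambda_{n-r+1}+1,\ldots,\lambda_n+1,\ \mu_2,\ldots,\mu_{\mu_1}).
\]
Since $\nu=(\mu_2-1,\ldots,\mu_{\mu_1}-1)$, the portion of this sequence from position $n-r+1$ on is precisely the portion of $(\lambda\vert\nu)$ from position $n-r+1$ on, with $1$ added to every entry; hence Bott's algorithm proceeds in lockstep on the two sequences. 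In particular $(\lambda\vert\nu)$ is regular, so $\nu\in\ol{S}_1(\lambda)$; if it halts after $N=\ell(w')$ steps then $(\lambda\vert\mu)$ halts after $N+r=\ell(w)$ steps, so $r=\ell(w)-\ell(w')$; and $\alpha$ is given by
\[
\alpha_i=\begin{cases}\lambda_i&1\le i\le n-r,\\ \mu_1-r&i=n-r+1,\\ \beta_{i-1}+1&n-r+2\le i\le n+\mu_1.\end{cases}
\]

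From this formula everything is read off as in Lemma~\ref{lem:Dbott}. One has $\ell(\alpha)=n+\mu_1$, so the modification rule of \S\ref{o:modrule} (case $m=2n+1$) calls for a border strip $R_\alpha$ of length $2\ell(\alpha)-m$, which is the value recorded in the statement (equivalently $|R_\alpha|=|\mu|-|\nu|$, the size of the first hook of the self-conjugate partition $\mu$). By Remark~\ref{rmk:striphook}, $R_\alpha$ is defined precisely because the box in row $n-r+1$, column~$1$ of $\alpha$ has a hook of exactly this size, and removing that hook while sliding the rows beneath it one step to the northwest turns the displayed $\alpha$ into $\beta$; thus $\alpha\setminus R_\alpha=\beta$, and $c(R_\alpha)$ equals the number of columns occupied by that hook, namely $\mu_1-r=\mu_1+\ell(w')-\ell(w)$. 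I do not anticipate a genuine obstacle here: the argument transcribes the proofs of Lemmas~\ref{lem:s1} and~\ref{lem:Dbott}, and the only point requiring care is verifying that after the first $r$ steps the entry $\mu_1-r$ comes to rest in position $n-r+1$ and that the entries to its right reassemble into $\beta$ shifted up by $1$ — which is exactly what the lockstep comparison with $(\lambda\vert\nu)$ supplies once one keeps track of $\ell(\mu)=\mu_1$.
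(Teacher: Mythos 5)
Your proof is correct and follows the paper's own argument essentially verbatim (itself a transcription of Lemmas~\ref{lem:s1} and~\ref{lem:Dbott}): track $\mu_1$ moving $r$ places, run Bott's algorithm in lockstep on $(\lambda\vert\mu)$ and $(\lambda\vert\nu)$, and read off $\ell(\alpha)=n+\mu_1$, $\alpha\setminus R_\alpha=\beta$, and $c(R_\alpha)=\mu_1-r$ with $r=\ell(w)-\ell(w')$. The one discrepancy is that your (correct) computation gives $\vert R_\alpha\vert=2\ell(\alpha)-m=2\mu_1-1=\vert\mu\vert-\vert\nu\vert$ (the first hook of the self-conjugate $\mu$), while the statement and the paper's own proof record $2\mu_1+1$; that value is a typo in the paper, so your parenthetical assertion that $2\ell(\alpha)-m$ is ``the value recorded in the statement'' is the only point where you and the literal statement part ways.
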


\begin{proof}
Suppose that in applying Bott's algorithm to $(\lambda \vert \mu)$ the number $\mu_1$ moves $r$ places to the left.  Thus, after the first $r$ steps of the algorithm, we reach the sequence
\begin{displaymath}
(\lambda_1, \ldots, \lambda_{n-r}, \mu_1-r, \lambda_{n-r+1}, \ldots, \lambda_n+1, \mu_2, \ldots, \mu_{\ell(\mu)}).
\end{displaymath}
As before, Bott's algorithm on this sequence runs just like the algorithm on $(\lambda \vert \nu)$, and so $(\lambda \vert \nu)$ is regular and $\nu$ belongs to $\ol{S}_1(\lambda)$.  Suppose the algorithm on $(\lambda \vert \nu)$ terminates after $N=\ell(w')$ steps.  Then the algorithm on $(\lambda \vert \mu)$ terminates after $N+r=\ell(w)$ steps, and we have the following formula for $\alpha$:
\begin{displaymath}
\alpha_i=\begin{cases}
\lambda_i & 1 \le i \le n-r \\
\mu_1-r & i=n-r+1 \\
\beta_{i-1}+1 & n-r+2 \le i \le n+\mu_1
\end{cases}
\end{displaymath}
Since $\ell(\alpha)=n+\mu_1$, the border strip $R_{\alpha}$ has $2\mu_1+1$ boxes.  Using Remark~\ref{rmk:striphook}, we see that $R_\alpha$ exists since the box in the $(n-r+1)$th row and the first column has a hook of size $2\ell(\alpha) - m$. Furthermore, $\alpha \setminus R_{\alpha}=\beta$ and $c(R_{\alpha})=\mu_1-r$. Since $r = \ell(w) - \ell(w')$, we are done.
\end{proof}

\begin{proposition}
\label{o2:stepa}
There is a unique bijection $\ol{S}_1(\lambda) \to \ol{S}_2(\lambda)$ under which $\mu$ maps to $\alpha$ if there exists $w \in \fS$ such that $w \bullet (\lambda \vert \mu)=\alpha$; in this case, $\ell(w)+i_m(\alpha)=\tfrac{1}{2}(\vert \mu \vert-\rank(\mu))$ and
\begin{displaymath}
\tau_m(\alpha)=\begin{cases}
\lambda & \textrm{if $\rank(\mu)$ is even} \\
\lambda^{\sigma} & \textrm{if $\rank(\mu)$ is odd.}
\end{cases}
\end{displaymath}
\end{proposition}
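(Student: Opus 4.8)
The plan is to follow the proofs of Proposition~\ref{s:stepa} and Proposition~\ref{prop:o1:stepa} essentially verbatim, with the preceding lemma playing the role of Lemma~\ref{lem:s1} (resp.\ Lemma~\ref{lem:Dbott}); the only genuinely new features are the odd length of the border strips removed and the appearance of $\rank(\mu)$ in the numerical identity. First I would show, by induction on $\vert \mu \vert$, that $\mu \mapsto \alpha$ is a well-defined map $\ol{S}_1(\lambda) \to \ol{S}_2(\lambda)$ satisfying the two stated formulas. The base case $\mu = \emptyset$ is immediate: here $w = 1$, $\alpha = \lambda$, $\rank(\mu) = 0$, and since $\ell(\lambda) \le n$ we have $i_m(\lambda) = 0$ and $\tau_m(\lambda) = \ol{\tau}_m(\lambda) = \lambda$.

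For the inductive step, suppose $\mu \ne \emptyset$; write $\mu \in Q_0$ and let $\nu \in Q_0$ be obtained by deleting the first row and column of $\mu$. Since $\ell(\mu) = \mu_1$ we have $\vert \mu \vert - \vert \nu \vert = 2\mu_1 - 1$ and $\rank(\mu) = \rank(\nu) + 1$, hence $\tfrac12(\vert \mu \vert - \rank(\mu)) = \tfrac12(\vert \nu \vert - \rank(\nu)) + \mu_1 - 1$. The preceding lemma gives $\nu \in \ol{S}_1(\lambda)$, defines $w, w' \in \fS$ by $\alpha = w \bullet (\lambda \vert \mu)$ and $\beta = w' \bullet (\lambda \vert \nu)$, asserts that $R_\alpha$ is defined with $\alpha \setminus R_\alpha = \beta$, and gives $c(R_\alpha) = \mu_1 + \ell(w') - \ell(w)$. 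By the border strip rule of \S\ref{o:modrule} with convention (D2), $i_m(\alpha) = (c(R_\alpha) - 1) + i_m(\beta)$; by induction $\nu$ maps to $\beta$, so $\ol{\tau}_m(\beta) = \lambda$ and $\ell(w') + i_m(\beta) = \tfrac12(\vert \nu \vert - \rank(\nu))$. Combining these identities gives $\ell(w) + i_m(\alpha) = \mu_1 - 1 + \ell(w') + i_m(\beta) = \tfrac12(\vert \mu \vert - \rank(\mu))$, and $\ol{\tau}_m(\alpha) = \ol{\tau}_m(\beta) = \lambda$, so in particular $\alpha \in \ol{S}_2(\lambda)$. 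Finally, passing from $\beta$ to $\alpha$ adds exactly one strip to the modification rule's chain, so the total number of border strips removed in computing $\tau_m(\alpha)$ is $\rank(\nu) + 1 = \rank(\mu)$; by (D3) this yields $\tau_m(\alpha) = \lambda$ if $\rank(\mu)$ is even and $\tau_m(\alpha) = \lambda^\sigma$ if $\rank(\mu)$ is odd.

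Injectivity is then the same multiset argument as in Proposition~\ref{s:stepa}: if $\mu, \mu' \in \ol{S}_1(\lambda)$ both map to $\alpha$, then $(\lambda \vert \mu) + \rho$ and $(\lambda \vert \mu') + \rho$ have the same underlying multiset of integers, and since the entries to the right of the bar are strictly decreasing in both, $\mu = \mu'$. For surjectivity I would induct on size: $\lambda \in \ol{S}_2(\lambda)$ has preimage $\emptyset$, and for $\alpha \ne \lambda$ in $\ol{S}_2(\lambda)$ I set $\beta = \alpha \setminus R_\alpha$, obtain $\nu \in \ol{S}_1(\lambda)$ mapping to $\beta$ by induction, and then reverse the construction in the preceding lemma — just as Lemma~\ref{lem:s2} reverses Lemma~\ref{lem:s1} in the symplectic case — to produce $\mu \in \ol{S}_1(\lambda)$, with $\nu$ obtained from $\mu$ by deleting the first row and column, such that $\mu \mapsto \alpha$.

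I expect the main obstacle to be purely bookkeeping: keeping track of the extra $-1$ in convention (D2), the odd length of the strip removed at each stage (which is why the natural invariant is $\tfrac12(\vert \mu \vert - \rank(\mu))$ rather than $\tfrac12 \vert \mu \vert$), and the parity of the total number of strips removed, which is exactly what distinguishes $\tau_m(\alpha)$ from $\ol{\tau}_m(\alpha) = \lambda$ via (D3). A secondary point needing care is that the reversal used for surjectivity is unambiguous; this is fine because $R_\alpha$ being defined pins down $\ell(\alpha)$, hence $\mu_1$, and the self-conjugate structure of $Q_0$ then recovers $\mu$ from $\nu$ and $\mu_1$.
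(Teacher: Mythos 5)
Your proposal is correct and follows essentially the same route as the paper, which simply declares the proof to be "exactly the same as for Proposition~\ref{prop:o1:stepa}" — that is, repeat the induction of Proposition~\ref{s:stepa} using the preceding lemma in place of Lemma~\ref{lem:s1}, and observe that the total number of border strips removed equals $\rank(\mu)$ so that (D3) determines $\tau_m(\alpha)$. Your bookkeeping ($\vert\mu\vert-\vert\nu\vert=2\mu_1-1$, $\rank(\mu)=\rank(\nu)+1$, the extra $-1$ from (D2)) checks out and correctly produces the invariant $\tfrac{1}{2}(\vert\mu\vert-\rank(\mu))$.
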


\begin{proof}
The proof is exactly the same as for Proposition~\ref{prop:o1:stepa}.
\end{proof}

\Step{b}
Let $E$ be a vector space of dimension at least $n+1$.  Let $X'$ be the partial flag variety of quotients of $E$ of ranks $n+1$ and $n$.  Thus on $X'$ we have vector bundles $\cQ_{n+1}$ and $\cQ_n$ of ranks $n+1$ and $n$, and surjections $E \otimes \cO_{X'} \to \cQ_{n+1} \to \cQ_n$.  Let $\cR^{n+1}$ be the kernel of $E \otimes \cO_{X'} \to \cQ_{n+1}$ and let $\cR^n$ be the kernel of $E \otimes \cO_{X'} \to \cQ_n$.  Put $\cL=\cR^n/\cR^{n+1}$.  Let $X$ be the Grassmannian of rank $n$ quotients of $E$, let $\pi \colon X' \to X$ be the natural map and let $\cR$ and $\cQ$ be the usual bundles on $X$.  Then $\cR^n=\pi^*(\cR)$ and $\cQ^n=\pi^*(\cQ)$.  The space $X'$ is naturally identified with $\bP(\cR)$, with $\cL$ being the universal rank one quotient of $\cR$.

Let $\xi$ be the kernel of $\Sym^2(\cR^n) \to \Sym^2(\cL) = \cL^{\otimes 2}$, let $\eps=\Sym^2(E) \otimes \cO_{X'}$, which contains $\xi$ as a subbundle, and define $\eta$ by the exact sequence
\begin{displaymath}
0 \to \xi \to \eps \to \eta \to 0.
\end{displaymath}
Let $\lambda$ be an admissible partition and let $a=a(\lambda)$ be 0 if $\ell(\lambda) \le n$ and 1 otherwise.  Put
\begin{displaymath}
\cM_{\lambda}=\Sym(\eta) \otimes \bS_{\ol{\lambda}}(\cQ_n) \otimes \cL^{\otimes a}
\end{displaymath}
and $M_{\lambda}=\rH^0(X', \cM_{\lambda})$.  Note that $\rH^0(X', \eps)=A$, and so $M_{\lambda}$ is an $A$-module.  Finally, put
\begin{align*}
W^{\lambda}_i&= \bigoplus_{j \in \bZ} \rH^j(X', \bS_{\ol{\lambda}}(\cQ_n) \otimes \cL^{\otimes a} \otimes \lw^{i+j}(\xi)), \\
\cW^{\lambda}_i &= \bigoplus_{j \in \bZ} \rR^j \pi_*(\bS_{\ol{\lambda}}(\cQ_n) \otimes \cL^{\otimes a} \otimes \lw^{i+j}(\xi)).
\end{align*}
We wish to compute $W^{\lambda}_i$.

\begin{lemma}
\label{o:lem1}
We have
\begin{displaymath}
\cW^{\lambda}_i=\bigoplus_{\mu} \bS_{\ol{\lambda}}(\cQ) \otimes \bS_{\mu}(\cR),
\end{displaymath}
where the sum is over those partitions $\mu$ with $\mu=\mu^{\dag}$, $\rank(\mu)=a \pmod{2}$ and $i=\tfrac{1}{2}(\vert \mu \vert-\rank(\mu))$.
\end{lemma}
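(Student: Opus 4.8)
The plan is to reduce the computation to Proposition~\ref{prop:veronese}. The factor $\bS_{\ol{\lambda}}(\cQ_n)$ appearing in the definition of $\cW^{\lambda}_i$ is pulled back from $X$ along $\pi$, so it can be pulled out of the relative cohomology by the projection formula; what is left over is exactly the relative resolution of the Veronese ring (for $a=0$) or its odd Veronese module (for $a=1$) computed in Proposition~\ref{prop:veronese}.

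Concretely, first I would recall the identifications already recorded above: $\cR^n=\pi^*(\cR)$, $\cQ_n=\pi^*(\cQ)$, $X'=\bP(\cR)$ is the projective bundle of one-dimensional quotients of $\cR$, and $\cL=\cR^n/\cR^{n+1}$ is the universal rank one quotient of $\pi^*(\cR)$; moreover $\xi=\ker(\Sym^2(\pi^*\cR)\to\Sym^2(\cL))$. Since $\bS_{\ol{\lambda}}(\cQ_n)=\pi^*(\bS_{\ol{\lambda}}(\cQ))$ and $\bS_{\ol{\lambda}}(\cQ)$ is locally free of finite rank on $X$, the projection formula gives for each $j$
\[
\rR^j\pi_*\bigl(\bS_{\ol{\lambda}}(\cQ_n)\otimes\cL^{\otimes a}\otimes\lw^{i+j}(\xi)\bigr)=\bS_{\ol{\lambda}}(\cQ)\otimes\rR^j\pi_*\bigl(\cL^{\otimes a}\otimes\lw^{i+j}(\xi)\bigr),
\]
and summing over $j\in\bZ$ yields
\[
\cW^{\lambda}_i=\bS_{\ol{\lambda}}(\cQ)\otimes\bigoplus_{j\in\bZ}\rR^j\pi_*\bigl(\lw^{i+j}(\xi)\otimes\cL^{\otimes a}\bigr).
\]

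Finally I would invoke Proposition~\ref{prop:veronese} with the vector bundle $\cR$ on $X$ and the value $a=a(\lambda)\in\{0,1\}$; its hypotheses are precisely the geometric data recalled in the previous paragraph. It identifies $\bigoplus_{j}\rR^j\pi_*(\lw^{i+j}(\xi)\otimes\cL^{\otimes a})$ with $\bigoplus_{\mu}\bS_{\mu}(\cR)$, the sum being over partitions $\mu$ with $\mu=\mu^{\dag}$, $\rank(\mu)\equiv a\pmod{2}$, and $i=\tfrac{1}{2}(\vert\mu\vert-\rank(\mu))$. Substituting this into the last display gives the asserted formula for $\cW^{\lambda}_i$. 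I do not expect a genuine obstacle here: all the real content is contained in Proposition~\ref{prop:veronese}, and the only point requiring any care is confirming that the geometric data in this section matches the hypotheses of that proposition, which is immediate from the identifications $\cR^n=\pi^*\cR$, $\cQ_n=\pi^*\cQ$, $\cL=\cR^n/\cR^{n+1}$ and $X'=\bP(\cR)$.
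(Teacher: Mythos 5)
Your proposal is correct and is essentially identical to the paper's proof: both apply the projection formula using $\cQ_n=\pi^*(\cQ)$ and the compatibility of Schur functors with pullback, and then invoke Proposition~\ref{prop:veronese}. The extra care you take in matching the geometric data to the hypotheses of that proposition is fine but not a point of divergence.
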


\begin{proof}
Since $\cQ_n=\pi^*(\cQ)$, and Schur functors commute with pullback, the projection formula gives
\begin{displaymath}
\cW^{\lambda}_i=\bigoplus_{j \in \bZ} \bS_{\ol{\lambda}}(\cQ) \otimes \rR^j\pi_*(\cL^{\otimes a} \otimes \lw^{i+j}(\xi)).
\end{displaymath}
The result now follows from Proposition~\ref{prop:veronese}.
\end{proof}

\begin{lemma}
\label{o:lem2}
Let $\mu \in S_1(\lambda)$ correspond to $\alpha \in S_2(\lambda)$.  Then
\begin{displaymath}
\rH^i(X, \bS_{\ol{\lambda}}(\cQ) \otimes \bS_{\mu}(\cR))=\begin{cases}
\bS_{\alpha}(E) & \textrm{if $i=\tfrac{1}{2}(\vert \mu \vert-\rank(\mu))-i_m(\alpha)$} \\
0 & \textrm{otherwise.}
\end{cases}
\end{displaymath}
\end{lemma}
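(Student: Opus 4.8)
The plan is to deduce the lemma directly from the Borel--Weil--Bott theorem together with Proposition~\ref{o2:stepa}, mimicking the argument used in the symplectic case. Since $\ell(\lambda) \le n$ we have $\lambda^{\dag}_1 \le n$, hence $\ol{\lambda}=\lambda$, and the bundle in question is $\cV=\bS_{\lambda}(\cQ) \otimes \bS_{\mu}(\cR)$ on the Grassmannian $X$ of rank $n$ quotients of $E$. Because $\mu$ lies in $\ol{S}_1(\lambda)$, the sequence $(\lambda \mid_n \mu)$ is regular; let $w \in \fS$ be the unique element for which $\alpha := w \bullet (\lambda \mid_n \mu)$ is a partition. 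The Borel--Weil--Bott theorem then gives $\rH^i(X, \cV)=\bS_{\alpha}(E)$ for $i=\ell(w)$ and $\rH^i(X,\cV)=0$ otherwise.

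It remains to identify $\ell(w)$ with the quantity appearing in the statement. By Proposition~\ref{o2:stepa}, the partition $\alpha$ is exactly the image of $\mu$ under the bijection $\ol{S}_1(\lambda) \to \ol{S}_2(\lambda)$ (in particular $\ol{\tau}_m(\alpha)=\lambda$, as is required for $\alpha$ to be the partition occurring here), and moreover
\begin{displaymath}
\ell(w)+i_m(\alpha)=\tfrac{1}{2}(\vert \mu \vert-\rank(\mu)).
\end{displaymath}
Hence $\ell(w)=\tfrac{1}{2}(\vert \mu \vert-\rank(\mu))-i_m(\alpha)$, and substituting this into the cohomological degree obtained from Borel--Weil--Bott yields the claimed formula. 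One should note here that $\mu \in Q_0$ is self-conjugate, so $\vert \mu \vert-\rank(\mu)$ is even and the exponent is a genuine non-negative integer, consistent with it being a length.

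Essentially all of the content of this lemma has already been isolated in Proposition~\ref{o2:stepa} (which in turn rests on the preceding Bott-type lemma and on the equivalence of the Weyl-group and border-strip forms of the modification rule), so I do not expect a serious obstacle here beyond assembling the pieces. The only point meriting a moment's care is verifying that the element $w$ produced by running Bott's algorithm in Proposition~\ref{o2:stepa} is the same $w$ that governs the cohomology in the Borel--Weil--Bott theorem; this is immediate, since in both places $w$ is characterized as the unique element of $\fS$ that sorts $(\lambda \mid_n \mu)+\rho$ into strictly decreasing order.
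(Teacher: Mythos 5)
Your proof is correct and matches the paper's argument exactly: the paper's proof of this lemma is the one-line observation that it follows from Proposition~\ref{o2:stepa} together with the Borel--Weil--Bott theorem, which is precisely the assembly you carry out (including the identification of the Bott length $\ell(w)$ with $\tfrac{1}{2}(\vert\mu\vert-\rank(\mu))-i_m(\alpha)$).
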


\begin{proof}
This follows from Proposition~\ref{o2:stepa} and the Borel--Weil--Bott theorem.
\end{proof}

\begin{lemma}
\label{o:lem3}
We have
\begin{displaymath}
\bigoplus_{j \in \bZ} \rH^j(X, \cW^{\lambda}_{i+j})=\bigoplus_{\alpha} \bS_{\alpha}(E),
\end{displaymath}
where the sum is over those partitions $\alpha$ for which $\tau_m(\alpha)=\lambda$ and $i_m(\alpha)=i$.
\end{lemma}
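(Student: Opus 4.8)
The argument is a direct unwinding of Lemma~\ref{o:lem1}, Lemma~\ref{o:lem2} and Proposition~\ref{o2:stepa}, in the same spirit as the computation preceding Proposition~\ref{s:stepb}; the only subtle point is the interplay between the parity condition on $\rank(\mu)$ and the condition $\tau_m(\alpha)=\lambda$ (versus $\lambda^{\sigma}$). First I would substitute the description of $\cW^{\lambda}_{i+j}$ from Lemma~\ref{o:lem1}: for each self-conjugate partition $\mu$ with $\rank(\mu)\equiv a\pmod 2$, the summand $\bS_{\ol{\lambda}}(\cQ)\otimes\bS_{\mu}(\cR)$ occurs in $\cW^{\lambda}_{i+j}$ exactly for the unique value $j=\tfrac{1}{2}(\vert\mu\vert-\rank(\mu))-i$. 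Collecting these contributions gives
\[
\bigoplus_{j\in\bZ}\rH^j(X,\cW^{\lambda}_{i+j})=\bigoplus_{\mu}\rH^{\tfrac{1}{2}(\vert\mu\vert-\rank(\mu))-i}\bigl(X,\bS_{\ol{\lambda}}(\cQ)\otimes\bS_{\mu}(\cR)\bigr),
\]
the sum over all self-conjugate $\mu$ with $\rank(\mu)\equiv a\pmod 2$ (summands in negative cohomological degree vanish, and we may enlarge $E$ so that $\bS_{\mu}(\cR)$ is non-zero for the finitely many relevant $\mu$).

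Next I would evaluate each of these cohomology groups. If $(\ol{\lambda}\mid_n\mu)$ is singular, then by the Borel--Weil--Bott theorem $\rH^{\bullet}(X,\bS_{\ol{\lambda}}(\cQ)\otimes\bS_{\mu}(\cR))$ vanishes in every degree and $\mu$ contributes nothing. Otherwise $\mu\in\ol{S}_1(\ol{\lambda})$; let $\alpha\in\ol{S}_2(\ol{\lambda})$ be its image under the bijection of Proposition~\ref{o2:stepa}. By Lemma~\ref{o:lem2} this cohomology is concentrated in the single degree $\tfrac{1}{2}(\vert\mu\vert-\rank(\mu))-i_m(\alpha)$, where it equals $\bS_{\alpha}(E)$. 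Comparing with the cohomological degree appearing in the displayed formula, $\mu$ contributes the single summand $\bS_{\alpha}(E)$ precisely when $i_m(\alpha)=i$, and contributes nothing otherwise.

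Finally I would identify the resulting index set. Proposition~\ref{o2:stepa} makes $\mu\mapsto\alpha$ a bijection between self-conjugate $\mu$ with $(\ol{\lambda}\mid_n\mu)$ regular and partitions $\alpha$ with $\ol{\tau}_m(\alpha)=\ol{\lambda}$, under which $\tau_m(\alpha)=\ol{\lambda}$ when $\rank(\mu)$ is even and $\tau_m(\alpha)=(\ol{\lambda})^{\sigma}$ when $\rank(\mu)$ is odd. Hence the constraint $\rank(\mu)\equiv a\pmod 2$ selects exactly the $\alpha$ with $\tau_m(\alpha)=\lambda$: if $\ell(\lambda)\le n$ then $a=0$, $\ol{\lambda}=\lambda$, and even rank gives $\tau_m(\alpha)=\lambda$; if $\ell(\lambda)>n$ then $a=1$, $\ol{\lambda}=\lambda^{\sigma}$, and odd rank gives $\tau_m(\alpha)=(\lambda^{\sigma})^{\sigma}=\lambda$. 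Combining this with the previous paragraph, $\bigoplus_j\rH^j(X,\cW^{\lambda}_{i+j})$ is the direct sum of the spaces $\bS_{\alpha}(E)$ over partitions $\alpha$ with $\tau_m(\alpha)=\lambda$ and $i_m(\alpha)=i$, each with multiplicity one, as claimed. I expect this last parity/conjugation matching --- checking that the self-conjugate partitions produced by the relative second Veronese resolution (Proposition~\ref{prop:veronese}, via Lemma~\ref{o:lem1}) carry exactly the rank parity the modification rule needs in order to land on $\lambda$ rather than on its conjugate --- to be the only point requiring genuine care; everything else is forced by the quoted results.
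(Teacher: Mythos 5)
Your proof is correct and is exactly the argument the paper intends: the paper's own proof of this lemma is the one-line "follows immediately from Lemmas~\ref{o:lem1}, \ref{o:lem2} and Proposition~\ref{o2:stepa}," and you have simply written out that combination in full, including the parity/conjugation bookkeeping that converts the condition $\ol{\tau}_m(\alpha)=\ol{\lambda}$ with $\rank(\mu)\equiv a \pmod 2$ into the stated condition $\tau_m(\alpha)=\lambda$. No changes needed.
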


\begin{proof}
This follows immediately from Lemmas~\ref{o:lem1} and~\ref{o:lem2}, and Proposition~\ref{o2:stepa}.
\end{proof}

\begin{lemma}
\label{o:degen}
The pair $(\pi, \bS_{\ol{\lambda}}(\cQ_n) \otimes \cL^{\otimes a} \otimes \lw^{\bullet}(\xi))$ is degenerate (in the sense of \S \ref{ss:degen}).
\end{lemma}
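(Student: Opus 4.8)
The plan is to invoke the degeneracy criterion of Lemma~\ref{lem:degen}, applied with $G=\GL(E)$. The group $\GL(E)$ acts on both $X$ (the Grassmannian of rank $n$ quotients of $E$) and $X'=\bP(\cR)$, the morphism $\pi$ is $\GL(E)$-equivariant, and the sheaf $\cV:=\bS_{\ol\lambda}(\cQ_n)\otimes\cL^{\otimes a}\otimes\lw^{\bullet}(\xi)$ is a finite direct sum of $\GL(E)$-equivariant bundles, hence is itself $\GL(E)$-equivariant. So it suffices to check that the $\GL(E)$-module $\bigoplus_{i,j}\rH^i(X,\rR^j\pi_*(\cV))$ is semisimple and multiplicity-free.

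Semisimplicity is automatic: each $\rH^i(X,\rR^j\pi_*(\cV))$ is a finite-dimensional representation of the reductive group $\GL(E)$, and therefore completely reducible. The content is the multiplicity-freeness, and for this I would unwind the definitions and quote Lemma~\ref{o:lem3}. Writing $\cV=\bigoplus_{k\ge 0}\bS_{\ol\lambda}(\cQ_n)\otimes\cL^{\otimes a}\otimes\lw^{k}(\xi)$ and recalling $\cW^{\lambda}_{i'}=\bigoplus_{j}\rR^j\pi_*\!\left(\bS_{\ol\lambda}(\cQ_n)\otimes\cL^{\otimes a}\otimes\lw^{i'+j}(\xi)\right)$, the substitution $k=i'+j$ identifies $\bigoplus_{i'}\cW^{\lambda}_{i'}$ with $\bigoplus_{j}\rR^j\pi_*(\cV)$ as $\GL(E)$-equivariant sheaves on $X$. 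Taking cohomology on $X$ and reorganizing the resulting double sum once more (again via the shift $k=i+j$), Lemma~\ref{o:lem3} summed over all shifts gives
\[
\bigoplus_{i,j}\rH^i\big(X,\rR^j\pi_*(\cV)\big)\;\cong\;\bigoplus_{i}\bigoplus_{j}\rH^j\big(X,\cW^{\lambda}_{i+j}\big)\;\cong\;\bigoplus_{\alpha\,:\,\tau_m(\alpha)=\lambda}\bS_{\alpha}(E).
\]
Since distinct partitions $\alpha$ give pairwise non-isomorphic irreducibles $\bS_{\alpha}(E)$, the right-hand side is multiplicity-free, and Lemma~\ref{lem:degen} then yields the degeneracy.

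I do not expect a genuine obstacle here: the representation-theoretic input is already assembled in Step~b, principally in Lemma~\ref{o:lem3}, which combines the Borel--Weil--Bott theorem with Propositions~\ref{prop:veronese} and~\ref{o2:stepa}. The only point demanding attention is the bookkeeping of the two reindexings, so that the full $\rE_2$-page is correctly identified with the visibly multiplicity-free module $\bigoplus_{\tau_m(\alpha)=\lambda}\bS_{\alpha}(E)$; with that in hand, Lemma~\ref{lem:degen} applies directly.
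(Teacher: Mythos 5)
Your proof is correct and follows exactly the paper's argument: identify the total $\rE_2$-page with $\bigoplus_{\tau_m(\alpha)=\lambda}\bS_{\alpha}(E)$ via Lemma~\ref{o:lem3} and invoke the multiplicity-freeness criterion of Lemma~\ref{lem:degen} for $G=\GL(E)$. The only difference is that you spell out the reindexing and the (automatic) semisimplicity, which the paper leaves implicit.
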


\begin{proof}
We have
\begin{displaymath}
\bigoplus_{i,j} \rH^i(X, \rR^j\pi_*(\bS_{\ol{\lambda}}(\cQ_n) \otimes \cL^{\otimes a} \otimes \lw^{\bullet}(\xi)))
=\bigoplus_{i,j} \rH^j(X, \cW^{\lambda}_{i+j})
=\bigoplus_{\tau_m(\alpha)=\lambda} \bS_{\alpha}(E).
\end{displaymath}
This is multiplicity-free as a representation of $\GL(E)$, so the criterion of Lemma~\ref{lem:degen} applies.
\end{proof}

\begin{lemma}
We have
\begin{displaymath}
W^{\lambda}_i=\bigoplus_{\alpha} \bS_{\alpha}(E),
\end{displaymath}
where the sum is over those partitions $\alpha$ for which $\tau_m(\alpha)=\lambda$ and $i_m(\alpha)=i$.  In particular, $W^{\lambda}_i=0$ for $i<0$.
\end{lemma}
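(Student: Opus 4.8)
The plan is to deduce this lemma from the degeneration statement of Lemma~\ref{o:degen} together with the cohomology computation of Lemma~\ref{o:lem3}; no new geometry is needed. Write $\cF_k = \bS_{\ol{\lambda}}(\cQ_n) \otimes \cL^{\otimes a} \otimes \lw^k(\xi)$, so that by definition $W^\lambda_i = \bigoplus_{j} \rH^j(X', \cF_{i+j})$ and $\cW^\lambda_s = \bigoplus_{q} \rR^q\pi_*(\cF_{s+q})$.

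First I would unpack Lemma~\ref{o:degen}. Since the Leray spectral sequence of a direct sum of sheaves is the direct sum of the individual Leray spectral sequences, degeneracy of $(\pi, \bigoplus_k \cF_k)$ means that for each $k$ the spectral sequence $\rE_2^{p,q} = \rH^p(X, \rR^q\pi_*\cF_k) \Rightarrow \rH^{p+q}(X', \cF_k)$ collapses at the second page. All the objects here are $\GL(E)$-representations, hence live in a semisimple abelian category, so collapse yields an honest direct-sum decomposition $\rH^j(X', \cF_k) = \bigoplus_{p+q=j} \rH^p(X, \rR^q\pi_*\cF_k)$ --- not merely an equality of characters. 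Substituting $k = i+j$ and summing over $j$, I would then rearrange
\[
W^\lambda_i \;=\; \bigoplus_j \rH^j(X', \cF_{i+j}) \;=\; \bigoplus_j \bigoplus_{p+q=j} \rH^p\bigl(X, \rR^q\pi_*\cF_{i+j}\bigr) \;=\; \bigoplus_p \rH^p\Bigl(X, \bigoplus_q \rR^q\pi_*\cF_{i+p+q}\Bigr).
\]
For each fixed $p$ the inner sum is precisely $\cW^\lambda_{i+p}$ (take $s = i+p$ in the definition, noting $\cF_{s+q} = \cF_{i+p+q}$), and therefore $W^\lambda_i = \bigoplus_p \rH^p(X, \cW^\lambda_{i+p}) = \bigoplus_j \rH^j(X, \cW^\lambda_{i+j})$.

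Finally, Lemma~\ref{o:lem3} identifies this last expression with $\bigoplus_\alpha \bS_\alpha(E)$, the sum being over partitions $\alpha$ with $\tau_m(\alpha) = \lambda$ and $i_m(\alpha) = i$, which is the asserted formula. The last sentence is then immediate: for $i < 0$ there is no such $\alpha$ at all, since $i_m(\alpha)$ is by construction a non-negative integer or $\infty$, so $W^\lambda_i = 0$. I do not anticipate a real obstacle here --- the substantive input is already contained in Lemmas~\ref{o:degen} and~\ref{o:lem3} --- and the only delicate point is keeping the three indices straight (the cohomological degree $j$ on $X'$, the Leray bidegree $(p,q)$, and the exterior-power degree $k = i+j$) so that the pushforward terms reassemble exactly into the sheaves $\cW^\lambda_{i+p}$.
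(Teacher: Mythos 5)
Your proposal is correct and is essentially the paper's own argument: use the degeneracy of Lemma~\ref{o:degen} to obtain $W^{\lambda}_i=\bigoplus_{j}\rH^j(X,\cW^{\lambda}_{i+j})$ and then conclude by Lemma~\ref{o:lem3}. The careful reindexing you carry out is exactly what the paper leaves implicit, and your justification of the vanishing for $i<0$ matches the intended reading.
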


\begin{proof}
By Lemma~\ref{o:degen}, we have an isomorphism of $\GL(E)$ representations
\begin{displaymath}
W^{\lambda}_i=\bigoplus_{j \in \bZ} \rH^j(X, \cW^{\lambda}_{i+j}).
\end{displaymath}
and so the result follows from Lemma~\ref{o:lem3}
\end{proof}

\begin{proposition}
\label{o2:stepb}
We have
\begin{displaymath}
\Tor^A_i(M_{\lambda}, \bC)=\bigoplus_{\alpha} \bS_{\alpha}(E),
\end{displaymath}
where the sum is over those partitions $\alpha$ for which $\tau_m(\alpha)=\lambda$ and $i_m(\alpha)=i$.
\end{proposition}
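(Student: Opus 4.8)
The plan is to apply the geometric technique (Proposition~\ref{geo}) on the variety $X'$, in exact parallel with the symplectic case (Proposition~\ref{s:stepb}). First I would check that the data introduced at the start of Step~b fits the template of Proposition~\ref{geo}: the bundle $\eps=\Sym^2(E)\otimes\cO_{X'}$ is trivial, $\xi$ is the subbundle $\ker(\Sym^2(\cR^n)\to\cL^{\otimes 2})\subset\eps$, $\eta=\eps/\xi$, and the auxiliary bundle is $\cV=\bS_{\ol\lambda}(\cQ_n)\otimes\cL^{\otimes a}$. By construction $A=\rH^0(X',\Sym(\eps))=\Sym(\Sym^2 E)$ (since $X'$ is projective and connected) and $M_\lambda=\rH^0(X',\Sym(\eta)\otimes\cV)$, exactly the ring and module appearing in Proposition~\ref{geo}. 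The fact that $X'$ is a projective-space bundle over a Grassmannian rather than a Grassmannian itself is immaterial, as Proposition~\ref{geo} is stated for an arbitrary smooth projective variety.

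Next I would verify the vanishing hypothesis of Proposition~\ref{geo}, namely that $\rH^j(X',\lw^{i+j}(\xi)\otimes\cV)=0$ for $i<0$ and all $j$. This is precisely the ``in particular'' clause of the previous lemma, which asserts $W^\lambda_i=0$ for $i<0$; recall that this clause was obtained from the degeneration Lemma~\ref{o:degen} (via the equivariance criterion Lemma~\ref{lem:degen}, the resulting multiplicity-freeness being read off from Lemma~\ref{o:lem3}) together with the explicit computation of $W^\lambda_i$. Granting this, Proposition~\ref{geo} gives a natural isomorphism
\begin{displaymath}
\Tor^A_i(M_\lambda,\bC)=\bigoplus_{j\ge 0}\rH^j(X',\lw^{i+j}(\xi)\otimes\cV).
\end{displaymath}
Since negative values of $j$ contribute nothing to sheaf cohomology, the right-hand side is exactly $W^\lambda_i=\bigoplus_{j\in\bZ}\rH^j(X',\bS_{\ol\lambda}(\cQ_n)\otimes\cL^{\otimes a}\otimes\lw^{i+j}(\xi))$, and the previous lemma identifies this with $\bigoplus_\alpha\bS_\alpha(E)$, the sum over partitions $\alpha$ with $\tau_m(\alpha)=\lambda$ and $i_m(\alpha)=i$. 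This is the desired statement. Finally, I would note that $E$ may be taken of arbitrarily large dimension (subject only to $\dim E\ge n+1$), so no restriction on $E$ is lost.

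I do not expect a genuine obstacle in this step: all the substantive work has already been done in the preceding lemmas — the Borel--Weil--Bott bookkeeping packaged in Proposition~\ref{o2:stepa}, the relative second-Veronese resolution of Proposition~\ref{prop:veronese} feeding Lemma~\ref{o:lem1}, and the Leray degeneration of Lemma~\ref{o:degen}, which was needed precisely because the cohomology $W^\lambda_i$ on $X'$ is computed indirectly through the direct images $\cW^\lambda_i$ on the Grassmannian $X$. The only point requiring a moment's attention is the bookkeeping match between the ambient space $X'$, the auxiliary bundle $\cV$, and the hypotheses of Proposition~\ref{geo}; once that is in place the proof is a one-line citation, just as in the symplectic and even-orthogonal cases.
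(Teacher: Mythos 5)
Your proposal matches the paper's proof exactly: the paper also deduces the proposition by citing the preceding lemma (the computation of $W^{\lambda}_i$ together with its vanishing for $i<0$) and applying Proposition~\ref{geo} on $X'$. The additional bookkeeping you spell out is correct but is left implicit in the paper.
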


\begin{proof}
This follows immediately from the previous lemma and Proposition~\ref{geo}.
\end{proof}

\Step{c}
For an admissible partition $\lambda$, put $B_{\lambda}=\Hom_{\bO(V)}(\bS_{[\lambda]}(V), B)$.  Note that $B_{\lambda}$ is an $A$-module with a compatible action of $\GL(E)$.

\begin{lemma}
The spaces $B_{\lambda} \oplus B_{\lambda^{\sigma}}$ and $M_{\lambda} \oplus M_{\lambda^{\sigma}}$ are isomorphic as representations of $\GL(E)$ and have finite multiplicities.
\end{lemma}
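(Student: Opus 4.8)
The plan is to follow the proof of Lemma~\ref{o:iso} essentially verbatim, with the module $M_\lambda \oplus M_{\lambda^\sigma}$ (for $\ell(\lambda) \le n$) playing the role that $\ol{M}_\lambda$ played in the even case; note that the genuine geometric input, including the spectral sequence degeneration (Lemma~\ref{o:degen}) and the relative Veronese resolution (Proposition~\ref{prop:veronese}), has already been packaged into Proposition~\ref{o2:stepb}, so what remains is representation-ring bookkeeping. Set $M = \bigoplus_{\ell(\lambda) \le n} \bS_{[\lambda]}(V) \otimes (M_\lambda \oplus M_{\lambda^\sigma})$. Since $m = 2n+1$ is odd, no admissible partition is $\sigma$-self-conjugate, so conjugation is a fixed-point-free involution on admissible partitions; over $\SO(V)$ one has $\bS_{[\lambda]}(V) \cong \bS_{[\lambda^\sigma]}(V)$, and these restrictions exhaust the irreducibles of $\SO(V)$, each exactly once. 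Hence $B = \bigoplus_{\ell(\lambda) \le n} (B_\lambda \oplus B_{\lambda^\sigma}) \otimes \bS_{[\lambda]}(V)$ as $\GL(E) \times \SO(V)$-modules. It therefore suffices to prove that $M$ and $B$ are isomorphic as $\GL(E) \times \SO(V)$-modules with finite multiplicities, and for this it is enough to check that, for each partition $\theta$, the $\bS_\theta(E)$-multiplicity spaces of $M$ and $B$ agree as $\SO(V)$-modules and are finite-dimensional.

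Next I would compute the $\bS_\theta(E)$-multiplicity space of $B = \Sym(E \otimes V)$, which is $\bS_\theta(V)$ by the Cauchy decomposition, and restrict it to $\SO(V)$ by applying the specialization homomorphism to \cite[Thm.~2.3.1(2)]{koiketerada}. Exactly as in Lemma~\ref{o:iso} this yields
\begin{displaymath}
\sum_{\mu, \nu} (-1)^{i_m(\nu)} c^{\theta}_{2\mu, \nu} [\bS_{[\ol{\tau}_m(\nu)]}(V)],
\end{displaymath}
and for fixed $\theta$ only finitely many $\mu, \nu$ have $c^\theta_{2\mu,\nu} \ne 0$, which gives finiteness.

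Finally, on the $M$ side I would use $[M_\lambda \oplus M_{\lambda^\sigma}] = [A] \sum_{i \ge 0} (-1)^i [\Tor^A_i(M_\lambda \oplus M_{\lambda^\sigma}, \bC)]$ in the representation ring of $\GL(E)$. Since (for $\ell(\lambda) \le n$) the condition $\tau_m(\alpha) \in \{\lambda, \lambda^\sigma\}$ is equivalent to $\ol{\tau}_m(\alpha) = \lambda$, Proposition~\ref{o2:stepb} gives $\sum_i (-1)^i[\Tor^A_i(M_\lambda \oplus M_{\lambda^\sigma},\bC)] = \sum_{\ol{\tau}_m(\nu) = \lambda} (-1)^{i_m(\nu)} [\bS_\nu(E)]$; combining with $[A] = \sum_\mu [\bS_{2\mu}(E)]$ (\cite[I.A.7, Ex.~1]{macdonald}) and the Littlewood--Richardson rule produces $[M_\lambda \oplus M_{\lambda^\sigma}] = \sum_{\mu,\nu,\theta :\, \ol{\tau}_m(\nu)=\lambda} (-1)^{i_m(\nu)} c^\theta_{2\mu,\nu}[\bS_\theta(E)]$. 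Summing against $\bS_{[\lambda]}(V)$ over all $\lambda$ with $\ell(\lambda) \le n$ and re-indexing by $\lambda = \ol{\tau}_m(\nu)$, the $\bS_\theta(E)$-multiplicity space of $M$ becomes $\sum_{\mu,\nu} (-1)^{i_m(\nu)} c^\theta_{2\mu,\nu}[\bS_{[\ol{\tau}_m(\nu)]}(V)]$, which is exactly the expression found for $B$; this matches the multiplicity spaces and their finiteness, and the lemma follows by extracting the isotypic pieces. The one place to be careful — and the only real obstacle — is the bookkeeping surrounding the involution $\sigma$: keeping straight $\tau_m$ versus $\ol{\tau}_m$, the pairing of $M_\lambda$ with $M_{\lambda^\sigma}$ (which carry the complementary twists $\cL^{\otimes 0}$ and $\cL^{\otimes 1}$), and the fact that it is precisely the passage to $\SO(V)$ that makes the two sides line up. All the substantive geometry and representation theory was already done in Steps a and b.
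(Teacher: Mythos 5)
Your proposal is correct and is essentially the paper's own argument: the paper also sets $M=\bigoplus_{\textrm{admissible }\lambda} M_{\lambda}\otimes\bS_{[\lambda]}(V)$ (which is your $\bigoplus_{\ell(\lambda)\le n}(M_{\lambda}\oplus M_{\lambda^{\sigma}})\otimes\bS_{[\lambda]}(V)$ after pairing $\lambda$ with $\lambda^{\sigma}$ over $\SO(V)$), reduces to comparing $\bS_{\theta}(E)$-multiplicity spaces as $\SO(V)$-representations, and then runs the same character computation as in Lemma~\ref{o:iso} using \cite[Thm.~2.3.1(2)]{koiketerada}, Proposition~\ref{o2:stepb}, and $[A]=\sum_{\mu}[\bS_{2\mu}(E)]$. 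Your bookkeeping with $\ol{\tau}_m$ versus $\tau_m$ and the fixed-point-freeness of $\sigma$ in the odd case is exactly the point the paper leaves implicit.
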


\begin{proof}
Let $M=\bigoplus_\lambda M_{\lambda} \otimes \bS_{[\lambda]}(V)$, where the sum is over admissible partitions $\lambda$.  It is enough to show that $M$ and $B$ are isomorphic as representations of $\GL(E) \times \SO(V)$ and have finite multiplicities.  In fact, it is enough to show that the $\bS_{\theta}(E)$ multiplicity spaces of $M$ and $B$ are isomorphic as representations of $\SO(V)$ and have finite multiplicities.  The proof goes exactly as that of Lemma~\ref{o:iso}.
\end{proof}

\begin{proposition}
Let $\lambda$ be an admissible partition.  We have an isomorphism $M_{\lambda} \to B_{\lambda}$ which is $A$-linear and $\GL(E)$-equivariant.
\end{proposition}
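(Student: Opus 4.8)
The plan is to follow the template of Propositions~\ref{s:stepc} and~\ref{o:stepc}: read off a short presentation of $M_\lambda$ from its low-degree $\Tor$, show that the generating relation cannot occur in $B_\lambda$, deduce that the surjection coming from \eqref{o:cring} factors through $M_\lambda\to B_\lambda$, and promote this to an isomorphism by the multiplicity comparison of the preceding lemma. Two features distinguish this from the even case: since $m=2n+1$ is odd we always have $\lambda\ne\lambda^\sigma$, so there is only one case; but the preceding lemma only compares $M_\lambda\oplus M_{\lambda^\sigma}$ with $B_\lambda\oplus B_{\lambda^\sigma}$, so the argument must be carried out for $\lambda$ and $\lambda^\sigma$ simultaneously.

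First I would use Proposition~\ref{o2:stepb} together with the combinatorics of the modification rule to compute $\Tor^A_0(M_\lambda,\bC)$ and $\Tor^A_1(M_\lambda,\bC)$. One checks that the only partition $\alpha$ with $\tau_m(\alpha)=\lambda$ and $i_m(\alpha)=0$ is $\lambda$ itself, so $\Tor^A_0(M_\lambda,\bC)=\bS_\lambda(E)$; and that the only $\alpha$ with $\tau_m(\alpha)=\lambda$ and $i_m(\alpha)=1$ is the partition $\mu$ with $\mu^\dag_1=m+1-\lambda^\dag_2$, $\mu^\dag_2=m+1-\lambda^\dag_1$ and $\mu^\dag_j=\lambda^\dag_j$ for $j\ge 3$ (equivalently, $\mu^\dag=s_0\bullet\lambda^\dag$). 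Here the point is that the border strip of size $2\ell(\mu)-m$ removed from $\mu$ has exactly two columns and its removal leaves $\lambda^\sigma$, which is admissible; so the rule stops there, and after the sign change (D3) — an odd number of strips were removed — one gets $\tau_m(\mu)=(\lambda^\sigma)^\sigma=\lambda$ with $i_m(\mu)=(2-1)+i_m(\lambda^\sigma)=1$. This yields a $\GL(E)$-equivariant, $A$-linear presentation
\[
\bS_\mu(E)\otimes A\longrightarrow \bS_\lambda(E)\otimes A\longrightarrow M_\lambda\longrightarrow 0.
\]

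Next, by the Littlewood--Richardson rule $\bS_\mu(E)$ occurs with multiplicity exactly one in $\bS_\lambda(E)\otimes A=\bigoplus_\nu\bS_\lambda(E)\otimes\bS_{2\nu}(E)$ (this is where the explicit shape of $\mu$ is used), so $\bS_\mu(E)$ does not occur in $M_\lambda$. Running the same computation for $\lambda^\sigma$ produces a different relation Schur functor; moreover $|\mu|-|\lambda^\sigma|=m+2-2\lambda^\dag_2$ is odd whereas every $\bS_{2\nu}(E)$ lies in even degree, so $\bS_\mu(E)$ does not occur in $\bS_{\lambda^\sigma}(E)\otimes A$ and hence not in $M_{\lambda^\sigma}$. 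Thus $\bS_\mu(E)$ occurs in neither summand of $M_\lambda\oplus M_{\lambda^\sigma}$, and by the preceding lemma it occurs in neither $B_\lambda$ nor $B_{\lambda^\sigma}$. Since $\Tor^A_0(B,\bC)=C$ and \eqref{o:cring} identify the $\bS_{[\lambda]}(V)$-isotypic part of $C$ with $\bS_\lambda(E)$, there is a $\GL(E)$-equivariant $A$-linear surjection $f\colon\bS_\lambda(E)\otimes A\to B_\lambda$; as $\bS_\mu(E)$ does not occur in $B_\lambda$, the unique copy of $\bS_\mu(E)$ inside $\bS_\lambda(E)\otimes A$ lies in $\ker f$, so $f$ factors through a surjection $M_\lambda\to B_\lambda$, and likewise $M_{\lambda^\sigma}\to B_{\lambda^\sigma}$. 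Comparing $\bS_\theta(E)$-multiplicities (all finite) in $M_\lambda\oplus M_{\lambda^\sigma}\cong B_\lambda\oplus B_{\lambda^\sigma}$ forces both surjections to be isomorphisms.

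I expect the main obstacle to be the first step: verifying that there is a unique $\alpha$ with $\tau_m(\alpha)=\lambda$ and $i_m(\alpha)=1$ and identifying it. A priori such an $\alpha$ could also be reached by removing a two-column strip followed by a one-column strip, or starting from a partition of still larger length; one must check that all these routes produce the same $\mu$, which is exactly the choice-independence built into the modification rule. Once the presentation is in hand, the remaining steps are formal and run exactly as in the symplectic and even-orthogonal cases.
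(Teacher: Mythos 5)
Your proof is correct and follows the same route as the paper, which simply says to argue as in the symplectic and even-orthogonal cases to get surjections $M_\lambda \to B_\lambda$ and $M_{\lambda^\sigma} \to B_{\lambda^\sigma}$ and then invokes the direct-sum comparison $M_\lambda \oplus M_{\lambda^\sigma} \cong B_\lambda \oplus B_{\lambda^\sigma}$ to conclude both are isomorphisms. Your parity argument showing $\bS_\mu(E)$ does not occur in $M_{\lambda^\sigma}$ correctly supplies a detail the paper leaves implicit (it is needed precisely because only the direct sum is controlled by the preceding lemma), and your identification of the unique degree-one syzygy $\mu^\dag = s_0 \bullet \lambda^\dag$ matches the modification rule.
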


\begin{proof}
Arguing exactly as in the proof of Proposition~\ref{s:stepc} or~\ref{o:stepc}, we obtain a surjection $f \colon M_{\lambda} \to B_{\lambda}$.  Of course, we also have a surjection $f' \colon M_{\lambda^{\sigma}} \to B_{\lambda^{\sigma}}$.  By the previous lemma, $f \oplus f'$ is an isomorphism, and so $f$ and $f'$ are isomorphisms.
\end{proof}

Combining the above proposition with Proposition~\ref{o2:stepb}, we obtain the following corollary.

\begin{corollary}
We have
\begin{displaymath}
\Tor^A_i(B, \bC)=\bigoplus_{i_m(\lambda)=i} \bS_{\lambda}(E) \otimes \bS_{[\tau_m(\lambda)]}(V)
\end{displaymath}
as $\GL(E) \times \bO(V)$ representations.
\end{corollary}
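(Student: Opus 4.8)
The plan is to assemble the corollary from the $\bO(V)$-isotypic decomposition of $B$ together with the two propositions just proved; no new ideas are needed.

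First I would record the decomposition
\[
B=\bigoplus_{\textrm{admissible }\lambda} B_{\lambda}\otimes\bS_{[\lambda]}(V),
\]
valid $\GL(E)\times\bO(V)$-equivariantly: the group $\bO(V)$ is reductive, its irreducible algebraic representations are precisely the $\bS_{[\lambda]}(V)$ with $\lambda$ admissible, and $B_{\lambda}=\Hom_{\bO(V)}(\bS_{[\lambda]}(V),B)$ is by construction the corresponding multiplicity space; moreover the $\GL(E)$-action on $B=\Sym(E\otimes V)$ commutes with that of $\bO(V)$, and $A$ acts through $\GL(E)$, so this is a decomposition of $A$-modules compatible with $\GL(E)$. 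Each summand is an $A$-submodule (the $A$-action, being $\bO(V)$-equivariant, preserves isotypic components), and $\bS_{[\lambda]}(V)$ carries the trivial $A$-action, so additivity of $\Tor$ gives
\[
\Tor^A_i(B,\bC)=\bigoplus_{\textrm{admissible }\lambda}\Tor^A_i(B_{\lambda},\bC)\otimes\bS_{[\lambda]}(V)
\]
as $\GL(E)\times\bO(V)$-representations.

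Next I would feed in the above proposition, which supplies a $\GL(E)$-equivariant, $A$-linear isomorphism $M_{\lambda}\cong B_{\lambda}$ for every admissible $\lambda$; since $\Tor$ can be computed from the $\GL(E)$-equivariant Koszul resolution $A\otimes\lw^{\bullet}U$ of $\bC$, this yields $\Tor^A_i(B_{\lambda},\bC)\cong\Tor^A_i(M_{\lambda},\bC)$ as $\GL(E)$-representations, and Proposition~\ref{o2:stepb} identifies the latter with $\bigoplus_{\alpha}\bS_{\alpha}(E)$, the sum over partitions $\alpha$ with $\tau_m(\alpha)=\lambda$ and $i_m(\alpha)=i$. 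Substituting and interchanging the order of summation yields
\[
\Tor^A_i(B,\bC)=\bigoplus_{\textrm{admissible }\lambda}\;\bigoplus_{\substack{\tau_m(\alpha)=\lambda\\ i_m(\alpha)=i}}\bS_{\alpha}(E)\otimes\bS_{[\lambda]}(V).
\]

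Finally I would reindex. The assignment $\alpha\mapsto\tau_m(\alpha)$ is a well-defined map from $\{\alpha:i_m(\alpha)<\infty\}$ to the set of admissible partitions, and every $\alpha$ occurring above has $i_m(\alpha)=i<\infty$; hence, as $\lambda$ runs over the admissible partitions, the inner index sets partition $\{\alpha:i_m(\alpha)=i\}$. Collapsing the double sum to a single sum over such $\alpha$, with $\lambda=\tau_m(\alpha)$, and then renaming $\alpha$ to $\lambda$, produces exactly
\[
\Tor^A_i(B,\bC)=\bigoplus_{i_m(\lambda)=i}\bS_{\lambda}(E)\otimes\bS_{[\tau_m(\lambda)]}(V),
\]
which is the assertion. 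I expect essentially no obstacle: the substance is in steps (a), (b), (c), and here the only points that deserve a moment's care are that both the isotypic decomposition of $B$ and the isomorphism $M_{\lambda}\cong B_{\lambda}$ are $\GL(E)$-compatible, so the $\Tor$ groups match up $\GL(E)$-equivariantly (the $\bO(V)$-action throughout being carried by the unchanged factor $\bS_{[\lambda]}(V)$), and that the final reindexing is a genuine bijection of index sets.
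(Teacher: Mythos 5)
Your proposal is correct and is exactly the argument the paper intends when it says the corollary follows by "combining the above proposition with Proposition~\ref{o2:stepb}": you have simply written out the routine details (the $\bO(V)$-isotypic decomposition of $B$ into the $A$-submodules $B_{\lambda}\otimes\bS_{[\lambda]}(V)$, additivity of $\Tor$, substitution of $M_{\lambda}$ for $B_{\lambda}$, and the reindexing of the double sum). No difference in approach and no gaps.
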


\subsection{Examples}
\label{ss:orthexamples}

We now give a few examples to illustrate the theorem.  

\begin{example}
Suppose $\lambda=(i)$.  Then $L_\bullet^{\lambda}$ is the complex $\Sym^{i-2}{V} \to \Sym^i{V}$, where the differential is multiplication by the symmetric form on $V^*$ treated as an element of $\Sym^2 V$.
\begin{itemize}
\item If $m \ge 2$, or $m=1$ and $i \le 1$, or $m=0$ and $i=0$ then the differential is injective, and $\rH_0(L_\bullet^{\lambda})=\bS_{[i]}(V)$ is a non-zero irreducible representation of $V$.  
\item If $m=1$ and $i \ge 2$, or $m=0$ and $i \ge 3$ or $i=1$, the differential is an isomorphism and all homology of $L_\bullet^{\lambda}$ vanishes.
\item If $m=0$ and $i=2$ then the differential is surjective and $\rH_1(L_\bullet^{\lambda})=\bC$ is the trivial representation of the trivial group $\bO(0)$. \qedhere
\end{itemize}
\end{example}

\begin{example}
Suppose $\lambda=(3,1)$.  Then $L^{\lambda}_\bullet$ is the complex $\bC \to \bS_{(3,1)/(2)}V \to \bS_{(3,1)}V$, where the differentials are multiplication by the symmetric form on $V^*$ treated as an element of $\Sym^2V$.  
\begin{itemize}
\item If $m\ge 3$ then the differential is injective, and $\rH_0(L^{\lambda}_\bullet)=\bS_{[3,1]}(V)$ is an irreducible representation of $V$.  
\item If $m=1,2$ then the complex is exact, and all homology of $L^{\lambda}_\bullet$ vanishes.   
\item Finally when $m=0$ then $\rH_2(L^{\lambda}_\bullet)=\bC$. \qedhere
\end{itemize}
\end{example}

The reader will check easily that in both instances the description of the homology agrees with the rule given by the Weyl group action.

\begin{example}
Let us consider the same situation as in Example~\ref{ex:s3}, i.e., $\lambda=(6,5,4,4,3,3,2)$ and $m=4$.  The modification rule, using border strips, proceeds as follows:
\begin{displaymath}
\ydiagram[*(white)]{6,3,3,2,2,1}*[*(gray)]{6,5,4,4,3,3,2} \qquad\qquad
\ydiagram[*(white)]{2,2,1,1}*[*(gray)]{3,3,3,2,2,1}*[*(white)]{6,3,3,2,2,1}
\end{displaymath}
Starting with $\lambda=\lambda_0$ we remove the border strip $R_0$ of size $2\ell(\lambda)-m=10$.  Doing so we obtain the partition $\lambda_1=(6,3,3,2,2,1)$.  We now are supposed to remove the border strip $R_1$ of size 8.  This border strip is shaded.  However, upon removing this strip we do not have a Young diagram.  It follows that all homology of $L^{\lambda}_\bullet$ vanishes.

In the Weyl group version, this amounts to 
\[
\lambda^\dagger + \rho = (7,7,6,4,2,1) + (-2,-3,-4,\dots) = (5,4,2,-1,-4,-6)
\]
having a nontrivial stabilizer: if $\sigma$ is the transposition that swaps the first and fifth entries, then the stabilizer contains $s_0\sigma s_0$, and this is a non-identity element.
\end{example}

\begin{example}
Suppose $\lambda=(4,4,4,4,3,3,2)$ and $m=4$.  The border strip algorithm runs as follows:
\begin{displaymath}
\ydiagram[*(white)]{3,3,3,2,2,1}*[*(gray)]{4,4,4,4,3,3,2} \qquad\qquad
\ydiagram[*(white)]{2,2,1,1}*[*(gray)]{3,3,3,2,2,1} \qquad\qquad
\ydiagram[*(white)]{2}*[*(gray)]{2,2,1,1} \qquad\qquad
\ydiagram[*(white)]{2}
\end{displaymath}
We have removed three border strips $R_0$, $R_1$, $R_2$.  Thus according to rule (D3) of \S \ref{o:modrule}, $\tau_4(\lambda)$ is not the final partition $(2)$, but its conjugate, i.e., $\tau_4(\lambda)=(3,1)$.  We have
\begin{displaymath}
i_4(\lambda)=(c(R_0)-1)+(c(R_1)-1)+(c(R_2)-1)=3+2+1=6.
\end{displaymath}
We thus see that $\rH_i(L^{\lambda}_\bullet)=0$ if $i \ne 6$ and $\rH_6(L^{\lambda}_\bullet)=\bS_{[3,1]}(V)$.

Now we illustrate the modification rule using the Weyl group action. We write $\alpha \xrightarrow{s_i} \beta$ if $\beta = s_i(\alpha)$. The idea for getting the Weyl group element is to apply $s_0$ if sum of the first two column lengths is too big, then sort the result, and repeat as necessary. We start with $\lambda^\dagger + \rho = (7,7,6,4) + (-2,-3,-4,-5,\dots)$:
\begin{align*}
(5,4,2,-1) &\xrightarrow{s_0} (-4,-5,2,-1)
\xrightarrow{s_2} (-4,2,-5,-1)\\
&\xrightarrow{s_3} (-4,2,-1,-5)
\xrightarrow{s_1} (2,-4,-1,-5)\\
&\xrightarrow{s_2} (2,-1,-4,-5)
\xrightarrow{s_0} (1,-2,-4,-5).
\end{align*}
Subtracting $\rho$ from the result, we get $(3,1) = (2,1,1)^\dagger$.
\end{example}

\section{General linear groups}

\subsection{Representations of $\GL(V)$}

Let $V$ be a vector space of dimension $n$.  The irreducible representations of $\GL(V)$ are indexed by pairs of partitions $(\lambda, \lambda')$ such that $\ell(\lambda)+\ell(\lambda') \le n$ (see \cite[\S 1]{koike} for more details).
We call such pairs {\bf admissible}.  Given an admissible pair $(\lambda, \lambda')$, we denote by $\bS_{[\lambda,\lambda']}(V)$ the corresponding irreducible representation of $\GL(V)$.  Identifying weights of $\GL(V)$ with elements of $\bZ^n$, the representation $\bS_{[\lambda,\lambda']}(V)$ is the irreducible with highest weight $(\lambda_1, \ldots, \lambda_r, 0, \ldots, 0, -\lambda'_s, \ldots, -\lambda'_1)$, where $r=\ell(\lambda)$ and $s=\ell(\lambda')$.  The representation $\bS_{[\lambda, 0]}(V)$ is the usual Schur functor $\bS_{\lambda}(V)$, while the representation $\bS_{[0, \lambda]}$ is its dual $\bS_{\lambda}(V)^*$.

\subsection{The Littlewood complex}
\label{ss:glcx}

Let $E$ and $E'$ be vector spaces.  Put $U=E \otimes E'$, $A=\Sym(U)$ and $B=\Sym((E \otimes V) \oplus (E' \otimes V^*))$.  Let $U \subset B$ be the inclusion given by
\begin{displaymath}
E \otimes E' \subset (E \otimes V) \otimes (E' \otimes V^*) \subset \Sym^2(E \otimes V \oplus E' \otimes V^*),
\end{displaymath}
where the first inclusion is multiplication with the identity element of $V \otimes V^*$.  This inclusion defines an algebra homomorphism $A \to B$.  Let $C=B \otimes_A \bC$; this is the quotient of $B$ by the ideal generated by $U$.  We have maps
\begin{displaymath}
\Spec(C) \to \Spec(B) \to \Spec(A).
\end{displaymath}
We have a natural identification of $\Spec(B)$ with the space $\Hom(E, V^*) \times \Hom(E', V)$ of pairs of maps $(\varphi \colon E \to V^*, \psi \colon E' \to V)$.  The space $\Spec(A)$ is naturally identified with the space $(E \otimes E')^*$ of bilinear forms on $E \times E'$.  The map $\Spec(B) \to \Spec(A)$ takes a pair of maps $(\varphi, \psi)$ to the form $(\varphi \otimes \psi)^* \omega$, where $\omega \colon V \otimes V^* \to \bC$ is the trace map.  The space $\Spec(C)$, which we call the {\bf Littlewood variety}, is the scheme-theoretic fiber of this map above 0, i.e., it consists of those pairs of maps $(\varphi, \psi)$ such that $(\varphi \otimes \psi)^* \omega=0$. 

\begin{remark}
One can modify the definitions of the rings $A$, $B$ and $C$ by replacing $E$ with its dual everywhere.  The space $\Spec(A)$ is then identified with $\Hom(E', E)$, while $\Spec(B)$ is identified with the set of pairs of maps $(\varphi \colon V \to E, \psi \colon E' \to V)$.  The map $\Spec(B) \to \Spec(A)$ takes $(\varphi, \psi)$ to $\varphi \psi$.  The space $\Spec(C)$ consists of those pairs $(\varphi, \psi)$ such that $\varphi \psi=0$; thus $\Spec(C)$ is the space of complexes of the form $E' \to V \to E$.
\end{remark}

Let $\rK_{\bullet}(E, E')=B \otimes \lw^{\bullet}{U}$ be the Koszul complex of the Littlewood variety.  We can decompose this complex under the action of $\GL(E) \times \GL(E')$:
\begin{displaymath}
\rK_{\bullet}(E, E')=\bigoplus_{\substack{\ell(\lambda) \le \dim{E}\\ \ell(\lambda') \le \dim{E'}}} \bS_{\lambda}(E) \otimes \bS_{\lambda'}(E') \otimes L^{\lambda,\lambda'}_{\bullet}.
\end{displaymath}
The complex $L^{\lambda,\lambda'}_{\bullet}$ is the {\bf Littlewood complex}, and is independent of $E$ and $E'$ (so long as $\dim{E} \ge \ell(\lambda)$ and $\dim{E'} \ge \ell(\lambda')$).  By \cite[Theorem 3.3]{brylinski}, its zeroth homology is
\begin{equation}
\label{gl:Hzero}
\rH_0(L^{\lambda,\lambda'}_{\bullet}) = \begin{cases}
\bS_{[\lambda,\lambda']}(V) & \textrm{if $(\lambda, \lambda')$ is admissible} \\
0 & \textrm{otherwise.}
\end{cases}
\end{equation}
By Lemma~\ref{lem:koszul}, we have $\rH_i(\rK_{\bullet})=\Tor^A_i(B, \bC)$, and so we have a decomposition
\begin{equation}
\label{gl:tor}
\Tor^A_i(B, \bC)=\bigoplus_{\substack{\ell(\lambda) \le \dim{E}\\ \ell(\lambda') \le \dim{E'}}} \bS_{\lambda}(E) \otimes \bS_{\lambda'}(E') \otimes \rH_i(L^{\lambda,\lambda'}_{\bullet}).
\end{equation}
Applied to $i=0$, we obtain
\begin{equation}
\label{gl:cring}
C = \bigoplus_{\textrm{admissible $(\lambda, \lambda')$}} \bS_{\lambda}(E) \otimes \bS_{\lambda'}(E') \otimes \bS_{[\lambda,\lambda']}(V).
\end{equation}

\subsection{A special case of the main theorem}

Our main theorem computes the homology of the complex $L^{\lambda}_{\bullet}$.  We now formulate and prove the theorem in a particularly simple case.  We mention this here only because it is worthwhile to know; the argument is not needed to prove the main theorem.

\begin{proposition}
Suppose $(\lambda, \lambda')$ is admissible.  Then
\begin{displaymath}
\rH_i(L^{\lambda, \lambda'}_{\bullet}) = \begin{cases}
\bS_{[\lambda, \lambda']}(V) & \textrm{if $i=0$} \\
0 & \textrm{otherwise.}
\end{cases}
\end{displaymath}
\end{proposition}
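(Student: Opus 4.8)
The plan is to run the argument of Proposition~\ref{s:spcase}: choose $E$ and $E'$ small enough that the Koszul complex $\rK_{\bullet}(E,E')$ of the Littlewood variety is acyclic in positive degrees, and transfer this to $L^{\lambda,\lambda'}_{\bullet}$ through the $\GL(E)\times\GL(E')$-decomposition, using \eqref{gl:Hzero} to identify $\rH_0$. Given an admissible pair $(\lambda,\lambda')$, so that $\ell(\lambda)+\ell(\lambda')\le n$, I would take $\dim E=\ell(\lambda)$ and $\dim E'=\ell(\lambda')$; then $\bS_{\lambda}(E)\otimes\bS_{\lambda'}(E')\ne 0$, so $L^{\lambda,\lambda'}_{\bullet}$ genuinely occurs as a summand of $\rK_{\bullet}(E,E')$, and the vanishing of the higher homology of $\rK_{\bullet}(E,E')$ forces that of $L^{\lambda,\lambda'}_{\bullet}$.

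The work is thus concentrated in a general linear analogue of Lemma~\ref{lem:C:CI}: \emph{if $\dim E+\dim E'\le n$, then $U=E\otimes E'\subset B$ is spanned by a regular sequence.} As in Lemma~\ref{lem:C:CI}, it suffices to prove $\dim\Spec(C)=\dim\Spec(B)-\dim U$, the inequality $\ge$ being automatic from Krull's height theorem since $U$ has $\dim U=(\dim E)(\dim E')$ elements. To obtain $\le$, I would use the explicit description of the Littlewood variety. Write $p=\dim E$, $q=\dim E'$. A point of $\Spec(C)$ is a pair $(\varphi\colon E\to V^*,\ \psi\colon E'\to V)$ with $\im(\varphi)$ contained in the annihilator $\im(\psi)^{\perp}\subseteq V^*$. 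For $0\le r\le q$ let $T_r$ be the total space of $\cHom(E',\cS_r)\oplus\cHom(E,\cQ_r^*)$ over the Grassmannian $\Gr(r,V)$ of $r$-dimensional subspaces of $V$, where $0\to\cS_r\to V\otimes\cO\to\cQ_r\to 0$ is the tautological sequence; sending $(W,\ \psi\colon E'\to W,\ \varphi\colon E\to W^{\perp})$ to $(\varphi,\psi)$ defines a morphism $T_r\to\Spec(C)$, and since every $(\varphi,\psi)\in\Spec(C)$ lies in the image of $T_r$ for $r=\dim\im(\psi)$, these morphisms jointly cover $\Spec(C)$. Hence $\dim\Spec(C)\le\max_{0\le r\le q}\dim T_r$, and $\dim T_r=r(n-r)+qr+p(n-r)=:g(r)$. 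Now $g$ is a downward parabola in $r$ with vertex at $r=(n+q-p)/2$, which is $\ge q$ exactly when $p+q\le n$; under our hypothesis $g$ is increasing on $[0,q]$, so $\dim\Spec(C)\le g(q)=qn+pn-pq=\dim\Spec(B)-\dim U$, as needed.

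Granting this lemma, the proof finishes as in Proposition~\ref{s:spcase}: the choice $\dim E=\ell(\lambda)$, $\dim E'=\ell(\lambda')$ satisfies $\dim E+\dim E'\le n$, so $\rK_{\bullet}(E,E')$ is the Koszul complex on a regular sequence and has no homology in positive degrees; therefore $\rH_i(L^{\lambda,\lambda'}_{\bullet})=0$ for $i>0$, while $\rH_0(L^{\lambda,\lambda'}_{\bullet})=\bS_{[\lambda,\lambda']}(V)$ by \eqref{gl:Hzero}. I expect the delicate point to be the dimension estimate in the lemma: one must bound the dimension of \emph{every} stratum of the Littlewood variety, not only the generic one on which $\psi$ is injective, and it is precisely the monotonicity of $g$ on $[0,q]$ --- equivalently the hypothesis $\dim E+\dim E'\le n$ --- that prevents the determinantal strata coming from lower-rank $\psi$ from contributing components of excess dimension.
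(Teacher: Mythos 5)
Your proposal is correct and follows the paper's own route: choose $\dim E=\ell(\lambda)$, $\dim E'=\ell(\lambda')$, prove the general linear analogue of Lemma~\ref{lem:C:CI} (that $U=E\otimes E'$ is spanned by a regular sequence when $\dim E+\dim E'\le n$) by a dimension count, and conclude via \eqref{gl:Hzero}; this is exactly Lemma~\ref{lem:A:CI} and the surrounding argument in the paper. The only difference is cosmetic: the paper computes $\dim\Spec(C)$ via a single birational parametrization by a bundle over a two-step flag variety, whereas you bound every rank stratum separately --- a slightly more careful variant of the same count.
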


\begin{proof}
Choose $E, E'$ so that $\dim E = \ell(\lambda)$ and $\dim E' =  \ell(\lambda')$.  By Lemma~\ref{lem:A:CI} below, $\rK_{\bullet}(E, E')$ has no higher homology.  It follows that $L^{\lambda, \lambda'}_{\bullet}$ does not either.  The computation of $\rH_0(L^{\lambda, \lambda'}_{\bullet})$ is given in \eqref{gl:Hzero}.
\end{proof}

\begin{lemma} \label{lem:A:CI}
Suppose $\dim{E} + \dim E' \le n$.  Then $U \subset B$ is spanned by a regular sequence.
\end{lemma}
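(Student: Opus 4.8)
The plan is to mimic the proof of Lemma~\ref{lem:C:CI} exactly. Since $B$ is a polynomial ring, hence Cohen--Macaulay, and $U$ is spanned by $\dim_{\bC}U=(\dim E)(\dim E')$ homogeneous elements of positive degree, it suffices to prove that $\dim\Spec(C)=\dim\Spec(B)-\dim_{\bC}U$. Write $d=\dim E$ and $d'=\dim E'$; then $\dim\Spec(B)=nd+nd'$ and $\dim_{\bC}U=dd'$, so the target value is $N:=nd+nd'-dd'$. Because $C=B\otimes_A\bC$ is the quotient of $B$ by an ideal generated by $dd'$ elements, the generalized Krull principal ideal theorem already gives $\dim\Spec(C)\ge N$; the whole point is the reverse inequality $\dim\Spec(C)\le N$.

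To obtain that bound I would stratify $\Spec(C)$ by $s:=\rank\varphi$. Recall from \S\ref{ss:glcx} that $\Spec(C)$ is the set of pairs $(\varphi\colon E\to V^{*},\,\psi\colon E'\to V)$ with $(\varphi\otimes\psi)^{*}\omega=0$, and that this last condition says precisely that $\psi(E')$ lies in the annihilator $\varphi(E)^{\perp}\subseteq V$. On the locally closed stratum where $\rank\varphi=s$ (with $0\le s\le d$, since $d\le d+d'\le n$), the map $(\varphi,\psi)\mapsto\varphi(E)$ realizes the stratum as a fibration over the Grassmannian of $s$-dimensional subspaces of $V^{*}$: over such a subspace $W$, the fiber consists of a surjection $E\twoheadrightarrow W$ together with an arbitrary linear map $E'\to W^{\perp}$, and $\dim W^{\perp}=n-s$. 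Hence this stratum has dimension
\begin{displaymath}
s(n-s)+sd+(n-s)d'=-s^{2}+(n+d-d')\,s+nd'.
\end{displaymath}

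The hypothesis $\dim E+\dim E'\le n$ is used here and only here: it gives $n+d-d'\ge 2d$, so the displayed downward parabola in $s$ is increasing on the whole range $0\le s\le d$ and attains its maximum at $s=d$, where its value is $nd+nd'-dd'=N$. Thus every stratum has dimension $\le N$, so $\dim\Spec(C)\le N$, which completes the proof. (If one prefers to see the equality on the nose, as in Lemma~\ref{lem:C:CI}: the top stratum $\rank\varphi=d$ is the image, under a birational map, of the total space of the bundle $\cHom(E,\cS)\oplus\cHom(\cQ,E')$ over the Grassmannian of $d$-planes in $V^{*}$, where $\cS\subseteq V^{*}\otimes\cO$ and $\cQ=(V^{*}\otimes\cO)/\cS$ are the tautological bundles, and this total space has dimension $N$.) The only point requiring care compared with the symplectic and orthogonal cases is that $\Spec(C)$ need not be irreducible here, which is why the rank stratification is spelled out rather than just pointing to a dense open subvariety; the main (and essentially only) substantive step is the dimension estimate above, where one checks that no lower stratum can beat dimension $N$ — and this is exactly the inequality that forces the hypothesis on $\dim E+\dim E'$.
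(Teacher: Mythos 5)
Your proof is correct, and the overall strategy --- reduce, via Cohen--Macaulayness of $B$, to the single equality $\dim\Spec(C)=\dim\Spec(B)-\dim U$ --- is exactly the paper's. Where you diverge is in how that dimension is computed. The paper exhibits a birational map onto $\Spec(C)$ from the total space of $\cHom(E,\cR_d)\oplus\cHom(\cR_{d+d'}/\cR_d,E')$ over the two-step flag variety ${\bf Fl}(d,d+d',V)$, recording subspaces adapted to both $\varphi$ and $\psi$ at once; that total space has dimension $n(d+d')-dd'$ and the equality follows in one stroke. You instead stratify by $\rank\varphi$ alone, bound each stratum by the parabola estimate, and get the matching lower bound from Krull's principal ideal theorem. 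The two computations are cousins (your parenthetical model over $\Gr(d,V^*)$ is essentially the paper's model restricted to the open stratum), but your version buys two things: it avoids having to check that the flag-variety model actually dominates all of $\Spec(C)$, and it isolates precisely where the hypothesis $\dim E+\dim E'\le n$ is used. One small remark: your stratification in fact proves more than you claim. Since the unique stratum of dimension $N$ is irreducible and, by Krull, every irreducible component of $\Spec(C)$ has dimension at least $N$, the variety $\Spec(C)$ is irreducible under the hypothesis of the lemma; so your caution about reducibility, while harmless, is not needed here --- reducibility is a genuine issue only at the boundary of the even orthogonal case, cf.\ the remark in the proof of Lemma~\ref{lem:BD:CI}.
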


\begin{proof}
It suffices to show that $\dim{\Spec(C)}=\dim{\Spec(B)}-\dim{U}$.  Put $d=\dim{E}$ and $d' = \dim E'$.  Observe that the locus of $(\phi, \psi)$ in $\Spec(C)$ where $\varphi$ is injective and $\psi$ is surjective is open. Let ${\bf Fl}(d,d+d',V)$ be the variety of partial flags $W_d \subset W_{d+d'} \subset V$, where the subscript indicates the dimension of the subspace. This variety comes with a tautological partial flag $\cR_d \subset \cR_{d+d'} \subset V \otimes \cO_{{\bf Fl}(d,d+d',V)}$. There is a natural birational map from the total space of $\cH om(E,\cR_d) \oplus \cH om(\cR_{d+d'} / \cR_d, E')$ to $\Spec(C)$, and thus $\Spec(C)$ has dimension $n(d+d')-dd'$.  As $\dim{\Spec(B)}=n(d+d')$ and $\dim{U}=dd'$, the result follows.
\end{proof}

\subsection{The modification rule} \label{ss:gl-modrule}

We now associate to a pair of partitions $(\lambda, \lambda')$ two quantities, $i_n(\lambda, \lambda')$ and $\tau_n(\lambda, \lambda')$.  As usual, we give two equivalent definitions.

We begin with Koike's definition of the Weyl group definition from the discussion preceding \cite[Prop.~2.2]{koike}. First, consider $-(\lambda'^\dagger)^{\rm op}$, which is the sequence obtained by taking $\lambda'^\dagger$ and reversing and negating its entries. We add $n$ to each entry, and call the result $\sigma(\lambda')$. For example, if $\lambda' = (3,2,2,1,1)$ and $n=4$, then $-(\lambda'^\dagger)^{\rm op} = (-1,-3,-5)$ and $\sigma(\lambda') = (3,1,-1)$.  Note that if $\mu$ is any weakly decreasing sequence of nonnegative integers with $n \ge \mu_1$, then it makes sense to reverse this procedure, and so we can define $\sigma^{-1}(\mu)$. Now set $\alpha = (\sigma(\lambda') \mid \lambda^\dagger)$ and $\rho = (\dots, 3, 2, 1 \mid 0, -1, -2, \dots)$.  We let $W'$ be the group of permutations on the index set $\bZ$ of the coordinates of $\rho$ and $\alpha$ which differ from the identity permutation in only finitely many places. Given a permutation $w \in W'$, we define $w \bullet \alpha = w(\alpha + \rho) - \rho$ as usual. One of two possibilities occurs:
\begin{itemize}
\item There exists a unique element $w \in W'$ so that $w \bullet \alpha = (\beta' \mid \beta)$ is weakly decreasing. In this case, we set $i_n(\lambda, \lambda') = \ell(w)$ and $\tau_n(\lambda, \lambda') = (\beta, \sigma^{-1}(\beta'))$.
\item There exists a non-identity element $w \in W'$ such that $w \bullet \alpha = \alpha$. In this case, we put $i_n(\lambda, \lambda') = \infty$ and leave $\tau_n(\lambda, \lambda')$ undefined.
\end{itemize}

We now give a modified (but equivalent) description of this Weyl group action. The group $\fS \times \fS$ acts on the set $\cU \times \cU$ via the $\bullet$ action.  We define a new involution $t$ of $\cU \times \cU$ by
\begin{displaymath}
t \bullet ((a_1, a_2, \ldots), (b_1, b_2, \ldots))
= ((n+1-b_1, a_2, \ldots), (n+1-a_1, b_2, \ldots))
\end{displaymath}
Let $W$ be the subgroup of $\Aut(\cU \times \cU)$ generated by $\fS \times \fS$ and $t$.  Then $W$ is isomorphic to an infinite symmetric group, and comes equipped with a length function $\ell \colon W \to \bZ_{\ge 0}$ with respect to its set of generators, as in \eqref{eqn:word}.  Given a pair of partitions $(\lambda, \lambda') \in \cU \times \cU$, exactly one of the following two possibilities hold:
\begin{itemize}
\item There exists a unique element $w \in W$ such that $w \bullet (\lambda, \lambda')^{\dag}=(\mu, \mu')^{\dag}$ is a pair of partitions and $(\mu, \mu')$ is admissible.  We then put $i_n(\lambda, \lambda')=\ell(w)$ and $\tau_n(\lambda, \lambda')=(\mu, \mu')$.  (The notation $(\lambda, \lambda')^{\dag}$ simply means $(\lambda^{\dag}, (\lambda')^{\dag})$.)
\item There exists a non-identity element $w \in W$ such that $w \bullet (\lambda, \lambda')^{\dag}=(\lambda, \lambda')^{\dag}$.  We then put $i_n(\lambda, \lambda')=\infty$ and leave $\tau_n(\lambda, \lambda')$ undefined.
\end{itemize}
As always, if $(\lambda, \lambda')$ is already admissible then we are in the first case, and $i_n(\lambda, \lambda')=0$ and $\tau_n(\lambda, \lambda')=(\lambda, \lambda')$.

~

We now give the border strip definition, which we could not find in the literature.  If $(\lambda, \lambda')$ is admissible then we define $i_n(\lambda, \lambda')=0$ and $\tau_n(\lambda, \lambda')=(\lambda, \lambda')$.  Assume now that $(\lambda, \lambda')$ is not admissible.  Let $R_{\lambda}$ (resp.\ $R_{\lambda'}$) be the border strip of length $\ell(\lambda)+\ell(\lambda')-n-1$ starting in the first box of the final row of $\lambda$ (resp.\ $\lambda')$, if it exists.  If both $R_{\lambda}$ and $R_{\lambda'}$ exist and are non-empty and both $\lambda \setminus R_{\lambda}$ and $\lambda' \setminus R_{\lambda'}$ are partitions, we put
\begin{displaymath}
i_n(\lambda, \lambda')=c(R_{\lambda})+c(R_{\lambda'})-1+i_n(\lambda \setminus R_{\lambda}, \lambda' \setminus R_{\lambda'})
\end{displaymath}
and $\tau_n(\lambda, \lambda')=\tau_n(\lambda \setminus R_{\lambda}, \lambda' \setminus R_{\lambda'})$.  Otherwise, we put $i_n(\lambda, \lambda')=\infty$ and leave $\tau_n(\lambda, \lambda')$ undefined.

\begin{proposition}
The above two definitions agree.
\end{proposition}

\begin{proof}
Consider the diagram
\[
\begin{tikzpicture}[scale=.4]
\draw (0,0) -- (4,0) -- (4,-7) -- (0,-7) -- cycle;
\path (2,-2) node {$\mu^c$};
\draw (4,0) -- (9,0) -- (9,-2) -- (7,-2) -- (7,-3) -- (6,-3) -- (6,-5) -- (4,-5);
\path (5.5,-2) node {$\lambda$};
\draw (4,-4) -- (3,-4) -- (3,-5) -- (2,-5) -- (2,-6) -- (0,-6);
\path (3,-6) node {$\mu$}; 
\end{tikzpicture}
\]
where $\mu^c$ is the complement of $\mu$ in a $n \times \mu_1$ rectangle (we are allowing the possibility that $\ell(\mu) > n$, in which case, we need to consider negative column lengths, but for the purposes of explanation, we assume $\ell(\mu) \le n$). According to Koike's rule, we are supposed to apply Bott's algorithm to the columns of the union of $\mu^c$ and $\lambda$. 

We describe two procedures:

\begin{enumerate}
\item Start with a shape that is a single column of length $d$ ($d$ could be negative) union a partition $\lambda$ with $\ell(\lambda) > d$ and apply Bott's algorithm. Then we end up with a single column of length $\ell(\lambda)-1$ union the shape obtained by removing a border strip of size $\ell(\lambda) - d - 1$ if that is possible, and 0 otherwise. 

\item Dually, start with a shape $\nu$ union a column of length $e$ with $e > \nu^\dagger_{\nu_1}$ and apply Bott's algorithm. Then we get the shape $\eta$ union a column of length $\nu^\dagger_{\nu_1} + 1$ where $\eta$ is obtained from $\nu$ by adding a border strip of length $e - \nu^\dagger_{\nu_1} - 1$ starting from the bottom box in the last column of $\nu$ if it exists, and 0 otherwise.
\end{enumerate}

Now go back to our shape, which is $(\mu^c, \lambda)$. Apply (1) with the column being the last column of $\mu^c$ so that $d = n - \ell(\mu)$. Then the last column of $\mu^c$ becomes $\ell(\lambda) - 1$ and we have removed a border strip of size $\ell(\lambda) + \ell(\mu) - n - 1$ from $\lambda$. In the process, we used $c(\lambda)$ simple reflections. Now apply (2) with $\nu$ being $\mu^c$ minus its last column and $e = \ell(\lambda) - 1$. The result is the shape obtained by adding a border strip (starting from the right, not left) of length $\ell(\lambda) - \nu^\dagger_{\nu_1}$ to $\nu$ union a column of length $\nu^\dagger_{\nu_1} + 1$. We can also describe this shape as follows: we added to $\mu^c$ a border strip (starting from the right) of size $\ell(\lambda) + \ell(\mu) - n - 1$. In this second step, we have used $c(\mu)-1$ simple reflections. 

Finally, note that adding a border strip, starting from the right, to $\mu^c$ is equivalent to removing a border strip from $\mu$ in the usual sense.
\end{proof}

\subsection{The main theorem}

Our main theorem is the following:

\begin{theorem}
\label{gl:mainthm}
For a pair of partitions $(\lambda, \lambda')$ and an integer $i$, we have
\begin{displaymath}
\rH_i(L^{\lambda,\lambda'}_{\bullet})=\begin{cases}
\bS_{[\tau_n(\lambda, \lambda')]}(V) & \textrm{if $i=i_n(\lambda, \lambda')$} \\
0 & \textrm{otherwise.}
\end{cases}
\end{displaymath}
In particular, if $i_n(\lambda, \lambda')=\infty$ then $L^{\lambda,\lambda'}_{\bullet}$ is exact.
\end{theorem}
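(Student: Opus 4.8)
The plan is to follow the three-step strategy of \S\ref{ss:proofoverview}, just as in the symplectic and orthogonal cases, with the two auxiliary spaces $E$ and $E'$ now playing symmetric roles. The combinatorial input that replaces the plethysms of $\lw^2 E$ and $\Sym^2 E$ is the dual Cauchy formula
\[
\lw^i(E \otimes E') = \bigoplus_{\vert \mu \vert = i} \bS_\mu(E) \otimes \bS_{\mu^\dag}(E'),
\]
in which \emph{every} partition $\mu$ occurs, the $E'$-factor receiving the transpose $\mu^\dag$; correspondingly $A = \Sym(E \otimes E') = \bigoplus_\mu \bS_\mu(E) \otimes \bS_\mu(E')$ by the ordinary Cauchy formula.

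\textbf{Step (a).} Fix an admissible pair $(\lambda, \lambda')$. For a partition $\mu$ one runs Bott's algorithm ``on the $E$-side'' using $\lambda$ and $\mu$ and ``on the $E'$-side'' using $\lambda'$ and $\mu^\dag$, and calls $\mu$ regular when both computations succeed. The heart of the matter is an inductive lemma modelled on Lemmas~\ref{lem:s1} and~\ref{lem:Dbott}: if $\mu$ is nonempty and regular and $\nu$ is obtained from $\mu$ by deleting its first row and column, then $\nu$ is again regular; moreover, if $w, w' \in \fS$ straighten the two side sequences of $\mu$ to partitions $\alpha, \alpha'$, and $v, v'$ do the same for $\nu$ giving $\beta, \beta'$, then the border strips $R_\alpha$ and $R_{\alpha'}$ of \S\ref{ss:gl-modrule} are defined with $\alpha \setminus R_\alpha = \beta$, $\alpha' \setminus R_{\alpha'} = \beta'$, and the column counts $c(R_\alpha) = \mu_1 + \ell(v) - \ell(w)$, $c(R_{\alpha'}) = \mu^\dag_1 + \ell(v') - \ell(w')$; Remark~\ref{rmk:striphook} guarantees the strips exist. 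The new feature relative to the classical groups is that a single step of the recursion strips a border strip off of \emph{both} $\lambda$ and $\lambda'$, their common length $\ell(\lambda) + \ell(\lambda') - n - 1$ tying the two sides together, and the $-1$ in the formula $i_n(\lambda, \lambda') = c(R_\lambda) + c(R_{\lambda'}) - 1 + i_n(\lambda \setminus R_\lambda, \lambda' \setminus R_{\lambda'})$ is exactly what absorbs the single use of the involution $t$ at that stage. Iterating, and reversing it as in Lemma~\ref{lem:s2}, yields a bijection between the regular $\mu$ and the pairs $(\alpha, \alpha')$ with $\tau_n(\alpha, \alpha') = (\lambda, \lambda')$, under which $\ell(w) + \ell(w') + i_n(\alpha, \alpha') = \vert \mu \vert$.

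\textbf{Steps (b) and (c).} For step (b) I take $E, E'$ of large dimension, let $X$ be a suitable resolution of the Littlewood variety $\Spec C$ built, as in the previous sections, from Grassmannians and/or partial flag varieties attached to $E$ and $E'$, with tautological sub-bundles $\cR \subseteq E \otimes \cO_X$ and $\cR' \subseteq E' \otimes \cO_X$, and set $\xi = \cR \otimes \cR'$, $\eps = (E \otimes E') \otimes \cO_X$, $\eta = \eps/\xi$, and $M_{\lambda, \lambda'} = \rH^0(X, \Sym(\eta) \otimes \bS_\lambda(\cQ) \otimes \bS_{\lambda'}(\cQ'))$ for the appropriate Schur bundles in the tautological quotients. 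Then $\lw^\bullet(\xi)$ decomposes by the dual Cauchy formula, Borel--Weil--Bott factors through the two Grassmannian factors, and feeding step~(a) into Proposition~\ref{geo} (whose vanishing hypothesis is the ``$i < 0$'' assertion of step~(a)) gives
\[
\Tor^A_i(M_{\lambda, \lambda'}, \bC) = \bigoplus_{\alpha, \alpha'} \bS_\alpha(E) \otimes \bS_{\alpha'}(E'),
\]
the sum over pairs with $\tau_n(\alpha, \alpha') = (\lambda, \lambda')$ and $i_n(\alpha, \alpha') = i$. For step (c), put $B_{\lambda, \lambda'} = \Hom_{\GL(V)}(\bS_{[\lambda, \lambda']}(V), B)$, an $A$-module with compatible $\GL(E) \times \GL(E')$-action, and argue $M_{\lambda, \lambda'} \cong B_{\lambda, \lambda'}$ exactly as in Lemma~\ref{lem:B=M} and Proposition~\ref{s:stepc}: the $\bS_\theta(E) \otimes \bS_{\theta'}(E')$-multiplicity space of $B$ is $\bS_\theta(V) \otimes \bS_{\theta'}(V^*)$, whose class in the rational representation ring of $\GL(V)$ is computed by Koike's rule \cite{koike} (the $\rK$-theoretic shadow of $(i_n, \tau_n)$); matching this against the Euler characteristic $[M_{\lambda, \lambda'}] = [A] \sum_i (-1)^i [\Tor^A_i(M_{\lambda, \lambda'}, \bC)]$ shows that $\bigoplus_{(\lambda, \lambda')} M_{\lambda, \lambda'} \otimes \bS_{[\lambda, \lambda']}(V)$ and $B$ have the same $\GL(E) \times \GL(E')$-character with finite multiplicities, so $M_{\lambda, \lambda'}$ and $B_{\lambda, \lambda'}$ agree as $\GL(E) \times \GL(E')$-representations. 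Finally the low-degree part of step~(b) shows $\Tor^A_0(M_{\lambda, \lambda'}, \bC) = \bS_\lambda(E) \otimes \bS_{\lambda'}(E')$ and $\Tor^A_1$ is a single $\bS_\alpha(E) \otimes \bS_{\alpha'}(E')$ of multiplicity one in $\bS_\lambda(E) \otimes \bS_{\lambda'}(E') \otimes A$; since the $\bS_{[\lambda, \lambda']}(V)$-multiplicity space of $C = \Tor^A_0(B, \bC)$ is $\bS_\lambda(E) \otimes \bS_{\lambda'}(E')$ by \eqref{gl:cring}, there is a surjection $\bS_\lambda(E) \otimes \bS_{\lambda'}(E') \otimes A \to B_{\lambda, \lambda'}$ killing that $\Tor_1$-generator, hence a surjection $M_{\lambda, \lambda'} \surjects B_{\lambda, \lambda'}$, which is an isomorphism by the finite-multiplicity match. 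Combining with \eqref{gl:tor} and letting $\dim E, \dim E' \to \infty$ yields the theorem.

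\textbf{Main obstacle.} I expect step~(a) to be the crux: unlike the classical-group cases, the plethysm index $\mu$ is unrestricted and simultaneously governs border strips on \emph{both} $\lambda$ and $\lambda'$, coupled through their common length $\ell(\lambda) + \ell(\lambda') - n - 1$, so the recursion and the bookkeeping around the involution $t$ (the origin of the $-1$) must be handled with care. A secondary issue is selecting the right geometric model in step~(b) --- which product of Grassmannians or flag varieties, and which Schur bundles in the tautological quotients, reproduce precisely the coupled answer of step~(a); but once step~(a) is in hand, step~(b) should be a routine application of Borel--Weil--Bott and Proposition~\ref{geo}, as in the earlier sections.
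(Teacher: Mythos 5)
Your proposal is correct and follows the paper's own proof essentially verbatim: the same dual Cauchy decomposition of $\lw^{\bullet}(E \otimes E')$, the same coupled first-row-and-column induction in step (a) (your separate formulas for $c(R_\alpha)$ and $c(R_{\alpha'})$ sum to the combined identity the paper states), the same geometric model $\xi = \cR \otimes \cR'$ on a product of Grassmannians, and the same Koike character formula, Euler-characteristic comparison, and multiplicity-one $\Tor_1$ argument in step (c). The one detail you flag as open --- which product of Grassmannians to use --- is resolved exactly as you anticipate, by fixing $a+b=n$ and taking rank-$a$ and rank-$b$ quotients of $E$ and $E'$, with the bars in $(\lambda \mid_a \mu)$ and $(\lambda' \mid_b \mu^{\dag})$ placed accordingly.
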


\begin{remark}
Consider the coordinate ring $R$ of rank $\le n$ matrices; identifying $E$ with its dual, this is the quotient of $A$ by the ideal generated by $(n+1) \times (n+1)$ minors.  The resolution of $R$ over $A$ is known, see \cite[\S 6.1]{weyman} or \cite{lascoux}.  On the other hand, $R$ is the $\GL(V)$-invariant part of $B$, and so the above theorem, combined with \eqref{gl:tor}, shows that $\bS_{\lambda}(E) \otimes \bS_{\lambda'}(E')$ appears in its resolution if and only if $\tau_n(\lambda,\lambda') = \emptyset$. Thus the modification rule gives an alternative description of the resolution of $R$. It is a pleasant combinatorial exercise to show directly that these two descriptions are equivalent. As in previous situations, the modification rule is more complicated, but has the advantage that it readily generalizes to our situation.
\end{remark}

The proof will take the rest of this section, and will follow the three-step plan given in \S\ref{ss:proofoverview}.

\Step{a}
Fix integers $a$ and $b$ with $n=a+b$.  Let $(\lambda, \lambda')$ be a pair of partitions with $\ell(\lambda) \le a$ and $\ell(\lambda') \le b$.  We write $(\lambda \vert \mu)$ in place of $(\lambda \mid_a \mu)$ and $(\lambda' \vert \mu')$ in place of $(\lambda' \mid_b \mu')$.  Define
\begin{displaymath}
\begin{split}
S_1(\lambda, \lambda') &= \{ \textrm{partitions $\mu$ such that $(\lambda \vert \mu)$ and $(\lambda' \vert \mu^{\dag})$ are regular} \} \\
S_2(\lambda, \lambda') &= \{ \textrm{pairs of partitions $(\alpha, \alpha')$ such that $\tau_n(\alpha, \alpha')=(\lambda, \lambda')$} \}.
\end{split}
\end{displaymath}

\begin{lemma}
Let $\mu$ be a non-zero partition in $S_1(\lambda, \lambda')$ and let $\nu$ be the partition obtained by removing the first row and column of $\mu$.  Then $\nu$ also belongs to $S_1(\lambda,\lambda')$.  Furthermore, let $(w_1, w_1')$ (resp.\ $(w_2, w_2')$) be elements of $W$ such that $\alpha=w_1 \bullet (\lambda \vert \mu)$ and $\alpha'=w'_1 \bullet (\lambda \vert \mu^{\dag})$ (resp.\ $\beta=w_2 \bullet (\lambda \vert \nu)$ and $\beta'=w_2' \bullet (\lambda \vert \nu^{\dag})$) are partitions.  Then $R_{\alpha}$ and $R_{\alpha'}$ are defined and we have the identities
\begin{displaymath}
\vert R_{\alpha} \vert=\vert R_{\alpha'} \vert=\ell(\mu)+\mu_1-1, \qquad
\alpha \setminus R_{\alpha}=\beta, \qquad
\alpha' \setminus R_{\alpha'}=\beta'
\end{displaymath}
and
\begin{displaymath}
c(R_{\alpha})+c(R_{\alpha'})=\ell(\mu)+\mu_1+\ell(w_2)+\ell(w_2')-\ell(w_1)-\ell(w_1').
\end{displaymath}
\end{lemma}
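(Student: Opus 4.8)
The plan is to treat this as a ``doubled'' version of Lemma~\ref{lem:s1}: the two sequences $(\lambda \vert \mu)$ and $(\lambda' \vert \mu^{\dag})$ never interact, so one can run the argument of Lemma~\ref{lem:s1} on each of them separately, and all that the general-linear setting contributes is the bookkeeping that glues the two halves together. The first thing to record is that transposing a partition commutes with ``delete the first row and column'': if $\nu$ is obtained from $\mu$ in this way, then $\nu^{\dag}$ is obtained from $\mu^{\dag}$ in the same way, so that $(\lambda' \vert \nu^{\dag})$ is the peeled partner of $(\lambda' \vert \mu^{\dag})$ just as $(\lambda \vert \nu)$ is the peeled partner of $(\lambda \vert \mu)$. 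Since $\mu$ is nonzero, so is $\mu^{\dag}$, and the peeling argument applies on both sides.

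Next I would apply the argument of Lemma~\ref{lem:s1} verbatim to $(\lambda \vert \mu)$. Writing $r_1$ for the number of places that the entry $\mu_1$ moves to the left during Bott's algorithm, one reaches after $r_1$ steps a sequence whose tail agrees with the corresponding tail of $(\lambda \vert \nu)$ after adding $1$ to every entry; hence $(\lambda \vert \nu)$ is regular, the two runs of Bott's algorithm terminate with $\ell(w_1)=\ell(w_2)+r_1$, and $\alpha$ has the explicit shape
\[
\alpha_i=\begin{cases}
\lambda_i & 1\le i\le a-r_1,\\
\mu_1-r_1 & i=a-r_1+1,\\
\beta_{i-1}+1 & a-r_1+2\le i\le a+\ell(\mu).
\end{cases}
\]
In particular $\ell(\alpha)=a+\ell(\mu)$. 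The mirror argument applied to $(\lambda' \vert \mu^{\dag})$ shows that $(\lambda' \vert \nu^{\dag})$ is regular, so that $\nu\in S_1(\lambda,\lambda')$, and gives $\ell(w_1')=\ell(w_2')+r_1'$ and $\ell(\alpha')=b+\ell(\mu^{\dag})=b+\mu_1$, where $r_1'$ is the number of left-moves of $\mu^{\dag}_1=\ell(\mu)$.

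With these two shapes in hand the border-strip assertions drop out. Since $n=a+b$ we have $\ell(\alpha)+\ell(\alpha')-n-1=\ell(\mu)+\mu_1-1$, which is exactly the length of the strip prescribed by the modification rule of \S\ref{ss:gl-modrule}; moreover $\ell(\alpha)+\ell(\alpha')=n+\ell(\mu)+\mu_1>n$, so $(\alpha,\alpha')$ is not admissible and that step of the rule does fire. By Remark~\ref{rmk:striphook}, $R_\alpha$ is the hook of the box in row $a-r_1+1$ of the first column; its size is $\alpha_{a-r_1+1}+\ell(\alpha)-(a-r_1+1)=(\mu_1-r_1)+(a+\ell(\mu))-(a-r_1+1)=\ell(\mu)+\mu_1-1$, removing it gives $\beta$ by the shape formula above, and $c(R_\alpha)=\alpha_{a-r_1+1}=\mu_1-r_1=\mu_1+\ell(w_2)-\ell(w_1)$. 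The mirror computation on $(\lambda' \vert \mu^{\dag})$ gives $\vert R_{\alpha'}\vert=\ell(\mu)+\mu_1-1$, $\alpha'\setminus R_{\alpha'}=\beta'$ and $c(R_{\alpha'})=\mu^{\dag}_1-r_1'=\ell(\mu)+\ell(w_2')-\ell(w_1')$; adding the two column counts yields the last displayed identity.

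I expect the main obstacle to be purely verificational: checking that the argument of Lemma~\ref{lem:s1} carries over unchanged even though $\mu$ here is an \emph{arbitrary} nonzero partition rather than a member of one of the sets $Q_{-1}$, $Q_1$, $Q_0$. The one substantive ingredient is the identity $\ell(\alpha)=a+\ell(\mu)$, which holds because every entry of $(\lambda\mid_a\mu)+\rho$ at a position in $\{1,\dots,a+\ell(\mu)\}$ is at least $1-a-\ell(\mu)$ while the entries at later positions form the strictly smaller trivial tail $-(a+\ell(\mu)+1),-(a+\ell(\mu)+2),\dots$, so after sorting the trivial tail begins exactly at position $a+\ell(\mu)+1$. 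Once this is in place there is nothing conceptually new, since the passage $\mu\leftrightarrow\mu^{\dag}$ is orthogonal to the mechanics of Bott's algorithm and no interaction between the two sequences ever has to be analyzed.
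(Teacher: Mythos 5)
Your proposal is correct and follows essentially the same route as the paper: run the argument of Lemma~\ref{lem:s1} independently on $(\lambda \vert \mu)$ and $(\lambda' \vert \mu^{\dag})$, read off the explicit shapes of $\alpha$ and $\alpha'$ (with $\ell(\alpha)=a+\ell(\mu)$, $\ell(\alpha')=b+\mu_1$), and identify $R_{\alpha}$, $R_{\alpha'}$ via the hook description of Remark~\ref{rmk:striphook}. The only difference is that you make explicit two points the paper leaves implicit --- the justification of $\ell(\alpha)=a+\ell(\mu)$ via the trivial tail of $(\lambda\mid_a\mu)+\rho$, and the observation that Lemma~\ref{lem:s1}'s peeling argument never used $\mu\in Q_{-1}$ --- both of which are correct and worth having spelled out.
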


\begin{proof}
Reasoning as in the proof of Lemma~\ref{lem:s1}, we see that $\nu$ belongs to $S_1(\lambda, \lambda')$.  Suppose that in applying Bott's algorithm to $(\lambda \vert \mu)$ (resp. $(\lambda' \vert \mu^{\dag})$) the number $\mu_1$ (resp.\ $\mu^{\dag}_1$) moves $r$ (resp.\ $s$) places to the left.  Let $N=\ell(w_2)$ (resp.\ $N'=\ell(w_2')$) be the number of steps in Bott's algorithm applied to $(\lambda \vert \nu)$ (resp.\ $(\lambda \vert \nu^{\dag})$).  Then, as in the proof of Lemma~\ref{lem:s1}, we can trace Bott's algorithm on $(\lambda \vert \mu)$ and $(\lambda' \vert \mu^{\dag})$.  We find
\begin{align*}
\alpha_i&=\begin{cases}
\lambda_i & 1 \le i \le a-r \\
\mu_1-r & i=a-r+1 \\
\beta_{i-1}+1 & a-r+1 \le i \le a+\ell(\mu)
\end{cases}\\
\alpha'_i&=\begin{cases}
\lambda'_i & 1 \le i \le b-s \\
\mu^{\dag}_1-s & i=b-s+1 \\
\beta'_{i-1}+1 & b-s+2 \le i \le b+\mu_1.
\end{cases}
\end{align*}
We now examine the border strips used in the modification rule for $(\alpha, \alpha')$.  Both $R_{\alpha}$ and $R_{\alpha'}$ have $\ell(\alpha)+\ell(\alpha')-n-1=\ell(\mu)+\mu_1-1$ boxes. Using Remark~\ref{rmk:striphook}, we see that $R_\alpha$ (resp. $R_{\alpha'}$) exists since the box in $(a-r+1)$th (resp. $(b-s+1)$th) row and the first column has a hook of size $\ell(\mu)+\mu_1-1$. Furthermore, $\alpha \setminus R_{\alpha}=\beta$ and $\alpha' \setminus R_{\alpha'}=\beta'$ and
\begin{align*}
c(R_{\alpha}) &=\ell(\mu)+\mu_1-\ell(\alpha)+a-r, \\
c(R_{\alpha'}) &=\ell(\mu)+\mu_1-\ell(\alpha')+b-s,
\end{align*}
which completes the proof.
\end{proof}

\begin{proposition}
\label{gl:stepa}
There is a unique bijection $S_1(\lambda, \lambda') \to S_2(\lambda, \lambda')$ under which $\mu$ maps to $(\alpha, \alpha')$ if there exists $w$ and $w'$ in $\fS$ such that $w \bullet (\lambda \vert \mu)=\alpha$ and $w' \bullet (\lambda' \vert \mu^{\dag})=\alpha'$; in this case, $\ell(w)+\ell(w')+i_n(\alpha, \alpha')=\vert \mu \vert$.
\end{proposition}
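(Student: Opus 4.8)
The plan is to transcribe the proof of Proposition~\ref{s:stepa} almost verbatim, with Lemma~\ref{lem:s1} replaced by the preceding lemma and with the single Bott computation replaced by the pair of Bott computations on $(\lambda\vert\mu)$ and on $(\lambda'\vert\mu^{\dag})$. First I would record that $\mu\mapsto(\alpha,\alpha')$ --- where $\alpha=w\bullet(\lambda\vert\mu)$ and $\alpha'=w'\bullet(\lambda'\vert\mu^{\dag})$ are the partitions produced by Bott's algorithm --- is a genuine map, then show it lands in $S_2(\lambda,\lambda')$ and satisfies $\ell(w)+\ell(w')+i_n(\alpha,\alpha')=\vert\mu\vert$, then prove injectivity, and finally surjectivity.

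For the containment in $S_2(\lambda,\lambda')$ and the length identity I would induct on $\vert\mu\vert$, the case $\mu=\emptyset$ being trivial ($w=w'=1$, $(\alpha,\alpha')=(\lambda,\lambda')$, $i_n=0$). For $\mu\neq\emptyset$, let $\nu$ be $\mu$ with its first row and column removed. The preceding lemma supplies $\nu\in S_1(\lambda,\lambda')$, partitions $\beta=w_2\bullet(\lambda\vert\nu)$ and $\beta'=w_2'\bullet(\lambda'\vert\nu^{\dag})$, border strips $R_{\alpha}$ and $R_{\alpha'}$ with $\alpha\setminus R_{\alpha}=\beta$, $\alpha'\setminus R_{\alpha'}=\beta'$ and common size $\ell(\mu)+\mu_1-1$, together with the column-count identity $c(R_{\alpha})+c(R_{\alpha'})=\ell(\mu)+\mu_1+\ell(w_2)+\ell(w_2')-\ell(w)-\ell(w')$. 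By the inductive hypothesis $\tau_n(\beta,\beta')=(\lambda,\lambda')$ and $\ell(w_2)+\ell(w_2')+i_n(\beta,\beta')=\vert\nu\vert$. Since $R_{\alpha}$ and $R_{\alpha'}$ are exactly the strips prescribed by the border-strip rule of \S\ref{ss:gl-modrule} for $(\alpha,\alpha')$ (their common length $\ell(\mu)+\mu_1-1$ equals $\ell(\alpha)+\ell(\alpha')-n-1$ because $\ell(\alpha)=a+\ell(\mu)$, $\ell(\alpha')=b+\mu_1$ and $a+b=n$), that rule gives $\tau_n(\alpha,\alpha')=\tau_n(\beta,\beta')=(\lambda,\lambda')$ and $i_n(\alpha,\alpha')=c(R_{\alpha})+c(R_{\alpha'})-1+i_n(\beta,\beta')$; substituting the column-count identity, the inductive identity, and $\vert\mu\vert=\vert\nu\vert+\ell(\mu)+\mu_1-1$ then collapses this to $\ell(w)+\ell(w')+i_n(\alpha,\alpha')=\vert\mu\vert$.

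Injectivity I would prove exactly as in Proposition~\ref{s:stepa}: if $\mu$ and $\mu'$ both map to $(\alpha,\alpha')$, then $(\lambda\vert\mu)+\rho$ and $(\lambda\vert\mu')+\rho$ are both equal to $\alpha+\rho$ as multisets, hence equal to each other; the entries to the left of the bar are the fixed numbers $\lambda_i-i$ in both, so the entries to the right of the bar coincide as multisets, and, being strictly decreasing sequences, coincide as sequences, forcing $\mu=\mu'$ (only $\alpha$ is needed). For surjectivity I would induct on size: $(\alpha,\alpha')=(\lambda,\lambda')$ has $\emptyset$ as preimage, and if $(\alpha,\alpha')\in S_2(\lambda,\lambda')$ is not admissible then the modification rule removes strips $R_{\alpha},R_{\alpha'}$ of common length $\ell(\alpha)+\ell(\alpha')-n-1$ to leave $(\beta,\beta')\in S_2(\lambda,\lambda')$; by induction some $\nu\in S_1(\lambda,\lambda')$ maps to $(\beta,\beta')$, and running the preceding lemma backwards (in the spirit of Lemma~\ref{lem:s2}) produces a $\mu\in S_1(\lambda,\lambda')$, whose first-row-and-column deletion is $\nu$, mapping to $(\alpha,\alpha')$.

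The step I expect to require the most care is this last reversal: unlike the symplectic case one is now reconstructing a single auxiliary partition $\mu$ from two separate pieces of data --- the strip $R_{\alpha}$ on $\alpha$ and the strip $R_{\alpha'}$ on $\alpha'$ --- so one must check that these two strips are genuinely compatible, i.e.\ that the principal hook of $\mu$ of size $\ell(\mu)+\mu_1-1$ they prescribe, and the principal hook of $\mu^{\dag}$ of the same size, describe the same $\mu$ (equivalently, that $\mu^{\dag}$ is the shape produced on the $\alpha'$ side). The equality $\vert R_{\alpha}\vert=\vert R_{\alpha'}\vert$ built into the border-strip rule, together with the fact that $\ell(\alpha)$ and $\ell(\alpha')$ determine $\ell(\mu)$ and $\mu_1$, is what makes this work; once it is in hand, the remainder is the routine transcription of the symplectic argument described above.
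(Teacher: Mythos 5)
Your proposal is correct and follows exactly the route the paper intends: the paper's own proof of this proposition is the single sentence ``The proof is essentially the same as that of Proposition~\ref{s:stepa},'' and your write-up is precisely that transcription, with the preceding lemma playing the role of Lemma~\ref{lem:s1} and the two Bott computations replacing the single one. Your closing remark about checking that the two border strips $R_{\alpha}$ and $R_{\alpha'}$ reconstruct a single compatible hook of $\mu$ is a genuine point of care that the paper leaves implicit, and your verification via $\ell(\alpha)=a+\ell(\mu)$, $\ell(\alpha')=b+\mu_1$, $a+b=n$ handles it correctly.
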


\begin{proof}
The proof is essentially the same as that of Proposition~\ref{s:stepa}.
\end{proof}

\Step{b}
Let $E$ and $E'$ be vector spaces of dimensions at least $a$ and $b$, respectively.  Let $X$ (resp.\ $X'$) be the Grassmannian of rank $a$ (resp.\ $b$) quotients of $E$ (resp.\ $E'$).  Let $\cR$ and $\cQ$ (resp.\ $\cR'$ and $\cQ'$) be the tautological bundles on $X$ (resp.\ $X'$) as in \eqref{eqn:taut}; we regard all four as bundles on $X \times X'$.  Put $\eps=E \otimes E' \otimes \cO_{X \times X'}$, $\xi=\cR \otimes \cR'$ and define $\eta$ by the exact sequence
\begin{displaymath}
0 \to \xi \to \eps \to \eta \to 0.
\end{displaymath}
Finally, for a pair of partitions $(\lambda, \lambda')$ with $\ell(\lambda) \le a$ and $\ell(\lambda') \le b$, put
\begin{displaymath}
\cM_{\lambda,\lambda'}=\Sym(\eta) \otimes \bS_{\lambda}(\cQ) \otimes \bS_{\lambda'}(\cQ')
\end{displaymath}
and $M_{\lambda, \lambda'}=\rH^0(X \times X', \cM_{\lambda, \lambda'})$.  Note that $A=\rH^0(X, \Sym(\eps))$, and so $M_{\lambda,\lambda'}$ is an $A$-module.

\begin{lemma} \label{lem:gl_vbundle}
Let $\lambda$ and $\lambda'$ be partitions with $\ell(\lambda) \le a$ and $\ell(\lambda') \le b$.  Let $\nu \in S_1(\lambda, \lambda')$ correspond to $(\alpha, \alpha')$ in $S_2(\lambda, \lambda')$.  Let $\cV$ be the vector bundle on $X \times X'$ given by
\begin{displaymath}
\cV=\bS_{\lambda}(\cQ) \otimes \bS_{\lambda'}(\cQ') \otimes \bS_{\nu}(\cR) \otimes \bS_{\nu^{\dag}}(\cR').
\end{displaymath}
Then
\begin{displaymath}
\rH^i(X \times X', \cV) = \begin{cases}
\bS_{\alpha}(E) \otimes \bS_{\alpha'}(E') & \textrm{if $i=\vert \nu \vert-i_n(\alpha, \alpha')$} \\
0 & \textrm{otherwise.}
\end{cases}
\end{displaymath}
\end{lemma}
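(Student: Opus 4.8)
The plan is to separate the two factors of $X \times X'$ using the Künneth formula, apply the Borel--Weil--Bott theorem on each Grassmannian individually, and then read off both the cohomological degree and the representation from the combinatorial bijection established in Proposition~\ref{gl:stepa}. Concretely, the bundles $\cQ,\cR$ are pulled back from $X$ and $\cQ',\cR'$ from $X'$, so $\cV$ is an external tensor product, and
\begin{displaymath}
\rH^i(X \times X', \cV) = \bigoplus_{p+q=i} \rH^p(X, \bS_{\lambda}(\cQ) \otimes \bS_{\nu}(\cR)) \otimes \rH^q(X', \bS_{\lambda'}(\cQ') \otimes \bS_{\nu^{\dag}}(\cR')).
\end{displaymath}

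Next I would invoke Borel--Weil--Bott on each factor. Since $\nu \in S_1(\lambda,\lambda')$, both sequences $(\lambda \mid_a \nu)$ and $(\lambda' \mid_b \nu^{\dag})$ are regular. Hence $\rH^{\bullet}(X, \bS_{\lambda}(\cQ) \otimes \bS_{\nu}(\cR))$ is concentrated in the single degree $\ell(w)$, where $w \in \fS$ is the element produced by running Bott's algorithm on $(\lambda \mid_a \nu)$, and there it equals $\bS_{\alpha}(E)$ with $\alpha = w \bullet (\lambda \mid_a \nu)$; likewise the cohomology on $X'$ sits in degree $\ell(w')$ and equals $\bS_{\alpha'}(E')$ with $\alpha' = w' \bullet (\lambda' \mid_b \nu^{\dag})$. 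By the characterization of the bijection $S_1(\lambda,\lambda') \to S_2(\lambda,\lambda')$ in Proposition~\ref{gl:stepa}, the pair $(\alpha,\alpha')$ obtained in this way is precisely the image of $\nu$.

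Assembling: only the term $(p,q)=(\ell(w),\ell(w'))$ of the Künneth sum survives, so $\rH^i(X \times X', \cV)$ vanishes unless $i = \ell(w)+\ell(w')$, where it is $\bS_{\alpha}(E) \otimes \bS_{\alpha'}(E')$. Finally, Proposition~\ref{gl:stepa} gives $\ell(w)+\ell(w') = \vert \nu \vert - i_n(\alpha,\alpha')$, which is the asserted degree. I do not expect a genuine obstacle here: the argument is essentially bookkeeping, and the only point deserving a word of care is that the Bott elements entering the two applications of Borel--Weil--Bott are literally the elements $w,w' \in \fS$ featuring in Proposition~\ref{gl:stepa} (both arise from Bott's algorithm), so the $(\alpha,\alpha')$ produced is indeed the image of $\nu$ under the bijection. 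As elsewhere in the paper, one also notes that if $\bS_{\nu}(\cR)$ or $\bS_{\nu^{\dag}}(\cR')$ vanishes for rank reasons then both sides are zero, which is harmless, and does not occur once $\dim E$ and $\dim E'$ are taken large.
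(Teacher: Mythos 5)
Your proposal is correct and is essentially identical to the paper's proof: Künneth to split $\cV$ over the two factors, Borel--Weil--Bott applied to each regular sequence $(\lambda \mid_a \nu)$ and $(\lambda' \mid_b \nu^{\dag})$, and Proposition~\ref{gl:stepa} to convert $\ell(w)+\ell(w')$ into $\vert\nu\vert - i_n(\alpha,\alpha')$. No further comment is needed.
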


\begin{proof}
The K\"unneth formula shows that
\begin{displaymath}
\rH^{\bullet}(X \times X', \cV)=\rH^{\bullet}(X, \bS_{\lambda}(\cQ) \otimes \bS_{\nu}(\cR)) \otimes \rH^{\bullet}(X', \bS_{\lambda'}(\cQ') \otimes \bS_{\nu^{\dag}}(\cR')).
\end{displaymath}
By definition, $(\lambda \vert \nu)$ and $(\lambda' \vert \nu^{\dag})$ are both regular, and so we can find $w$ and $w'$ in $\fS$ such that $\alpha=w \bullet (\lambda \vert \nu)$ and $\alpha'=w \bullet (\lambda' \vert \nu^{\dag})$ are partitions. By the Borel--Weil--Bott theorem, we get 
\begin{align*}
\rH^i(X, \bS_{\lambda}(\cQ) \otimes \bS_{\nu}(\cR)) &=\begin{cases}
\bS_{\alpha}(E) & \textrm{if $i=\ell(w)$} \\
0 & \textrm{otherwise}
\end{cases}\\
\rH^i(X', \bS_{\lambda}(\cQ') \otimes \bS_{\nu^{\dag}}(\cR')) &=\begin{cases}
\bS_{\alpha'}(E') & \textrm{if $i=\ell(w')$} \\
0 & \textrm{otherwise.}
\end{cases}
\end{align*}
By Proposition~\ref{gl:stepa}, we have $\ell(w)+\ell(w')=\vert \nu \vert-i_n(\alpha, \alpha')$, which completes the proof.
\end{proof}

\begin{lemma} \label{lem:gl_coh}
Let $\lambda$ and $\lambda'$ be partitions with $\ell(\lambda) \le a$ and $\ell(\lambda') \le b$ and let $i$ be an integer.  We have
\begin{displaymath}
\bigoplus_{j \in \bZ} \rH^j(X \times X', \lw^{i+j}(\xi) \otimes \bS_{\lambda}(\cQ) \otimes \bS_{\lambda'}(\cQ')) = \bigoplus_{(\alpha, \alpha')} \bS_{\alpha}(E) \otimes \bS_{\alpha'}(E'),
\end{displaymath}
where the sum is over pairs $(\alpha, \alpha')$ with $i_n(\alpha, \alpha')=i$ and $\tau_n(\alpha, \alpha')=(\lambda, \lambda')$.  In particular, when $i<0$ the left side above vanishes.
\end{lemma}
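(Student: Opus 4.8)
The plan is to transcribe the argument used for the symplectic analogue (the lemma immediately preceding Proposition~\ref{s:stepb}), with the plethysm for $\lw^{\bullet}(\lw^2)$ replaced by the dual Cauchy decomposition of the exterior power of a tensor product. First I would apply the identity $\lw^k(\cR \otimes \cR') = \bigoplus_{\vert \nu \vert = k} \bS_{\nu}(\cR) \otimes \bS_{\nu^{\dag}}(\cR')$ (see \cite[Chapter 2]{weyman}), relativized to bundles on $X \times X'$, to obtain $\lw^{i+j}(\xi) = \bigoplus_{\vert \nu \vert = i+j} \bS_{\nu}(\cR) \otimes \bS_{\nu^{\dag}}(\cR')$; substituting this and re-indexing the sum by $\nu$ (so that $j = \vert \nu \vert - i$) turns the left-hand side of the statement into
\[
\bigoplus_{\nu} \rH^{\vert \nu \vert - i}\bigl(X \times X',\ \bS_{\nu}(\cR) \otimes \bS_{\nu^{\dag}}(\cR') \otimes \bS_{\lambda}(\cQ) \otimes \bS_{\lambda'}(\cQ')\bigr),
\]
the sum now running over all partitions $\nu$.

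Next I would apply the K\"unneth formula to split this cohomology as $\rH^{\bullet}(X, \bS_{\lambda}(\cQ) \otimes \bS_{\nu}(\cR)) \otimes \rH^{\bullet}(X', \bS_{\lambda'}(\cQ') \otimes \bS_{\nu^{\dag}}(\cR'))$, so that by the Borel--Weil--Bott theorem the $\nu$-term vanishes unless both $(\lambda \mid_a \nu)$ and $(\lambda' \mid_b \nu^{\dag})$ are regular, i.e.\ unless $\nu \in S_1(\lambda, \lambda')$. For $\nu \in S_1(\lambda, \lambda')$, letting $(\alpha, \alpha')$ be the corresponding element of $S_2(\lambda, \lambda')$, Lemma~\ref{lem:gl_vbundle} gives that the $\nu$-term equals $\bS_{\alpha}(E) \otimes \bS_{\alpha'}(E')$ exactly when $\vert \nu \vert - i = \vert \nu \vert - i_n(\alpha, \alpha')$, that is, when $i = i_n(\alpha, \alpha')$, and vanishes otherwise.

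Finally, Proposition~\ref{gl:stepa} says that $\nu \mapsto (\alpha, \alpha')$ is a bijection $S_1(\lambda, \lambda') \to S_2(\lambda, \lambda')$, and $S_2(\lambda, \lambda')$ consists precisely of the pairs with $\tau_n(\alpha, \alpha') = (\lambda, \lambda')$; hence the surviving terms are indexed exactly by the pairs $(\alpha, \alpha')$ with $\tau_n(\alpha, \alpha') = (\lambda, \lambda')$ and $i_n(\alpha, \alpha') = i$, which is the claimed identity. The ``in particular'' assertion follows at once: $i_n(\alpha, \alpha')$ always lies in $\bZ_{\ge 0} \cup \{\infty\}$, so the indexing set is empty when $i < 0$. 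I do not expect a genuine obstacle here; the argument is essentially a copy of the symplectic proof, and the only places demanding a little care are pinning down the correct form of the dual Cauchy identity for $\lw^{\bullet}(\cR \otimes \cR')$ and the bookkeeping in the K\"unneth step, where one needs \emph{both} $(\lambda \mid_a \nu)$ and $(\lambda' \mid_b \nu^{\dag})$ to be regular rather than just one of them.
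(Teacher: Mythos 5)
Your proposal is correct and follows essentially the same route as the paper's proof: decompose $\lw^{i+j}(\cR\otimes\cR')$ by the dual Cauchy identity, re-index by $\nu$, and reduce to Lemma~\ref{lem:gl_vbundle} together with the bijection of Proposition~\ref{gl:stepa}. The only difference is that you spell out the K\"unneth and Borel--Weil--Bott bookkeeping that the paper leaves implicit in the phrase ``we only need to sum over $\nu\in S_1(\lambda,\lambda')$.''
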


\begin{proof}
We have
\begin{displaymath}
\lw^{i+j}(\xi)=\lw^{i+j}(\cR \otimes \cR')=\bigoplus_{\vert \nu \vert=i+j} \bS_{\nu}(\cR) \otimes \bS_{\nu^{\dag}}(\cR'),
\end{displaymath}
and so
\begin{displaymath}
\begin{split}
& \bigoplus_{j \in \bZ} \rH^j(X \times X', \lw^{i+j}(\xi) \otimes \bS_{\lambda}(\cQ) \otimes
\bS_{\lambda'}(\cQ')) \\
=& \bigoplus_{\nu} \rH^{\vert \nu \vert-i}(X \times X', \bS_{\lambda}(\cQ) \otimes \bS_{\lambda'}(\cQ') \otimes \bS_{\nu}(\cR) \otimes \bS_{\nu^{\dag}}(\cR')).
\end{split}
\end{displaymath}
Of course, we only need to sum over $\nu \in S_1(\lambda, \lambda')$.  The result now follows from Lemma~\ref{lem:gl_vbundle}.
\end{proof}

\begin{proposition}
\label{gl:stepb}
Let $\lambda$ and $\lambda'$ be partitions with $\ell(\lambda) \le a$ and $\ell(\lambda') \le b$.  We have
\begin{displaymath}
\Tor^A_i(M_{\lambda, \lambda'}, \bC)=\bigoplus_{(\alpha, \alpha')} \bS_{\alpha}(E) \otimes \bS_{\alpha'}(E'),
\end{displaymath}
where the sum is over pairs $(\alpha, \alpha')$ with $i_n(\alpha, \alpha')=i$ and $\tau_n(\alpha, \alpha')=(\lambda, \lambda')$.
\end{proposition}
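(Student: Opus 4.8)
The plan is to feed the cohomology computation of Lemma~\ref{lem:gl_coh} into the geometric technique of Proposition~\ref{geo}, with the base variety taken to be $Y = X \times X'$. First I would check that the data set up just above the statement is of exactly the form required by Proposition~\ref{geo}: on $Y$ we have the exact sequence $0 \to \xi \to \eps \to \eta \to 0$ with $\eps = E \otimes E' \otimes \cO_Y$ trivial, we have $A = \rH^0(Y, \Sym(\eps)) = \Sym(E \otimes E')$, and $M_{\lambda,\lambda'} = \rH^0(Y, \Sym(\eta) \otimes \cV)$ where $\cV = \bS_{\lambda}(\cQ) \otimes \bS_{\lambda'}(\cQ')$ plays the role of the auxiliary bundle. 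So the setup matches verbatim once one reads $Y$ for $X$.

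Next I would verify the hypothesis of Proposition~\ref{geo}, namely that $\rH^j(Y, \lw^{i+j}(\xi) \otimes \cV) = 0$ for all $i < 0$ and all $j$. This is precisely the ``in particular'' clause of Lemma~\ref{lem:gl_coh}, so nothing new needs to be proved. Proposition~\ref{geo} then gives a natural isomorphism
\[
\Tor^A_i(M_{\lambda,\lambda'}, \bC) \cong \bigoplus_{j \ge 0} \rH^j(Y, \lw^{i+j}(\xi) \otimes \cV).
\]
Finally, I would invoke the full statement of Lemma~\ref{lem:gl_coh}, which identifies $\bigoplus_{j \in \bZ} \rH^j(Y, \lw^{i+j}(\xi) \otimes \cV)$ with $\bigoplus_{(\alpha,\alpha')} \bS_{\alpha}(E) \otimes \bS_{\alpha'}(E')$, the sum over pairs with $i_n(\alpha,\alpha') = i$ and $\tau_n(\alpha,\alpha') = (\lambda,\lambda')$; since $\rH^j$ vanishes for $j < 0$, this coincides with the sum over $j \ge 0$. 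Combining the two displays yields the proposition.

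There is essentially no obstacle in this step itself — it is the bookkeeping step that packages the real content, which lives in Lemma~\ref{lem:gl_vbundle} (the Borel--Weil--Bott calculation run through the bijection of Proposition~\ref{gl:stepa}) and Lemma~\ref{lem:gl_coh} (the plethysm $\lw^{\bullet}(\cR \otimes \cR') = \bigoplus_{\nu} \bS_{\nu}(\cR) \otimes \bS_{\nu^{\dag}}(\cR')$ together with the K\"unneth formula). The only thing requiring a moment's care is confirming that the degree shifts match: $\lw^{i+j}(\xi)$ decomposes into $\bS_{\nu}(\cR) \otimes \bS_{\nu^{\dag}}(\cR')$ with $\vert\nu\vert = i+j$, so the cohomological degree $j = \vert\nu\vert - i$ in which $\bS_{\alpha}(E) \otimes \bS_{\alpha'}(E')$ appears forces $i = i_n(\alpha,\alpha')$ exactly when $j = \vert\nu\vert - i_n(\alpha,\alpha')$, which is the indexing in Lemma~\ref{lem:gl_vbundle}. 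Hence the proof is a one-line citation of the previous lemma and Proposition~\ref{geo}, in direct parallel with Proposition~\ref{s:stepb}.
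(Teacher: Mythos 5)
Your proposal is correct and is exactly the paper's argument: the paper also proves this by citing Lemma~\ref{lem:gl_coh} (whose ``in particular'' clause supplies the vanishing hypothesis) together with Proposition~\ref{geo} applied to $X \times X'$ with $\cV = \bS_{\lambda}(\cQ) \otimes \bS_{\lambda'}(\cQ')$. You have merely spelled out the bookkeeping that the paper leaves implicit.
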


\begin{proof}
This follows immediately from the Lemma~\ref{lem:gl_coh} and Proposition~\ref{geo}.
\end{proof}

\begin{lemma}
\label{gl:m-ind}
The module $M_{\lambda,\lambda'}$ is independent of the choice of $a$ and $b$, provided that $\ell(\lambda) \le a$ and $\ell(\lambda') \le b$.
\end{lemma}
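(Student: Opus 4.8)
The plan is to reconstruct $M_{\lambda,\lambda'}$ from its minimal free presentation over $A$ and to observe that this presentation is described entirely in terms of $n,\lambda,\lambda',E,E'$, with no reference to the splitting $n=a+b$. This is the same reconstruction principle used in the proofs of Propositions~\ref{s:stepc} and~\ref{o:stepc}. Throughout, fix $E$ and $E'$ large, and set $F_0=A\otimes\bS_\lambda(E)\otimes\bS_{\lambda'}(E')$, a free $A$-module that does not depend on $a$ or $b$.

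First I would record the presentation. By Proposition~\ref{gl:stepb}, $\Tor^A_0(M_{\lambda,\lambda'},\bC)=\bS_\lambda(E)\otimes\bS_{\lambda'}(E')$, since the only pair with $\tau_n(\alpha,\alpha')=(\lambda,\lambda')$ and $i_n(\alpha,\alpha')=0$ is $(\lambda,\lambda')$ itself. Moreover $M_{\lambda,\lambda'}$ in internal degree $0$ is $\rH^0(X\times X',\bS_\lambda(\cQ)\otimes\bS_{\lambda'}(\cQ'))=\bS_\lambda(E)\otimes\bS_{\lambda'}(E')$ by Künneth and Borel--Weil, so there is a $\GL(E)\times\GL(E')$-equivariant surjection $\pi\colon F_0\twoheadrightarrow M_{\lambda,\lambda'}$, unique up to scalar, which is an isomorphism in degree $0$ (an equivariant self-map of the irreducible $\bS_\lambda(E)\otimes\bS_{\lambda'}(E')$). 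Hence $K:=\ker\pi\subseteq\fm F_0$, where $\fm\subset A$ is the augmentation ideal; the presentation is minimal and $K/\fm K=\Tor^A_1(M_{\lambda,\lambda'},\bC)$. Tracing the border strip rule of \S\ref{ss:gl-modrule}, the only pair with $\tau_n(\alpha,\alpha')=(\lambda,\lambda')$ and $i_n(\alpha,\alpha')=1$ is $\alpha=(\lambda,1^t)$, $\alpha'=(\lambda',1^t)$ with $t=n+1-\ell(\lambda)-\ell(\lambda')$, so Proposition~\ref{gl:stepb} gives $K/\fm K=\bS_{(\lambda,1^t)}(E)\otimes\bS_{(\lambda',1^t)}(E')$.

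The key step, and the one place needing a genuine (if short) combinatorial argument, is to check that $\bS_{(\lambda,1^t)}(E)\otimes\bS_{(\lambda',1^t)}(E')$ occurs with multiplicity exactly one in $F_0$. Using the Cauchy identity $A=\bigoplus_\mu\bS_\mu(E)\otimes\bS_\mu(E')$, this multiplicity equals $\sum_\mu c^{(\lambda,1^t)}_{\lambda,\mu}\,c^{(\lambda',1^t)}_{\lambda',\mu}$; since $(\lambda,1^t)/\lambda$ is the single column of $t$ boxes appended at the bottom of the first column of $\lambda$, the Littlewood--Richardson (Pieri) rule forces $c^{(\lambda,1^t)}_{\lambda,\mu}=1$ for $\mu=(1^t)$ and $0$ otherwise, and likewise with $\lambda'$, so the multiplicity is $1$.

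Finally I would conclude. Let $N\subseteq F_0$ be the $A$-submodule generated by the isotypic component of $F_0$ of type $\bS_{(\lambda,1^t)}(E)\otimes\bS_{(\lambda',1^t)}(E')$; this $N$ depends only on $n,\lambda,\lambda',E,E'$. Lifting $K/\fm K$ to an equivariant graded subspace $G\subseteq K$ and applying graded Nakayama gives $K=AG$; as $G$ lies in that isotypic component we get $K\subseteq N$. Conversely, a splitting of $K\twoheadrightarrow K/\fm K$ embeds $\bS_{(\lambda,1^t)}(E)\otimes\bS_{(\lambda',1^t)}(E')$ into $K\subseteq F_0$, and by the multiplicity-one statement its image is the full isotypic component, whence $N\subseteq K$. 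Therefore $K=N$ and $M_{\lambda,\lambda'}=F_0/N$ depends only on $n,\lambda,\lambda',E,E'$; in particular it is independent of the decomposition $n=a+b$ subject to $\ell(\lambda)\le a$ and $\ell(\lambda')\le b$, as claimed. I expect the only real obstacle to be the multiplicity-one verification in the third paragraph; the rest is formal once Proposition~\ref{gl:stepb} is in hand.
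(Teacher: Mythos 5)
Your argument is correct and is essentially the paper's own proof: both extract the minimal presentation of $M_{\lambda,\lambda'}$ from Proposition~\ref{gl:stepb} in degrees $0$ and $1$ and observe that $\bS_{(\lambda,1^d)}(E)\otimes\bS_{(\lambda',1^d)}(E')$ occurs with multiplicity one in $A\otimes\bS_\lambda(E)\otimes\bS_{\lambda'}(E')$, so the presentation (hence the module) is determined by $n,\lambda,\lambda'$ alone. You supply two details the paper leaves implicit --- the Cauchy/Littlewood--Richardson verification of the multiplicity-one claim and the Nakayama identification of the kernel with the submodule generated by that isotypic component --- but the route is the same.
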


\begin{proof}
Applying Proposition~\ref{gl:stepb} with $i=0$ and $i=1$ shows that we have a presentation
\begin{displaymath}
A \otimes \bS_{(\lambda, 1^d)}(E) \otimes \bS_{(\lambda', 1^d)}(E') \to A \otimes \bS_{\lambda}(E) \otimes \bS_{\lambda'}(E') \to M_{\lambda,\lambda'} \to 0,
\end{displaymath}
where $d=n+1-\ell(\lambda)-\ell(\mu)$.  The left map is unique up to scalar multiple, since $\bS_{(\lambda,1^d)}(E) \otimes \bS_{(\lambda',1^d)}(E')$ occurs with multiplicity one in the middle group, and so the lemma follows.
\end{proof}

We thus have a well-defined $A$-module $M_{\lambda, \lambda'}$ for any admissible pair $(\lambda, \lambda')$.

\Step{c}
For an admissible pair $(\lambda, \lambda')$, put
\begin{displaymath}
B_{\lambda,\lambda'}=\Hom_{\GL(V)}(\bS_{[\lambda,\lambda']}(V), B).
\end{displaymath}
Note that $B_{\lambda,\lambda'}$ is an $A$-module and has a compatible action of $\GL(E) \times \GL(E')$.  Our goal is to show that $B_{\lambda,\lambda'}$ is isomorphic to $M_{\lambda,\lambda'}$.

\begin{lemma}
Let $(\lambda, \lambda')$ be an admissible pair.  Then the spaces $B_{\lambda,\lambda'}$ and $M_{\lambda,\lambda'}$ are isomorphic as representations of $\GL(E) \times \GL(E')$, and have finite multiplicities.
\end{lemma}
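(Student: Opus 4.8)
The plan is to follow the proofs of Lemma~\ref{lem:B=M} and Lemma~\ref{o:iso} essentially verbatim, replacing the single auxiliary space by the pair $E,E'$ and the Koike--Terada identities of \cite{koiketerada} by the corresponding identity of \cite{koike}. Put $M=\bigoplus M_{\lambda,\lambda'}\otimes\bS_{[\lambda,\lambda']}(V)$, the sum over admissible pairs $(\lambda,\lambda')$. As before, it suffices to prove that $M$ and $B$ are isomorphic as $\GL(E)\times\GL(E')\times\GL(V)$-representations with finite multiplicities, and for that it is enough to show, for every pair of partitions $(\theta,\theta')$, that the $\bS_\theta(E)\otimes\bS_{\theta'}(E')$-isotypic component of $M$ is isomorphic as a $\GL(V)$-representation to that of $B$, with finite multiplicity.

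First I would handle $B$. By the Cauchy identity $B=\Sym(E\otimes V)\otimes\Sym(E'\otimes V^*)=\bigoplus_{\theta,\theta'}\bS_\theta(E)\otimes\bS_{\theta'}(E')\otimes\bigl(\bS_\theta(V)\otimes\bS_{\theta'}(V)^*\bigr)$, so the multiplicity space of $\bS_\theta(E)\otimes\bS_{\theta'}(E')$ in $B$ is $\bS_\theta(V)\otimes\bS_{\theta'}(V)^*$. Writing its class in the representation ring of $\GL(V)$ in the basis of the $[\bS_{[\mu,\mu']}(V)]$ --- this is the content of the specialization homomorphism applied to Koike's universal-character multiplication rule \cite{koike} --- gives an expression of the form
\[
\sum_{\kappa,\sigma,\sigma'}(-1)^{i_n(\sigma,\sigma')}\,c^{\theta}_{\kappa,\sigma}\,c^{\theta'}_{\kappa,\sigma'}\,[\bS_{[\tau_n(\sigma,\sigma')]}(V)],
\]
where the quantities $i_n$ and $\tau_n$ are those of \S\ref{ss:gl-modrule} (here one invokes the agreement of the modification rules proved above). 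For fixed $\theta,\theta'$ only finitely many triples $(\kappa,\sigma,\sigma')$ give nonzero Littlewood--Richardson coefficients, so the multiplicities are finite.

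Next I would handle $M$. In the representation ring of $\GL(E)\times\GL(E')$ one has $[M_{\lambda,\lambda'}]=[A]\sum_{i\ge0}(-1)^i[\Tor^A_i(M_{\lambda,\lambda'},\bC)]$. Proposition~\ref{gl:stepb} gives $\sum_i(-1)^i[\Tor^A_i(M_{\lambda,\lambda'},\bC)]=\sum_{\tau_n(\alpha,\alpha')=(\lambda,\lambda')}(-1)^{i_n(\alpha,\alpha')}[\bS_\alpha(E)\otimes\bS_{\alpha'}(E')]$, while $A=\Sym(E\otimes E')$ and Cauchy give $[A]=\sum_\kappa[\bS_\kappa(E)\otimes\bS_\kappa(E')]$. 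Multiplying out and expanding $\bS_\kappa(E)\otimes\bS_\alpha(E)$ and $\bS_\kappa(E')\otimes\bS_{\alpha'}(E')$ by the Littlewood--Richardson rule yields
\[
[M_{\lambda,\lambda'}]=\sum_{\theta,\theta'}\Bigl(\sum_{\substack{\kappa,\alpha,\alpha'\\ \tau_n(\alpha,\alpha')=(\lambda,\lambda')}}(-1)^{i_n(\alpha,\alpha')}\,c^{\theta}_{\kappa,\alpha}\,c^{\theta'}_{\kappa,\alpha'}\Bigr)[\bS_\theta(E)\otimes\bS_{\theta'}(E')].
\]
Summing $[M_{\lambda,\lambda'}]\otimes[\bS_{[\lambda,\lambda']}(V)]$ over all admissible $(\lambda,\lambda')$ shows that the $\bS_\theta(E)\otimes\bS_{\theta'}(E')$-component of $M$ has class $\sum_{\kappa,\alpha,\alpha'}(-1)^{i_n(\alpha,\alpha')}c^{\theta}_{\kappa,\alpha}c^{\theta'}_{\kappa,\alpha'}[\bS_{[\tau_n(\alpha,\alpha')]}(V)]$, which coincides term by term with the expression obtained for $B$. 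Hence the multiplicity spaces agree and are finite, as desired. The one place where care is needed is matching conventions: identifying exactly which identity in \cite{koike} encodes the decomposition of $\bS_\theta(V)\otimes\bS_{\theta'}(V)^*$ and checking that the modification data appearing there is literally $(i_n,\tau_n)$ from \S\ref{ss:gl-modrule}; once that is pinned down, the rest is the same bookkeeping with Cauchy's formula and Littlewood--Richardson coefficients used in the symplectic and orthogonal cases.
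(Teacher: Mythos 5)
Your proposal is correct and follows essentially the same route as the paper: reduce to comparing $\bS_\theta(E)\otimes\bS_{\theta'}(E')$-multiplicity spaces, compute the one in $B$ via the specialization of Koike's universal-character formula (the paper cites \cite[Thm.~2.4]{koike}, where the relevant quantity is $M^{[\nu,\nu']}_{[\theta,0],[0,\theta']}$), and compute the one in $M$ via the Euler characteristic of the $\Tor$'s from Proposition~\ref{gl:stepb} together with the Cauchy identity $[A]=\sum_\mu[\bS_\mu(E)][\bS_\mu(E')]$ and the Littlewood--Richardson rule. The two resulting sums match term by term, exactly as you say.
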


\begin{proof}
As we have done before, put
\begin{displaymath}
M=\bigoplus_{\textrm{admissible $(\lambda, \lambda')$}} M_{\lambda,\lambda'} \otimes \bS_{[\lambda,\lambda']}(V).
\end{displaymath}
It is enough to show that $M$ and $B$ are isomorphic as representations of $\GL(E) \times \GL(E') \times \GL(V)$ and have finite multiplicities.  In fact, it is enough to show that the $\bS_{\theta}(E) \otimes \bS_{\theta'}(E')$ multiplicity spaces of $M$ and $B$ are isomorphic as representations of $\GL(V)$ and have finite multiplicities.  This is what we do.

The $\bS_{\theta}(E) \otimes \bS_{\theta'}(E)$ multiplicity space of $B$ is $\bS_{\theta}(V) \otimes \bS_{\theta'}(V^*)$.  The decomposition of this in the representation ring of $\GL(V)$ can be computed using \cite[Thm.~2.4]{koike}.  The result is
\begin{displaymath}
\sum_{\nu,\nu',\mu} (-1)^{i_n(\nu, \nu')} c^{\theta}_{\nu, \mu} c^{\theta'}_{\nu', \mu} [\bS_{[\tau_n(\nu, \nu')]}(V)].
\end{displaymath}
(In the notation of \cite{koike}, we are computing $M^{[\nu,\nu']}_{[\theta,0],[0,\theta']}$.  These zeros lead to the massive simplification of the general formula given there.)  Note that for fixed $(\theta, \theta')$ there are only finitely many values for $(\nu, \nu', \mu)$ which make the product of Littlewood--Richardson coefficients non-zero, which establishes finiteness of the multiplicities.  Now, we have an equality
\begin{displaymath}
[M_{\lambda,\lambda'}]=[A] \sum_{i \ge 0} (-1)^i [\Tor^A_i(M_{\lambda, \lambda'}, \bC)]
\end{displaymath}
in the representation ring of $\GL(E)$.  Applying Proposition~\ref{gl:stepb}, we find
\begin{displaymath}
\sum_{i \ge 0} (-1)^i [\Tor^A_i(M_{\lambda, \lambda'}, \bC)] = \sum_{\substack{\nu, \nu'\\ \tau_n(\nu, \nu')=(\lambda, \lambda')}} (-1)^{i_n(\nu, \nu')} [\bS_{\nu}(E)][\bS_{\nu'}(E')].
\end{displaymath}
As $[A]=\sum_{\mu} [S_{\mu}(E)][S_{\mu}(E')]$, we obtain
\begin{displaymath}
[M_{\lambda,\lambda'}]=\sum_{\substack{\mu, \theta, \theta', \nu, \nu' \\ \tau_n(\nu,\nu')=(\lambda, \lambda') }} (-1)^{i_n(\nu, \nu')} c^{\theta}_{\nu, \mu} c^{\theta'}_{\nu', \mu} [\bS_{\theta}(E)] [\bS_{\theta'}(E')].
\end{displaymath}
We therefore find that the $\bS_{\theta}(E) \otimes \bS_{\theta'}(E')$ component of $M$ is given by
\begin{displaymath}
\sum_{\substack{\lambda, \lambda', \mu, \nu, \nu'\\ \tau_n(\nu,\nu')=(\lambda,\lambda')}} (-1)^{i_n(\nu,\nu')} c^{\theta}_{\nu, \mu} c^{\theta'}_{\nu', \mu} [\bS_{[\lambda,\lambda']}(V)].
\end{displaymath}
The result now follows.
\end{proof}

\begin{proposition}
Let $(\lambda, \lambda')$ be an admissible pair.  Then we have an isomorphism $B_{\lambda,\lambda'} \to M_{\lambda,\lambda'}$ which is $A$-linear and $\GL(E) \times \GL(E')$ equivariant.
\end{proposition}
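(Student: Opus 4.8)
The plan is to run the same argument as in the proof of Proposition~\ref{s:stepc} (and Proposition~\ref{o:stepc}), now using the presentation of $M_{\lambda,\lambda'}$ already extracted in Lemma~\ref{gl:m-ind}. First I would apply Proposition~\ref{gl:stepb} with $i=0$ and $i=1$. For $i=0$ the only pair $(\alpha,\alpha')$ with $\tau_n(\alpha,\alpha')=(\lambda,\lambda')$ and $i_n(\alpha,\alpha')=0$ is $(\lambda,\lambda')$ itself, so $\Tor^A_0(M_{\lambda,\lambda'},\bC)=\bS_{\lambda}(E)\otimes\bS_{\lambda'}(E')$. For $i=1$, a pair with $i_n=1$ requires a single round of the border-strip modification rule in which $c(R_\alpha)=c(R_{\alpha'})=1$, i.e.\ both removed strips are single columns, which forces $\alpha$ and $\alpha'$ to agree with $\lambda$ and $\lambda'$ away from the first column; hence $\Tor^A_1(M_{\lambda,\lambda'},\bC)=\bS_{(\lambda,1^d)}(E)\otimes\bS_{(\lambda',1^d)}(E')$ with $d=n+1-\ell(\lambda)-\ell(\lambda')$. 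This gives the presentation
\begin{displaymath}
A\otimes\bS_{(\lambda,1^d)}(E)\otimes\bS_{(\lambda',1^d)}(E')\longrightarrow A\otimes\bS_{\lambda}(E)\otimes\bS_{\lambda'}(E')\longrightarrow M_{\lambda,\lambda'}\longrightarrow 0.
\end{displaymath}

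Next I would observe that $\bS_{(\lambda,1^d)}(E)\otimes\bS_{(\lambda',1^d)}(E')$ occurs with multiplicity one in the middle module: using $[A]=\sum_\mu[\bS_\mu(E)][\bS_\mu(E')]$, the only $\mu$ that can contribute a copy is $\mu=1^d$, and then Pieri's rule forces the factor in each variable to be $\bS_{(\lambda,1^d)}(E)$, resp.\ $\bS_{(\lambda',1^d)}(E')$, each with multiplicity one. Therefore $\bS_{(\lambda,1^d)}(E)\otimes\bS_{(\lambda',1^d)}(E')$ does not occur in $M_{\lambda,\lambda'}$; since $M_{\lambda,\lambda'}$ and $B_{\lambda,\lambda'}$ are isomorphic as $\GL(E)\times\GL(E')$-representations with finite multiplicities by the previous lemma, it does not occur in $B_{\lambda,\lambda'}$ either.

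Finally, $\Tor^A_0(B,\bC)$ is the coordinate ring $C$ of the Littlewood variety, and by \eqref{gl:cring} its $\bS_{[\lambda,\lambda']}(V)$-isotypic multiplicity space is $\bS_{\lambda}(E)\otimes\bS_{\lambda'}(E')$; hence $\Tor^A_0(B_{\lambda,\lambda'},\bC)=\bS_{\lambda}(E)\otimes\bS_{\lambda'}(E')$ and there is a surjection $f\colon A\otimes\bS_{\lambda}(E)\otimes\bS_{\lambda'}(E')\twoheadrightarrow B_{\lambda,\lambda'}$. Since $\bS_{(\lambda,1^d)}(E)\otimes\bS_{(\lambda',1^d)}(E')$ does not occur in $B_{\lambda,\lambda'}$, the unique copy of it in the source of $f$ lies in $\ker f$, so $f$ factors through a surjection $M_{\lambda,\lambda'}\twoheadrightarrow B_{\lambda,\lambda'}$; as both modules are abstractly isomorphic as $\GL(E)\times\GL(E')$-representations with finite-dimensional multiplicity spaces, this surjection is an isomorphism, and its inverse is the desired $A$-linear, $\GL(E)\times\GL(E')$-equivariant isomorphism $B_{\lambda,\lambda'}\to M_{\lambda,\lambda'}$. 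I do not expect a genuine obstacle here: the only input not already in place in the symplectic/orthogonal cases is the identification of $\Tor^A_1$, and that has been done in Lemma~\ref{gl:m-ind}, so the remainder of the proof is verbatim the argument of Proposition~\ref{s:stepc}.
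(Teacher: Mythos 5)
Your proposal is correct and is essentially the paper's own proof: the authors likewise take the presentation from Lemma~\ref{gl:m-ind} (i.e.\ Proposition~\ref{gl:stepb} with $i=0,1$), note the multiplicity-one occurrence of $\bS_{(\lambda,1^d)}(E)\otimes\bS_{(\lambda',1^d)}(E')$, obtain the surjection $f$ from \eqref{gl:cring}, and conclude via the finite-multiplicity comparison with the previous lemma. The only difference is that you spell out the Pieri-rule justification of the multiplicity-one claim and the identification of $\Tor^A_1$, which the paper leaves implicit.
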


\begin{proof}
As in the proof of Lemma~\ref{gl:m-ind}, we have a presentation
\begin{displaymath}
A \otimes \bS_{(\lambda, 1^d)}(E) \otimes \bS_{(\lambda', 1^d)}(E') \to A \otimes \bS_{\lambda}(E) \otimes \bS_{\lambda'}(E') \to M_{\lambda,\lambda'} \to 0,
\end{displaymath}
where $d=n+1-\ell(\lambda)-\ell(\lambda')$.  Note that $\bS_{(\lambda,1^d)}(E) \otimes \bS_{(\lambda',1^d)}$ occurs with multiplicity one in the middle module, and thus does not occur in $M_{\lambda,\lambda'}$; it therefore does not occur in $B_{\lambda,\lambda'}$ either, since $M_{\lambda,\lambda'}$ and $B_{\lambda,\lambda'}$ are isomorphic as representations of $\GL(E) \times \GL(E')$.

Since $\Tor^A_0(B, \bC)=C$, we see from \eqref{gl:cring} that $\Tor^A_0(B_{\lambda,\lambda'}, \bC)=\bS_{\lambda}(E) \otimes \bS_{\lambda'}(E')$.  It follows that we have a surjection $f \colon A \otimes \bS_{\lambda}(E) \otimes \bS_{\lambda'}(E') \to B_{\lambda, \lambda'}$.  Since $\bS_{(\lambda, 1^d)}(E) \otimes \bS_{(\lambda', 1^d)}(E')$ does not occur in $B_{\lambda, \lambda'}$, we see that $f$ induces a surjection $M_{\lambda, \lambda'} \to B_{\lambda, \lambda'}$.  Since the two are isomorphic as $\GL(E) \times \GL(E')$ representations and have finite multiplicities, this surjection is an isomorphism.
\end{proof}

Combining the above proposition with Proposition~\ref{gl:stepb}, we obtain the following corollary.  Combined with \eqref{gl:tor}, this proves the main theorem.

\begin{corollary}
We have
\begin{displaymath}
\Tor^A_i(B, \bC)=\bigoplus_{i_n(\lambda,\lambda')=i} \bS_{\lambda}(E) \otimes \bS_{\lambda'}(E') \otimes \bS_{[\tau_n(\lambda, \lambda')]}(V).
\end{displaymath}
\end{corollary}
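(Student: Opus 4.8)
The plan is to assemble this corollary from the two results that immediately precede it, in exactly the manner used for the symplectic and orthogonal families. First I would record the $\GL(V)$-isotypic decomposition of $B$. Since $B=\Sym((E \otimes V) \oplus (E' \otimes V^*))$ is a polynomial functor in $V$ and $V^*$, every irreducible $\GL(V)$-constituent of $B$ has the form $\bS_{[\mu,\mu']}(V)$ for an admissible pair $(\mu,\mu')$, and collecting terms gives
\begin{displaymath}
B=\bigoplus_{\textrm{admissible }(\mu,\mu')} B_{\mu,\mu'} \otimes \bS_{[\mu,\mu']}(V)
\end{displaymath}
as $\GL(E) \times \GL(E') \times \GL(V)$-modules, where $B_{\mu,\mu'}=\Hom_{\GL(V)}(\bS_{[\mu,\mu']}(V),B)$ retains the $A$-module structure and the action of $\GL(E)\times\GL(E')$, and where the isomorphism is $A$-linear because $A$ acts trivially on the $V$-factors.

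Next I would apply $\Tor^A_i(-,\bC)$ to this decomposition. Since $\Tor^A_i(-,\bC)$ commutes with arbitrary direct sums and since $\GL(V)$ acts trivially on $A$ and on $\bC$, this yields
\begin{displaymath}
\Tor^A_i(B,\bC)=\bigoplus_{\textrm{admissible }(\mu,\mu')} \Tor^A_i(B_{\mu,\mu'},\bC) \otimes \bS_{[\mu,\mu']}(V).
\end{displaymath}
Now invoke the preceding proposition, which provides an $A$-linear, $\GL(E)\times\GL(E')$-equivariant isomorphism $B_{\mu,\mu'}\cong M_{\mu,\mu'}$, and then Proposition~\ref{gl:stepb}, which computes $\Tor^A_i(M_{\mu,\mu'},\bC)=\bigoplus \bS_{\alpha}(E)\otimes\bS_{\alpha'}(E')$ over the pairs $(\alpha,\alpha')$ with $\tau_n(\alpha,\alpha')=(\mu,\mu')$ and $i_n(\alpha,\alpha')=i$. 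Substituting produces a double sum: an outer sum over admissible $(\mu,\mu')$ and an inner sum over $(\alpha,\alpha')\in\tau_n^{-1}(\mu,\mu')$ with $i_n(\alpha,\alpha')=i$.

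The final step is a reindexing. Every pair $(\alpha,\alpha')$ with $i_n(\alpha,\alpha')<\infty$ has a well-defined value $\tau_n(\alpha,\alpha')$, which is an admissible pair by construction; hence the fibers $\tau_n^{-1}(\mu,\mu')$, as $(\mu,\mu')$ runs over admissible pairs, partition the set of all pairs with finite $i_n$. The double sum therefore collapses to a single sum over all pairs $(\alpha,\alpha')$ with $i_n(\alpha,\alpha')=i$, in which the $V$-factor for $(\alpha,\alpha')$ is $\bS_{[\tau_n(\alpha,\alpha')]}(V)$; renaming $(\alpha,\alpha')$ to $(\lambda,\lambda')$ gives the stated formula. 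As in the earlier cases, the identity is to be read with $\dim E$ and $\dim E'$ chosen large (it holds uniformly once they are large enough), so that all relevant $\bS_\lambda(E)$, $\bS_{\lambda'}(E')$ are nonzero. I do not expect a genuine obstacle here: all the substantive content lives in Proposition~\ref{gl:stepb}, in the module identification $B_{\lambda,\lambda'}\cong M_{\lambda,\lambda'}$, and in the combinatorial bijection of Step~a; what remains is bookkeeping, and the only point requiring a moment's care is matching the partition-pair indexing and checking that the $\tau_n$-fibers really do partition the index set.
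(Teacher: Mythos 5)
Your proposal is correct and is exactly the argument the paper intends: the paper's proof is the single sentence ``Combining the above proposition with Proposition~\ref{gl:stepb}, we obtain the following corollary,'' and your isotypic decomposition of $B$, the substitution $B_{\mu,\mu'}\cong M_{\mu,\mu'}$, and the reindexing over the fibers of $\tau_n$ is precisely the bookkeeping being elided. No issues.
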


\subsection{Examples}
\label{ss:genlinexamples}

We now give a few examples to illustrate the theorem.  

\begin{example}
Suppose $\lambda=(1^i)$, $\lambda'=(1^j)$ with $i$ and $j$ positive.  Then $L^{\lambda ,\lambda'}_\bullet$ is the complex
\begin{displaymath}
\lw^{i-1}V\otimes\lw^{j-1}V^*\rightarrow \lw^iV\otimes\lw^j V^*,
\end{displaymath}
where the differential is the multiplication by the identity, treated as an element of $V\otimes V^*$. Let $n = \dim V$.
\begin{itemize}
\item If $i+j\le n$ then the differential is injective, and $\rH_0(L^{\lambda,\lambda'}_\bullet)=\bS_{[1^i, 1^j]}(V)$ is the irreducible representation with highest weight $(1^i, 0^{n-i-j}, (-1)^j)$.  
\item If $i+j=n+1$ then the differential is an isomorphism, and all homology of $L^{\lambda ,\lambda'}_\bullet$ vanishes.  
\item If $i+j>n+1$ but $i \le n+1$ and $j \le n+1$ we have $\rH_1(L^{\lambda, \lambda'}_\bullet)= \bS_{[1^{n+1-j},1^{n+1-i}]}(V)$.  
\item Finally, if $i>n+1$ or $j>n+1$ then the complex $L^{\lambda, \lambda'}_\bullet$ vanishes identically.
\end{itemize}

The reader will easily check that the description of homology given above agrees with the rule given by the Weyl group action.
\end{example}

\begin{example}
Suppose $\lambda=\lambda_0=(4,3,2,2)$ and $\lambda'=\lambda'_0=(5,2,2,1,1)$ and $n=3$.  We are supposed to remove border strips of size $\ell(\lambda_0)+\ell(\lambda'_0)-n-1=4+5-4=5$ from each partition.  Let $R_0$ and $R_0'$ be these border strips.  The picture is as follows:
\begin{displaymath}
\ydiagram[*(white)]{4,1,1}*[*(gray)]{4,3,2,2} \qquad\qquad
\ydiagram[*(white)]{5,1}*[*(gray)]{5,2,2,1,1}
\end{displaymath}
The partition $\lambda_0$ is on the left, with $R_0$ shaded, and $\lambda'_0$ is on the right with $R'_0$ shaded.  Let $\lambda_1=\lambda_0 \setminus R_0$ and $\lambda_1'=\lambda_0' \setminus R_0'$; these are the unshaded boxes in the above diagrams.  As $\ell(\lambda_1)+\ell(\lambda_1')=5>n$, the pair $(\lambda, \lambda')$ is not admissible and the algorithm continues.  The border strips $R_1$ and $R_1'$ have size 1.  The picture is thus:
\begin{displaymath}
\ydiagram[*(white)]{4,1}*[*(gray)]{4,1,1} \qquad\qquad
\ydiagram[*(white)]{5}*[*(gray)]{5,1}
\end{displaymath}
Removing these border strips, we obtain the partitions $\lambda_2=(4,1)$ and $\lambda_2'=(4)$.  As $\ell(\lambda_2)+\ell(\lambda_2') \le n$, the pair $(\lambda, \lambda')$  is admissible and the algorithm terminates.  So $\tau_3(\lambda, \lambda')=((4,1), (4))$ and
\begin{displaymath}
i_3(\lambda, \lambda')=(c(R_0)+c(R_0')-1)+(c(R_1)+c(R_1')-1)=4+1=5.
\end{displaymath}
It follows that $\rH_i(L^{\lambda,\lambda'}_\bullet)=0$ for $i \ne 5$, while $\rH_5(L^{\lambda,\lambda'}_\bullet)=\bS_{[(4,1),(4)]}(V)$ is the irreducible of $\GL(3)$ with highest weight $(4,1,-4)$.

Now we illustrate the modification rule using Koike's original Weyl group action. Using the notation of \S\ref{ss:gl-modrule}, we have $\sigma(\lambda') = (2,2,2,0,-2)$, $\alpha = (2,2,2,0,-2 \mid 4,4,2,1)$, and $\rho = (\dots, 2,1 \mid 0,-1,\dots)$. If we sort $\alpha + \rho = (7,6,5,2,-1 \mid 4,3,0,-2)$, we get $(7,6,5,4,3 \mid 2,0,-1,-2)$. The permutation that does this sorting has length 5 (we made 5 consecutive swaps), and subtracting $\rho$, we get $(2,2,2,2,2 \mid 2,1,1,1)$. Then $(2,1,1,1)^\dagger = (4,1)$ is our first partition, and our second partition is $\sigma^{-1}(2^5) = (1^5)^\dagger = (5)$.
\end{example}

\end{document}